\numberwithin{equation}{section}
\newcommand{\mc}[1]{{\mathcal #1}}
\newcommand{\mb}[1]{{\mathbf #1}}
\newcommand{\mf}[1]{{\mathfrak #1}}
\newcommand{\bs}[1]{{\boldsymbol #1}}
\newcommand{\bb}[1]{{\mathbb #1}}
\newcommand{\ms}[1]{{\mathscr #1}}
\theoremstyle{plain}
\newtheorem{theorem}{Theorem}[section]
\newtheorem{proposition}[theorem]{Proposition}
\newtheorem{lemma}[theorem]{Lemma}
\newtheorem{corollary}[theorem]{Corollary}
\theoremstyle{definition}
\theoremstyle{remark}
\newtheorem{remark}[theorem]{Remark}
\newcommand{\<}{\langle}
\renewcommand{\>}{\rangle}
\renewcommand{\t}{\tau}
\newcommand{\De}{\Delta}
\newcommand{\Ga}{\Gamma}
\newcommand{\lan}{\langle}
\newcommand{\ran}{\rangle}
\def\R{{\mathbb{R}}}
\def\Z{{\mathbb{Z}}}
\def\T{{\mathbb{T}}}
\def\ux{\underline{x}}
\newcommand{\vertiii}[1]{{\left\vert\kern-0.25ex\left\vert\kern-0.25ex\left\vert #1 
    \right\vert\kern-0.25ex\right\vert\kern-0.25ex\right\vert}}
\title{Linear fluctuation of interfaces in Glauber-Kawasaki dynamics}
\author{Tadahisa Funaki, Claudio Landim and Sunder Sethuraman}
\date{\today}
\begin{document}
\maketitle

\begin{abstract}
In this article, we find a scaling limit of the space-time mass
fluctuation field of Glauber + Kawasaki particle dynamics around its
hydrodynamic mean curvature interface limit. Here, the Glauber rates
are scaled by $K=K_N$, the Kawasaki rates by $N^2$ and space by $1/N$.
We start the process so that the interface $\Ga_t$ formed is
stationary that is, $\Ga_t$ is `flat'. When the Glauber rates are
balanced on $\T^d$, $\Ga_t=\Ga=\{x: x_1=0\}$ is immobile and the
hydrodynamic limit is given by $\rho(t,v) = \rho_+$ for
$v_1\in (0,1/2)$ and $\rho(t,v)= \rho_-$ for $v_1\in (-1/2,0)$ for all
$t\ge 0$, where $v=(v_1,\ldots,v_d)\in \T^d$ identified with
$[-1/2,1/2)^d$. Since in the formation the boundary region about the
interface has width $O(1/\sqrt{K_N})$, we will scale the $v_1$
coordinate in the fluctuation field by $\sqrt{K_N}$ so that the
scaling limit will capture information `near' the interface.

We identify the fluctuation limit as a Gaussian field when
$K_N\uparrow \infty$ and $K_N= O(\sqrt{\log(N)})$ in $d\leq 2$. In the
one dimensional case, the field limit is given by ${\bf e}(v_1) B_t$
where $B_t$ is a Brownian motion and ${\bf e}$ is the normalized
derivative of a decreasing `standing wave' solution $\phi$ of
$\partial^2_{v_1} \phi - V'(\phi)=0$ on $\R$, where $V'$ is the
homogenization of the Glauber rates. In two dimensions, the limit is
${\bf e}(v_1)Z_t(v_2)$ where $Z_t$ is the solution of a one
dimensional stochastic heat equation. The appearance of the function
${\bf e}(\cdot)$ in the limit field indicates that the interface
fluctuation retains the shape of the transition layer $\phi$.
\end{abstract}

\section{Introduction}
\label{sec1}

Consider reaction $+$ diffusion interacting particle systems on
$\Z^d$, such as `Glauber $+$ Kawasaki' models. Informally, particles
on $\Z^d$, with edge scaling of $1/N$, exchange neighbor places with
rate $N^2$ (the `Kawasaki' part), and are born or expire with rates
depending on local configurations of order $K=K_N$ (the `Glauber'
part). When $K_N\uparrow\infty$, such models have been used to study
the phenomenon of phase segregation \cite{deMasi}.

For instance, when the Glauber rates correspond to the gradient of a
`double well' potential, the continuum space-time hydrodynamic limit
of the mass describes mean-curvature of an interface between two
regions of different constant densities (the argmins of the double
well potential) \cite{Bonaventura}, \cite{KS94}, \cite{FT}. Related
hydrodynamic limits have been understood also in more general reaction
$+$ diffusion systems \cite{kfhps}, \cite{FMSTspeedchange}. In this
context, the aim of this article is to derive the associated continuum
fluctuations, certain SPDEs, when the particle system is started from
a certain `flat' nonequilibrium state.

From a broader perspective, the emergence of phase separating
interface continuum flows is a concern of long standing interest; see
\cite{deMasi}, \cite{Spohn} in the statistical physics literature, and
\cite{Belletini} and references therein for results in the PDE
literature. There are also works which investigate the `mesoscopic'
fluctuations of the interface through SPDE scaling limits; see
\cite{Funakibook} and references therein. However, less is known or
conjectured about the continuum mass fluctuations, with respect to the
hydrodynamics, as a limit from microscopic systems.

This paper may be seen as a companion to \cite{F1}, which considers
the `mesoscopic' scaling limit of an SPDE (equation (2.3) in
\cite{F1}), written as a formal expansion of the stochastic
differential for the mass empirical measure in a Glauber + Kawasaki
particle system. Our purpose is to derive rigorously the linear form
of this SPDE directly from Glauber + Kawasaki microscopic
interactions, supplying the homogenizations required, starting from a
certain `stationary' initial condition on the discrete torus
$\T_N^d= (\Z/ N\Z)^d$ for $d\leq 2$. This SPDE limit
captures microscopic-to-macroscopic fluctuations of the mass near and
away from the associated continuum interface in a certain regime.

To describe the results, let us recall the intuition for the
mean-curvature limit \cite{FT, F24} say; see also the `Motion by mean
curvature' discussion below.  Denote by
$\eta(t) \in \{0,1\}^{\bb T^d_N}$ the state of the process at time
$t$, so that $\eta_x(t)=1$ if site $x\in \T_N^d$ is occupied at time
$t$ and $\eta_x(t)=0$ if it is empty. By looking at stochastic
differentials, one expects that the random configuration
$\eta(t)=\{\eta_x(t)\}$ of particles on $\T^d_N$ is close to
deterministic $u^N(t)=\{u^N_x(t)\}$ values where
$\partial_t u^N_x(t) = \Delta^N u^N_x(t) - K_NV'(u^N_x(t))$ is a
discretized form of the hydrodynamic equation \eqref{hyd equation}
with $K=K_N$. Starting from a distribution $\mu^N(0)$ which identifies
an initial continuum interface $\Gamma_0$ between two phases, the
relative entropy $H(\mu^N(t); \nu^N_t)$ between the distribution
$\mu^N(t)$ of $\eta(t)$ and a product measure $\nu^N_t$ whose
marginals have means $u^N(t)$ is bounded as $O(N^{d-\epsilon})$. As a
consequence, the empirical measure $\pi^N_t$ of the particles is close
to that of the values $u^N(t)$. Then, PDE estimates are used to show
that the deterministic values $u^N(t)$ converge to the mean-curvature
limit \eqref{meancurvaturelimit}. Here $K_N\uparrow\infty$ and its
divergence is restricted to $K_N \leq \delta \log(N)$ to show a form
of homogenization or `Boltzmann-Gibbs principle', needed to deduce the
relative entropy bound.

In the setup to analyze fluctuations, we will start from initial
conditions close to a stationary profile in one direction and constant
in others. To describe them consider $u_0:\T^d \rightarrow \R$ where
$u_0(v_1, v')=u_0(v_1, v'')$ for $v', v'' \in \T^{d-1}$ and
$\partial^2_{v_1} u_0(v) - K_N V'(u_0(v)) = 0$. The Glauber rates will
be chosen so that its homogenization $V'$ has exactly three zeroes,
$\rho_-<\rho_*<\rho_+$. We will take $u_0$ as the (unique) solution
passing through $\rho_*$ exactly twice, $u_0(0)=\rho_*$,
$\partial_{v_1}u_0(0)<0$, and whose extreme values are bounded by and
close to $\rho_\pm$. Then, the level set of the profile $u_0$ on the
continuum torus $\T^d$ with respect to $\rho_*$ corresponds to an
interface with two `flat' components which remain fixed under
mean-curvature flow. In this formulation, there are boundary regions
of width $O(1/\sqrt{K_N})$ about the interface where $u_0$ transitions
from values close to $\rho_\pm$ to those close to $\rho_\mp$.

To capture the behavior near the interface, we form
$\rho^K\colon \sqrt{K}\T\rightarrow \R$ where
$\rho^K(\vartheta) = u_0(\vartheta/\sqrt{K}, \cdot, \ldots, \cdot)$.
Note that $\rho^K$ has period $\sqrt{K}$ and
$\partial^2_{\vartheta} \rho^K - V'(\rho^K)=0$. Also, $\rho^K$
converges to the decreasing `standing wave' $\phi$ on finite intervals
of $\R$, which solves $\partial^2_{\vartheta} \phi - V'(\phi) =0$ with
$\phi(\pm \infty) = \rho_\mp$ and $\phi(0)=\rho_*$. Now, we will set
initially $E[\eta_x(0)] = {\bf u}^N(x)$ where
${\bf u}^N(x)=\rho^K(x_1 \sqrt{K}/N)$ for $x\in \T^d_N$. In this way,
at a macroscopic point $a=x/N$ for $x\in \T^d_N$, we have
${\bf u}^N(x)=\rho^K(x_1\sqrt{K}/N)= \rho^K(a_1\sqrt{K}) \rightarrow
\rho_+{\bf 1}(a_1<0) + \rho_-{\bf 1}(a_1>0)$, as
$K=K_N\uparrow\infty$, consistent with the hydrodynamics picture given
above.

We will focus on the fluctuations of the mass in one of the `layers',
that is on the part of the torus $\T^d_N$ corresponding to the region
where ${\bf u}^N$ transitions from values near $\rho_-$ to $\rho_+$;
the analysis of the other `layer' is similar. Let
$Ax = (x_1\sqrt{K}/N, x_2/N,\ldots, x_d/N)$ for $x\in \T^d_N$.
Typically, the fluctuation field
$\sum_{x\in \T^d} (\eta_x(t) - {\bf u}^N(x)) \delta_{Ax}$ should be
scaled by $N^{d/2}$ to obtain nontrivial limits. However, the
martingale, introduced by the Glauber rates, in the stochastic
differential of such a scaled field is of order $O(\sqrt{K})$, whereas
the one from the Kawasaki rates is of order $O(1)$. Hence, we will
consider the field divided by $N^{d/2}K^{1/4}$ denoted as $X^N_t$; see
\eqref{2-16}.

To be able to analyze the limit of $X^N_t$, we (1) obtain a suitable
estimate of the relative entropy between the process distribution
$\mu^N(t)$ at time $t$ with $\nu^N_0$, now a product measure with
means $\{{\bf u}^N(x): x\in \T^d\}$, then (2) make use of this bound
to perform a homogenization or Boltzmann-Gibbs principle of nonlinear
microscopic rates (see the next paragraph), and finally (3) identify
limits of the scaled fluctuation field via its stochastic
differential. We observe that $\nu^N_0$ is not the stationary
distribution of the Glauber + Kawasaki dynamics $\eta(t)$ on $\T_N^d$,
and so in this way, we will identify here a nonequilibrium fluctuation
limit. The unique stationary distribution is called the steady state
and its concrete form is unknown.

To this end, in Theorem \ref{mt1} and Corollary \ref{m-cor}, we show
$H(\mu^N_t; \nu^N_0) = O(N^{\alpha_d} e^{CtK_N^2})$ where
$\alpha_1 = 1/5$, $\alpha_2 = 4/3$ and $\alpha_d = d-(4/3)$ in
$d\geq 3$, when $K_N\uparrow\infty$ and $K_N = O(\ell_N^{d/2})$ with
$\ell_N=N^{4/5}$ in $d=1$, $\ell_N \log \ell_N = N^2$ in $d=2$, and
$\ell_N=N^{4/(3d)}$ in $d\geq 3$. These estimates allow homogenization
of the microscopic rates in the stochastic differential of $X^N_t$ in
dimensions $d=1,2$, but not in $d\geq 3$. Moreover, in the
homogenization the divergence of $K_N$ is further limited to
$K_N \leq \delta \sqrt{\log(N)}$, see \eqref{2-89},  for a small constant $\delta>0$,
instead of $K_N \leq \delta \log(N)$. Finally, we identify the limit
of $X^N_t$ in Theorem \ref{mt2} as a Gaussian field $X_t$ acting on
compactly supported smooth functions
$F:\R\times \T^{d-1}\rightarrow \R$ where
$$X_t(F) = \left\{\begin{array}{rl}
\sqrt{\varpi}\langle F, \bs e\rangle B_t & \ {\rm in \ }d=1\\
\langle F, \bs e(\vartheta)Z_t(\theta)\rangle & \ {\rm in \ }d=2.
\end{array}\right.
$$
Here,
$\partial_t Z_t = \partial^2_\theta Z_t + \sqrt{\varpi}\xi(t,\theta)$
with $Z_0=0$ and $\xi$ is a space-time white noise on $\R_+\times \T$.
Also, $\sqrt{\varpi}$ is a certain constant (below \eqref{2-78}) and
$\bs e = \phi'/\|\phi\|_{L^2(\R)}$. In these evaluations, $X_t$ is a
random function in $d\leq 2$, whereas any derivations of $X_t$ in
$d\geq 3$ would be as a generalized distribution (see equations
(4.5)--(4.7) in \cite{F1}).

Since $E[\eta_x(t)] \sim \phi(x_1\sqrt{K}/N)\sim \rho_\pm$ depending
on whether $x_1<0$ or $x_1>0$, the field $X^N_t$ may be seen as the
second order correction, in a scaled window of the interface from the
hydrodynamics, limiting to ${\bs e}(\vartheta) \sqrt{\varpi}B_t$ in
$d=1$ and ${\bs e}(\vartheta)Z_t(\theta)$ in $d=2$.  In both limits,
as $|{\bs e}(\vartheta)|$ diminishes for $|\vartheta|$ large, the
fluctuation has largest amplitude near the interface
$\Gamma_t = \{(\vartheta, \theta): \vartheta =0\}$ as, far from the
interface, the values $\rho_\pm$ are more stable.  It is interesting
that the fluctuations involve the `shape' of ${\bs e}$, indicating an
interesting `rigidity'. The appearance of $e(\cdot)$ in the limit
field can be interpreted as an indication of the fluctuation of the
interfaces retaining the shape of the transition layer $\phi$; see
Section 4.2 of \cite{F1}. Such rigidity might be understood in terms
of `metastability': In \cite{cp}, it is argued that macroscopic times
of order $e^{O(\sqrt{K})}$ are needed to move the interface in
dimension $d=1$.  Since $t\in [0,T]$ for $T>0$, such times are not
reached in the fluctuation limit in Theorem \ref{mt2}, which fixes the
initial interface for all $t\geq 0$.

As in other works, the proof of the relative entropy estimate follows
by analyzing its derivative. Here, we follow the
general scheme of \cite{jm2, jl}.  In \cite{jl}, a nonequilibrium
fluctuation limit for types of voter models is shown. However, unlike
in \cite{jl}, intrinsically due to variation across the interface,
$\nu_0^N$ is an inhomogeneous product measure which introduces several
differences. The inhomogeneity also affects the proof of tightness of
$X^N_t$. Indeed, we show only tightness for $\int_0^t X^N_sds$ instead
of $X^N_t$ in Theorem \ref{mt3}.

To identify the limit points and prove tightness we employ an analysis
of a Sturm-Liouville problem from which the `shape' ${\bs e}$ emerges
in the results. For $(t,v)\in [0,T]\times \sqrt{K}\T\times \T^{d-1}$,
and $v=(\vartheta, \theta)$, we will be able to approximate the
stochastic differential of $X^N_t$, via the Boltzmann-Gibbs principle,
as
$$\partial_t X^N_t(t, v) \approx K\, \big \{\,
\partial_{\vartheta}^2 X^N_t(t,v) - V''(\rho^K(\vartheta))\,
X^N_t(t,v) \,\big\} + \Delta_{d-1}X^N_t(t,v) + dM^N_t$$ where
$\Delta_{d-1}=\sum_{j=2}^d \partial^2_{\theta_j}$. One may consider
the mild form: By applying the field $X^N_t$ to the time-dependent
function $T^N_{t-s}G = e^{(t-s)K\mathcal{A}_K}e^{(t-s)\Delta_{d-1}}G$,
we will show that $X^N_t(G)$ is well approximated by
the value at time $t$ of a martingale $M^N_s(T^N_{t-s}G)$
(cf. Corollary \ref{2-cor1}). Here,
$\mathcal{A}_K = \partial^2_{\vartheta} - V''(\rho^K)$, for which it
is known that $\partial_{\vartheta}\rho^K$ is a ground state.

By the Sturm-Liouville analysis in \cite{cp} and \cite{F1}, the
semigroup $e^{tK\mathcal{A}_K}F$ converges to
${\bs e} \, \langle F, {\bs e}\rangle$. Then, we may approximate the
value at time $t$ of the martingale $M^N_s(T^N_{t-s}G)$ by the value
at time $t$ of the martingale
$M^N_s({\bs e}_K(\vartheta)e^{(t-s)\Delta_{d-1}}\langle
G\rangle(\theta_2,\ldots, \theta_d))$. By a martingale relation, this
last expression equals
$$M^N_t({\bs e}_K(\vartheta)\langle G\rangle(\theta_2,\ldots, \theta_d)) + \int_0^t M^N_s\big({\bs e}_K(\vartheta)\Delta_{d-1}e^{(t-s)\Delta_{d-1}}\langle G\rangle(\theta_2,\ldots, \theta_d)\big).
$$
In these terms,
$\langle G\rangle = \int {\bs e}(\vartheta)
G(\vartheta,\theta)d\vartheta$, and ${\bs e}_K$ is a $\sqrt{K}\T$
torus approximation of ${\bs e}$. Finally, in the $N\uparrow\infty$
limit, the covariances of the last display are computed as those of
the limits described earlier.

Last, we comment on the role of $K_N$ in the results. Since we do not
know the stationary measure of the Glauber + Kawaski dynamics on
$\T^d_N$, we are left to use $\nu_0^N$ as a proxy. The allowed growth
of $K_N$ in the estimates to prove the relative entropy bound and the
Boltzmann-Gibbs principle is therefore limited. As mentioned in
\cite{F1}, if the $K_N$ growth rate could be improved to $K_N = N^b$
for large enough powers $0<b<2$, then higher order terms in the
stochastic differential of $X^N_t$ can be homogenized so that one may
be able to obtain nonlinear SPDEs for the fluctuation limits of
$X^N_t$.

In Section \ref{sec2}, after introducing notation, we state the
results of the article. In Sections \ref{sec3}, \ref{sec8}, and
\ref{sec4}, we develop notions to prove the relative entropy bound
Theorem \ref{mt1}. In Section \ref{sec-BG}, we show the homogenization
or Boltzmann-Gibbs principle needed. In Section \ref{sec5}, we
identify the limits in Theorem \ref{mt2}, and in Section \ref{sec6},
we show the tightness results Theorem \ref{mt3}. Finally, in the
Appendix, several technical estimates are discussed.

\section{Notation and results}
\label{sec2}

Denote by $\color{blue} \bb T_N = \bb Z / N \bb Z$,
$N \in \bb N =\{1, 2, \dots \}$, the one-dimensional discrete torus
with $N$ points. Let $\color{blue} \Omega_N = \{0,1\}^{\bb T_N^d}$
be the state space. Elements of $\Omega_N$ are represented by the
Greek letters $\eta = (\eta_x : x\in \bb T^d_N)$ and $\xi$. Hence,
$\eta_x =1$ if the configuration $\eta$ is occupied at site $x$, and
$\eta_x =0$ otherwise.

Denote by $L_N^E$ the generator of the symmetric simple exclusion
process, or alternatively Kawasaki process, on $\bb T^d_N$ given by
\begin{equation}
\label{10}
(L_N^Ef)\, (\eta)\;=\; \sum_{x\in\bb T^d_N} \sum_{j=1}^d 
\, \big\{\, f(T^{x,x+e_j}\eta)-f(\eta)\, \big\}
\end{equation}
for all $f: \Omega_N \to \bb R$.  In this formula, $T^{ x,y}\eta$
represents the configuration of particles obtained from $\eta$ by
exchanging the occupation variables $\eta_x$ and $\eta_y$:
\begin{equation*}
{\color{blue} (T^{x,y}\eta)_{z} } \;:=\;
\begin{cases}
\eta_{z}  \,, & z\not =x \,,\, y\;,\\
\eta_{x} \,, & z=y\;, \\
\eta_{y} \,, & z=x\;,
\end{cases}
\end{equation*}
and $\color{blue} \{e_1,\dots,e_d\}$ the canonical basis of $\bb
R^d$. Here and below, summations are to be understood modulo $N$.

Fix a positive cylinder function
$c_0\colon \{0,1\}^{\bb Z^d} \to \bb R$, and let $L_N^G$ be the
generator of the Glauber dynamics on $\Omega_N$:
\begin{equation*}
(L_N^G f)(\eta)  \;=\; \sum_{x\in \bb T^d_N}  c_0(\tau_x \eta)
\,\big[\,  f(T^x \eta) \,-\, f(\eta)\, \big]
\end{equation*}
for all $f: \Omega_N \to \bb R$.  In this formula, $T^x\eta$
represents the configuration obtained from $\eta$ by flipping the
value of $\eta_x$:
\begin{equation*}
{\color{blue}(T^{x}\eta)_{z}} \;:=\;
\begin{cases}
\eta_{z}\;, & z\not =x\;,\\
1\,-\, \eta_{x}\;, & z=x\;,
\end{cases}
\end{equation*}
and $\{\tau_x : x\in \bb T^d_N\}$ are the translations acting on
$\Omega_N$: 
\begin{equation}
\label{12}
{\color{blue} (\tau_x \eta)_z}
\;:=\;\eta_{x+z}\;, \quad x\,,\, z\,\in\, \bb T^d_N\;,
\;\; \eta\,\in\, \Omega_N  \;.
\end{equation}

Fix an increasing sequence $(K_N:N\ge 1)$ such that $K_N\to+\infty$,
$K_N/N\to 0$, and satisfying certain growth conditions to be specified
later.  Let $\color{blue} (\eta^N(t); t\geq 0)$ be the
$\Omega_N$-valued, continuous-time Markov chain whose generator,
denoted by $L_N$, is given by
\begin{equation*}
L_N \;=\; N^2\, L_N^E \;+\; K_N \, L_N^G\;.
\end{equation*}
Denote by $\color{blue} (S^N(t) : t\ge 0)$ the associated  semigroup.

Fix a topological space $X$. Let $D(I,X)$, $I=[0,T]$, $T>0$, or
$I = \R_+$, be the space of right-continuous trajectories from $I$ to
$X$ with left-limits, endowed with the Skorohod topology.  For a
probability measure $\nu$ on $\Omega_{N}$, denote by
$\color{blue} \bb P_{\nu}$ the probability measure on
$D(\R_{+},\Omega_{N})$ induced by the process $\eta^{N} (t)$ starting
from $\nu$. The expectation with respect to $\bb P_{\nu}$ is
represented by $\color{blue} \bb E_{\nu}$. Denote by $\bb P_{\eta}$
the measure $\bb P_{\nu}$ when the probability measure $\nu$ is the
Dirac measure concentrated on the configuration $\eta$. Analogously,
$\bb E_{\eta}$ stands for the expectation with respect to
$\bb P_{\eta}$.

Denote by $\color{blue} \nu^N_\alpha$, $0\le \alpha \le 1$, the
Bernoulli product measure on $\Omega_N$ with density $\alpha$. This is
the product measure whose marginals are Bernoulli distributions with
parameter $\alpha$. A straightforward computation shows that these
measures satisfy the detailed balance conditions for the symmetric
exclusion process. In particular, they are stationary for this
dynamics.

\subsection*{Hydrodynamical limit}

Assume in this subsection that the sequence $K_N$ is constant equal to
$K$: $K_N=K$.  Let $\color{blue} \bb T = \bb R \backslash \bb Z$ be
the continuous one-dimensional torus, Denote by $\mc M_{+}$ the space
of nonnegative measures on $\bb T^d$ with total mass bounded by $1$,
endowed with the weak topology.  For a measure $\mu$ in
$\mathcal{M}_+$ and a continuous function $G:\bb T^d\to\R$, denote by
$\lan \mu, G \ran$ the integral of $G$ with respect to $\mu$:
\begin{equation*}
\lan \mu , G \ran \;=\; \int_{\bb T^d}  G(\theta) \; \mu(d\theta) \;.
\end{equation*}

Let $\pi^N\colon \Omega_N\to\mc M_+$ be the function which associates to a
configuration $\eta$ the positive measure obtained by assigning mass
$N^{-d}$ to each particle of $\eta$,
\begin{align*}
\pi^N(\eta)\;=\;\frac{1}{N^d}\sum_{x\in \bb T^d_N}\eta_{x}\, \delta_{x/N}\;,
\end{align*}
where $\delta_\theta$ stands for the Dirac measure which has a point
mass at $\theta\in\bb T^d$. Let $\pi^{N}_{t} = \pi^N (\eta^N(t))$,
$t\ge 0$.  The next result is due to De Masi, Ferrari and Lebowitz in
\cite{dfl}. We refer to \cite{dfl, jlv, kl} for a proof. Let
$V\colon [0,1] \to \bb R$ be the potential characterized by the
identity
\begin{equation}
\label{2-54}
{\color{blue} V'(\alpha)}
\;=\; -\, E_{\nu^N_\alpha} \big[\, c_0(\eta) \, [1- 2\eta_0]\,\big]\;.
\end{equation}

\begin{theorem}
\label{hydrodynamics}
Fix a measurable function $\rho \colon \bb T^d\to[0,1]$.  Let $\nu_N$
be a sequence of product probability measures on $\Omega_N$ associated
to $\rho$, in the sense that
\begin{equation*}
\lim_{N\to\infty}
\nu_N \Big(\, \big |\lan\pi^{N},G\ran -\int_{\bb T^d}
G(\theta)\, \rho(\theta)\, d\theta\,\big|>\delta\, \Big)\;=\;0\;,
\end{equation*}
for every $\delta>0$ and every continuous function $G\colon \bb T^d\to\R$.  Then,
for every $t\ge 0$, every $\delta>0$ and every continuous function
$G:\bb T^d\to\R$, 
\begin{equation*}
\lim_{N\to\infty}
\mathbb{P}_{\nu_N}
\Big(\, \big |\lan\pi^{N}_t ,G\ran -\int_{\bb T^d}
G(\theta)\, u(t,\theta)\, d\theta\,\big|>\delta\, \Big) \;=\;0\;,
\end{equation*}
where $u\colon [0,\infty)\times\bb T^d\to[0,1]$ is the unique weak solution of
the Cauchy problem
\begin{equation}
\begin{cases}
     \partial_tu \;=\; \Delta u \,-\, K\, V'(u) \ \text{ on }\ \bb T^d\;,\\
     u(0,\cdot)\;=\;\rho (\cdot)\;,
\end{cases}
\label{hyd equation}
\end{equation}
and $V'(\cdot)$ is given by \eqref{2-54}.
\end{theorem}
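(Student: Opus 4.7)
The plan is to apply the entropy/replacement method of Guo--Papanicolaou--Varadhan as adapted to reaction$+$diffusion systems in \cite{dfl}. First, for smooth $G\colon \bb T^d\to \R$, Dynkin's formula gives that
\[
M^{N,G}_t \;:=\; \langle \pi^N_t, G\rangle \,-\, \langle \pi^N_0, G\rangle \,-\, \int_0^t L_N \langle \pi^N_s, G\rangle\, ds
\]
is a mean-zero martingale. A direct computation produces
\[
N^2\, L_N^E \langle \pi^N, G\rangle \;=\; \frac{1}{N^d}\sum_{x\in\bb T^d_N} (\Delta_N G)(x/N)\,\eta_x,
\]
with $\Delta_N$ the discrete Laplacian converging uniformly to $\Delta G$, and
\[
K\, L_N^G \langle \pi^N, G\rangle \;=\; \frac{K}{N^d}\sum_{x\in\bb T^d_N} G(x/N)\,c_0(\tau_x\eta)\,(1-2\eta_x).
\]
A standard computation shows $\bb E[\langle M^{N,G}\rangle_t] = O(K/N^d)$, so the martingale vanishes in $L^2$ and the Kawasaki drift contributes $\int_0^t \langle \pi^N_s, \Delta G\rangle\, ds$ to the limiting equation, up to negligible errors.

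The core obstacle is homogenizing the Glauber term via a one-block replacement. Setting $\bar\eta^\ell_x := (2\ell+1)^{-d}\sum_{|y-x|\le \ell}\eta_y$ and recalling the definition of $V'$ in \eqref{2-54}, the key estimate to establish is
\[
\lim_{\ell\to\infty}\limsup_{N\to\infty}\bb E_{\nu_N}\!\Big[\,\Big|\int_0^t \frac{1}{N^d}\sum_{x\in\bb T^d_N} G(x/N)\,\big\{c_0(\tau_x\eta_s)(1-2\eta_{s,x}) + V'(\bar\eta^\ell_{s,x})\big\}\,ds\Big|\,\Big] \;=\; 0.
\]
To prove it I would bound the relative entropy $H(\mu^N_t\,|\,\nu^N_\beta)$ with respect to a reference Bernoulli measure $\nu^N_\beta$ by $CN^d$ using entropy production against the Kawasaki Dirichlet form (the Glauber contribution is $O(KN^d)$ and harmless for fixed $K$). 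A Feynman--Kac variational argument combined with the spectral gap $O(\ell^{-2})$ of the exchange dynamics on a box of side $\ell$, which conserves particle number, then permits replacing $c_0(\tau_x\eta)(1-2\eta_x)$ by its canonical expectation on the hyperplane of density $\bar\eta^\ell_x$; by equivalence of ensembles this expectation equals $-V'(\bar\eta^\ell_x) + o(1)$, which is exactly \eqref{2-54}. A standard two-blocks estimate then upgrades the mesoscopic average $\bar\eta^{\ell}_x$ to the macroscopic density at $x/N$.

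Tightness of the laws $\{Q^N\}$ of $(\pi^N_\cdot)$ on $D([0,T],\mc M_+)$ follows from Aldous' criterion applied to $\langle \pi^N_t, G\rangle$ for a dense family of test functions $G$, using the martingale and drift bounds above together with $|\langle \pi^N_t, G\rangle|\le \|G\|_\infty$. Combining the preceding steps, any limit point $Q^*$ is concentrated on absolutely continuous trajectories $\pi_t(d\theta)=u(t,\theta)\, d\theta$ satisfying
\[
\langle \pi_t, G\rangle - \langle \pi_0, G\rangle \;=\; \int_0^t \Big\{\,\langle \pi_s, \Delta G\rangle \,-\, K\!\int_{\bb T^d} G(\theta)\,V'(u(s,\theta))\, d\theta\,\Big\}\, ds,
\]
with $\pi_0(d\theta)=\rho(\theta)\,d\theta$. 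Since $V'$ is smooth in $\alpha\in [0,1]$ (it is the expectation of a bounded cylinder function under $\nu^N_\alpha$), the semilinear Cauchy problem \eqref{hyd equation} admits a unique weak solution, so $Q^*$ is the Dirac mass on this solution and convergence in probability of $\langle \pi^N_t, G\rangle$ to $\int_{\bb T^d} G(\theta)\, u(t,\theta)\, d\theta$ follows. The genuine difficulty is the replacement lemma with Glauber-perturbed reference measure; the remainder is standard compactness and PDE uniqueness.
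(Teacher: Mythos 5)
Your sketch is a correct outline of the standard proof of this classical result. Note that the paper itself offers no proof of Theorem \ref{hydrodynamics}: it is quoted from the literature with references to \cite{dfl, jlv, kl}. Your route (Dynkin martingale with vanishing quadratic variation of order $O(K/N^d)$, relative entropy bound $H(\mu^N_t\,|\,\nu^N_\beta)\le C(t)N^d$ with the Glauber contribution to the entropy production absorbed as $O(KN^d)$ for fixed $K$, Dirichlet-form bound plus spectral gap and equivalence of ensembles for the one-block estimate, a two-block upgrade, Aldous tightness, and uniqueness of the weak solution of $\partial_t u=\Delta u-KV'(u)$, which holds since $V'$ is a polynomial, hence Lipschitz on $[0,1]$) is exactly the entropy/replacement argument carried out in \cite{jlv, kl}. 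The original De Masi--Ferrari--Lebowitz proof in \cite{dfl} proceeds differently, via duality and correlation-function ($v$-function) estimates yielding propagation of chaos, so what your approach buys relative to that reference is robustness (no duality is needed, only the Dirichlet form and entropy bounds), at the price of the replacement-lemma machinery; the two minor points to state explicitly in a full write-up are that limit points of $\pi^N$ are automatically absolutely continuous with density in $[0,1]$ (one particle per site), and that smooth test functions suffice by density together with the uniform mass bound, so that the statement for merely continuous $G$ follows.
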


The non-equilibrium fluctuations around the hydrodynamic limit have
been examined in \cite{jm2, gjmm}, and the dynamical and static
large deviations in \cite{jlv, bl, lt, flt}

\subsection*{Motion by mean curvature}

Consider a smooth evolution of closed hypersurfaces
$\{\Gamma_t: t\in [0,T]\} \subset \T^d$.  Let $n(t)$ and $\kappa(t)$
be the `inward' normal velocity and mean curvature of $\Gamma_t$,
fixing one side of $\Gamma_t$ as `inward'.  We say the
evolution is motion by mean curvature starting from $\Gamma_0$ if
\begin{align*}
& n (t) = (d-1)\kappa(t), \ \ \ t\in [0,T]\\
& \Gamma_t|_{t=0}  = \Gamma_0.
\end{align*}
When $\Gamma_0$ is a smooth hypersurface without boundary, it is known
that there exists a smooth solution $\{\Gamma_t: t\in [0,T]\}$
evolving by mean-curvature starting from $\Gamma_0$ for a $T>0$.  One
may also view the mean curvature evolution as a local graph flow in
appropriate coordinates.  See Section 4 in \cite{Belletini} and
\cite{Funakibook}, among other references, for discussion of these
issues.

In the PDE literature, the sharp interface limit of solutions $u=u^K$
as $K$ diverges of the reaction-diffusion equation \eqref{hyd
equation}, under conditions on $V$, to mean curvature flow is well
known; see \cite{Belletini}.  Suppose the potential $V$ is a `double
well', that is $V$ has exactly three critical points
$\rho_- < \rho_* < \rho_+$ such that $\rho_\pm$ are local minima with
equal values $V(\rho_-)=V(\rho_+)$ and $\rho_*$ is a local maximum.
Under such a $V$, \eqref{hyd equation} is seen as an `Allen-Cahn'
equation, and $u^K$ converges pointwise to a step function defined by
mean curvature flow: For $\theta \in \T^d$ and $t\in [0,T]$,
\begin{align}
\label{meancurvaturelimit}
\lim_{K\rightarrow \infty} u^K(t, \theta) = \left\{\begin{array}{rl}
\rho_- & {\rm for \ } \theta  \ {\rm on \ one \ side \ of \ } \Gamma_t\\
\rho_+ & {\rm for \ } \theta \ {\rm on \ the \ other \ side \ of \ }\Gamma_t,
\end{array}
\right.
\end{align}
where $\Gamma_t$ evolves by mean curvature.

As remarked in Section \ref{sec1}, derivation of mean-curvature flow
from reaction-diffusion particle systems is also known; see
\cite{Bonaventura}, \cite{KS94}, \cite{FT}, \cite{kfhps},
\cite{FMSTspeedchange}.  We state a result in Theorem 1.1
of \cite{FT} which applies in our setting of Glauber + Kawasaki
particle dynamics.

Consider the particle system generated by $L_N$ where $K=K_N$ diverges
but is bounded $K_N \leq \delta \log(N)$ for a small constant
$\delta>0$.  Let $\rho_0:\T^d \rightarrow (\rho_-,\rho_+)$ be a
$C^\infty$ function and
$\Gamma_0 = \{\theta : \rho_0(\theta) = \rho_*\}$ its associated
initial interface. We will also suppose that the gradient of $\rho_0$,
denoted by $\color{blue} \nabla \rho_0$, is not parallel to the normal
direction to $\Gamma_0$, and $\rho_0(\theta) <\rho_*$ on $D_-(0)$ and
$\rho_0(\theta)>\rho_*$ on $D_+(0)$ where $D_\pm(0)$ are the regions
separated by $\Gamma_0$. Let $\{\Gamma_t:t\in [0,T]\}$ be the mean
curvature flow starting from $\Gamma_0$, and $D_\pm(t)$ be the regions
separated by $\Gamma_t$.

Let $\nu_N$ be a sequence of product probability measures on
$\Omega_N$ with Bernoulli marginals with success probabilities
$\rho_0(\theta/N)$.  Then, for $t\in [0,T]$ and $G\in C^\infty(\T^d)$,
we have
$$\lim_{N\rightarrow\infty} \mathbb{P}_{\nu_N}\left(\left| \langle
\pi^N_t, G\rangle - \langle \chi_{\Gamma_t}, G\rangle\right| >
\epsilon\right) = 0,$$ 
where 
$$\chi_{\Gamma_t}(\theta) = \left\{\begin{array}{rl}
\rho_- & \ {\rm for \ } \theta\in D_-(t)\\
\rho_+ & \ {\rm for \ } \theta \in D_+(t).
\end{array}\right.
$$

This mean curvature limit can be seen as a `law of large numbers'.  In
our later results, we will consider associated fluctuations when
$\Gamma_0$ is a `flat' interface, not moving by mean-curvature.

\subsection*{A model}
To fix ideas, from now on we assume that the Glauber jump rates are
given by
\begin{equation}
\label{01}
{\color{blue} c_0(\eta)}
\,:=\, 1\,-\, \gamma \, \sigma_0\, [\, \sigma_{-e_1} + \sigma_{e_1}
\,] \,+\, \gamma^2 \, \sigma_{-e_1}\, \sigma_{e_1} \;,
\end{equation}
where $\color{blue} \sigma_x = 2\eta_x -1 \in \{-1,1\}$ and
$|\gamma| \le 1$ for the rate to be positive. Mind that for $\gamma=1$
the rate $c_0(\cdot)$ vanishes if $\sigma(0)$, $\sigma(-e_1)$,
$\sigma(+e_1)$ have the same sign.  For these jump rates, the
potential $V\colon [0,1] \to \bb R$ introduced in \eqref{2-54} is
given by
\begin{equation*}
{\color{blue} V'(\rho)}
\;=\; -\, E_{\nu^N_\rho} \big[\, c_0(\eta) \, [1- 2\eta_0]\,\big]
\;=\; -\, (2\gamma-1)(2\rho-1) + \gamma^2 (2\rho-1)^3\;.
\end{equation*}
For $\color{blue} 1/2<\gamma\le 1$, $V(\cdot)$ is a double-well
potential with a local maximum at $\rho=1/2$, and two local minima at
$0<\rho_-<1/2<\rho_+ = 1-\rho_- = \frac{1}{2}+\frac{\sqrt{2\gamma
-1}}{2\gamma}$. Note that these Glauber rates
$c_0(\eta)$ depend only on $\eta_x$ for $x$ on the first coordinate
axis.

The arguments presented in this article do not require the jump rates
to have the specific form introduced in \eqref{01}, and apply to
Glauber dynamics for which the potential $V$ defined in \eqref{2-54}
has two local minima $\rho_-$, $\rho_+$ at the same depth separated by
a local maximum $\rho^*$: $0<\rho_- < \rho^*< \rho_+<1$,
$V(\rho_-) = V(\rho_+)$.

\subsection*{Entropy production}

Denote by $u^{(\epsilon)}\colon \bb T \to \bb R$ the solution of the
differential equation
\begin{equation}
\label{2-56}
\epsilon \, \Delta u^{(\epsilon)}
\,-\,  \, V'(u^{(\epsilon)}) \,=\, 0
\end{equation}
with two layers: $\sharp\{\theta_1 \in \T; u(\theta_1)= \rho^* \}=2$;
we will take $\epsilon = 1/\sqrt{K}$ in the next paragraph.  Such
$u^{(\epsilon)}$ exists uniquely except for translation; see \cite{cp,
F1} and the Appendix \ref{sec-c} for these claims and properties of
the solution that will be used later.  To fix the idea, we normalize
the solution as $u^{(\epsilon)}(0)= \rho^*$,
$\partial _{\theta_1} u^{(\epsilon)} (0)<0$, where
$\partial _{\theta_1} u^{(\epsilon)}$ represents the derivative of
$u^{(\epsilon)}$.

Fix $K\in \bb N$, and let $\sqrt{K} \bb T$ be the torus of length
$\sqrt{K}$.  Set $\epsilon = 1/\sqrt{K}$, and let
$\rho^K\colon \sqrt{K}\bb T \to (0,1)$ be given by
$\rho^K(\vartheta) := u^{(\epsilon)} (\vartheta/\sqrt{K}\,
)$. Clearly, $\rho^K$ has period $\sqrt{K}$ and solves the equation
\begin{equation}
\label{2-23b}
\Delta \rho^K \,-\,  \, V'(\rho^K) \,=\, 0\;, \quad
\rho^K(0)= \rho^*\,, \quad \partial_\vartheta \rho^K \,  (0) < 0\;.
\end{equation}

Denote by $\color{blue} \phi\colon \bb R \to (0,1)$ the decreasing
standing wave solution:
\begin{equation}
\label{2-79}
\Delta \phi \,-\, \, V'(\phi) \,=\, 0\;, \quad \phi(
\infty )= \rho_{-}\,, \quad \phi(-
\infty )= \rho_{+}\,, \quad \phi (0) = \rho^* \;.
\end{equation}
By Lemma \ref{CP_lemma}, as $K\to\infty$, on all intervals
$[-A,A] \subset \mathbb{R}$, $\rho^K$ converges uniformly to $\phi$.
This means that the solution $u^{(\epsilon)}$ of \eqref{2-56}, with
$\epsilon = 1/\sqrt{K}$, evolves smoothly on the spatial scale
$\theta/\sqrt{K}$.

We construct from the solution $u^{(\epsilon)}$ a discrete density
profile $\bs u^N\colon \bb T^d_N \to (0,1)$ on $\bb T^d_N$ as
follows. Let $u^N\colon \bb T_N \to (0,1)$ be given by
\begin{equation}
\label{2-52}
{\color{blue} u^N(x) \,:=\,} u^{(\epsilon)} (x/N) \;=\;
\rho^K (x \sqrt{K}/N) \quad x\in \bb T_N \;,
\end{equation}
and set
\begin{equation}
\label{2-53a}
{\color{blue} \bs u^N(x_1, \dots, x_d)} \,:=\, u^N(x_1)\;.
\end{equation}

Given a density profile $\bs v \colon \bb T^d_N \to [0,1]$, let
$\nu^N_{\bs v(\cdot)}$ be the product measure on $\{0,1\}^{\bb T^d_N}$
with marginals given by
\begin{equation}
\label{2-12}
{\color{blue} \nu^N_{\bs v(\cdot)} \{\eta_x = 1\}} \,:=\,
\bs v(x)\;, \;\;
\text{and let}\;\;  {\color{blue} \nu^N := \nu^N_{\bs u^N(\cdot)}}\;.
\end{equation}

The first main result of this article provides an estimate of the
entropy production.  It reads as follows. Let $f^N_t$, $t\ge 0$, be
the density of the measure $\nu^N S^N(t)$ with respect to $\nu^N$:
\begin{equation}
\label{2-34c}
f_t^N \;:=\; \frac{d\, \nu^N\,  S^N(t)}{d\, \nu^N}\;,
\end{equation}
and denote by $H_N(f)$ the entropy of a measure $f\, d\nu^N$ with
respect to $\nu^N$:
\begin{equation}
\label{2-34d}
{\color{blue} H_N(f)}\,:=\, \int f \, \log f \, d\nu^N\;  .
\end{equation}

\begin{theorem}
\label{mt1}
Let $(\ell_N:N\ge 1)$ be the sequence such that $\ell_N = N^{4/5}$ in
dimension $1$, $\ell^3_N \log \ell_N = N^2$ in
dimension $2$, and $\ell_N = N^{4/3d}$ in dimension $d\ge 3$.  Assume
that $\lim_{N\uparrow\infty}K_N=\infty$ and
$\sup_{N\ge 1} K^2_N / \ell^d_N < \infty$.  Then, there exists a
constant $\mf c(\bs u^N)$, depending only on $\bs u^N$ and uniformly
bounded in $N$, such that
\begin{equation*}
\frac{d}{dt}\, H_N(f^N_t) \;\leq\; 
\mf c(\bs u^N)\,\, K^2_N
\, \big\{ H_N(f^N_t) \,+\, (N/\ell_N)^d \, \big\}
\end{equation*}
for all $t\ge 0$, $N\ge 1$. 
\end{theorem}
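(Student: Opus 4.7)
The plan is to use the Yau relative-entropy method, adapted to an inhomogeneous product reference measure $\nu^N$ whose marginals vary across the interface. I follow the general scheme of \cite{jm2, jl}.

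\textbf{Step 1: Yau's inequality.} Write $L_N = L_N^{\mathrm{sym}} + L_N^{\mathrm{asym}}$, where $L_N^{\mathrm{sym}}$ is the symmetric part in $L^2(\nu^N)$. Differentiating the entropy using $\partial_t f^N_t = L_N^{*,\nu^N} f^N_t$ and applying the standard convexity bound $a\log(a/b) \leq (a-b) + (a-b)^2/(2b)$ (or its Yau version), one obtains
\begin{equation*}
\frac{d}{dt} H_N(f^N_t) \;\leq\; -\,\mf D_N(\sqrt{f^N_t}) \;+\; \int V^N\, f^N_t\, d\nu^N\;,
\end{equation*}
where $\mf D_N$ is the Dirichlet form of $L_N^E$ under $\nu^N$, and $V^N(\eta)$ collects the action of the anti-symmetric part on $\mathbf 1$. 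Here $V^N$ decomposes into an $N^2 L_N^E$--contribution arising from the spatial variation of $\bs u^N$ along the $e_1$--axis, and a $K_N L_N^G$--contribution coming from the flip rates.

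\textbf{Step 2: Principal cancellation.} A direct computation using the product form of $\nu^N$ rewrites the $N^2 L_N^E$--contribution, after a discrete summation by parts, as $\sum_x (\eta_x - \bs u^N(x)) \, N^2 \Delta^N \bs u^N(x)$ plus terms quadratic in the centered variables $\bar\eta_x := \eta_x - \bs u^N(x)$. The $K_N L_N^G$--contribution produces $-\sum_x K_N V'(\bs u^N(x))\,(\eta_x - \bs u^N(x))$ plus quadratic corrections. Since $\bs u^N(x) = \rho^K(x_1\sqrt{K_N}/N)$ and $\rho^K$ solves \eqref{2-23b}, one has $N^2\Delta^N \bs u^N(x) = K_N V'(\bs u^N(x)) + O(K_N^2/N^2)$, so the \emph{linear} parts of the two contributions cancel to leading order, leaving only (a) quadratic local cylinder functions $\psi_x(\eta)$ of $\bar\eta$ with vanishing local mean under $\nu^N$, and (b) a discretization error of order $K_N^2$.

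\textbf{Step 3: Boltzmann--Gibbs on inhomogeneous boxes.} The residual term $\int \big( K_N \sum_x \psi_x\big) f^N_t \, d\nu^N$ is handled by a one-block estimate. Replace $\psi_x$ by its conditional expectation on a cube $\Lambda_{\ell_N}(x)$ of volume $\ell_N^d$, at a cost controlled by the spectral gap of the Kawasaki dynamics on $\Lambda_{\ell_N}$ under the (inhomogeneous) canonical measures, which scales like $\ell_N^{-2}$. Combining this with the entropy inequality $\int g f \, d\nu^N \leq \gamma^{-1}\bigl\{H_N(f) + \log \int e^{\gamma g}\, d\nu^N\bigr\}$ at scale $\gamma \asymp K_N^{-2}$, with an exponential moment bound for sums of independent Bernoulli variables of magnitude $\asymp \sqrt{\ell_N^d}$, and absorbing the produced $\tfrac12 \mf D_N(\sqrt{f^N_t})$ into the negative Dirichlet form from Step 1, yields
\begin{equation*}
\int V^N\, f^N_t\, d\nu^N \;\leq\; \mf c(\bs u^N)\, K_N^2\, \bigl\{H_N(f^N_t) + (N/\ell_N)^d\bigr\} \;+\; \tfrac12\, \mf D_N(\sqrt{f^N_t})\,.
\end{equation*}
Here the choice of $\ell_N$ (i.e. $\ell_N=N^{4/5}$ in $d=1$, $\ell_N^3\log\ell_N = N^2$ in $d=2$, $\ell_N = N^{4/3d}$ in $d\geq 3$) balances the spectral-gap cost $\ell_N^2/N^2$ against the volume cost $(N/\ell_N)^d$, and the hypothesis $K_N^2 \lesssim \ell_N^d$ is precisely what makes the exponential moment estimate controllable.

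\textbf{Main obstacle.} The delicate point is Step 2: the inhomogeneity of $\nu^N$ prevents any direct use of translation-invariance, so one must track the microscopic version of \eqref{2-23b} term by term to obtain the linear cancellation; uniform bounds on $\bs u^N$ and its discrete derivatives (which collapse into the constant $\mf c(\bs u^N)$) are needed, and it is essential that $\rho^K \in (\rho_-, \rho_+)$ stays bounded away from $0$ and $1$ so that the log-densities of $\nu^N$ remain Lipschitz. The secondary difficulty in Step 3 is that the canonical measures on $\Lambda_{\ell_N}$ are not uniform across the interface region; this is controlled by comparing to a product reference on the box at the cost of a multiplicative constant absorbed into $\mf c(\bs u^N)$.
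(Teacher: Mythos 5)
Your skeleton coincides with the paper's: a Yau-type entropy production bound (Proposition \ref{l06-1}), an explicit computation of $L_N^*\mathbf 1$ under the inhomogeneous product measure, cancellation of the degree-one part because $\bs u^N$ almost solves the discrete stationary equation (the paper's \eqref{2-39}, total error $O(K^2N^{d-2})$), and control of the remaining degree-$\ge 2$ terms (which are quadratic \emph{and} cubic, not only quadratic, but the same machinery applies) by a block replacement followed by entropy and concentration estimates. Up to that point your plan is sound.

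The genuine gap is the quantitative core of your Step 3. You price the replacement of $\psi_x$ by its block conditional expectation with the Kawasaki spectral gap on $\Lambda_{\ell_N}$, of order $\ell_N^{-2}$, and assert that the stated $\ell_N$ balances ``the spectral-gap cost $\ell_N^2/N^2$ against the volume cost $(N/\ell_N)^d$.'' That is not the balance defining $\ell_N$: by \eqref{2-18b}, $\ell_N^{3d/2}g_d(\ell_N)=N^2$, where $g_d(\ell)=\ell,\ \log\ell,\ 1$ is the Green's-function (flow-energy) cost of Lemma \ref{l01}, not the inverse gap $\ell^2$; numerically, in $d=1$ one has $\ell_N^2/N^2=N^{-2/5}$ while $(N/\ell_N)=N^{1/5}$, and in $d\ge3$ the choice $\ell_N=N^{4/(3d)}$ has no relation to $\ell^2/N^2$. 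More importantly, if one really pays the inverse gap $\approx\ell^2$ per block (times the block variance $\approx\ell^d$ and $K^2$), summing over $(N/\ell)^d$ blocks against $\delta N^2 I^E_N$ leaves an error of order $K^2N^{d-2}\ell^2$; optimizing this against the concentration cost $K^2(N/\ell)^d$ forces $\ell\approx N^{1/(d+1)}$ and yields an entropy production of order $K^2 N^{d/(d+1)}$, i.e.\ $N^{1/2}$ in $d=1$ and $N^{4/3}$ in $d=2$, strictly weaker than the stated $N^{1/5}$ and $N^{2/3}$; in $d=2$ the resulting $\ell_N\approx N^{1/3}$ even violates $\ell_N\gg\sqrt N$ in \eqref{2-80}, so the weaker bound could not feed the fluctuation analysis. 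The paper never invokes the gap: it writes $\omega_x-\omega^\ell_x$ through an antisymmetric flow $\Phi_\ell$ of energy $g_d(\ell)$ (Lemma \ref{l01}), integrates by parts bond by bond against the carr\'e du champ (Lemma \ref{l2}), and—because $\nu^N$ is inhomogeneous—iterates the argument once more to absorb the extra terms $A_x$ coming from $\nu^N(T^{x,x+e_1}\eta)/\nu^N(\eta)\neq 1$, gaining the factor $\sqrt K/N$ from $\nabla_N\bs u^N$ (Lemmata \ref{2-l02}, \ref{2-l03}); the residual squares $(H^{(\ell)}_{k,x})^2$, $(\omega^\ell_x)^2$ are then treated by entropy plus concentration, as you propose. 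To prove the theorem as stated you must replace the spectral-gap input by this flow/Green's-function estimate (equivalently, a sharp $H_{-1}$ bound of order $\ell^d g_d(\ell)$ per block); as written, your argument only gives a weaker statement with a different $\ell_N$.
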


The next result follows by integrating the bound for
$(d/dt) \, \{ H_N(f^N_t) \, \exp [\, - \mf c(\bs u^N) \, K^2_N \, t]\, \}$ and
noting that $H_N(f^N_0)=0$.

\begin{corollary}
\label{m-cor}
Under the hypotheses of Theorem \ref{mt1}, there exists a finite
constant $\mf c_0 = \mf c_0(\bs u^N)$, depending only
on $\bs u^N$ and uniformly bounded in $N$, such that
\begin{equation*}
H_N(f^N_t) \;\leq\; 
(N/\ell_N)^d\, e^{\mf c_0 \, K^2_N \, t}
\end{equation*}
for all $t\ge 0$, $N\ge 1$. 
\end{corollary}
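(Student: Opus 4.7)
The plan is a direct Gronwall-type integration of the differential inequality supplied by Theorem \ref{mt1}. The key preliminary observation is that the initial entropy vanishes: at $t=0$ we have $\nu^N S^N(0)=\nu^N$, so $f^N_0 \equiv 1$ and hence $H_N(f^N_0) = \int 1 \cdot \log 1 \, d\nu^N = 0$.

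Next, I would introduce the auxiliary function $G(t) := H_N(f^N_t)\,\exp[-\mf c(\bs u^N) K_N^2 t]$, which by construction satisfies $G(0)=0$ and is absolutely continuous in $t$, because $\Omega_N$ is finite and hence $t \mapsto f^N_t$ (and thus $H_N(f^N_t)$) is smooth. Differentiating and applying Theorem \ref{mt1} gives
\begin{equation*}
\tfrac{d}{dt} G(t) \;=\; e^{-\mf c(\bs u^N) K_N^2 t}\,\Big\{\tfrac{d}{dt} H_N(f^N_t) \,-\, \mf c(\bs u^N) K_N^2 \, H_N(f^N_t)\Big\} \;\leq\; \mf c(\bs u^N)\, K_N^2\, (N/\ell_N)^d \, e^{-\mf c(\bs u^N) K_N^2 t}.
\end{equation*}
Integrating from $0$ to $t$, using $G(0)=0$, yields
\begin{equation*}
G(t) \;\leq\; (N/\ell_N)^d\,\big(1 - e^{-\mf c(\bs u^N) K_N^2 t}\big) \;\leq\; (N/\ell_N)^d.
\end{equation*}
Multiplying through by $e^{\mf c(\bs u^N) K_N^2 t}$ then gives the claimed bound with $\mf c_0 := \mf c(\bs u^N)$, whose uniform-in-$N$ boundedness is inherited directly from that of $\mf c(\bs u^N)$ in Theorem \ref{mt1}.

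There is no substantive obstacle here: all the analytic content lies in Theorem \ref{mt1}, and the corollary is a formal Gronwall consequence. The only point that needs noting, beyond the mechanical computation, is the vanishing of the initial entropy — which is immediate from the definition \eqref{2-34c} of $f^N_t$ at $t=0$.
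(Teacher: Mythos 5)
Your proof is correct and is exactly the argument the paper intends: the text states that the corollary "follows by integrating the bound for $(d/dt)\,\{H_N(f^N_t)\,\exp[-\mf c(\bs u^N) K^2_N t]\}$ and noting that $H_N(f^N_0)=0$," which is precisely your Gronwall-type computation with $G(t)=H_N(f^N_t)e^{-\mf c(\bs u^N)K_N^2 t}$. No discrepancies to report.
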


In Theorem \ref{mt1}, $\ell_N$ represents the linear size of the cube
on which a two-block replacement is carried out. Denote by
$\color{blue} R_d(N) = (N/\ell_N)^d$ the order of the entropy in
dimension $d$. By definition of the sequence $\ell_N$ introduced in
\eqref{2-18}, \eqref{2-18b},
\begin{align}
\label{2-28}
\ell_N \, & =\, N^{4/5} \;, &
R_1(N) \; &=\; N^{1/5}\;, &
&\text{in}\;\; d=1\;, \nonumber
\\
\ell_N \, &\approx \, N^{2/3}\;, &
R_2(N) \;& \approx \; N^{2/3}\;,&
& \text{in}\;\; d=2\;,
\\
\ell_N \,& = \, N^{4/3d} \;, &
R_d(N) \;&=\; N^{d-(4/3)}
\;, &
& \text{in}\;\; d\ge 3\;. \nonumber
\end{align}

\subsection*{Growth conditions on $K_N$}

In the interface fluctuations analysis certain conditions on the
growth of $K_N$ are needed. We present them in this subsection.

Fix $T\ge 1$, and let
$\color{blue} \kappa_N(T) := \exp\{\mf c_0 K^2_N T\}$, where $\mf c_0$
is the constant appearing in Corollary \ref{m-cor}. With this choice,
\begin{equation}
\label{2-50b}
H_N(f^N_t) \,\le\,  (N/\ell_N)^d\, \kappa_N(T) \quad
\text{for all} \;\; 0\le t\le T\;.
\end{equation}
Assume that the sequence $\ell_N$, introduced in the statement of
Theorem \ref{mt1}, satisfies
\begin{equation}
\label{2-80}
\ell_N \,\gg\, \max \{ \sqrt{N} , N^{(d-2)/d}\}\;.
\end{equation}
This requirement restricts the proof to dimensions $1$ and
$2$. 
Let $\mf c_5$ be the constant introduced in the maximum principle
stated in Lemma \ref{maximum lemma}. Suppose, furthermore, that
\begin{equation}
\label{2-90}
K^{17/4} \,  \sqrt{\kappa_N(T)}\,
e^{4 \mf c_5 K T} \,\ll\,
\Big(\frac{\ell_N} {N} \Big)^{d/2} \, N  \;,
\quad
K^{11/4} \, \kappa_N(T)
\, e^{2 \mf c_5 K T} \, \ll \, \Big( \frac{\ell_N} {N} \Big)^d
\, N^{d/2}  \;.
\end{equation}
In the proof of the tightness, the following further bound is needed.
Assume that
\begin{equation}
\label{2-48}
\sup_{N\ge 1} \frac{1}{N^{(d/2)(1-\delta)}} \,
e^{2 (1+2\delta^2)  (\mf c_5 K+1)T}\,
K^{7/4}\, \big[\, \kappa_N(T) \,
(N/\ell_N)^d \,\big]^{1-(\delta/2)}
\,<\, \infty
\end{equation}
for some $0<\delta<2/3$. Note that in dimension $1$ and $2$,
$(N/\ell_N) / N^{1/2} \le N^{-1/7}$. Hence, there exists
$0<\delta_0<2/3$ such that for all $0<\delta<\delta_0$, we have
$(N/\ell_N)^{1-(\delta/2)} / N^{(1-\delta)/2} \le N^{-a}$ for some
$a=a(\delta)>0$. Condition \eqref{2-48} requires
$e^{2 (1+2\delta^2) (\mf c_5 K+1)T}\, K^{7/4} \,
\kappa_N(T)^{1-(\delta/2)}$ to be bounded by $C_0 N^{ad}$ for some
finite constant $C_0$.

It follows from the second condition in \eqref{2-90} in $d=1$ or $2$
that
\begin{equation}
\label{2-72}
K^{11/4}_N\, \kappa_N(T)\, N^{d/2} \,\ll\, \ell_N^d\;, \quad
K^{5/2}_N\, \kappa_N(T)\, N^{d-2} \,\ll\, \ell_N^d\;.
\end{equation}
These estimates yield that
\begin{equation}
\label{2-67}
K_N\, \kappa_N(T) \ll \ell_N^d\;, \quad
K_N \ll \ell^d_N \,, \quad
K^{3/2}_N \,\ll\,  N^{2 - (d/2)} \;.
\end{equation}
Finally, by \eqref{2-90}, in $d=1$ or $2$,
\begin{equation}
\label{2-89}
K^{5/4}_N  \,  \sqrt{\kappa_N(T)}\,
\, e^{\mf c_5 K T} \,\ll \,
\Big(\frac{\ell_N} {N} \Big)^{d/2} \, N^{d/4}
\,\ll \,
\Big(\frac{\ell_N} {N} \Big)^{d/2} \, N  \;.
\end{equation}

Certainly, to give an example, $K_N \leq \delta_0\sqrt{\log(N)}$ for
$\delta_0=\delta_0(c_0, \frak{c_5}, T)>0$ small enough satisfies the
above requirements.

\subsection*{Density fluctuations}

Denote by $\color{blue} C^\infty_c(\bb R)$,
$\color{blue} \ms D(\bb R \times \bb T^{d-1}) = C^\infty_c(\bb R
\times \bb T^{d-1})$ the spaces of infinitely differentiable functions
with compact support on $\bb R$,  $\bb R \times \bb T^{d-1}$,
respectively. Let $X^N_t$, 
$t\ge0$, be the random element of $\ms D'$ defined by
\begin{equation}
\label{2-16}
X_t^N(F) \;=\; \frac{1}{\sqrt{N^d K^{1/2}} }
\sum_{x\in \bb T^d_N} \widehat {F} (Ax)\, 
(\, \eta_x^N(t) \,-\, \bs u^N(x)\, )\;, 
\end{equation}
for $F\in \ms D(\bb R \times \bb T^{d-1})$.  In this formula and
below,
$A\colon N \bb T^d \to \sqrt{K} \bb T \times \bb
T^{d-1}$ represents the map
\begin{equation}
\label{2-44}
A x \, =\, (x_1 \sqrt{K} /N, x_2/N, \dots, x_d/N)\;,
\end{equation}
regarding $N \bb T^d = [- (N/2), (N/2))^d$, $\sqrt{K} \bb T
= [- (\sqrt{K}/2), (\sqrt{K}/2))$, $\bb T^{d-1} = [- (1/2), (1/2))^{d-1}$.
For a function $G\colon \bb R \times \bb T^{d-1}$,
$x\in \bb T^d_N$,
\begin{equation}
\label{2-69}
{\color{blue} \widehat G (Ax)} \,: =\
\frac{N^d}{\sqrt{K}}\, \int_{\square_N(x)} G(\vartheta,
\bar \theta) \, d\vartheta \, d\bar\theta\;, 
\end{equation}
where we write $\vartheta$ for $\vartheta \in \sqrt{K} \bb T$ or
$\vartheta \in \bb R$, $\theta$ for $\theta \in \bb T^d$, and
$\bar\theta$ for $\bar\theta \in \bb T^{d-1}$. Moreover,
$\square_N(x)$ is the $d$-dimensional hyperrectangle defined by
$\square_N(x) := [A(x- {\bf 1/2}), A(x + {\bf 1/2}] \subset \sqrt{K}
\bb T \times \bb T^{d-1}$,
${\bf 1/2} = (1/2, \dots, 1/2) \in \bb R^d$.

The statement of the second main result of the article requires some
notation. Denote by $\color{blue} (S_t:t\ge 0)$ the semigroup of the
heat equation in $\bb T^{d-1}$. Recall that $\phi(\cdot)$ introduced
in \eqref{2-79} represents the standing wave solution of the
reaction-diffusion equation.  Let $\bs e\colon \bb R \to \bb R$ be
given by
\begin{equation}
\label{2-83}
{\color{blue} \bs e(\vartheta) } \,:=\,
(\partial_\vartheta \phi) (\vartheta)/
\Vert \partial_\vartheta \phi  \Vert_{L^2(\bb R)} \;,
\end{equation}
where $\color{blue} \Vert F \Vert_{L^2(\bb R)}$ represents the
$L^2(\bb R)$ norm of a function $F\colon\bb R\to \bb R$.  Denote by
$\color{blue} \< F, G\>$ the scalar product in $L^2(\bb R)$.  For a
function $F$ in $\ms D(\bb R \times \bb T^{d-1})$, let
$\<F\>\colon \bb T^{d-1}\to \bb R$,
$\Pi F \colon \bb R \times \bb T^{d-1} \to \bb R$ be given by
\begin{equation}
\label{2-75}
{\color{blue} \< F\>  (\bar\theta) } \,=\, 
\< F(\cdot, \bar\theta)\,,\, \bs e(\cdot)\>\;,
\quad 
{\color{blue} ( \Pi F)  (\vartheta, \bar\theta)}
\,:=\, \bs e (\vartheta)\, \< F\> (\bar \theta) \;.
\end{equation}
Note that
\begin{equation}
\label{2-77}
S_{t-r} \Pi F = \bs e \; S_{t-r} \< F\>,\quad \;
S_{t-r} \partial_{\vartheta} \Pi F = \partial_{\vartheta} \bs e
\; S_{t-r} \< F\>\;.
\end{equation}

\begin{theorem}
\label{mt2}
Fix $T\ge 1$, and assume that conditions \eqref{2-80}, \eqref{2-90}
are in force. In particular, $d=1$ or $2$.  Fix $p\ge 1$,
$0\le t_1<t_2 < \cdots <t_p\le T$ and functions
$F_j\in \ms D(\bb R \times \bb T^{d-1})$.  Under the measure
$\bb P_{\nu^N}$, the random vector
$(X^{N}_{t_1}(F_1), \dots, X^{N}_{t_p}(F_p))$ converges in
distribution to the centered Gaussian random vector whose covariances
are given by
\begin{equation*}
\begin{aligned}
\text{\rm Cov\,} \big(\, X_{t}(F) \,,\, X_{s}(G)\,\big)\; & =\;
2 \, \int_0^{s} dr\, \int_{\bb R\times \bb T^{d-1}} 
\chi (\phi) \, S_{t-r} \partial_{\vartheta} \Pi F\,
\, S_{s-r} \partial_{\vartheta} \Pi G\, d\vartheta\, d \bar\theta
\\
& +\;
\int_0^{s} dr\, \int_{\bb R\times \bb T^{d-1}} 
\widehat c_0(\phi) \, S_{t-r} \Pi F\,
\, S_{s-r} \Pi G\, d\vartheta\, d \bar\theta\;.
\end{aligned}
\end{equation*}
for every $0\le s \le t\le T$, $F$,
$G\in \ms D(\bb R \times \bb T^{d-1})$.
\end{theorem}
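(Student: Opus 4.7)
The plan is to establish finite-dimensional distribution convergence via the martingale problem: represent $X^N_{t}(F)$ as a stochastic integral against a martingale whose (co)variation converges to the stated covariance, and then apply a martingale central limit theorem. Three ingredients drive the argument: a Dynkin decomposition combined with a Taylor expansion of the Glauber rate $c_0(\tau_x\eta)$ around the profile $\bs u^N$, the Boltzmann-Gibbs principle of Section~\ref{sec-BG} (supported by Corollary~\ref{m-cor} and the growth constraints \eqref{2-89}--\eqref{2-90}), and the Sturm-Liouville analysis of $\mc A_K = \partial^2_\vartheta - V''(\rho^K)$ recalled from \cite{cp, F1}.

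First, Dynkin's formula yields
$$X^N_t(F) \,=\, X^N_0(F) + \int_0^t (L_N X^N_s)(F)\,ds + M^N_t(F),$$
and using \eqref{2-23b} together with $c_0(\tau_x\eta) = c_0(\bs u^N) + c_0'(\bs u^N)(\eta_x - \bs u^N(x)) + \cdots$ one rewrites the drift as the linear operator $K_N\mc A_K + \Delta_{d-1}$ acting on $X^N_s$, plus a quadratic remainder controlled by the Boltzmann-Gibbs principle. Testing against the time-dependent kernel $T^N_{t-s}G = e^{(t-s)K_N\mc A_K}\, e^{(t-s)\Delta_{d-1}}\, G$ produces the mild form
$$X^N_t(G) \;\approx\; X^N_0(T^N_t G) \,+\, \int_0^t dM^N_s\big(T^N_{t-s} G\big);$$
the first term has mean zero because $\nu^N$ has marginals exactly $\bs u^N$, and its $L^2$-norm is controlled by $\|T^N_t G\|_{L^2}^2$, which vanishes under \eqref{2-90}.

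Next, the Sturm-Liouville analysis of $\mc A_K$ on $\sqrt{K}\bb T$ shows that $\partial_\vartheta \rho^K$ is the ground state at eigenvalue $0$ with a spectral gap bounded away from zero as $K\to\infty$. Hence $e^{(t-s)K_N\mc A_K}$ contracts onto $\mathrm{span}(\bs e_K)$ at exponential rate $K_N$, and $T^N_{t-s}G$ may be replaced inside the stochastic integral by $\bs e_K(\vartheta)\,S_{t-s}\<G\>(\bar\theta)$. The Duhamel-style martingale identity $\int_0^t dM_s(g_{t-s}) = M_t(g_0) - \int_0^t M_s(\partial_s g_{t-s})\,ds$ then yields the representation
$$X^N_t(G) \;\approx\; M^N_t(\bs e_K \<G\>) + \int_0^t M^N_s\big(\bs e_K\, \Delta_{d-1}\, S_{t-s}\<G\>\big)\,ds$$
displayed in the introduction. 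The limits of the (co)variation of $M^N$ are computed from the carré du champ of $L_N$: the Kawasaki part $N^2\Gamma^E$, after summing bond-gradient squares and using the map $A$, produces an integrand $2\chi(\phi)\,S_{t-r}\partial_\vartheta \Pi F\cdot S_{s-r}\partial_\vartheta \Pi G$ (only the $\vartheta$-gradients survive the scaling, the transverse ones losing an additional factor $1/K^{1/2}$); the Glauber carré du champ $K_N \widehat c_0(\bs u^N)\bs e_K^2$ contributes $\widehat c_0(\phi)\,S_{t-r}\Pi F\cdot S_{s-r}\Pi G$. Gaussianity is inherited from the martingale CLT since individual jumps of $X^N_t$ are uniformly $O((N^d K^{1/2})^{-1/2})$, and joint convergence at several times $t_1<\cdots<t_p$ follows by considering linear combinations and using the overlap $\int_0^{s\wedge t}dr$ in the representations.

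The main obstacle is controlling, uniformly in $s\in[0,t]$, the combined Boltzmann-Gibbs and Sturm-Liouville replacement errors inside the stochastic integral $\int_0^t dM^N_s(T^N_{t-s}G)$. Doob's inequality reduces this to $L^2$-estimates whose entropy cost grows like $\kappa_N(T) = e^{\mf c_0 K_N^2 T}$, while the spectral-gap contraction is effective only on the time scale $t-s \gg 1/K_N$; handling the initial boundary layer $t-s \lesssim 1/K_N$, together with the passage from the torus $\sqrt{K_N}\bb T$ to the line $\bb R$ for $\phi$ and $\bs e$, is the most delicate part. Conditions \eqref{2-89}--\eqref{2-90} are calibrated precisely so that this trade-off closes.
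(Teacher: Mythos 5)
Your proposal follows essentially the same route as the paper's proof: the mild formulation $X^N_t(F)\approx M^N_t(T^K_{t-\cdot}F)$ via the Dynkin martingale for time-dependent test functions (Proposition \ref{2-p1} and Corollary \ref{2-cor1}), the Boltzmann--Gibbs principle to linearize the Glauber term around $\bs u^N$, the Sturm--Liouville replacement of $T^K_{t-s}$ by the projection $\bs e_K\, S_{t-s}\<\cdot\>$ (Lemma \ref{2-l04}), and the martingale CLT through convergence of the carr\'e du champ to identify the Gaussian limit and its covariance (Lemma \ref{l38} and the decomposition \eqref{2-76}). The only small slip is the treatment of the initial term: $\Vert T^K_t F\Vert_2$ does not vanish (it is merely uniformly bounded by \cite[Lemma B.7]{F1}); the term $X^N_0(T^K_tF)$ vanishes because of the extra $1/K$ prefactor coming from the $K^{1/4}$ normalization of the field, not because of \eqref{2-90}.
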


\noindent In the previous result,
$\color{blue} \chi(\alpha) = \alpha (1-\alpha)$ represents the static
compressibility of the exclusion dynamics, and
$\color{blue} \widehat c_0(\alpha) = E_{\nu^N_\alpha}[c_0]$.

By \eqref{2-77}, the previous covariance can be written as
\begin{equation}
\label{2-78}
\text{\rm Cov\,} \big(\, X_{t}(F) \,,\, X_{s}(G)\,\big)\; =\;
\varpi \, \int_0^{s} dr\,
\int_{\bb T^{d-1}}  \, S_{t-r}  \<F\>\,
\, S_{s-r} \<G\> \, d \bar\theta \;. 
\end{equation}
where
\begin{equation*}
\varpi \,=\, \int_{\bb R}
\big\{ \, 2\, \chi (\phi) \,  [\partial_{\vartheta} \bs e ]^2
\,+\, \widehat c_0(\phi) \, \bs e^2  \, \big\}\;
d\vartheta 
\;=\; \big\Vert \sqrt{ 2\, \chi (\phi)} \,  \partial_{\vartheta} \bs
e\, \big\Vert^2_{L^2(\bb R)} \,+\,
\big\Vert \sqrt{  \widehat c_0(\phi) } \,  \bs
e\, \big\Vert^2_{L^2(\bb R)} \; .
\end{equation*}
Note that $\varpi$ coincides with $c^2_*$ in (4.7) of \cite{F1}.

By \eqref{2-78}, in dimension 1,
\begin{equation}
\label{2-81}
X_t (\vartheta) \;=\; \bs e(\vartheta)\,  \sqrt{\varpi} \,  B_t
\quad \text{so that}\quad X_t(F) \,=\, \sqrt{\varpi} \,
\<F\,,\, \bs e \> \, B_t\;, 
\end{equation}
where $B_t$ is a Brownian motion, and in dimension $2$,
\begin{equation}
\label{2-82}
X_t (\vartheta, \theta) \;=\;  \bs e(\vartheta)\,  Z_t
(\theta) \;,
\end{equation}
where $Z_t$ is the solution of the stochastic heat equation on the
one-dimensional torus with initial condition equal to $0$:
\begin{equation*}
\left\{
\begin{aligned}
& \partial_t Z_t\,=\, \Delta Z_t  \,+\, \sqrt{\varpi}\, 
\xi(t,\theta) \;, \\
& Z_0 = 0\;,
\end{aligned}
\right.
\end{equation*}
and $\xi$ is the space-time Gaussian white noise on
$\bb R_+\times \bb T$. As noted in Section \ref{sec1}, our result, in
particular, the projection to $\bs e(\cdot)$ in the first variable,
indicates the fluctuation of the interface retaining the shape of the
transition layer $\phi$ to the normal direction to the interface.

We turn to the tightness of the sequence $X^N_t$. By \cite[Chapter
10]{v20}, $\ms D(\bb R \times \bb T^{d-1})$ is a LF-space (a locally
convex inductive limit of a sequence of Fr\'echet spaces). Denote by
$\color{blue} \ms D'$ the dual of $\ms D(\bb R \times \bb T^{d-1})$,
and by $\bb Q_N$ the measure on $C([0,T], \ms D')$ induced by the
measure $\bb P_{\nu^N}$ and the process
$\bb X^N_t = \int_0^t X^N_s\, ds$, $t\ge 0$:
$\bb Q_N := \bb P_{\nu^N} \circ (\bb X^N)^{-1}$.

\begin{theorem}
\label{mt3}
Fix $T\ge 1$ and assume that the
hypotheses of Theorem \ref{mt2} are in force. Suppose, furthermore,
that \eqref{2-48} holds. Then, the sequence of probability measures
$\bb Q_N$ is tight.
\end{theorem}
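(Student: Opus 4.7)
The plan is to apply Mitoma's criterion to reduce tightness of $\{\bb Q_N\}$ in $C([0,T],\ms D')$ to scalar tightness in $C([0,T],\bb R)$ of $\bb X^N_\cdot(F)$ for each fixed $F\in\ms D(\bb R\times\bb T^{d-1})$; since $\ms D$ is an LF-space and $\ms D'$ carries the weak-$*$ topology, this reduction is standard. Fix such an $F$. The path $t\mapsto \bb X^N_t(F)$ is absolutely continuous with $\bb X^N_0(F)=0$, so Cauchy--Schwarz together with Fubini give
\[
\bb E_{\nu^N}\!\big[|\bb X^N_t(F)-\bb X^N_s(F)|^2\big] \;\le\; (t-s)\int_s^t \bb E_{\nu^N}\!\big[|X^N_u(F)|^2\big]\,du\;,
\]
and it therefore suffices to establish the uniform second-moment bound
\[
\sup_{N\ge 1}\;\sup_{0\le t\le T}\;\bb E_{\nu^N}\!\big[X^N_t(F)^2\big]\;\le\;C(F,T)\;.
\]
Indeed, this yields $\bb E|\bb X^N_t(F)-\bb X^N_s(F)|^2\le C(t-s)^2$, so the Kolmogorov--Chentsov criterion with exponent $2$ and H\"older regularity $1$ gives equicontinuity of $\{\bb X^N_\cdot(F)\}$, while tightness of the marginals is immediate from Chebyshev and $\bb X^N_0(F)=0$.

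For the $L^2$ bound I would use the mild-form representation outlined in Section~\ref{sec1}. With $T^N_{t-s}F := e^{(t-s)K\mathcal{A}_K}\,e^{(t-s)\Delta_{d-1}}F$, the stochastic differential of $X^N_\cdot(F)$ yields
\[
X^N_t(F) \;=\; X^N_0(T^N_t F) \,+\, \int_0^t dM^N_s(T^N_{t-s}F) \,+\, \mathrm{Err}^N_t(F)\;,
\]
where $\mathrm{Err}^N_t(F)$ collects the Boltzmann--Gibbs residuals. I would bound each piece separately. The initial term is a linear statistic of independent Bernoullis under $\nu^N$, whose variance is controlled by $\|T^N_t F\|_{L^2}^2 \le \|F\|_{L^2}^2$ thanks to the $L^2$-contraction of $e^{tK\mathcal A_K}$ and $e^{t\Delta_{d-1}}$. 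The bracket of the martingale part is $\int_0^t q_N(\eta^N_s,T^N_{t-s}F)\,ds$ with $q_N$ built from a discrete gradient of $T^N_{t-s}F$ (Kawasaki piece) and from $T^N_{t-s}F$ times the bounded rate $c_0$ (Glauber piece); using $(\eta_x-\eta_y)^2\le 1$ and $\|c_0\|_\infty<\infty$, a direct computation bounds $q_N\le C\|F\|_\infty^2$ pointwise in $\eta$, so $\bb E[M^N_t(T^N_\cdot F)^2]\le C(F)\,t$ with no use of entropy. The entire burden therefore falls on $\mathrm{Err}^N_t(F)$, which is estimated via the Boltzmann--Gibbs principle of Section~\ref{sec-BG} combined with the entropy estimate of Corollary~\ref{m-cor}; the resulting bound is precisely of the form $C\,K^{7/4}\,e^{2(1+2\delta^2)(\mf c_5K+1)T}\,[\kappa_N(T)(N/\ell_N)^d]^{1-\delta/2}\,N^{-(d/2)(1-\delta)}$, which \eqref{2-48} renders uniformly finite.

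The main obstacle is precisely this last step, namely the control of $\mathrm{Err}^N_t(F)$ in the inhomogeneous nonequilibrium setting. A direct entropy inequality $\bb E_{f\nu}[g^2]\le \gamma^{-1}\log\bb E_\nu[e^{\gamma g^2}]+\gamma^{-1}H_N(f^N_t)$ cannot close the estimate since $H_N(f^N_t)$ can be as large as $\kappa_N(T)(N/\ell_N)^d\to\infty$, while the exponential moments of the replaced local averages under $\nu^N$ are only controlled for a constant $\gamma$. The remedy, which motivates the strengthened hypothesis \eqref{2-48} beyond \eqref{2-90}, is to exploit the projection property of $T^N_{t-s}$: by Lemma~\ref{CP_lemma} and the Sturm--Liouville analysis of \cite{cp}, after the semigroup has acted for time $t-s$ the relevant test function is essentially $\bs e_K\,\langle F\rangle$ up to an error of order $e^{-\mf c_5K(t-s)}$, and the Boltzmann--Gibbs residual can then be paired with a fractional power $1-\delta/2$ of the entropy bound through H\"older's inequality. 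Calibrating the small parameter $\delta\in(0,2/3)$ against the exponent in the entropy inequality, and against the factors $K^{7/4}$ and $e^{2(1+2\delta^2)(\mf c_5K+1)T}$ produced by the semigroup analysis, is the delicate step; it is genuinely new relative to the homogeneous setting of \cite{jl} because the inhomogeneity of $\nu^N$ across the interface forces every local replacement to respect the scaled first coordinate $\vartheta=x_1\sqrt{K}/N$, which is exactly what the semigroup $T^N_{t-s}$ is designed to accommodate.
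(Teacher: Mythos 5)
Your skeleton (the reduction via \cite[Th\'eor\`eme IV.1]{Fou} to scalar tightness, the mild-form decomposition of $X^N_r(F)$ into initial term, stochastic integral and Boltzmann--Gibbs residual, and the Schwarz/quadratic-variation treatment of the first two pieces) coincides with the paper's. The gap is the step you reduce everything to: the uniform second-moment bound $\sup_{N}\sup_{t\le T}\bb E_{\nu^N}[X^N_t(F)^2]\le C(F,T)$. This is not available with the estimates of the paper, and it is precisely the reason the theorem is stated for $\bb X^N_t=\int_0^t X^N_s\,ds$ rather than for $X^N_t$ itself (see the Introduction and Remark \ref{rm2}). The Boltzmann--Gibbs principle (Theorem \ref{t01}, Corollary \ref{2-l12}, Proposition \ref{2-p1}) controls only the $L^1$ norm of the time-integrated residual; it says nothing about $\bb E[\mathrm{Err}^N_t(F)^2]$. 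If instead you try to bound second moments directly, the only available tool is the entropy/concentration bound of Lemma \ref{2-l01} and Corollary \ref{2-l13}, which for $\sum_x G(x)\,\omega_{x+D}$ carries the factor $N^{\alpha d/2}\,\Vert G\Vert_\infty^\alpha\,(1+H_N(f^N_t))^{\alpha/2}$; with $G\sim K\,N^{-d/2}K^{-1/4}\widehat F$ this gives, at $\alpha=2$, an estimate of order $K^{3/2}\,\kappa_N(T)\,(N/\ell_N)^d$, which diverges. So the residual's second moment is not ``rendered uniformly finite'' by \eqref{2-48}; no lemma in the paper produces such a bound.

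The ``remedy'' you sketch is in substance the paper's interpolation (Lemma \ref{l-jl} inside Lemma \ref{2-l22}): pair the small $L^1$ bound coming from the Boltzmann--Gibbs cancellation with the large $L^b$ bound coming from entropy. But with $\theta=1-\delta$, $b=1+2\delta$ and $\delta<2/3$ this yields moments of order $a<\theta+(1-\theta)b=1+2\delta^2<2$ only, together with increments $\bb E[|\bb Y_t-\bb Y_s|^a]\le C|t-s|^{1+c}$ for a small $c>0$; that is exactly why the paper proves tightness of the time-integrated processes $\bb Y^{1,D,N}$ through the Kolmogorov--\v{C}entsov criterion of Theorem \ref{l46} with exponents close to $(1,0)$, rather than through a second-moment/H\"older-$1$ bound as in your scheme. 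The numerical expression you quote, $K^{7/4}e^{2(1+2\delta^2)(\mf c_5K+1)T}[\kappa_N(T)(N/\ell_N)^d]^{1-\delta/2}N^{-(d/2)(1-\delta)}$, is the paper's bound for these low-order moments of the \emph{time integral} (Corollary \ref{2-l21}), not a bound on $\bb E[\mathrm{Err}^N_t(F)^2]$, so \eqref{2-48} cannot play the role you assign to it. Two further small inaccuracies: $e^{tK\ms A_K}$ is not an $L^2$ contraction (the ground-state eigenvalue of $\ms A_K$ need not be nonpositive); the uniform bound on $\Vert T^K_tF\Vert_2$ comes from \cite[Lemma B.7]{F1}, and sup-norms grow like $e^{(\mf c_5K+1)T}$ (Lemma \ref{maximum lemma}), a growth that must be absorbed by \eqref{2-48}. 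To repair your argument you would have to abandon the $L^2$ reduction and run the interpolation scheme on $\bb X^N$ directly, which is what the paper does.
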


Theorem \ref{mt2} characterizes the limit points of the sequence $\bb
Q_N$. Therefore, by Theorem \ref{mt2} and \ref{mt3}, the sequence
$\bb Q_N$ converges weakly to the measure on $C([0,T], \ms D')$ induced by
the time integral of the processes $X_t$ introduced in \eqref{2-81}
and \eqref{2-82}.

\section{Entropy production}
\label{sec3}

We present in this section a bound on the entropy production. This is
one of the main estimates needed in the proof of Theorem \ref{mt1}.
For a probability measure $\mu^N$ and a density $h$ with respect to
$\mu^N$, denote by $I_N(h; \mu^N)$ the carr\'e du champ of $h$
integrated with respect to $\mu^N$ given by
\begin{equation}
\label{13}
{\color{blue}  I_N(h; \mu^N)}
\;:=\;  \frac{1}{2}\, \int \big\{ L_N h
\,-\, 2\, (\, L_N \sqrt{h} \,) \,
\sqrt{h} \,\big\} \; d \mu^N\;.
\end{equation}
An elementary computation yields that
\begin{equation*}
I_N(h;\mu^N) \;=\; N^2 \, I^E_N(h;\mu^N) \;+\; K \, I^G_N(h;\mu^N)\;,
\end{equation*}
where
\begin{equation*}
\begin{gathered}
I^E_N(h;\mu^N)  \;=\; \frac{1}{2}\, \sum_{j=1}^d \sum_{x\in\bb T^d_N} 
\int \big[\,  \sqrt{h (T^{x,x+e_j} \eta)} \,-\,
\sqrt{h(\eta)}\, \big]^2 \; d \mu^N\;, \\
I^G_N(h;\mu^N)  \;=\;  \frac{1}{2}\, \sum_{x\in\bb T^d_N} 
\int c_0(\tau_x \eta)\, \big[\,  \sqrt{h (T^{x} \eta)} \,-\,
\sqrt{h(\eta)}\, \big]^2 \; d \mu^N \;.
\end{gathered}
\end{equation*}

Fix a density profile
$\color{blue} \varrho\colon \bb T^d_N \to (0,1)$. Recall from
\eqref{2-12} that we denote by $\nu^N_{\varrho (\cdot)}$ the product
measure on $\Omega_N$ with marginals given by $\varrho(\cdot)$.
Recall that $S^N(t)$ represents the semigroup associated to the
generator $L_N$.  Let $h^N_t$, $t\ge 0$, be the density of the measure
$\nu^N_{\varrho (\cdot)} S^N(t)$ with respect to
$\nu^N_{\varrho (\cdot)}$:
\begin{equation}
\label{2-34}
h_t^N \;:=\; \frac{d\, \nu^N_{\varrho (\cdot)}\,
S^N(t)}{d\, \nu^N_{\varrho (\cdot)}}\;,
\end{equation}
and denote by $H_N(h^N_t\,|\, \nu^N_{\varrho (\cdot)})$
the entropy of $\nu^N_{\varrho (\cdot)} S^N(t)$ with respect to
$\nu^N_{\varrho (\cdot)}$:
\begin{equation}
\label{2-34b}
{\color{blue} H_N(h^N_t\,|\, \nu^N_{\varrho (\cdot)})}\,:
=\, \int h^N_t\, \log h^N_t \, d\nu^N_{\varrho (\cdot)}\;  .
\end{equation}
The proof of the next result is similar to the one of
\cite[Proposition 5.1]{jl}. Note that in this computation the carr\'e
du champ engendered by the Glauber dynamics appears, but will not be
used in the rest of the argument.

\begin{proposition}
\label{l06-1}
For all $t\ge 0$,
\begin{equation}
\label{2-22}
\frac{d}{dt} H_N(h^N_t\,|\, \nu^N_{\varrho (\cdot)})
\;\leq\; -\,2\,  I_N(h_t^N; \nu^N_{\varrho (\cdot)})
\;+\;    \int  h^N_t \, L^*_{N} \, \mb 1   \; d \nu^N_{\varrho (\cdot)} \;,
\end{equation}
where $\color{blue} \mb 1 \colon \Omega_N \to \bb R$ is the constant
function equal to $1$, and $\color{blue} L^*_{N}$ represents the
adjoint of $L_N$ in $L^2(\nu^N_{\varrho (\cdot)})$.
\end{proposition}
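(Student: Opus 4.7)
The plan is to differentiate the entropy directly, use the Kolmogorov forward equation $\partial_t h^N_t = L_N^* h^N_t$, and then apply the elementary inequality $a\log(b/a) \le (b-a) - (\sqrt{b}-\sqrt{a})^2$ pointwise in the configuration space to produce both the carr\'e du champ term and an error term arising from non-invariance of $\nu^N_{\varrho(\cdot)}$.

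First I would write
\begin{equation*}
\frac{d}{dt} H_N(h^N_t\,|\, \nu^N_{\varrho (\cdot)})
\;=\; \int (\partial_t h^N_t)(1+\log h^N_t)\, d\nu^N_{\varrho(\cdot)}
\;=\; \int (L_N^* h^N_t)\, \log h^N_t\, d\nu^N_{\varrho(\cdot)},
\end{equation*}
where the constant piece is discarded because $\int L_N^* h^N_t\, d\nu^N_{\varrho(\cdot)} = \int h^N_t\, L_N \mathbf 1\, d\nu^N_{\varrho(\cdot)} = 0$ (any Markov generator kills constants). By the definition of the adjoint in $L^2(\nu^N_{\varrho(\cdot)})$, the right-hand side equals $\int h^N_t\, L_N \log h^N_t\, d\nu^N_{\varrho(\cdot)}$.

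Next I would expand $L_N \log h^N_t(\eta) = \sum_{\eta'} c(\eta,\eta')[\log h^N_t(\eta') - \log h^N_t(\eta)]$, where the sum runs over the Kawasaki and Glauber neighbors with their weighted rates ($N^2$ for exchanges, $K c_0(\tau_x\eta)$ for flips). Applying the pointwise inequality $a\log(b/a) \le (b-a) - (\sqrt{b}-\sqrt{a})^2$ (which follows from $\log x \le 2(\sqrt{x}-1)$) with $a=h^N_t(\eta)$ and $b=h^N_t(\eta')$, and integrating against $\nu^N_{\varrho(\cdot)}$, yields
\begin{equation*}
\int h^N_t\, L_N \log h^N_t\, d\nu^N_{\varrho(\cdot)}
\;\le\; -\int \sum_{\eta'} c(\eta,\eta')\bigl(\sqrt{h^N_t(\eta')}-\sqrt{h^N_t(\eta)}\bigr)^2 d\nu^N_{\varrho(\cdot)}
\;+\; \int L_N h^N_t\, d\nu^N_{\varrho(\cdot)}.
\end{equation*}

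Finally, I would identify the two terms on the right-hand side with the quantities in the proposition. A direct algebraic check using $L_N f(\eta) - 2(L_N \sqrt f)(\eta)\sqrt{f(\eta)} = \sum_{\eta'} c(\eta,\eta')(\sqrt{f(\eta')}-\sqrt{f(\eta)})^2$ shows that the first term equals $-2 I_N(h^N_t; \nu^N_{\varrho(\cdot)})$ as defined in \eqref{13}; this computation splits cleanly into the Kawasaki and Glauber contributions displayed after \eqref{13}. For the last term, a second application of the adjoint identity gives $\int L_N h^N_t\, d\nu^N_{\varrho(\cdot)} = \int h^N_t\, L_N^* \mathbf 1\, d\nu^N_{\varrho(\cdot)}$, producing the extra term in \eqref{2-22}. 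I do not expect a real obstacle here: the only subtlety is that $\nu^N_{\varrho(\cdot)}$ is neither invariant nor reversible for $L_N$, which is precisely why $L_N^* \mathbf 1 \neq 0$ and the second term survives; everything else is the standard Yau-type entropy computation.
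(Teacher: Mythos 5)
Your argument is correct and is exactly the standard Yau-type entropy computation that the paper invokes by referring to \cite[Proposition 5.1]{jl}: differentiate the entropy via the forward equation $\partial_t h^N_t = L^*_N h^N_t$, pass the generator onto $\log h^N_t$ by the adjoint identity, apply the elementary bound $a\log(b/a) \le (b-a)-(\sqrt b - \sqrt a)^2$ (equivalently $f\, L_N\log f \le 2\sqrt f\, L_N\sqrt f$ pointwise), and identify the two resulting terms with $-2 I_N(h^N_t;\nu^N_{\varrho(\cdot)})$ via \eqref{13} and with $\int h^N_t\, L^*_N \mb 1\, d\nu^N_{\varrho(\cdot)}$. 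The only points left implicit, and harmlessly so on a finite state space, are the strict positivity of $h^N_t$ and the differentiation under the integral.
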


In the rest of this section, we compute $L^*_{N} \, \mb 1 $ for a
generic product measure $\nu^N_{\varrho (\cdot)}$.  Denote by $L^{E,*}_N$, $L^{G,*}_N$
the adjoint of the generators $L^E_N$, $L^G_N$ in $L^2(\nu^N_{\varrho (\cdot)})$,
respectively. A straightforward computation yields that
\begin{equation}
\label{02}
\begin{gathered}
(L^{E,*}_N g)(\eta) \;=\;
\sum_{j=1}^d \sum_{x\in \bb T^d_N} \Big\{\,
\frac{\nu^N_{\varrho (\cdot)}(T^{x,x+e_j} \eta)}{\nu^N_{\varrho (\cdot)}(\eta) } \,
g(T^{x,x+e_j} \eta) \,-\, g(\eta)  \, \Big\}\;,
\\
(L^{G,*}_N g)(\eta) \;=\;
\sum_{x\in \bb T^d_N} \Big\{\,
\frac{\nu^N_{\varrho (\cdot)}(T^x \eta)}{\nu^N_{\varrho (\cdot)}(\eta) } \, c_0(\tau_x  T^x \eta) \,
g(T^x \eta) \,-\, c_0(\tau_x \eta) \,g(\eta)  \, \Big\} 
\end{gathered}
\end{equation}
for all functions $g\colon \Omega_N\to \bb R$.

The next result requires some notation.  Let
\begin{equation}
\label{2-20}
\begin{gathered}
{\color{blue} \omega_x} \,:=\, \eta_x - \varrho (x)\;,
\quad
{\color{blue} \zeta_x} \;:=\;
\frac{\omega_{x}}{\chi(x)} \;=\;
\frac{\eta_{x} - \varrho (x)}{\chi(x)}\; , \\
{\color{blue} \chi(x)}  \,:=\,  \varrho(x)\, [1-\varrho(x)]\;, \quad 
\quad
{\color{blue} \beta(x)}  \,:=\,
2\varrho(x) - 1 \;, \quad x\in \bb T^d_N\;,
\end{gathered}
\end{equation}
where we omitted the dependence on $N$ of all these variables.  Denote
by $G_a$, $G_{1,j} \colon \bb T^d_N \to \bb R$, $0\le a\le 3$,
$1\le j\le d$, the functions given by
\begin{equation*}
\begin{gathered}
{\color{blue} G_0 (x)} \;:=\; \gamma\,[ \beta(x-e_1)
\,+\,  \beta(x+e_1) \,]
\,-\, \beta(x) \,-\, \gamma^2 \beta(x-e_1)\, \beta(x)\,
\beta(x+e_1) \;,
\\
{\color{blue} G_1(x) \,=\, G_{1,1} (x)} \,:=\, 
G_3(x) \, -\, (N^2/K)  \,
\frac{[\, \varrho(x+e_1) - \varrho(x)\,]^2}{\chi(x) \, \chi(x+e_1)}
\;,
\\
{\color{blue} G_{1,j} (x)} \,:=\, 
-\, (N^2/K)  \,
\frac{[\, \varrho(x+e_j) - \varrho(x)\,]^2}{\chi(x) \, \chi(x+e_j)}
\;, \quad 
{\color{blue} G_2(x)} \,:=\,
-\, 4\, \gamma^2\, \beta(x) \, \chi (x)^{-1} \;,
\end{gathered}
\end{equation*}
$2\le j\le d$, and
\begin{equation*}
{\color{blue} G_3(x)} \;:=\; 2\, \gamma\,
\Big(\, \frac{1}{\chi(x)} \,+\,
\frac{1}{\chi (x+e_1)} \, \Big)
\,-\, 2\, \gamma^2 \,
\Big(\, \frac{\beta(x-e_1)\, \beta(x)}{\chi(x)} \,+\,
\frac{\beta(x+e_1)\, \beta(x+2e_1)}{\chi(x+e_1)} \, \Big)\;.
\end{equation*} 

\begin{lemma}
\label{2-l07}
For all $N\ge 1$, 
\begin{equation}
\label{07}
L^*_N \mb 1\; =\;  U \;+\;  K\, \Upsilon \;,
\end{equation}
where
\begin{equation}
\label{2-08}
\begin{gathered}
{\color{blue} U (\eta)} \,:=\,
\sum_{x\in \bb T^d_N} \big\{  (\Delta_N \varrho) (x)\,
+\, K\, G_0(x)\, \big\}  \, \zeta_x 
\\
{\color{blue} \Upsilon (\eta)} \,:=\,
\sum_{j=1}^d \sum_{x\in \bb T^d_N}  G_{1,j}(x) \, \omega_{x} \, \omega_{x+e_j}
\,+\,
\sum_{x\in \bb T^d_N}  G_2(x) \, \omega_{x-e_1} \, \omega_{x} \,
\omega_{x+e_1} \;,
\end{gathered}
\end{equation}
and $\Delta_N$ stands for the discrete Laplacian, $(\Delta_NG)(x) =
N^2 \sum_{1\le j \le d} [ G(x+e_j) + G(x-e_j)  - 2 G(x) ]$. 
\end{lemma}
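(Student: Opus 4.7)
The plan is to evaluate $L_N^*\mb 1 = N^2 L_N^{E,*}\mb 1 + K L_N^{G,*}\mb 1$ directly from the adjoint formulas \eqref{02} applied with $g=\mb 1$, compute the Radon--Nikodym ratios using the product form of $\nu^N_{\varrho(\cdot)}$, and then re-express everything in the centered variables $\omega_x$, $\zeta_x$. Two pointwise identities drive the computation. The product structure gives, by checking the two values $\eta_x\in\{0,1\}$,
$$\frac{\nu^N_{\varrho(\cdot)}(T^x\eta)}{\nu^N_{\varrho(\cdot)}(\eta)} \;=\; 1-\beta(x)\,\zeta_x.$$
Since $\eta_x\in\{0,1\}$ we also have $\omega_x^2 = \chi(x) - \beta(x)\omega_x$, and dividing by $\chi(x)$ yields
$$\omega_x\,\zeta_x \;=\; 1 - \beta(x)\,\zeta_x,$$
which will be used repeatedly to eliminate $\omega_x\zeta_x$ factors in the Glauber expansion.

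For $L_N^{E,*}\mb 1$, the analogous case check gives
$$\frac{\nu^N_{\varrho(\cdot)}(T^{x,x+e_j}\eta)}{\nu^N_{\varrho(\cdot)}(\eta)}-1 \;=\; \Delta_j\varrho(x)\,\big(\zeta_x-\zeta_{x+e_j}\big) \;-\; \frac{[\Delta_j\varrho(x)]^2}{\chi(x)\,\chi(x+e_j)}\,\omega_x\,\omega_{x+e_j},$$
where $\Delta_j\varrho(x):=\varrho(x+e_j)-\varrho(x)$. A single discrete integration by parts in $x$, summed over $j$, converts the linear part into $N^{-2}\sum_x(\Delta_N\varrho)(x)\,\zeta_x$; multiplication by the $N^2$ in $L_N$ produces the $(\Delta_N\varrho)(x)\,\zeta_x$ term of $U$, while the quadratic remainder, after writing $N^2 = K\cdot (N^2/K)$, yields precisely the $-(N^2/K)[\Delta_j\varrho(x)]^2/[\chi(x)\chi(x+e_j)]$ contributions inside each $G_{1,j}$.

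For $L_N^{G,*}\mb 1$ one uses that $T^x$ flips $\sigma_x$ alone, so a direct check gives $c_0(\tau_xT^x\eta)-c_0(\tau_x\eta)=2\ga\,\sigma_x(\sigma_{x-e_1}+\sigma_{x+e_1})$, reducing the second line of \eqref{02} to
$$L_N^{G,*}\mb 1(\eta) \;=\; \sum_{x}\Big\{\,2\ga\,\sigma_x(\sigma_{x-e_1}+\sigma_{x+e_1})\;-\;\beta(x)\,\zeta_x\,c_0(\tau_xT^x\eta)\,\Big\}.$$
Substituting $\sigma_y=2\omega_y+\beta(y)$ and invoking $\omega_x\zeta_x=1-\beta(x)\zeta_x$ to suppress every $\omega_x\zeta_x$ factor reduces the expression to a polynomial in $\zeta_x$, in $\omega_y$ ($y\ne x$), in the bilinears $\omega_x\omega_{x\pm e_1}$, and in the single trilinear $\zeta_x\omega_{x-e_1}\omega_{x+e_1}$. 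Standard index shifts such as $\sum_x\beta(x)\omega_{x-e_1}=\sum_x\beta(x+e_1)\omega_x$, together with the relation $\omega_x=\chi(x)\zeta_x$, collect the purely linear contributions into $\sum_x G_0(x)\,\zeta_x$; the unique trilinear piece arises from $-\ga^2\beta(x)\zeta_x\cdot 4\omega_{x-e_1}\omega_{x+e_1}$, and since $\zeta_x\omega_{x-e_1}\omega_{x+e_1} = \omega_{x-e_1}\omega_x\omega_{x+e_1}/\chi(x)$, it matches exactly $G_2(x)\,\omega_{x-e_1}\omega_x\omega_{x+e_1}$. I expect the main obstacle to be the final bookkeeping for $G_3$: one must assemble the $\omega_x\omega_{x+e_1}$ contributions coming from (i) the $8\ga\omega_x(\omega_{x-e_1}+\omega_{x+e_1})$ part of the first summand, (ii) the cross-terms in $\ga\beta(x)^2\zeta_x(\sigma_{x-e_1}+\sigma_{x+e_1})$, and (iii) the bilinear part of $-\ga^2\beta(x)\zeta_x\sigma_{x-e_1}\sigma_{x+e_1}$, each after the appropriate shift, to reproduce precisely both the $2\ga[\chi(x)^{-1}+\chi(x+e_1)^{-1}]$ and the $-2\ga^2[\beta(x-e_1)\beta(x)/\chi(x)+\beta(x+e_1)\beta(x+2e_1)/\chi(x+e_1)]$ pieces of $G_3(x)$. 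The identity $E_{\nu^N_{\varrho(\cdot)}}[L_N^*\mb 1]=0$, which forces every ``constant'' contribution in the expansion to cancel, provides a useful consistency check throughout.
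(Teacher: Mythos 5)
Your proposal is correct and follows essentially the same route as the paper's proof: apply the adjoint formulas \eqref{02} at $\mb 1$, expand the product-measure Radon--Nikodym ratios (your exchange and flip identities agree exactly with \eqref{2-21} and the displayed flip ratio), center, and use the quadratic identity $\omega_x^2=\chi(x)-\beta(x)\omega_x$ (the paper's $\hat\sigma_x^2=4\chi(x)-2\beta(x)\hat\sigma_x$ in the variables $\hat\sigma_x=2\omega_x$) together with index shifts to match $G_0$, $G_{1,j}$, $G_2$, $G_3$. The only difference is notational ($\omega,\zeta$ versus $\hat\sigma$ and the degree decomposition $I_0,\dots,I_3$), and the remaining $G_3$ bookkeeping you flag is exactly the ``long but straightforward'' computation the paper also leaves partially to the reader.
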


\begin{proof}
As $\nu^N_{\varrho (\cdot)}$ is the product probability measure on $\Omega_N$ associated to a
density profile $\varrho (\cdot)$,
\begin{equation*}
\frac{\nu^N_{\varrho (\cdot)}(T^{x,x+e_j} \eta)}{\nu^N_{\varrho (\cdot)}(\eta) } 
\,-\, 1  \;=\;
-\, [\, \varrho (x+e_j) - \varrho(x)\,] \,
\{\, \zeta_{x+e_j} \,-\, \zeta_{x}  \,\} \;-\;
[\, \varrho(x+e_j) - \varrho(x)\,]^2 \, \zeta_{x} \, \zeta_{x+e_j} 
\;,
\end{equation*}
where $\zeta_y$ has been introduced in \eqref{2-20}.  Hence, in the
case of the symmetric simple exclusion process, an integration by
parts yields that
\begin{equation}
\label{2-21}
N^2 (L^{E,*}_N \mb 1)(\eta) \;=\;
\sum_{x\in \bb T^d_N}  (\Delta_N \varrho) (x)\, \zeta_{x}
\,-\, N^2\, \sum_{j=1}^d  \sum_{x\in \bb T^d_N}
[\, \varrho(x+e_j) - \varrho(x)\,]^2 \, \zeta_{x} \, \zeta_{x+e_j} \;.
\end{equation}

We turn to the Glauber dynamics for which
\begin{equation*}
\frac{\nu^N_{\varrho (\cdot)}(T^{x} \eta)}{\nu^N_{\varrho (\cdot)}(\eta) } 
\;=\;  1  \;+\;  [\, 1 - 2 \, \varrho(x)\,]\,  \zeta_{x} \;.
\end{equation*}
Replacing this ratio in \eqref{02} yields that
\begin{equation}
\label{03}
(L^{G,*}_N \mb 1)(\eta) \;=\;
\sum_{x\in \bb T^d_N} \big\{\, c_0(\tau_x T^x \eta) \,
\,-\, c_0(\tau_x \eta)  \, \big\} 
\;+\; \sum_{x\in \bb T^d_N}
[\, 1 - 2 \, \varrho(x)\,]\,  \zeta_{x}
\, c_0(\tau_x T^x \eta) \;.
\end{equation}
Recall from \eqref{01} the definition of the jump rate $c_0(\cdot)$,
and that $\sigma_x = 2\eta_x -1 \in \{-1,1\}$, $x\in \bb T^d_N$. In
these coordinates, $\zeta_x = \hat \sigma_x/2 \chi(x)$, where
$\color{blue} \hat \sigma_x = \sigma_x - \beta (x)$, and $\beta(x)$
has been introduced in \eqref{2-20}.

The first term on the right-hand side of \eqref{03} is equal to
\begin{equation}
\label{04}
4\gamma \sum_{x\in \bb T^d_N} \big\{\, \beta(x)\, \beta(x+e_1) \,+\,
\hat\sigma_x \,[ \, \beta(x-e_1)\, +\, \beta(x+e_1)\,]\, +\,
\hat\sigma_x \hat\sigma_{x+e_1}\,\big\}\;,
\end{equation}
and the second one is equal to
\begin{equation}
\label{05}
-\, 2\, \sum_{x\in \bb T^d_N} \frac{\beta(x)}{1-\beta(x)^2} \,
\hat\sigma_x\,
\Big\{\, 1 \,+\, \gamma\,\sigma_x\, (\sigma_{x-e_1} +\sigma_{x+e_1})
\,+\,
\gamma^2 \, \sigma_{x-e_1} \, \sigma_{x+e_1}\,\Big\}\;.
\end{equation}
In this last term, write $\sigma_x$ as $\hat\sigma_x + \beta(x)$ and
observe that
$\hat\sigma_x^2 = 4 \chi(x) \,-\, 2 \beta(x) \, \hat\sigma_x$, to
conclude that
\begin{equation}
\label{06}
\hat\sigma_x\,
\Big\{\, 1 \,+\, \gamma\,\sigma_x\, (\sigma_{x-e_1} +\sigma_{x+e_1})
\,+\,
\gamma^2 \, \sigma_{x-e_1} \, \sigma_{x+e_1}\,\Big\} \;=\;
\sum_{p=0}^3 I_p(x)\;, 
\end{equation}
where $I_p(x)$ accounts for the monomials
of degree $p$. The term of degree $0$ is easy to compute and is given
by
\begin{equation*}
I_0(x) \;=\; 4\,
\gamma \, \{\, \beta(x-e_1) + \beta(x+e_1) \,\} \, \chi(x) \;.
\end{equation*}
A long but straightforward calculation gives that
\begin{equation*}
\begin{aligned}
I_1(x) \; & =\; \Big\{ \, 1 \,-\,  \gamma \, \beta(x) \, [ \beta(x-e_1)
\,+\,  \beta(x+e_1)\,] \,
\,+\, \gamma^2 \, \beta(x-e_1\, )\, \beta(x+e_1) \, \Big\} \,
\hat\sigma_x \\
\; & +\; 4\, \gamma\, \chi(x) \, [ \hat\sigma_{x-e_1}
\,+\, \hat\sigma_{x+e_1} \, ]\;.
\end{aligned}
\end{equation*}
The terms of degree $2$ and $3$ are given by
\begin{equation*}
\begin{gathered}
I_2(x) \; =\; \gamma^2 \, \hat\sigma_{x}
\, [ \beta(x+e_1)\, \hat\sigma_{x-e_1}
\,+\,  \beta(x-e_1)\, \hat\sigma_{x+e_1} \,]
\;-\; \gamma \, \beta(x) \, \hat\sigma_{x}
\, [ \, \hat\sigma_{x-e_1} \,+\,  \hat\sigma_{x+e_1} \,]\;, \\
I_3(x) \; =\; \gamma^2 \, \hat\sigma_{x-e_1} \, \hat\sigma_{x}
\, \hat\sigma_{x+e_1} \;.
\end{gathered}
\end{equation*}

By \eqref{06} and the previous formulas for $I_p(x)$, adding
\eqref{04} and \eqref{05} yields that
\begin{equation*}
(L^{G,*}_N \mb 1)(\eta) \;=\;
\sum_{x\in \bb T^d_N}  \big\{\, a_0(x)\, \hat\sigma_{x} \,+\,
a_1(x)\, \hat\sigma_{x} \, \hat\sigma_{x+e_1}
\,+\,
a_2(x)\, \hat\sigma_{x-e_1} \, \hat\sigma_{x} \, \hat\sigma_{x+e_1}\,\big\}\;,
\end{equation*}
where
\begin{equation*}
\begin{gathered}
a_0(x) \;=\; \frac{1}{2\, \chi(x)} \,
\Big\{\, \gamma\,[ \beta(x-e_1)\,+\,  \beta(x+e_1) \,] \,-\,
\beta(x) \,-\, \gamma^2 \beta(x-e_1)\, \beta(x)\,
\beta(x+e_1)\,\Big\}\;, \\
a_1(x) \;=\; \frac{\gamma}{2}\, \Big(\, \frac{1}{\chi(x)} \,+\,
\frac{1}{\chi (x+e_1)} \, \Big)
\,-\, \frac{\gamma^2}{2} \,
\Big(\, \frac{\beta(x-e_1)\, \beta(x)}{\chi(x)} \,+\,
\frac{\beta(x+e_1)\, \beta(x+2e_1)}{\chi(x+e_1)} \, \Big)\;, \\
a_2(x) \;=\; -\, \gamma^2\, \frac{\beta(x)}{2\, \chi(x)}\;.
\end{gathered}
\end{equation*}
To complete the proof of the lemma, 
it remains to replace $\hat\sigma_{y}$ by $2\, [\,\eta_y -
\varrho(y)\,]$, and recall equation \eqref{2-21}.
\end{proof}

\begin{remark}
Given a density profile $\varrho\colon \bb T^d_N \to (0,1)$, a
function $f\colon \Omega_N \to \bb R$ is said to be of
$\varrho$-degree $k$, where $k\ge 0$, (or simply of degree $k$) if
there exist $p\ge 1$, subsets $A_1, \dots, A_p$ of $\bb T^d_N$, each
one with $k$ elements, and constants $c_1, \dots c_p$ such that
\begin{equation*}
f (\eta) \,=\, \sum_{i=1}^p c_i\, \prod_{x\in A_i} \omega_x\;.
\end{equation*}

In the previous lemma, the term of degree $0$ vanishes. On the other
hand, if $\varrho(\cdot)$ (and therefore $\beta$) is constant, then
\begin{equation*}
a_0(x) \,=\, \frac{-\, 2}{1-\beta^2} \, \big\{\, (1-2\gamma) \beta + \gamma^2
\beta^3 \,\big\}
\,=\, \frac{-\, 1}{ 2\rho(1-\rho)} \, V'(\rho) \;.
\end{equation*}
In particular, this term vanishes
if the computation is performed with respect to a Bernoulli product
measure at a critical density. Similarly, if $\beta$ is constant,
\begin{equation*}
a_1(x) \,=\, \frac{4\gamma}{1-\beta^2} \, \big\{\, 1\,-\, \gamma\,
\beta^2\,\big\} \;.
\end{equation*}
If $\gamma=1$, the stable densities are $\pm 1$, while $0$ is an
unstable one. $\triangle$
\end{remark}

The term of order one in the computation of $L^*_N \mb 1$ reveals the
reference density profile. To cancel the order one term, we have to
choose $\varrho(\cdot)$ which satisfies
\begin{equation}
\label{2-93}
(\Delta_N \varrho) (x)\, +\, K\, G_0(x) \,=\, 0\;, \quad x\in \bb
T^d_N\;. 
\end{equation}
By Lemma \ref{2-l18} the density profile $\bs u^N$ introduced in
\eqref{2-53a}, \eqref{2-52} almost fulfills this identity in the sense
that
\begin{equation}
\label{2-39}
\sum_{x\in \bb T^d_N} \big|\, (\Delta_N \bs u_N)(x) + K
G_0(x)\,\big|\;\le\; C_0\,  K^2\, N^{d-2}
\end{equation}
for some finite constant $C_0$, and all
$N\ge 1$. Actually, by the proof of Lemma \ref{2-l18}, whose main part
consists in showing that $G_0$ is close to $-V'(\bs u^N)$,
$C_0 = C_1 \, \sum_{1\le j\le 4} (1 + \Vert
\partial^j_\vartheta \rho^K \Vert_\infty) \, \Vert
\partial^j_\vartheta \rho^K \Vert_\infty$ for some finite constant
$C_1$ independent of $N$. By \eqref{2-84}, $C_0<\infty$.

Assume from now on that $\varrho (\cdot)= \bs u^N(\cdot)$.  Recall
from \eqref{2-12} that $\nu^N = \nu^N_{\bs u^N(\cdot)}$ represents the
product measure associated to the density profile $\bs u^N$, $f^N_t$
the density of the measure $d\nu^N S^N(t)$ with respect to $d\nu^N$,
and $H_N(f^N_t)$ the relative entropy introduced in \eqref{2-34d} or
in \eqref{2-34b} for $h^N_t = f^N_t$. With this choice, by
\eqref{2-39} and \eqref{2-38b}, there exists a constant
$\mf c(\bs u^N)$, depending only on $\bs u^N$ and uniformly bounded in
$N$, such that
\begin{equation}
\label{2-40}
\sup_{\eta\in\Omega_N} \,|U(\eta)| \,\le\,  \mf c(\bs u^N)\,  K^2\, N^{d-2}
\end{equation}
for all $N\ge 1$.

Since $\bs u^N(\cdot)$ is translation invariant in the directions
$e_j$, $2\le j\le d$, $G_{1,j}(\cdot)=0$ for $2\le j\le d$. Recall
from Lemma \ref{2-l07} that $G_1(\cdot) = G_{1,1}(\cdot)$.  By
Proposition \ref{l06-1}, Lemma \ref{2-l07}, and \eqref{2-39}, to
estimate the entropy production $(d/dt) \, H_N(f^N_t)$ we have to
evaluate
\begin{equation*}
K \int \Upsilon \, f^N_t\, d\nu^N \;=\;
K \int \sum_{x\in \bb T^d_N}  \{\,
G_1(x) \, \omega_{x} \, \omega_{x+e_1}
\,+\, G_2(x) \, \omega_{x-e_1} \, \omega_{x} \,
\omega_{x+e_1}\} \,  f^N_t\, d\nu^N \;.
\end{equation*}
Since similar quantities appear in the proof of the Boltzmann-Gibbs
principle, presented in Section \ref{sec-BG}, the analysis is carried
out for a general function $G\colon \bb T^d_N \to \bb R$ and a general
cylinder function $\prod_{x\in A} \omega_x$. This is the content of
the next section.

\section{Proof of Theorem \ref{mt1}}
\label{sec8}

The proof of Theorem \ref{mt1} is based Theorem \ref{t02} below whose
statement requires some notation.  For a finite subset $A$ of
$\bb Z^d$, let 
\begin{equation}
\label{x2}
{\color{blue} x+A } \,:=\,  \{x+y : y\in A\}\;, \quad 
{\color{blue}  \omega_{A} }  \;:=\; \prod_{x\in A} \, \omega_{x} \;.
\end{equation}
Fix a finite subset $A$ of $\bb Z^d$ with at least two elements and a
function $G\colon \bb T^d_N \to \bb R$.  Let $\mc W$,
be given by
\begin{equation}
\label{06a}
{\color{blue} \mc W (\eta)} \,=\, \mc W^{G,A} (\eta)
\;:=\; \sum_{x \in \bb T^d_N} G(x) \, \omega_{x+A}\;.
\end{equation}

Let $(g_d(\ell) : \ell\ge 1)$,  $(s_d(\ell) : \ell\ge 1)$, be the
sequences given by
\begin{equation}
\label{2-18}
\begin{gathered}
{\color{blue} g_d(\ell)}\,:=\, 
\left\{
\begin{array}{c@{\;,\quad}l}
\ell & d=1\\
\log \ell & d=2\\
1 & d \geq 3\;,
\end{array}
\right.
\qquad {\color{blue}  s_d(\ell) } \;=\; \ell^{d} \, g_d(\ell) \;,
\end{gathered}
\end{equation}
The sequence $(\ell_N:N\ge 1)$ introduced in the statement of Theorem
\ref{mt1} can be defined as the sequence which fulfills 
\begin{equation}
\label{2-18b}
{\color{blue} \ell_N^{d/2} \, s_d(\ell_N) } \;=\; \ell_N^{3d/2} g_d(\ell_N) \;=\;
N^{2} \;.
\end{equation}

For a real function $G$ defined on a set $\Lambda$ (which can be
$\bb T^d_N$, $\sqrt{K}\, \bb T \times \bb T^{d-1}$ or
$\bb R \times \bb T^{d-1}$), denote by $\Vert G\Vert_\infty$ its sup
norm:
$\color{blue} \Vert G\Vert_\infty = \sup_{z\in \Lambda} | G(z)|$.
Recall from \eqref{2-12} that $\nu^N = \nu^N_{\bs u(\cdot)}$.

\begin{theorem} 
\label{t02}
Fix a function $G\colon \bb T^d_N \to \bb R$, and a finite subset $A$
of $\bb Z^d$ with at least two elements. Assume that
$\sup_{N\ge 1} K^2_N / \ell^d_N < \infty$. Then, there exists a finite
constant $\mf c(\bs u^N)$ which depends only on $\bs u^N$, the set
$A$, the dimension $d$, and is uniformly bounded in $N$, such that
\begin{equation*}
K_N\, \int  \mc W^{G,A}
\, f\; d\nu^N \; \le\;
\delta \, N^2\, I^E_{N}(f;\nu^N)
\;+\; \mf c(\bs u^N)\, \frac{1}{\delta}\, \{\, 1 + \Vert
G\Vert^2_\infty\,\} \, K^2_N
\, \big\{ H_N(f) \,+\, (N/\ell_N)^d \, \big\} 
\end{equation*}
for all $0<\delta<1$, density $f$ with respect to $\nu^N$ and
$N\ge 1$.
\end{theorem}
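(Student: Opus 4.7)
The plan is to prove the bound by a two-block / moving-particle estimate adapted to the inhomogeneous product reference $\nu^N=\nu^N_{\bs u^N(\cdot)}$. The structural point is that, since $|A|\ge 2$, each cylinder $\omega_{x+A}$ is centered under $\nu^N$; this centering is what makes it possible to trade the expectation against $f$ for $\delta N^2 I^E_N(f;\nu^N)$ plus a variance/entropy-type remainder, rather than a term of order $K_N$ as would arise from a degree-$1$ function.

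I would proceed in three steps. First, apply the entropy inequality with a parameter $\alpha\sim\delta/K_N^2$: this produces the desired $(K_N^2/\delta)\, H_N(f)$ contribution and reduces the task to bounding $(1/\alpha)\log\int \exp\{\alpha K_N \mc W^{G,A}\}\,d\nu^N$. Second, decompose each $\omega_{x+A}$ into a spatial average $\langle \omega_{\cdot+A}\rangle_{\ell_N}(x)$ over a cube of side $\ell_N$ plus a fluctuation; the fluctuation is rewritten as a telescoping sum of nearest-neighbor exchange gradients and the moving-particle lemma, adapted to $\nu^N$, bounds the resulting expression by something whose cost integrated against the exchange Dirichlet form is $\sim s_d(\ell_N)\cdot I^E_N(f;\nu^N)$, with $s_d(\ell)=\ell^d g_d(\ell)$ being the Green-function scale of simple random walk on $\bb Z^d$. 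Third, the block-averaged part is estimated by a variance/concentration computation under the product measure $\nu^N$, which produces the $(N/\ell_N)^d$ factor as the effective number of independent blocks. The calibration $\ell_N^{d/2}\, s_d(\ell_N)=N^2$ is precisely what balances the Dirichlet-form piece (to come out with prefactor $\delta$) against the entropy/variance piece (to come out with prefactor $K_N^2/\delta$); the hypothesis $K_N^2 \le \mf c \,\ell_N^d$ is then used to keep constants clean when all parameters are optimized.

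The main obstacle will be accounting correctly for the inhomogeneity of $\nu^N$: the classical moving-particle estimates are stated for translation-invariant reference measures, whereas here $\bs u^N$ varies across the boundary layer of width $O(1/\sqrt{K})$. One must therefore redo the argument with an inhomogeneous reference, using the Lipschitz-type smoothness of $\bs u^N$ inherited from Lemma \ref{2-l18} and the Appendix bounds on $\rho^K$ (in particular the uniform control of $\partial_\vartheta^j \rho^K$) to show that locally replacing $\nu^N$ by a homogeneous product measure introduces only lower-order corrections. A further subtlety is that, because $|A|\ge 2$, the moving-particle argument must be run at the level of whole cylinders $\omega_{x+A}$ rather than single occupation variables, which requires extra bookkeeping (in particular, for the cubic term $\omega_{x-e_1}\omega_x\omega_{x+e_1}$ appearing in $\Upsilon$ and for the quadratic $G_1$ term, both from Lemma \ref{2-l07}) but no essentially new ideas.
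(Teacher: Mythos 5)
Your overall architecture (block averages over cubes of side $\ell_N$, a moving-particle/flow estimate for the fluctuation part, entropy plus concentration for the averaged part, calibrated by $\ell_N^{d/2}s_d(\ell_N)=N^2$) is the right one, but the order in which you deploy the entropy inequality creates a genuine gap. In your Step 1 you apply the entropy inequality to the full field, reducing the problem to the static exponential moment $(1/\alpha)\log\int\exp\{\alpha K_N\mc W^{G,A}\}\,d\nu^N$ with $\alpha\sim\delta/K_N^2$. Once this is done the density $f$ has been eliminated, so the Dirichlet form $I^E_N(f;\nu^N)$ you invoke in Step 2 can no longer appear: there is no formula expressing a static log-moment under $\nu^N$ in terms of the Dirichlet form of an arbitrary, unrelated density $f$ (the Feynman--Kac route that does produce such a form applies to time integrals under the dynamics, as in the Boltzmann--Gibbs principle of Section \ref{sec-BG}, not to the static statement of Theorem \ref{t02}). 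Moreover, even on its own terms Step 1 is quantitatively fatal: $\mc W^{G,A}$ is a finite-range-dependent sum of bounded centered variables, so the best available bound is $\log\int e^{t\mc W^{G,A}}\,d\nu^N\le C\,t^2\Vert G\Vert_\infty^2 N^d$ for small $t$, which with $t=\delta/K_N$ leaves a contribution of order $\delta\Vert G\Vert_\infty^2 N^d$; under the standing hypothesis $K_N^2\lesssim\ell_N^d$ this exceeds the target $\mf c\,\delta^{-1}K_N^2(N/\ell_N)^d$ by a factor of order $\delta^2\ell_N^d/K_N^2$. The gain from $N^d$ to $(N/\ell_N)^d$ is available only for block-averaged quantities, so the replacement must be performed first, directly against $f\,d\nu^N$, with its cost paid by the Dirichlet form.

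The workable order (and the paper's) is: decompose $\mc W=\mc W^{(1)}_\ell+\mc W^{(2)}_\ell$ by averaging only the occupation variable at a maximal coordinate $\mb x_A$ of $A$, keeping $\omega_{x+A_\star}$ intact so that the function multiplying $\omega_x-\omega_{x+e_k}$ does not depend on $\eta_x,\eta_{x+e_k}$ (this is what legitimizes the single-bond integration by parts of Lemma \ref{l2}); bound $K\int\mc W^{(2)}_\ell f\,d\nu^N$ via the flow of Lemma \ref{l01} and Lemma \ref{l2}, which produces $\delta N^2 I^E_N(f;\nu^N)$ plus sums of squares of block objects, the inhomogeneity of $\nu^N$ generating extra terms of the same structural type that are absorbed by iterating the argument once and gaining the factor $|\bs u^N(x+e_1)-\bs u^N(x)|\le \mf c\,\sqrt{K}/N$ of \eqref{2-15} (Lemma \ref{2-l02}); only then apply entropy plus concentration (Lemma \ref{2-l08}, and Lemma \ref{2-l09} for $\mc W^{(1)}_\ell$) to those averaged quantities, which is where $H_N(f)+(N/\ell_N)^d$ comes from. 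Two of your side remarks also need adjustment: averaging the whole cylinder $\omega_{x+A}$ rather than a single maximal coordinate makes the telescoping into exchange gradients genuinely awkward and is avoided in the paper by design; and your instinct that the inhomogeneity is controlled by $|\nabla_N\bs u^N|=O(\sqrt K/N)$ is correct, but it must be exploited inside the integration-by-parts step rather than through an unquantified preliminary replacement of $\nu^N$ by a homogeneous product measure.
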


We first complete the proof of Theorem \ref{mt1}.

\begin{proof}[Proof of Theorem \ref{mt1}]
Apply Proposition \ref{l06-1} and Lemma \ref{2-l07} for
$\varrho(\cdot) = \bs u^N(\cdot)$ and $h^N_t = f^N_t$. By
\eqref{2-53a}, $G_{1,j} =0$ for $j\neq 1$. By \eqref{2-40} and the
inequality $N^{d-2} \le (N/\ell_N)^d$ cf. \eqref{2-28}, which follows
from the definition of $\ell_N$,
$\max_{\eta\in\Omega_N} |U(\eta)| \le \mf c(\bs u^N) K^2
(N/\ell_N)^d$.  Apply Theorem \ref{t02} with $\delta=1$ and
$(G,A) = (G_{1,1}, \{0,e_1\})$, $(G,A) = (G_{2}, \{-e_1,0,e_1\})$, to
get that
\begin{equation*}
\frac{d}{dt}\, H_N(f^N_t) \;\leq\; 
\mf c(\bs u^N)\, \Big\{\, 2\, +\, \Vert G_{1,1}\Vert^2_\infty
\, +\, \Vert G_{2}\Vert^2_\infty \,\Big\} \, K^2_N
\, \big\{ H_N(f^N_t) \,+\, (N/\ell_N)^d \, \big\} \;.
\end{equation*}
By \eqref{2-53a}, \eqref{2-52}, \eqref{2-38b},  \eqref{2-84} the
sequence $\Vert G_{1,1}\Vert_\infty + \Vert G_{2}\Vert_\infty $ is
uniformly (in $N$) bounded.
\end{proof}

We turn to the proof of Theorem \ref{t02}. Fix till the end of this
section a function $G\colon \bb T^d_N \to \bb R$, and a finite subset
$A$ of $\bb Z^d$ with at least two elements.  The proof is divided in
two steps. We first show that we may average $\omega_x$ in a cube of
size $\ell = \ell_N$ at a cost which is bounded by the sum of the
integrated carr\'e du champ $I^E_{N}(f;\nu^N)$ with sums of squares of
the local empirical density over these mesoscopic boxes. This estimate
is proved in the next section. Then, we estimate these sums of squares
by the entropy and small terms using the entropy inequality and
concentration inequalities.

Let
$m_\ell$, $\ell\ge 1$, be the uniform measure on the cube
$\color{blue} \Lambda_\ell \,:= \, \{0 \,,\, 1\,,\, \dots
\,,\, (\ell-1)\}^d$:
\begin{equation*}
{\color{blue} m_\ell(z)} \;:=\; \frac{1}{ \ell^{d}}
\, \chi_{_{\Lambda_\ell}} (z)\;,
\end{equation*}
where $\color{blue} \chi_{_A}$ stands for the indicator of the set
$A$. Denote by $m^{(2)}_\ell$ be the convolution of $m_\ell$ with itself: 
\begin{equation*}
m^{(2)}_\ell(z) = \sum_{y \in \bb Z^d}
m_\ell(y) \, m_\ell(z-y) \;.
\end{equation*}
Notice that $m^{(2)}_\ell$ is supported on the cube
$\Lambda_{2(\ell-1)}$.  Denote by $\omega_x^{\ell}$ the average of
$\omega_{x+z}$ with respect to the measure $m^{(2)}_\ell$:
\begin{equation}
\label{20}
{\color{blue} \omega_x^{\ell}}
\;:=\; \sum_{y \in \bb T^d_N} m^{(2)}_\ell(y) \, \omega_{x+y}
\;=\; \sum_{y \in \Lambda_{2(\ell-1)}} m^{(2)}_\ell(y) \, \omega_{x+y}\;.
\end{equation}

Consider the partial order $\prec$ on $\bb Z^d$ defined by
$(x_{1}, \dots, x_{d}) \prec (y_{1}, \dots, y_{d})$ if $x_j\le y_j$
for all $1\le j\le d$.  Let $B = \{\mb x^{(1)}, \dots , \mb x^{(p)}\}$
be a finite subset of $\bb Z^d$ with at least two elements.  A point
$\mb x^{(k)}$ in $B$ is said to be \emph{maximal} if
$\mb x^{(k)} \prec \mb x^{(j)}$ entails that
$\mb x^{(k)} = \mb x^{(j)}$. The set $B$ has at least one maximal
element, which may not be unique. Take a maximal
element $\color{blue} \mb x_B$ of $B$, and let
$\color{blue} B_\star = B \setminus \{\mb x_B\}$.

Let $\mc W^{(1)}_\ell$, $\mc W^{(2)}_\ell\colon \Omega_N \to \bb R$ be
given by
\begin{equation}
\label{2-04} 
\begin{gathered}
{\color{blue} \mc W^{(1)}_\ell(\eta)} \;:=\; \sum_{x \in \bb T^d_N}
G(x) \, \omega_{x+A_\star} \, \omega^\ell_{x+ \mb x_A} \;,
\\
{\color{blue} \mc W^{(2)}_\ell(\eta)} \;:=\;
\sum_{x \in \bb T^d_N}
G(x) \, \omega_{x+A_\star} \, \{\,  \omega_{x+ \mb x_A} -
\omega^\ell_{x+ \mb x_A}\,\} \;, 
\end{gathered}
\end{equation}
so that
\begin{equation}
\label{2-29}
\mc W(\eta) \;=\; \mc W^{(1)}_\ell(\eta)
\,+\,\mc W^{(2)}_\ell(\eta) \;.
\end{equation}

We first examine $\mc W^{(2)}_\ell$.  The first bound requires further
notation.

\smallskip\noindent{\bf Flows.}
Let $\ms G$ be a finite set. For probability measures $\mu$ and $\nu$ on
$\ms G$, a function $\Phi:\ms G\times \ms G\to\bb R$ is called a {\it flow
  connecting $\mu$ to $\nu$} if
\begin{enumerate}
\item $\Phi$ is anti-symmetric: $\Phi(x,y) \,=\, -\, \Phi(y,x)$ for
all $x,y\in \ms G$;
\item The divergence of $\Phi$ at $x$ is $\mu(x) - \nu(x)$:
$\sum_{y\in \ms G} \Phi(x,y) = \mu(x) - \nu(x)$ for all $x\in \ms G$.
\end{enumerate}

The next result is \cite[Theorem 3.9]{jm1}.  

\begin{lemma}
\label{l01}
There exist a finite constant $C_d$, depending only on the dimension
$d$, and, for all $\ell\ge 1$, a flow $\Phi_\ell$ connecting the Dirac
measure at the origin to the measure $m^{(2)}_\ell$. This flow is
supported in $\Lambda_{2(\ell-1)}$ and on nearest-neighbour bonds. That
is,
\begin{equation*}
\Phi_\ell(x,y) \;=\;0
\end{equation*}
if $\Vert y - x \Vert \not = 1$ or if $\{x,y\} \not\subset
\Lambda_{2(\ell-1)}$. 
Moreover,
\begin{equation*}
\sum_{j=1}^d  \sum_{x\in\bb Z^d}\Phi_\ell(x,x+e_j)^2
\;\le\; C_d\; g_d(\ell)\;,
\end{equation*}
where $(g_d(\ell) : \ell\ge 1)$ is the sequence introduced in
\eqref{2-18}. 
\end{lemma}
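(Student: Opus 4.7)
The plan is to construct $\Phi_\ell$ via the potential-theoretic (electrical-network) picture: a minimum $L^2$ flow between two signed measures on a graph is the discrete gradient of a potential solving a discrete Poisson equation with those measures as source. First I would find $h\colon \Lambda_{2(\ell-1)} \to \bb R$ satisfying $-\Delta h = \delta_0 - m^{(2)}_\ell$ in the interior, with Dirichlet boundary conditions on the outer layer chosen so that the flow $\Phi_\ell(x,x+e_j) := h(x)-h(x+e_j)$ vanishes on any bond leaving $\Lambda_{2(\ell-1)}$. Anti-symmetry and the prescribed divergence $\mu(x)-\nu(x)=\delta_0(x)-m^{(2)}_\ell(x)$ are then immediate, and the two support requirements in the statement follow from the boundary condition.

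The energy is handled by discrete summation by parts,
\[
\sum_{j=1}^d \sum_{x\in\bb Z^d} \Phi_\ell(x,x+e_j)^2 \;=\; \langle h,\,-\Delta h\rangle \;=\; h(0) \,-\, \sum_{x} m^{(2)}_\ell(x)\, h(x),
\]
so the task reduces to estimating this scalar. Representing $h$ through the Dirichlet Green's function $G_\Lambda$ of $\Lambda_{2(\ell-1)}$ turns the energy into the quadratic form $\sum_{y,z} G_\Lambda(y,z)[\delta_0 - m^{(2)}_\ell](y)[\delta_0 - m^{(2)}_\ell](z)$. The decisive feature is that $m^{(2)}_\ell = m_\ell \ast m_\ell$ is a point mass smeared on scale $\ell$, which absorbs the on-diagonal singularity of $G_\Lambda$; only the growth of $G_\Lambda$ across distance $\ell$ survives. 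Standard local-CLT and capacity estimates on $\bb Z^d$ then give the three regimes: $O(1)$ in $d\geq 3$ (where the walk is transient and $G_\Lambda$ is bounded), $O(\log\ell)$ in $d=2$, and $O(\ell)$ in $d=1$, which is precisely $g_d(\ell)$.

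The main obstacle I foresee is extracting the correct dimension-dependent rate from this cancellation. Since $G_\Lambda$ is on-diagonal divergent in low dimension (logarithmically in $d=2$, linearly in $d=1$), bounding the four pairings $G_\Lambda(y,z)$ in absolute value is hopeless; one must genuinely exploit the zero-total-mass property of $\delta_0 - m^{(2)}_\ell$ so that only differences $G_\Lambda(y,z)-G_\Lambda(y',z)$ contribute, and such differences are controlled by $G_\Lambda$'s variation at scale $\ell$. A more combinatorial alternative, likely close to the route taken in \cite{jm1}, is to write $m^{(2)}_\ell$ as a weighted average of Dirac masses $\delta_{y+z}$ for $(y,z)\in \Lambda_\ell^2$ and build $\Phi_\ell$ by superposing unit nearest-neighbour paths from $0$ to each $y+z$ inside $\Lambda_{2(\ell-1)}$; summing the squares of the resulting total bond flows reduces to classical path-counting on $\bb Z^d$ and yields the same $g_d(\ell)$ bounds.
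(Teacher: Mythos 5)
The paper does not actually prove this lemma: it is imported verbatim from \cite[Theorem 3.9]{jm1}, so your attempt at a self-contained construction goes beyond what the text does, and both of your routes are in principle viable reconstructions. Your reduction of the energy to the quadratic form of a Green's function against the zero-mean charge $\delta_0-m^{(2)}_\ell$, and your identification of the zero-total-mass cancellation as the decisive point in $d=1,2$, are the right ideas, and the resulting rates $1$, $\log\ell$, $\ell$ do match $g_d(\ell)$.

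Two steps, however, are stated in a form that would fail as written. First, the boundary condition in the potential-theoretic route must be of Neumann (no-flux) type, not Dirichlet: if you prescribe $h$ on an outer layer, the differences $h(x)-h(x+e_j)$ across bonds leaving $\Lambda_{2(\ell-1)}$ have no reason to vanish, so the support property is lost. The correct construction solves $-\Delta_{\Lambda}h=\delta_0-m^{(2)}_\ell$ for the graph Laplacian of the box $\Lambda_{2(\ell-1)}$ (solvable precisely because the source has zero total mass) and defines $\Phi_\ell(x,x+e_j)=h(x)-h(x+e_j)$ only on bonds inside the box; then anti-symmetry, divergence and support are immediate and your identity $\sum_{j,x}\Phi_\ell(x,x+e_j)^2=h(0)-\sum_x m^{(2)}_\ell(x)h(x)$ holds for this Laplacian. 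Second, in the combinatorial alternative the phrase ``classical path-counting'' hides the actual crux: superposing canonical (say lexicographic staircase) paths from the origin puts flux of order one on each bond of the first coordinate axis over a distance of order $\ell$, giving energy of order $\ell$ in every dimension, not $g_d(\ell)$. One must average over paths (e.g.\ uniformly random monotone paths), or otherwise spread the unit flux so that the flow through a bond at distance $r$ from the origin is $O(r^{-(d-1)})$; only then does $\sum_{r\le \ell} r^{d-1}\cdot r^{-2(d-1)}$ produce $1$, $\log\ell$, $\ell$, and this spreading is the substance of the construction in \cite{jm1}. A minor further remark: the convolution structure $m^{(2)}_\ell=m_\ell\ast m_\ell$ is irrelevant for the energy bound (it is used elsewhere in the paper); only the smearing at scale $\ell$ matters.
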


For $1\le k\le d$, define $H^{(\ell)}_{k}\colon \bb T^d_N \to \bb R$ by
\begin{equation}
\label{2-06a}
{\color{blue} H^{(\ell)}_{k,x}} \;:=\;
H^{\ell, G, A}_{k} (x) \,=\, \sum_{\{y,y+e_k\} \subset \Lambda_{2(\ell-1)}}
\Phi_\ell (y,y+e_k) \, G(\, x-\mb x_A-y \,) \,
\omega_{x-\mb x_A-y+A_\star} \;,
\end{equation}
where $\Phi_\ell$ is the flow introduced in Lemma \ref{l01}.

\begin{proposition}
\label{2-l05}
There exists a finite constant $\mf c(\bs u^N)$, depending only on
$\bs u^N$, $A$ and the dimension $d$, uniformly bounded in $N$
($\sup_N \mf c(\bs u^N) < \infty$), such that
\begin{equation*}
\begin{aligned}
& K\, \int  \mc W^{(2)}_\ell \, f\; d\nu^N \; \le\;
\delta\, N^2 \, I^E_{N}(f; \nu^N)
\; +\; \mf c(\bs u^N)  \, K\, \sum_{x\in\bb T_N^d}
\int (\omega^{\ell} _{x})^2 \, f \; d\nu^N
\\
& \;+\; \mf c(\bs u^N)  \, 
\Big(\frac{K}{N}\Big)^2 \Big\{ \, \frac{1}{\delta}\, \Big[ \, 1 \,+\, 
\frac{K\, s_d(\ell) \,}{N^2} \,\Big] \,+ \,  (1+\lambda)  \Big\} \, \,
\int \sum_{k=1}^d \sum_{x\in\bb T_N^d} (H^{(\ell)}_{k,x})^2 \, f  \;
d\nu^N
\\
& \quad \;+\; \mf c (\bs u^N)\, K^2\, N^{d-2}
\,\Big\{\, \delta \,+\, \frac{s_d(\ell)}{\lambda} \,\Big\} 
\end{aligned}
\end{equation*}
for all $0<\delta<1$, $\lambda>0$, density $f$ with respect to $\nu^N$
and $N> 4\ell$. In this formula, $s_d(\ell)$ is the sequence
introduced in \eqref{2-18}.
\end{proposition}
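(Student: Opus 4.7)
This is a two-block estimate combining the flow representation of Lemma~\ref{l01} with the standard ``moving particle'' exchange inequality, adapted to the inhomogeneous product measure $\nu^N$.

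\emph{Step 1 (flow representation).} Testing the antisymmetric flow $\Phi_\ell$ (whose divergence equals $\delta_0 - m^{(2)}_\ell$) against $z\mapsto \omega_{x+\mb x_A+z}$ and summing by parts yields
\begin{equation*}
\omega_{x+\mb x_A} - \omega^\ell_{x+\mb x_A} \;=\; \sum_{k=1}^d\sum_z \Phi_\ell(z,z+e_k)\,\bigl(\omega_{x+\mb x_A+z} - \omega_{x+\mb x_A+z+e_k}\bigr).
\end{equation*}
Substituting this into $\mc W^{(2)}_\ell$ and reindexing by $w = x + \mb x_A + z$ gives the bond representation $\mc W^{(2)}_\ell = \sum_{k,w} H^{(\ell)}_{k,w}\,(\omega_w - \omega_{w+e_k})$, with $H^{(\ell)}_{k,w}$ as in~\eqref{2-06a}.

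\emph{Step 2 (split the gradient).} Decompose $\omega_w - \omega_{w+e_k} = (\eta_w - \eta_{w+e_k}) - (\bs u^N(w) - \bs u^N(w+e_k))$. The deterministic piece vanishes for $k\ne 1$ since $\bs u^N$ is translation-invariant in those coordinates by~\eqref{2-53a}.

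\emph{Step 3 (exchange inequality, random part).} For each bond $b=(w,w+e_k)$, use antisymmetry of $\eta_w - \eta_{w+e_k}$ under $T^b$ together with the fact that $H^{(\ell)}_{k,w}$ is invariant under $T^b$ (as the flow support places no $\omega$-factors on $\{w,w+e_k\}$) to obtain
\begin{equation*}
2\int H^{(\ell)}_{k,w}(\eta_w-\eta_{w+e_k}) f\, d\nu^N \;=\; \int H^{(\ell)}_{k,w}(\eta_w-\eta_{w+e_k})\,\bigl[f - (f\circ T^b)\,r_b\bigr]\,d\nu^N,
\end{equation*}
where $r_b = \nu^N(T^b\eta)/\nu^N(\eta)$. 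Factor $f-(f\circ T^b)r_b = (\sqrt f-\sqrt{(f\circ T^b)r_b})(\sqrt f+\sqrt{(f\circ T^b)r_b})$ and apply Cauchy--Schwarz with parameter~$\delta$. The difference-of-square-roots factor yields the Kawasaki piece $\delta N^2\int(\sqrt{f\circ T^b}-\sqrt f)^2\, d\nu^N$ plus a correction proportional to $|1-\sqrt{r_b}|\lesssim|\bs u^N(w+e_1)-\bs u^N(w)|$; the sum-of-square-roots factor yields a quadratic $(c/\delta N^2)\int (H^{(\ell)}_{k,w})^2 f\,d\nu^N$. Multiplying by $K$ and summing over $(k,w)$ reconstructs the Dirichlet-form term $\delta N^2 I^E_N(f;\nu^N)$, the $(K/N)^2/\delta$-weighted sum $\sum_{k,x}\int (H^{(\ell)}_{k,x})^2 f\,d\nu^N$, and a ``drift'' correction. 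Bounding this correction by Young's inequality and re-expressing its diagonal portion as the square of the mesoscopic block average produces the $\mf c K\sum_x\int (\omega^\ell_x)^2 f\,d\nu^N$ term, plus a residual $\mf c K^2 N^{d-2}\delta$ absorbed using $\sum_w(\bs u^N(w+e_1)-\bs u^N(w))^2 = O(K N^{d-2})$ from Lemma~\ref{2-l18}.

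\emph{Step 4 (deterministic part, Young).} For the $k=1$ piece $K\sum_w H^{(\ell)}_{1,w}(\bs u^N(w+e_1)-\bs u^N(w))$, Young with parameter $\lambda$ gives
\begin{equation*}
K\,H^{(\ell)}_{1,w}\,(\bs u^N(w+e_1)-\bs u^N(w)) \;\le\; K\lambda\,(H^{(\ell)}_{1,w})^2 \;+\; \frac{K}{4\lambda}\,(\bs u^N(w+e_1)-\bs u^N(w))^2.
\end{equation*}
The first term combines with Step~3 to form the $(1+\lambda)$-prefactor in front of $\sum_{k,x}\int (H^{(\ell)}_{k,x})^2 f\,d\nu^N$. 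The second, summed over $w$ together with the flow's $\ell^d$-support volume and Lemma~\ref{l01}'s bound $\sum_z|\Phi_\ell|^2\le C g_d(\ell)$, contributes $\mf c K^2 N^{d-2}\,s_d(\ell)/\lambda$ to the error.

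\textbf{Main obstacle.} The delicate aspect is bookkeeping the inhomogeneity: $\nu^N$ fails to be invariant under $e_1$-exchanges, so $r_b$ differs from one by $O(\sqrt K/N)$, and every Cauchy--Schwarz step must carefully track the resulting mismatch terms. Identifying the diagonal piece of one such mismatch as $(\omega^\ell_w)^2$---which is handled by the entropy-inequality reduction in the rest of the section---is what enables the estimate to close without losing factors of $K$ or $N$. Assembling Steps 1--4 then delivers the claimed bound.
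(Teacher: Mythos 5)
Steps 1--2 and the skeleton of Step 3 reproduce the paper's Lemma~\ref{l2-01} and the first half of Lemma~\ref{l2}, but there is a genuine gap in how you treat the deterministic part of the gradient, and it is fatal to the stated bound. In the paper the term $-[\bs u^N(w+e_1)-\bs u^N(w)]\int H^{(\ell)}_{1,w}\,f\,d\nu^N$ is never estimated at all: expanding the ratio $\nu^N(T^{w,w+e_1}\eta)/\nu^N(\eta)$ to first order produces $-\tfrac{1}{2\chi(w)}[\bs u^N(w+e_1)-\bs u^N(w)]\int H^{(\ell)}_{1,w}\,(\eta_w-\eta_{w+e_1})^2 f\,d\nu^N$, and since $(\eta_w-\eta_{w+e_1})^2=2\chi(w)+\dots$, its mean part cancels the deterministic gradient term exactly, leaving only the remainder $A_w$ of \eqref{2-05}, which has degree at least one in the $\omega$'s. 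Your Step 4 instead bounds the deterministic term by Young with parameter $\lambda$, producing $K\lambda\sum_w\int (H^{(\ell)}_{1,w})^2 f\,d\nu^N$, whereas the proposition only allows a coefficient $\mf c(\bs u^N)(K/N)^2(1+\lambda)$ in front of that sum: your coefficient is larger by the factor $N^2/K$. No redistribution of the Young weights repairs this: to keep the $H^2$-coefficient at order $(K/N)^2\lambda$ you must accept a complementary error of order $K N^d/\lambda$ (using $\sum_w[\bs u^N(w+e_1)-\bs u^N(w)]^2\le \mf c K N^{d-2}$), which exceeds the allowed $K^2N^{d-2}s_d(\ell)/\lambda$ by the factor $N^2/(K s_d(\ell))=\ell^{d/2}/K$; under the standing regime $K^2\lesssim\ell^d$ (and in the application $K_N\sim\sqrt{\log N}\ll \ell_N^{d/2}$) this is unbounded. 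So a direct estimate of the bare deterministic term cannot reach the stated inequality; the exact cancellation inside Lemma~\ref{l2} is essential, not bookkeeping, and also your attribution of the $s_d(\ell)/\lambda$ factor in Step 4 to "the flow's support volume" has no basis, since the flow does not enter that Young term at all.

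The second omission is downstream of the first: after the cancellation, the residual term $-\sum_w[\bs u^N(w+e_1)-\bs u^N(w)]\int H^{(\ell)}_{1,w}A_w f\,d\nu^N$ must be handled by a second iteration of the whole flow/two-block scheme (the paper's Lemmata~\ref{2-l02} and~\ref{2-l03}). One splits $\omega_{w+e_1}$ into $(\omega_{w+e_1}-\omega^\ell_{w+e_1})+\omega^\ell_{w+e_1}$; the block-average piece gives, via Young, exactly the $\mf c(\bs u^N)\,K\sum_x\int(\omega^\ell_x)^2 f\,d\nu^N$ term, while the difference is rewritten with Lemma~\ref{l2-01} and Lemma~\ref{l2} once more, producing the second-generation functions $M^{p,\ell}_{k,x}$ of \eqref{2-06b}; Lemma~\ref{2-l03} then converts these back into $(H^{(\ell)}_{1,x})^2$ at the cost of the factors $K s_d(\ell)/N^2$ (the $\tfrac1\delta$-bracket in the statement) and $s_d(\ell)/\lambda$ (the last error term). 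In your write-up the $(\omega^\ell_x)^2$ term is attributed to "the diagonal portion" of the first-step ratio correction, and the $M^{p,\ell}$-level of the argument never appears; that is precisely the part of the proof where the inhomogeneity is actually absorbed, and it cannot be waved through. To repair the proposal you need the exact identity of Lemma~\ref{l2} (ratio expansion plus cancellation) and then the iteration of your Steps 1--3 applied to the resulting $A_w$-term.
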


This result is proved in Section \ref{sec4}. We comment that the
inhomogeneity of $\nu^N$ will play a strong role in the derivation.
Proposition \ref{2-l05} is one of the main steps in the proof of the
Boltzmann-Gibbs principle, the subject of Section \ref{sec-BG}.

In the application, $\ell_N$ will be chosen according to \eqref{2-28},
$N^2 = s_d(\ell)\, \ell^{d/2}$, and $\lambda$ as $\ell_N^{d/2}$.  With
these choices, it will be useful to give an alternate
version of Proposition \ref{2-l05}. Assume that
$\sup_N (K^2_N/\ell^d_N)<\infty$.  Then, there exists a finite
constant $\mf c(\bs u^N)$, depending only on $\bs u^N$, $A$, $d$ and
uniformly bounded in $N$, such that
\begin{equation}
\label{2-41}
\begin{aligned}
& K\, \int  \mc W^{(2)}_\ell \, f\; d\nu^N \; \le\;
\delta\, N^2 \, I^E_{N}(f; \nu^N)
\; +\; \mf c (\bs u^N)  \, K\, \sum_{x\in\bb T_N^d}
\int (\omega^{\ell} _{x})^2 \, f \; d\nu^N
\\
& \;+\; \mf c(\bs u^N)   \,  \frac{1}{\delta}\, 
\frac{K^2}{s_d(\ell)} \,
\int \sum_{k=1}^d \sum_{x\in\bb T_N^d} (H^{(\ell)}_{k,x})^2 \, f  \;
d\nu^N
\;+\; \mf c (\bs u^N)  \, K^2\, (N/\ell)^{d} \,
\end{aligned}
\end{equation}
for all $0<\delta<1$, $N\ge 1$ and density $f$ with respect to
$\nu^N$. In this formula, $\ell = \ell_N$.

At this point, we use the entropy inequality and a concentration
inequality to estimate the integral of $(H^{(\ell)}_{k,x})^2 $ and
$(\omega^{\ell} _{x})^2$. The next result follows from the proof of
Proposition 5.2 (page 4179) and Lemma 5.7 in \cite{jl}.

\begin{lemma}
\label{2-l08}
There exists a finite constant $\mf c(\bs u^N)$, depending
only on $\bs u^N$, $A$, $d$ and uniformly bounded in $N$, such that
\begin{equation*}
\begin{gathered}
\int \sum_{k=1}^d \sum_{x\in\bb T_N^d} (H^{(\ell)}_{k,x})^2 \, f  \;
d\nu^N \;\le \; 
\mf c (\bs u^N)\, \Vert G\Vert^2_\infty \, s_d(\ell)
\, \big\{ H_N(f) \,+\, (N/\ell)^d\, \big\}\;,
\\
\int \sum_{x\in\bb T_N^d} (\omega^{\ell} _{x})^2 \, f  \;
d\nu^N \;\le
\mf c (\bs u^N) 
\, \big\{ H_N(f) \,+\, (N/\ell)^d\, \big\}\;.
\end{gathered}
\end{equation*}
for all $N\ge 1$, $0<\ell<N/4$, and density $f$ with respect to $\nu^N$.
\end{lemma}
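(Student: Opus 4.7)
Both inequalities are Boltzmann--Gibbs-type bounds, proved in parallel by combining the entropy inequality
$$\int F\, f\, d\nu^N \;\le\; \frac{1}{\gamma}\, H_N(f) \;+\; \frac{1}{\gamma}\, \log \int e^{\gamma F}\, d\nu^N$$
with an exponential moment estimate under the product measure $\nu^N$. For the second inequality I would apply it with $F = \sum_x (\omega^\ell_x)^2$ and $\gamma$ a small absolute constant; for the first with $F = \sum_{k,x} (H^{(\ell)}_{k,x})^2$ and $\gamma$ of order $1/[\mf c(\bs u^N)\, \Vert G\Vert^2_\infty\, s_d(\ell)]$. In either case the target is $\log \int e^{\gamma F}\, d\nu^N \le C (N/\ell)^d$, so that dividing by $\gamma$ recovers the stated prefactors.

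\textbf{Reduction to single sites.} Under $\nu^N$ the variables $\omega_y$ are independent, centered and bounded in $[-1,1]$. By \eqref{20} and Lemma~\ref{l01}, both $\omega^\ell_x$ and $H^{(\ell)}_{k,x}$ are measurable with respect to $\{\omega_z : z \in x + \Lambda_{O(\ell)}\}$. Partitioning $\bb T^d_N$ into $M = (c\ell)^d$ translation classes with $c$ small enough that the supports of distinct summands within each class are pairwise disjoint, H\"older's inequality gives
$$\log \int e^{\gamma F}\, d\nu^N \;\le\; \frac{1}{M}\, \sum_{j=1}^M \log \int e^{\gamma M F_j}\, d\nu^N\;,$$
where $F_j$ is the restriction of $F$ to class $j$; within class $j$, independence factors the log as a sum over the $O((N/\ell)^d)$ sites of the class. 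The task therefore reduces to single-site exponential bounds of the form $\log \int e^{\lambda (\omega^\ell_x)^2}\, d\nu^N \le C$ for $\lambda \asymp \ell^d$, respectively $\log \int e^{\lambda (H^{(\ell)}_{k,x})^2}\, d\nu^N \le C$ for $\lambda \asymp 1/[\Vert G\Vert^2_\infty g_d(\ell)]$, with $\lambda = \gamma M$ in each case. Summing over the $O((N/\ell)^d)$ sites in a class and over the $M$ classes (cancelled by the $1/M$ prefactor) then delivers the target $C(N/\ell)^d$.

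\textbf{Main obstacle.} The single-site bound for $(\omega^\ell_x)^2$ is straightforward from Hoeffding: $\omega^\ell_x$ is sub-Gaussian with variance proxy bounded by $\Vert m^{(2)}_\ell\Vert_2^2\, \max_y\chi(y) \le C/\ell^d$, whence $(\omega^\ell_x)^2$ is sub-exponential at the required scale. The delicate step is the corresponding bound for $(H^{(\ell)}_{k,x})^2$, which is the square of a polynomial of degree $|A_\star|$ in the $\omega$'s rather than a linear form. The relevant variance bound $\mathrm{Var}_{\nu^N}(H^{(\ell)}_{k,x}) \le \mf c(\bs u^N)\, \Vert G\Vert^2_\infty\, g_d(\ell)$ follows from Lemma~\ref{l01} and the uniform bound $\chi \le 1/4$, but the exponential moment requires a Khintchine or hypercontractive estimate for polynomial chaos in bounded independent centered variables. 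This is exactly the content of Lemma~5.7 and the proof of Proposition~5.2 in \cite{jl}, which I would invoke essentially verbatim; the inhomogeneity of $\nu^N$ plays no role because $\chi(y) = \bs u^N(y)(1-\bs u^N(y)) \le 1/4$ uniformly in $y$, so the dependence on $\bs u^N$ reduces to this uniform bound, yielding a constant $\mf c(\bs u^N)$ that is bounded uniformly in $N$.
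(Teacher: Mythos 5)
Your proposal matches the paper's treatment: the paper proves this lemma simply by invoking the proof of Proposition 5.2 and Lemma 5.7 of \cite{jl} (entropy inequality plus exponential-moment/concentration bounds under the product measure), which is exactly the route you outline, and your observation that the inhomogeneity of $\nu^N$ is harmless because it enters only through uniform bounds on $\bs u^N$ and $\chi$ is the same remark the paper makes when adapting \cite{jl}. The only soft spot is minor: the required bound on $\int e^{\lambda (H^{(\ell)}_{k,x})^2}\,d\nu^N$ does not follow from a generic hypercontractive estimate for polynomial chaos (which would only control exponential moments of $|H|^{2/m}$ for degree $m$), but rather from splitting the sum defining $H^{(\ell)}_{k,x}$ into finitely many blocks of disjointly supported, hence independent, bounded, centered summands and applying Hoeffding blockwise—which is precisely what the argument you cite in \cite{jl} does.
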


Reporting these estimates in Proposition \ref{2-l05} yields a bound
for the integral of $\mc W^{(2)}_\ell$ with respect to the measure
$f\; d\nu^N$. This is the content of the next result.

\begin{lemma}
\label{2-l06}
Let $(\ell_N:N\ge 1)$ be the sequence introduced in the statement of
Theorem \ref{mt1}.  Assume that $\sup_N (K^2_N/\ell^d_N)<\infty$.
Then, there exists a finite constant $\mf c(\bs u^N)$, depending only
on $\bs u^N$, $A$, $d$ and uniformly bounded in $N$, such that
\begin{equation*}
K\, \int  \mc W^{(2)}_\ell \, f\; d\nu^N \; \le\;
\delta\, N^2 \, I^E_{N}(f; \nu^N) 
\;+\; \mf c(\bs u^N)\, \frac{1}{\delta} \, \{\, 1 + \Vert
G\Vert^2_\infty \,\} \,
K^2 \, \big\{ H_N(f) \,+\, (N/\ell_N)^d\, \big\} 
\end{equation*}
for all $0<\delta<1$, $N\ge 1$ and density $f$ with respect to $\nu^N$.
\end{lemma}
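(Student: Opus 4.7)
The plan is to combine the alternate form \eqref{2-41} of Proposition \ref{2-l05} directly with the two estimates of Lemma \ref{2-l08}, and then collect the resulting error terms. All substantive work has already been done: \eqref{2-41} supplies a Dirichlet-form bound in which the remainders are expressed through the local averages $(\omega^\ell_x)^2$ and the flow-weighted cylinder sums $(H^{(\ell)}_{k,x})^2$, while Lemma \ref{2-l08} controls the integrals of both quantities by the relative entropy plus a term of size $(N/\ell_N)^d$.

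Concretely, I would first invoke \eqref{2-41} with the prescribed $\delta$, producing four contributions: the Dirichlet form $\delta N^2 I^E_N(f;\nu^N)$; a term $\mf c(\bs u^N)\, K \sum_{x}\int (\omega^\ell_x)^2 f\, d\nu^N$; a term $\mf c(\bs u^N)(1/\delta)(K^2/s_d(\ell))\int \sum_{k,x}(H^{(\ell)}_{k,x})^2 f\, d\nu^N$; and a deterministic remainder of order $K^2 (N/\ell_N)^d$. I would then apply the first inequality of Lemma \ref{2-l08} to the cylinder-sum term. The essential cancellation is that the factor $s_d(\ell_N)$ coming from Lemma \ref{2-l08} exactly annihilates the $1/s_d(\ell_N)$ appearing in \eqref{2-41}, leaving the clean contribution $\mf c(\bs u^N)(1/\delta)\Vert G\Vert_\infty^2 K^2\{H_N(f)+(N/\ell_N)^d\}$.

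Applying the second inequality of Lemma \ref{2-l08} to the $(\omega^\ell_x)^2$ term yields a contribution of order $\mf c(\bs u^N)\, K\,\{H_N(f)+(N/\ell_N)^d\}$; since $K_N\uparrow\infty$ one has $K\le K^2$ for $N$ large, so this is absorbed into the $K^2$ term at the cost of enlarging $\mf c(\bs u^N)$. The deterministic remainder $\mf c(\bs u^N) K^2 (N/\ell_N)^d$ is already of the target form after using $1/\delta\ge 1$ and $1\le 1+\Vert G\Vert_\infty^2$. Merging the three constants (all of which depend only on $\bs u^N$, $A$ and $d$, and are uniformly bounded in $N$) into a single $\mf c(\bs u^N)$ produces the desired bound.

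I do not anticipate any real difficulty here: this lemma is a bookkeeping step that packages Proposition \ref{2-l05} and Lemma \ref{2-l08} into the shape demanded by the entropy-production argument of Theorem \ref{mt1}. The only point requiring a little care is verifying that the powers of $K$ and $s_d(\ell_N)$ cancel as predicted, so that no residual $\ell_N$-dependence survives in the final estimate and the resulting prefactor is genuinely of order $(1+\Vert G\Vert_\infty^2)/\delta$.
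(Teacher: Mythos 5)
Your proposal is correct and follows essentially the same route as the paper: the paper's proof inserts Lemma \ref{2-l08} into Proposition \ref{2-l05} with the choice $\lambda=\ell_N^{d/2}$ and uses $N^2=s_d(\ell_N)\,\ell_N^{d/2}$, which is exactly the computation packaged in \eqref{2-41} that you invoke, and the bookkeeping you describe (cancellation of $s_d(\ell_N)$, absorbing the $K$- and remainder terms via $K\le K^2$, $\delta<1$, $1\le 1+\Vert G\Vert_\infty^2$) matches the paper's argument.
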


\begin{proof}
Insert the estimates obtained in Lemma \ref{2-l08} at the right-hand
side of Proposition \ref{2-l05}.  Choose the constant $\lambda$ for
$\lambda \, (K/N)^2 \, s_d(\ell)\, (N/\ell)^{d}$ to be of the same
order as $\lambda^{-1}\, K^2\, N^{d-2} \, s_d(\ell)$, that is,
$\lambda= \ell^{d/2}$.

With this choice, the expression multiplying
$\{ H_N(f) \,+\, (N/\ell)^d\, \}$ is equal to
\begin{equation*}
\mf c(\bs u^N)  \, \Big\{ K \,+\, \Vert G\Vert^2_\infty\, s_d(\ell) \,
\Big(\frac{K}{N}\Big)^2 \,  \Big[ \, \frac{1}{\delta}\, \Big( \, 1+
s_d(\ell) \,
\frac{K}{N^2} \,\Big) \,+ \, 1 \,+\, \ell^{d/2} \,\Big]
\Big\}\;.
\end{equation*}
Choose $\ell_N$ according \eqref{2-28},
$N^2 = \ell^{d/2}\, s_d(\ell)$. By hypothesis,
$s_d(\ell) \, (K/N^2) \le C_0$. Thus, and since $\delta<1$, the
expression inside square brackets is bounded by
$C_0 \delta^{-1} \ell^{d/2}$, and the previous expression is less than
or equal to
\begin{equation*}
\mf c(\bs u^N)  \, \frac{1}{\delta}\, \Big\{ K \,+\,
\Vert G\Vert^2_\infty\, s_d(\ell) \,
\Big(\frac{K}{N}\Big)^2 \, \ell^{d/2} \, \Big\} \,=\,
\mf c(\bs u^N)  \, \frac{1}{\delta}\, \big\{ K \,+\, K^2\,
\Vert G\Vert_\infty \, \big\} \;\le\;
\mf c(\bs u^N)  \, \frac{1}{\delta}\, \Vert G\Vert^2_\infty \,
K^2\;,
\end{equation*}
where we used the identity $s_d(\ell) \, \ell^{d/2} =N^2$.

With the choice of $\lambda$, and since
$s_d(\ell)/\ell^{d/2} \ge \ell^{d/2} \to \infty$, the last term on the
right-hand side of the statement of Proposition \ref{2-l05} is given
by
\begin{equation*}
\mf c (\bs u^N)\, K^2\, N^{d-2}
\,\Big\{\, \delta \,+\, \frac{s_d(\ell)}{\ell^{d/2}} \,\Big\}
\;\le \;
\mf c (\bs u^N)\, K^2\, \frac{1}{N^{2}}\, 
\,s_d(\ell) \, \ell^{d/2} \,  (N/\ell)^d\;. 
\end{equation*}
By definition of $\ell_N$, as $(\ell_N^{d/2}s_d(\ell_N)=N^2$), this
expression is bounded by $\mf c (\bs u^N)\, K^2\, (N/\ell)^d$, which
completes the proof of the lemma.
\end{proof}

Lemma \ref{2-l06} completes the proof of the estimation of
$\mc W^{(2)}_\ell$. We turn to $\mc W^{(1)}_\ell$.

\begin{lemma}
\label{2-l09}
There exists a finite constant $\mf c(\bs u^N)$, depending
only on $\bs u^N$, $A$, the dimension $d$ and uniformly bounded in
$N$, such that
\begin{equation*}
\int  |\, \mc W^{(1)}_\ell\,| \, f\; d\nu^N \; \le\;
\mf c(\bs u^N)\, \Vert G\Vert_\infty \,
\big\{ H_N(f) \,+\, (N/\ell)^d\, \big\} 
\end{equation*}
for all density $f$ with respect to $\nu^N$ and $N> 4\ell$. 
\end{lemma}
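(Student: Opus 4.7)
The plan is to reduce $\int|\mathcal{W}^{(1)}_\ell|\, f\,d\nu^N$ to an $L^2$ bound under $f\,d\nu^N$ by Cauchy--Schwarz, and then apply the entropy inequality together with a subgaussian-type exponential-moment estimate for the polynomial $\mathcal{W}^{(1)}_\ell$ under the product measure $\nu^N$. The whole argument is parallel to the derivation of Lemma \ref{2-l08}, following \cite[Prop.~5.2, Lem.~5.7]{jl}, with the extra algebraic input that $\mathcal{W}^{(1)}_\ell$ is in fact mean-zero under $\nu^N$ because of the maximality of $\mathbf x_A$.

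\textbf{Mean and variance computation.} Since $m^{(2)}_\ell$ is supported on the non-negative coordinate cube $\Lambda_{2(\ell-1)}$ while every shift $y-\mathbf x_A$ with $y\in A_\star$ has at least one strictly negative coordinate (by the maximality of $\mathbf x_A$), the $\omega$-sites appearing in $\omega^\ell_{x+\mathbf x_A}$ are disjoint from $\{x+y:y\in A_\star\}$. By the product structure of $\nu^N$ and $\mathbb{E}_{\nu^N}[\omega_z]=0$, this forces $\int\mathcal W^{(1)}_\ell\,d\nu^N=0$, and a similar accounting kills all off-diagonal contributions $x\neq x'$ in $\int(\mathcal W^{(1)}_\ell)^2\,d\nu^N$. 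Using $\sum_z [m^{(2)}_\ell(z)]^2 \le C/\ell^d$ (Young's inequality) together with the boundedness of $\chi$, one arrives at the key $L^2$ estimate
\[
\int (\mathcal W^{(1)}_\ell)^2\,d\nu^N \;\le\; \mathfrak c(\bs u^N)\,\Vert G\Vert_\infty^2\,(N/\ell)^d.
\]

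\textbf{Entropy inequality plus exponential moment.} The random variable $\mathcal W^{(1)}_\ell$, being a centred polynomial of bounded degree in the independent Bernoulli variables $\{\omega_x\}$ whose summands have support of diameter $O(\ell)$ and are bounded by $\Vert G\Vert_\infty$, admits a subgaussian-type concentration: for $\gamma = c(\bs u^N)\,[\Vert G\Vert_\infty^2(N/\ell)^d]^{-1}$ small enough, $\int \exp\{\gamma(\mathcal W^{(1)}_\ell)^2\}\,d\nu^N \le 2$. This is a standard consequence of exponential moment bounds for sums of bounded, locally dependent mean-zero random variables. Combining with the entropy inequality $\int F\,f\,d\nu^N \le \gamma^{-1}\{H_N(f)+\log\int e^{\gamma F}\,d\nu^N\}$ applied to $F=(\mathcal W^{(1)}_\ell)^2$ yields
\[
\int(\mathcal W^{(1)}_\ell)^2\,f\,d\nu^N \;\le\; \mathfrak c(\bs u^N)\,\Vert G\Vert_\infty^2\,(N/\ell)^d\,\{H_N(f)+1\}.
\]

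\textbf{Conclusion by Cauchy--Schwarz and AM--GM.} Using $\int|F|\,f\,d\nu^N \le \big(\int F^2\,f\,d\nu^N\big)^{1/2}$, then $\sqrt{ab}\le \tfrac12(a+b)$ with $a=(N/\ell)^d$, $b=H_N(f)+1$, and finally $(N/\ell)^d\ge 1$ (since $N>4\ell$), one obtains
\[
\int|\mathcal W^{(1)}_\ell|\,f\,d\nu^N \;\le\; C\,\Vert G\Vert_\infty\,(N/\ell)^{d/2}\sqrt{H_N(f)+1} \;\le\; \mathfrak c(\bs u^N)\,\Vert G\Vert_\infty\,\{H_N(f)+(N/\ell)^d\},
\]
which is the desired bound. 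The main obstacle is the exponential-moment estimate itself: for $|A|=2$ it reduces to a Hanson--Wright-type bound for quadratic forms, while for $|A|=3$ (the case relevant to $G_2$) it requires hypercontractivity together with the observation that at most $O(\ell^d)$ summands share overlapping support, as in \cite[Lem.~5.7]{jl}.
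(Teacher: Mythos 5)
Your argument hinges on the exponential-moment claim $\int \exp\{\gamma(\mc W^{(1)}_\ell)^2\}\,d\nu^N\le 2$ with $\gamma\asymp[\Vert G\Vert_\infty^2(N/\ell)^d]^{-1}$, and that step is false in the regime of the lemma. The field $\mc W^{(1)}_\ell$ is a centred polynomial of degree $|A|\ge 2$ in the independent variables $\omega_x$, and such polynomials are not subgaussian at the scale of their standard deviation: hypercontractivity (or Hanson--Wright when $|A|=2$) only controls $\exp\{c\,(|\mc W^{(1)}_\ell|/\Vert\mc W^{(1)}_\ell\Vert_{L^2(\nu^N)})^{2/|A|}\}$, not the exponential of the square. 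Concretely, take $d=1$, $A=\{0,e_1\}$, $G\equiv 1$: on the event that all $\omega_x$ have the same sign, which has probability at least $e^{-CN}$, one has $|\mc W^{(1)}_\ell|\ge cN$, so this single event contributes at least $e^{c'\gamma N^2-CN}=e^{(c'\ell-C)N}$ to $\int e^{\gamma(\mc W^{(1)}_\ell)^2}d\nu^N$, which diverges because $\ell=\ell_N\to\infty$. In general one is forced down to $\gamma\lesssim N^{-d}$, and then the entropy inequality only yields $\int(\mc W^{(1)}_\ell)^2 f\,d\nu^N\lesssim N^d\{H_N(f)+1\}$, which after Cauchy--Schwarz misses the stated bound by the diverging factor $\ell^{d/2}$. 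So the obstacle is not your $L^2(\nu^N)$ variance computation (which is of the right order, although the assertion that maximality of $\mb x_A$ kills \emph{all} off-diagonal terms is not correct for $|A_\star|\ge 2$); it is the transfer to a general density $f$ via the entropy inequality applied to the \emph{global} square, which cannot be salvaged at the inverse-variance exponential scale.

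The paper's proof avoids exponentiating any global object. Writing $m^{(2)}_\ell=m_\ell*m_\ell$, it factors $\mc W^{(1)}_\ell(\eta)=\sum_{x}M^{(1)}_\ell(x)\,M^{(2)}_\ell(x)$, where $M^{(1)}_\ell(x)$ and $M^{(2)}_\ell(x)$ are block averages over $\Lambda_{\ell-1}$ of $G(\cdot)\,\omega_{\cdot+A_\star}$ and of $\omega_{\cdot+\mb x_A}$ respectively; Young's inequality with $\beta=\Vert G\Vert_\infty$ then reduces the problem to bounding $\int\sum_x M^{(i)}_\ell(x)^2\,f\,d\nu^N$, which is exactly the object handled by \cite[Lemma 5.7]{jl} (the same input as Lemma \ref{2-l08}), giving $\mf c(\bs u^N)\,\Vert G\Vert^2_\infty\,\{H_N(f)+(N/\ell)^d\}$ for $i=1$ and the same bound without $\Vert G\Vert^2_\infty$ for $i=2$. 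The point is that each $M^{(i)}_\ell(x)$ involves only $O(\ell^d)$ sites, so its square does admit an exponential moment at scale $\ell^d$, and the $(N/\ell)^d$ arises from tiling the torus by such blocks; this localization is what replaces your global subgaussian claim, and any repair of your outline would have to incorporate it (or an equivalent decomposition).
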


\begin{proof}
By definition of the measure $m^{(2)}_\ell$, and a change of
variables,
\begin{equation}
\label{2-31}
\mc W^{(1)}_\ell (\eta) \;=\;
\sum_{x\in \bb T^d_N}  M^{(1)}_\ell (x) \, M^{(2)}_\ell(x) \;,
\end{equation}
where
\begin{equation*}
\begin{gathered}
M^{(1)}_\ell (x) \;=\;  \sum_{y\in \Lambda_{\ell-1}} m_\ell(y) \,
G (x-y) \, \omega_{x-y+A_\star} \;,
\\
M^{(2)}_\ell(x) \;=\; \sum_{z\in \Lambda_{\ell-1}}
m_\ell(z) \, \omega_{x+z+ \mb x_A}\;.
\end{gathered}
\end{equation*}
By Young's inequality, $2 ab \le \beta\, a^2 + \beta^{-1} b^2$, 
$\beta>0$,
\begin{equation*}
|\, \mc W^{(1)}_\ell\,|  \;\le\;
\frac{1}{2\,\beta }\, \sum_{x\in \bb T^d_N}  M^{(1)}_\ell (x)^2
\;+\;
\frac{\beta}{2}\, \sum_{x\in \bb T^d_N}  M^{(2)}_\ell (x)^2 
\end{equation*}
for all $\beta>0$.  By \cite[Lemma 5.7]{jl},
\begin{equation*}
\int  \sum_{x\in \bb T^d_N}  M^{(1)}_\ell (x)^2  \, f\; d\nu^N \; \le\;
\mf c(\bs u^N)\, \Vert G \Vert^2_\infty\,
\big\{ H_N(f) \,+\, (N/\ell)^d\, \big\} 
\end{equation*}
for a finite constant $\mf c (\bs u^N)$ depending only on $\bs u^N$,
$A$, the dimension $d$ and uniformly bounded in $N$. The same
inequality holds for $ M^{(1)}_\ell$ with $\Vert G \Vert^2_\infty$
replaced by $1$. Note that this result is proved in \cite{jl} in the
case where the density $\bs u^N(\cdot)$ is constant, but the same
proof applies provided $\bs u^N(\cdot)$ is bounded away from $0$ and
$1$.  To complete the proof of the lemma, chooses
$\beta = \Vert G \Vert_\infty$.
\end{proof}

\begin{proof}[Proof of Theorem \ref{t02}]
The result is a consequence of the decomposition \eqref{2-29} and
Lemmata \ref{2-l06}, \ref{2-l09}.
\end{proof}

\section{An energy estimate}
\label{sec4}

In this section, we prove Proposition \ref{2-l05}. Here,
$\color{blue} \mf c = \mf c (\bs u^N)$ represents a constant uniformly
bounded in $N$, whose value may changed from line to line and which
depends only on the sequence $(\bs u^N: N\ge 1)$, $A$ and the dimension
$d$.  We first use the definition of the flow to derive an alternative
formula for $\mc W^{(2)}_\ell$.

\begin{lemma}
\label{l2-01}
With the notation of Proposition \ref{2-l05},
\begin{equation*}
\mc W^{(2)}_\ell (\eta) \; = \; \sum_{k=1}^d  \sum_{x\in\bb T_N^d}
\{\, \omega_{x}  \,-\, \omega_{x+e_k} \,\} \, H^{(\ell)}_{k,x} \;,
\end{equation*}
where $H^{(\ell)}_{k,x}$ is given by \eqref{2-06a}.
\end{lemma}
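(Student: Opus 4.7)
The plan is to use the flow $\Phi_\ell$ provided by Lemma \ref{l01} to rewrite the difference $\omega_{x+\mb x_A} - \omega^\ell_{x+\mb x_A}$ as a sum of nearest-neighbor gradients of $\omega$, and then perform a change of variables to match the form of $H^{(\ell)}_{k,x}$.

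First I would unpack the definition of $\omega^\ell_x$ in \eqref{20} and write
\begin{equation*}
\omega_{x+\mb x_A} \,-\, \omega^\ell_{x+\mb x_A}
\;=\; \sum_{z\in \bb Z^d} \big[\, \delta_0(z) \,-\, m^{(2)}_\ell(z)\,\big]\, \omega_{x+\mb x_A + z}\;.
\end{equation*}
Since $\Phi_\ell$ is a flow from $\delta_0$ to $m^{(2)}_\ell$ supported on nearest-neighbor bonds, the divergence identity gives
\begin{equation*}
\delta_0(z) \,-\, m^{(2)}_\ell(z) \;=\; \sum_{k=1}^d \big\{\, \Phi_\ell(z, z+e_k) \,-\, \Phi_\ell(z-e_k, z)\,\big\}\;,
\end{equation*}
using the anti-symmetry to split $\sum_y \Phi_\ell(z,y)$ into positive and negative directions.

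Next, I would plug this into the previous display and perform a discrete summation by parts in $z$ (shifting $z \mapsto z-e_k$ in the second sum), obtaining
\begin{equation*}
\omega_{x+\mb x_A} \,-\, \omega^\ell_{x+\mb x_A}
\;=\; \sum_{k=1}^d \sum_{z\in \bb Z^d} \Phi_\ell(z, z+e_k) \,\big\{\, \omega_{x+\mb x_A+z} \,-\, \omega_{x+\mb x_A+z+e_k}\,\big\}\;.
\end{equation*}
The support condition in Lemma \ref{l01} ensures the sum is really over $\{z, z+e_k\} \subset \Lambda_{2(\ell-1)}$, and for $N>4\ell$ the torus identifications cause no issue.

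Finally, inserting this expression into the definition \eqref{2-04} of $\mc W^{(2)}_\ell$ and making the change of variable $x' = x + \mb x_A + z$ (so that $x = x' - \mb x_A - z$, and the difference $\omega_{x+\mb x_A+z} - \omega_{x+\mb x_A+z+e_k}$ becomes $\omega_{x'} - \omega_{x'+e_k}$) yields
\begin{equation*}
\mc W^{(2)}_\ell(\eta) \;=\; \sum_{k=1}^d \sum_{x'\in \bb T_N^d} \big\{\,\omega_{x'} \,-\, \omega_{x'+e_k}\,\big\} \sum_{z}\Phi_\ell(z, z+e_k)\, G(x'-\mb x_A-z)\, \omega_{x'-\mb x_A-z+A_\star}\;.
\end{equation*}
The inner sum is precisely $H^{(\ell)}_{k,x'}$ as defined in \eqref{2-06a}, which completes the identity. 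No step is an obstacle here — the argument is essentially a bookkeeping exercise: the only thing one must be careful about is the direction of the flow's divergence convention and the signs in the summation by parts; since $\Phi_\ell$ is anti-symmetric and $\sum_y \Phi_\ell(x,y) = \mu(x) - \nu(x)$, tracking signs is routine.
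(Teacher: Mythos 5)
Your proposal is correct and follows essentially the same route as the paper's proof: expand $\omega_{x+\mb x_A}-\omega^\ell_{x+\mb x_A}$ via the divergence of the flow $\Phi_\ell$, sum by parts along nearest-neighbour bonds, restrict to $\Lambda_{2(\ell-1)}$ by the support property, and change variables $x'=x+\mb x_A+y$ to recognize $H^{(\ell)}_{k,x'}$. The sign bookkeeping and the use of anti-symmetry are exactly as in the paper, so nothing further is needed.
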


\begin{proof}
Rewrite $\mc W^{(2)}_\ell$ in \eqref{2-04} as
\begin{equation*}
\mc W^{(2)}_\ell(\eta)\; =\; \sum_{x \in \bb T^d_N} G(x) \, \omega_{x+A_\star}
\sum_{y\in \Lambda_{2(\ell-1)}} \omega_{x+\mb x_A +y} \,
\{\, \delta_{0}(y)  \,-\,  m^{(2)}_\ell(y) \,\}\;,
\end{equation*}
where $\delta_0$ stands for the Dirac measure concentrated at $0$.

Denote by $\Phi_\ell$ the flow introduced in Lemma \ref{l01}. In the
previous equation, we may replace $\Lambda_{2(\ell-1)}$, on which
$\Phi_\ell$ is supported, by $\bb Z^d$. This simplifies the summation
by parts performed below. After this replacement, since the flow
connects $\delta_0$ to $m^{(2)}_\ell$, it is anti-symmetric and
supported on nearest-neighbour bonds, the sum over $y$ becomes
\begin{align*}
& \sum_{y\in \bb Z^d} \omega_{x+\mb x_A+y} \,
\sum_{z: \Vert z\Vert =1} \Phi_\ell (y,y+z) \\
&\quad \;=\; \sum_{k=1}^d \sum_{y\in \bb Z^d} \omega_{x+\mb x_A+y} \,
\{\, \Phi_\ell (y,y+e_k) \,-\,
\Phi_\ell (y-e_k,y)\,\} \;.
\end{align*}
Performing a summation by parts, this last sum becomes
\begin{equation*}
\sum_{k=1}^d \sum_{y\in \bb Z^d} \Phi_\ell (y,y+e_k)
\,
\{\, \omega_{x+\mb x_A+y}  \,-\, \omega_{x+\mb x_A+y+e_k} \,\}\;.
\end{equation*}
As $\Phi_\ell$ is supported on $\Lambda_{2(\ell-1)}$, we may restrict
the sum over $y$ to the set of all points in $\bb Z^d$ such that
$\{y,y+e_k\} \subset \Lambda_{2(\ell-1)}$.

Perform a change of variables $x'=x+\mb x_A+y$ and recall the
definition of $\omega_x$ to conclude that
\begin{equation*}
\mc W^{(2)}_\ell \; =\; \sum_{k=1}^d  \sum_{x\in\bb T_N^d}
\{\, \omega_{x}  \,-\, \omega_{x+e_k} \,\}
\sum_{\{y,y+e_k\} \subset \Lambda_{2(\ell-1)}} \Phi_\ell (y,y+e_k) \,
G(x-\mb x_A-y)\,  \omega_{x-\mb x_A-y+A_\star} \;.
\end{equation*}
To complete the proof of the assertion, it remains to recall the
definition of $H^{(\ell)}_{k,x}$ introduced in \eqref{2-06a}.
\end{proof}

\smallskip\noindent{\bf Integration by parts.}  We continue our path
to the proof of Proposition \ref{2-l05}, based on the formula for
$\mc W^{(2)}_\ell$ provided by Lemma \ref{l2-01}.  Since $\mb x_A$ is
a maximal point of $A$ and $\Lambda_\ell = \{0,1,\ldots, \ell-1\}^d$,
the support of $H^{(\ell)}_{k,x}$ is disjoint from $\{x,x+e_k\}$ in
the sense that the indices $z$ of $\omega_z$ which appear in the
definition of $H^{(\ell)}_{k,x}$ are different from $x$ and
$x+e_k$. In particular,
\begin{equation}
\label{15b}
H^{(\ell)}_{k,x} (T^{x, x+e_k} \eta) \;=\;
H^{(\ell)}_{k,x} (\eta)\;.
\end{equation}

The next lemma requires some notation.  For $x\in \bb T^d_N$,
$1\le j\le d$, let $I^E_{x,x+e_j}$ be the functional $I^E_N$ restricted to
the bond $\{x,x+e_j\}$:
\begin{equation*}
I^E_{x,x+e_j}(f) \;=\;  \frac{1}{2}\, \int
\Big\{\, \sqrt{ f(T^{x,x+e_j}\eta)}
\,-\, \sqrt{f(\eta) \vphantom{T^{x,x+e_j}} } \, \Big\}^2 \; d \nu^N
\;, \quad f\colon \Omega_N \to \bb R \;.
\end{equation*}
Moreover, let $A_x(\cdot)$, $x\in \bb T^d_N$, be the local function
given by
\begin{equation}
\label{2-05}
{\color{blue} A_x(\eta) } \;:=\; \frac{1}{2\chi(x)} \,
\Big\{ \, [1-2\bs u^N(x)]
\, \omega_{x+e_1}  \, +\, [1-2\bs u^N(x+e_1)] \, \omega_{x} \,-\,
2\, \omega_{x} \, \omega_{x+e_1} \Big\} \;.
\end{equation}
Mind that the last two terms on the right-hand side of the next lemma
vanish if the density profile $\bs u^N(\cdot)$ is constant.

\begin{lemma}
\label{l2}
Fix $1\le j\le d$. Then, there exists a finite constant
$\mf c(\bs u^N)$ such that
\begin{equation*}
\begin{aligned}
& \int h \, [\omega_x - \omega_{x+e_j}]\, f\; d\nu^N
\;\leq\; \beta  \, I^E_{x,x+e_j}(f)
\;+\;  \frac{\mf c(\bs u^N)}{\beta}  \, \int h^2 \, f  \; d\nu^N 
\\
&\quad \;-\;
[\bs u^N(x+e_j) - \bs u^N(x)] \,
\int  h(\eta)  \, A_x(\eta)  \,  f(\eta)  \; d\nu^N
\;+\;  \beta\, \mf c (\bs u^N)\, [\bs u^N(x+e_j) - \bs u^N(x)]^4
\end{aligned}
\end{equation*}
for all $\beta >0$, $x \in \bb T^d_N$, $h\colon \Omega_N \to \bb R$ such that
$h(T^{x,x+e_j}\eta) = h(\eta)$ for all $\eta \in \Omega_N$, and
density $f\colon \Omega_N \to [0,\infty)$ with respect to $\nu^N$.
\end{lemma}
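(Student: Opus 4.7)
The plan is a careful integration-by-parts for the Kawasaki exchange $T := T^{x,x+e_j}$ against the \emph{inhomogeneous} product measure $\nu^N$: since $\bs u^N(x)\neq\bs u^N(x+e_j)$ in general, $\nu^N$ is not reversible for $T$, and the Radon--Nikodym derivative $R(\eta):=\nu^N(T\eta)/\nu^N(\eta)$ generates the additional $-\Delta\int hA_xf\,d\nu^N$ and $\beta\Delta^4$ terms, where $\Delta:=\bs u^N(x+e_j)-\bs u^N(x)$ and $\psi:=\omega_x-\omega_{x+e_j}$. A direct check gives $\psi\circ T=-\psi+2\Delta$ and $h\circ T=h$ by hypothesis. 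Changing variables $\zeta=T\eta$ in $\int h\psi f\,d\nu^N$ (using $d\nu^N(T\zeta)=R(\zeta)\,d\nu^N(\zeta)$), and observing separately that $R(\zeta)R(T\zeta)=1$ so that $\int h(f\circ T)R\,d\nu^N=\int hf\,d\nu^N$, produces the master identity
\begin{equation*}
\int h\psi f\,d\nu^N \;=\; \tfrac12\int h\psi\big[f-(f\circ T)R\big]d\nu^N \;+\; \Delta\int hf\,d\nu^N.
\end{equation*}

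I would then split $f-(f\circ T)R=[f-f\circ T]+(f\circ T)[1-R]$. The first summand is handled as in the reversible case: writing $f-f\circ T=(\sqrt f-\sqrt{f\circ T})(\sqrt f+\sqrt{f\circ T})$ and applying Young with parameter proportional to $\beta$, together with the bound $\int h^2(f\circ T)\,d\nu^N\leq\|R\|_\infty\int h^2 f\,d\nu^N$ (a further change of variables; $\|R\|_\infty\leq\mf c(\bs u^N)$ since $\bs u^N$ takes values in a compact subset of $(0,1)$), delivers $\beta I^E_{x,x+e_j}(f)+\mf c\beta^{-1}\int h^2 f\,d\nu^N$. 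For the second summand, use the expansion $1-R=\Delta(\zeta_{x+e_j}-\zeta_x)+\Delta^2\zeta_x\zeta_{x+e_j}$ from the paragraph preceding \eqref{2-21}. The $\Delta^2$ contribution is estimated by replacing $f\circ T$ by $f$ (at a Dirichlet-form cost via Cauchy--Schwarz), then using boundedness of $\zeta_y$ and $\int|h|f\,d\nu^N\leq(\int h^2 f\,d\nu^N)^{1/2}$, and finally Young to absorb everything into $\beta^{-1}\mf c\int h^2 f\,d\nu^N+\beta\mf c\Delta^4$.

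The remaining $O(\Delta)$ part of the second summand, after replacing $f\circ T$ by $f$, combines with the leftover $\Delta\int hf\,d\nu^N$ of the master identity into
\begin{equation*}
\Delta\int h\Big\{\tfrac12\psi(\zeta_{x+e_j}-\zeta_x)+1\Big\}f\,d\nu^N.
\end{equation*}
Expanding $\psi(\zeta_{x+e_j}-\zeta_x)$ directly and using the site identity $\omega_y^2=\chi(y)+[1-2\bs u^N(y)]\omega_y$ reveals that the braced quantity equals $-A_x(\eta)$ plus a remainder whose coefficients of $\omega_x$, $\omega_{x+e_j}$, $\omega_x\omega_{x+e_j}$ differ from those of $-A_x$ only by quantities of order $\chi(x+e_j)-\chi(x)$ or $\bs u^N(x+e_j)-\bs u^N(x)$, all of size $O(\Delta)$. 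Multiplying by the outer $\Delta$ turns this remainder into an $O(\Delta^2)$ perturbation, absorbed once more into $\beta^{-1}\int h^2 f\,d\nu^N+\beta\mf c\Delta^4$ by one last Young step. This leaves exactly the claimed $-\Delta\int hA_xf\,d\nu^N$ contribution.

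The main obstacle is the algebraic bookkeeping in this final identification: the naturally produced $\tfrac12\psi(\zeta_{x+e_j}-\zeta_x)+1$ is \emph{not} literally equal to $-A_x$, and its discrepancy must be identified as a genuine $O(\Delta)$ quantity (rather than $O(1)$) for the $\beta\Delta^4$ budget to suffice. A consistency check: for $j\neq 1$ one has $\bs u^N(x+e_j)=\bs u^N(x)$ by \eqref{2-53a}, hence $\Delta=0$, and the statement collapses to the familiar reversible IBP bound $\beta I^E_{x,x+e_j}(f)+\mf c\beta^{-1}\int h^2 f\,d\nu^N$.
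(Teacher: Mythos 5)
Your proof is correct and follows essentially the same route as the paper's: symmetrize over the exchange $T^{x,x+e_j}$, split into a Dirichlet-form term plus a term driven by $1-R$ with $R=\nu^N(T\eta)/\nu^N(\eta)$, expand $1-R$ to first order in $\Delta=\bs u^N(x+e_j)-\bs u^N(x)$, identify the first-order contribution with $-\Delta\int h\,A_x\,f\,d\nu^N$ up to $O(\Delta)$ discrepancies, and absorb all remainders by Young's inequality into $\beta^{-1}\mf c\int h^2 f\,d\nu^N+\beta\,\mf c\,\Delta^4$. The differences are only in bookkeeping (you keep $\psi=\omega_x-\omega_{x+e_j}$ and use the exact product-measure identity for $1-R$ together with $\omega_y^2=\chi(y)+[1-2\bs u^N(y)]\omega_y$, whereas the paper splits off the constant first, Taylor-expands \eqref{2-13}, and expands $[\eta_x-\eta_{x+e_1}]^2$), and your $O(\Delta)$ identification of the braced quantity with $-A_x$ does check out.
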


\begin{proof}
Write the left-hand side as
\begin{equation}
\label{2-01}
\int h \, [\eta_x - \eta_{x+e_j}]\, f\; d\nu^N
\;-\; [\bs u^N(x) - \bs u^N(x+e_j)] \, \int h \, f\; d\nu^N \;.
\end{equation}

Since $h(T^{x,x+e_j}\eta) = h(\eta)$, the first term can be written as
\begin{equation}
\label{2-02}
\begin{aligned}
& \frac{1}{2} \,\int  h(\eta) \, [\eta_x - \eta_{x+e_j}]\, 
[f(\eta) - f(T^{x,x+e_j} \eta) ] \; d\nu^N \\
& \quad +\; \frac{1}{2} \,\int  h(\eta)
 \, [\eta_x- \eta_{x+e_j}] \,
f(\eta) \, \Big\{ 1 - \frac{\nu^N ( T^{x,x+e_j} \eta)}
{\nu^N( \eta)} \Big\}  \; d\nu^N\;.
\end{aligned}
\end{equation}

In the first integral, write $c(b-a)$ as
$c (\sqrt{b} - \sqrt{a}) (\sqrt{b} + \sqrt{a})$, where $c$, $b$, $a$
represent $(1/2) \, h(\eta) \, [\eta_x - \eta_{x+e_j}]$, $f(\eta)$,
$f(T^{x,x+e_j} \eta)$, respectively.  Bound this product by
$(\beta/2)\, (\sqrt{b} - \sqrt{a})^2 + (1/2\beta) \, c^2\, (\sqrt{b} +
\sqrt{a})^2$, $\beta>0$. The second term of this sum is less than or
equal to $(1/\beta) \, c^2\, (b+a)$. Bound $(\eta_x - \eta_{x+e_j})^2$
by $1$ to get that $c^2 \le (1/4) h(\eta)^2$. Perform the change of
variables $\xi = T^{x,x+e_j} \eta$, recall that
$h(T^{x,x+e_j}\eta) = h(\eta)$, to conclude that the first term in
\eqref{2-02} is bounded above by
\begin{equation*}
\beta  \, I^E_{x,x+e_j}(f)
\;+\; \frac{1}{4\, \beta} \, \int h^2 \,
f \, \Big\{\, 1 \,+\, 
\frac{\nu^N ( T^{x,x+e_j} \eta)} {\nu^N( \eta)} \Big\} \; d\nu^N\;. 
\end{equation*}
for all $\beta>0$.  As $\bs u^N(\cdot)$ is constant along the $j$-th
direction for $j\neq 1$, $\nu^N ( T^{x,x+e_j} \eta) = \nu^N( \eta)$ if
$j\neq 1$. On the other hand, as
\begin{equation}
\label{2-13}
\frac{\nu^N ( T^{x,y} \eta)} {\nu^N( \eta)} \;=\;
\exp\Big\{\, -  \, \Big[\, \log
\frac{1-\bs u^N(y)} {\bs u^N (y)} \,-\, \log
\frac{1-\bs u^N(x)} {\bs u^N (x)}\, \Big]\, 
\,[\eta_y - \eta_x] \,\Big\} \;,
\end{equation}
by \eqref{2-38b}, this ratio is bounded by $\mf c(\bs u^N)$. (We could
get a better estimate, but since this ratio is summed to $1$ in the
penultimate equation, there is no reason to improve it). In
conclusion, the first term in \eqref{2-02} is bounded above by
\begin{equation}
\label{n-02}
\beta  \, I^E_{x,x+e_j}(f)
\;+\; \frac{\mf c(\bs u^N)}{\beta} \, \int h^2 \,
f \,  \; d\nu^N
\end{equation}
for all $\beta>0$.

We turn to the second term in \eqref{2-02}. It vanishes for $j\neq 1$.
For $j=1$, by \eqref{2-13} and a Taylor expansion, the expression
inside braces is equal to
\begin{equation*}
\frac{1}{\chi(x)}\, [\bs u^N(x+e_1) - \bs u^N(x)]\, [\eta_{x+e_1} -
\eta_x] \; +\; R^{(2)}_N \;.
\end{equation*}
Here and below, $R^{(2)}_N$ represents an error whose value may change
from line to line and whose absolute value is bounded by
$\mf c (\bs u^N)\, [\bs u^N(x+e_1) - \bs u^N(x)]^2$. The second term in
\eqref{2-02} is thus bounded by
\begin{equation}
\label{n-01}
\frac{-\, 1}{2\chi(x)} \, [\bs u^N(x+e_1) - \bs u^N(x)] \,
\int  h(\eta)  \, [\eta_x- \eta_{x+e_1}]^2  \,
f(\eta)  \; d\nu^N \;+\;  
R^{(2)}_N \,  \int  \big|\, h(\eta)\,\big|   \, f(\eta)  \; d\nu^N\;.
\end{equation}
Since
$[\eta_x- \eta_{x+e_1}]^2 = \eta_x + \eta_{x+e_1} - 2
\eta_x \, \eta_{x+e_1}$,
\begin{equation*}
[\eta_x- \eta_{x+e_1}]^2 \;=\; 2 \chi(x) \, +\, [1-2\bs u^N(x)]
\, \omega_{x+e_1}  \, +\, [1-2\bs u^N(x+e_1)] \, \omega_{x} \,-\,
2\, \omega_{x} \omega_{x+e_1} \;+\; R^{(1)}_N\;,
\end{equation*}
where $R^{(1)}_N$ stands for an error whose absolute value is bounded
by $\mf c (\bs u^N)\, |\bs u^N(x+e_1) - \bs u^N(x)|$. 
Therefore, the first term in \eqref{n-01} is equal to
\begin{equation*}
-\, [\bs u^N(x+e_1) - \bs u^N(x)] \, \Big\{
\int  h(\eta)  \, \big\{ \, 1\,+\, A_x(\eta) \,\big\}
\,  f(\eta)  \; d\nu^N   \;+\;  R^{(1)}_N \, \frac{1}{2\chi(x)}
\int \big|\, h(\eta)\,\big|  \, f(\eta)  \; d\nu^N \Big\}
\end{equation*}
where $A_x$ has been introduced in \eqref{2-05}. As $\chi(x)$ is
bounded below by a strictly positive constant, uniformly in $N$, by
definition of $R^{(2)}_N$, this expression is equal to
\begin{equation}
\label{n-03}
-\, [\bs u^N(x+e_1) - \bs u^N(x)] \, 
\int  h(\eta)  \, \big\{ \, 1\,+\, A_x(\eta) \,\big\}
\,  f(\eta)  \; d\nu^N   \;+\;  R^{(2)}_N \, 
\int \big|\, h(\eta)\,\big|  \, f(\eta)  \; d\nu^N \;.
\end{equation}
The second term in this sum coincides with the last one in
\eqref{n-01}. Thus, the expression in \eqref{n-01} (and therefore the
second term in \eqref{2-02}) is bounded by the previous expression.

Up to this point, we proved that the first integral in \eqref{2-02} is
bounded by \eqref{n-02}, and the second one by \eqref{n-03}. 
The expression
\begin{equation*}
-\, [\bs u^N(x+e_1) - \bs u^N(x)] \, 
\int  h(\eta)  \,  f(\eta)  \; d\nu^N   
\end{equation*}
which appears in \eqref{n-03} cancels with the second term in
\eqref{2-01}. This proves that \eqref{2-01} is bounded by the sum of
\eqref{n-02} with 
\begin{equation*}
-\, [\bs u^N(x+e_1) - \bs u^N(x)] \, 
\int  h \, A_x 
\,  f  \; d\nu^N   \;+\; \frac{1}{\beta}\, 
\int h^2  \, f  \; d\nu^N \;+\; \beta\, \mf c(\bs u^N)\, 
[\bs u^N(x+e_1) - \bs u^N(x)]^4
\end{equation*}
for all $\beta>0$. We used in the last step Young's inequality and the
definition of $R^{(2)}_N $.
\end{proof}

By \eqref{15b}, the functions $H^{(\ell)}_{k,x}(\cdot)$ satisfy the
hypotheses of Lemma \ref{l2}. Hence, by Lemma \ref{l2-01} and
Lemma \ref{l2} applied to $h= H^{(\ell)}_{j,x}(\cdot)$, as
$\bs u^N(\cdot)$ is constant along the $k$-th direction for $k\neq 1$,
\begin{equation}
\label{2-11}
\begin{aligned}
& \int  \mc W^{(2)}_\ell \, f\; d\nu^N \; \le\;
\beta \, I^E_{N}(f;\nu^N)
\;+\;  \frac{\mf c(\bs u^N)}{\beta}  \,
\int \sum_{k=1}^d \sum_{x\in\bb T_N^d} (H^{(\ell)}_{k,x})^2 \, f  \; d\nu^N 
\\
&\quad \;-\;
\sum_{x\in\bb T_N^d} [\bs u^N(x+e_1) - \bs u^N(x)] \,
\int  H^{(\ell)}_{1,x} \, A_x  \,  f  \; d\nu^N
\;+\; \beta\, \mf c (\bs u^N)\,
\sum_{x\in\bb T_N^d} [\bs u^N(x+e_1) - \bs u^N(x)]^4
\end{aligned}
\end{equation}
for all $\beta>0$.

To complete the proof of Proposition \ref{2-l05}, it remains to estimate
the third term on the right-hand side of \eqref{2-11}. This is the
content of the next result.

Recall from \eqref{2-06a} the definition of $H^{(\ell)}_{k,x}$. Let
\begin{equation}
\label{2-27}
{\color{blue} a_4(x)} \,:=\, [\bs u^N(x+e_1) - \bs u^N(x)] / 2\chi(x)\;.
\end{equation}
and rewrite the contribution from the degree 2 part
of $A_x$ in the third term on the right-hand side of \eqref{2-11} as
\begin{align}
\label{2-25}
& \frac{1}{\chi(x)}
\sum_{x\in\bb T_N^d} [\bs u^N(x+e_1) - \bs u^N(x)]\, 
\,  H^{(\ell)}_{1,x} \, \omega_x \, \omega_{x+e_1} \\
& \quad =\;
2\, \sum_{x\in\bb T_N^d} \sum_{\{y,y+e_k\} \subset \Lambda_{2(\ell-1)}}
\Phi_\ell (y,y+e_k) \, a_4(x)\, G(\, x-\mb x_A-y \,) \,
\omega_x \, \omega_{x+e_1} \, \omega_{x-y + (A_\star - \mb x_A)}\;.
\nonumber
\end{align}
This expression is similar to the sum appearing in the definition
\eqref{06a} of $\mc W$. By \eqref{2-84} and \eqref{2-52}, we gain a
factor $\sqrt{K}/N$, implicit in $a_4(\cdot)$. We may, thus, iterate
the argument used to estimate $\mc W$.

Define
$M^{p, \ell}_{x}$, $1\le p\le 3$, by
\begin{equation}
\label{2-06b}
{\color{blue} M^{p, \ell}_{x} (\eta)} \;:=\;
\sum_{k=1}^d  \sum_{\{y,y+e_k\} \subset \Lambda_{2(\ell-1)}}
\Phi_\ell (y,y+e_k) \, m^{p, \ell}_{x,y} (\eta)  \;,
\end{equation}
where $\Phi_\ell$ is the flow introduced in Lemma \ref{l01}.
and
\begin{equation}
\label{2-24}
\begin{gathered}
\vphantom{\Big\{}
m^{1, \ell}_{x,y} (\eta)  \;=\; a_4(\, x-e_1-y \,) \,
H^{(\ell)}_{1,x-e_1-y} (\eta)\,  \omega_{x-e_1-y} \;, \\
\vphantom{\Big\{}
m^{2, \ell}_{x,y} (\eta)  \;=\; a_4(\, x-e_1-y \,) \,
H^{(\ell)}_{1,x-e_1-y} (\eta)\,  [\, 1-2\bs u^N(x-e_1-y) \,] \;, \\
\vphantom{\Big\{}
m^{3, \ell}_{x,y} (\eta)  \;=\; a_4(\, x-y \,) \,
H^{(\ell)}_{1,x-y} (\eta)\,  \,  [\, 1-2\bs u^N(x+e_1-y) \,] \;.
\end{gathered}
\end{equation}

In the following lemma, $m^{1, \ell}_{x,y}$ arises from the degree $2$
part of $A_x$, while $m^{2, \ell}_{x,y}$ and $m^{3, \ell}_{x,y}$ from
its degree $1$ part.

\begin{lemma}
\label{2-l02}
With the notation of Lemma \ref{l2},
\begin{equation*}
\begin{aligned}
& -\, \sum_{x\in\bb T_N^d} [\bs u^N(x+e_1) - \bs u^N(x)] \, \int
H^{(\ell)}_{1,x} \, A_x \, f \; d\nu^N \\
& \quad 
\;\le\;
3\, \beta  \, I^E_{N}(f;\nu^N)
\;+\; \beta\,  \mf c (\bs u^N)\, \sum_{x\in\bb T_N^d}
[\bs u^N(x+e_1) - \bs u^N(x)]^4 \\
& \quad +\; \mf c(\bs u^N)\, \sum_{k=1}^d
\sum_{p=1}^3 \sum_{x\in\bb T_N^d}
\Big\{ \frac{1}{\beta}
\int [M^{p, \ell}_{k,x}]^2 \, f  \; d\nu^N 
\;+\; |\, \bs u^N(x+e_1) - \bs u^N(x) \,| \,
\int  |M^{p, \ell}_{k,x}|  \,  f  \; d\nu^N \,\Big\} 
\\
&\quad +\; \mf c(\bs u^N)  \sum_{x\in\bb T_N^d}
\int
\Big\{\, \gamma \, [\bs u^N(x+e_1) - \bs u^N(x)]^2 \,
[H^{(\ell)}_{1,x}]^2 \, +\, 
\gamma^{-1}\, (\omega^{\ell} _{x})^2
\, \Big\}  \, f \; d\nu^N
\end{aligned}
\end{equation*}
for all $\beta>0$, $\gamma>0$. 
\end{lemma}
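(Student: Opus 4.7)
The plan is to decompose $A_x$ according to \eqref{2-05} into its three summands: the quadratic piece $A^{(c)}_x := -\omega_x\omega_{x+e_1}/\chi(x)$ and the two linear ones $A^{(a)}_x := [1-2\bs u^N(x)]\omega_{x+e_1}/2\chi(x)$, $A^{(b)}_x := [1-2\bs u^N(x+e_1)]\omega_x/2\chi(x)$; these will respectively produce $M^{1,\ell}_{k,\cdot}$, $M^{2,\ell}_{k,\cdot}$ and $M^{3,\ell}_{k,\cdot}$ from \eqref{2-06b}-\eqref{2-24}. Each piece is handled by an outer flow representation (applied to the stray $\omega$-factor coming from $A_x$) followed by one application of Lemma \ref{l2}; the three applications together account for the $3\beta$ prefactor on the Dirichlet form in the statement.

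For the quadratic piece I would start from the rewriting \eqref{2-25} and replace the remaining factor $\omega_{x+e_1}$ by its $\omega^\ell_{x+e_1}$ average plus flow differences, via the identity $\omega_y - \omega^\ell_y = \sum_{k,z}\Phi_\ell(z,z+e_k)[\omega_{y+z}-\omega_{y+z+e_k}]$ (obtained from Lemma \ref{l01} together with the summation-by-parts already used in the proof of Lemma \ref{l2-01}). The change of variable $w := x+e_1+z$ turns the flow-difference part into $2\sum_{k,w}(\omega_w-\omega_{w+e_k})M^{1,\ell}_{k,w}$ with $m^{1,\ell}_{w,z}$ exactly as in \eqref{2-24}; the maximality of $\mb x_A$ in $A$ combined with $\{z,z+e_k\}\subset\Lambda_{2(\ell-1)}$ implies, as in \eqref{15b}, that $M^{1,\ell}_{k,w}$ is invariant under $T^{w,w+e_k}$, and Lemma \ref{l2} then applies with $h = 2M^{1,\ell}_{k,w}$. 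This yields the $\beta I^E_N(f;\nu^N)$ and $\beta[\bs u^N(x+e_1)-\bs u^N(x)]^4$ contributions, the integral of $[M^{1,\ell}_{k,w}]^2/\beta$, and a residual of the form $-[\bs u^N(w+e_1)-\bs u^N(w)]\int M^{1,\ell}_{k,w}A_w\,f\,d\nu^N$; this last residual I bound crudely by $|\bs u^N(w+e_1)-\bs u^N(w)|\cdot|M^{1,\ell}_{k,w}|$ using the uniform estimate $|A_w|\le \mf c(\bs u^N)$ (which follows from $|\omega|\le 1$ and \eqref{2-38b}), rather than iterating. The "diagonal" leftover $2\sum_x a_4(x)H^{(\ell)}_{1,x}\omega_x\omega^\ell_{x+e_1}$ that arises from the replacement step is controlled by Young's inequality $|ab|\le \gamma a^2 + \gamma^{-1} b^2$ with $a$ proportional to $[\bs u^N(x+e_1)-\bs u^N(x)]H^{(\ell)}_{1,x}$ and $b = \omega_x\omega^\ell_{x+e_1}$, using $|\omega_x|\le 1$; after the harmless torus shift $x+e_1\mapsto x$, this is precisely the last line of the stated bound.

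The two linear pieces are handled in parallel: for each I apply the same flow representation to the lone $\omega$ factor ($\omega_{x+e_1}$ for $A^{(a)}_x$, $\omega_x$ for $A^{(b)}_x$) and change variables ($w = x+e_1+z$ and $w = x+z$ respectively). The flow-difference parts take the form $\sum_{k,w}(\omega_w-\omega_{w+e_k})M^{p,\ell}_{k,w}$ with $p=2,3$, matching \eqref{2-24}, and Lemma \ref{l2} applied with $h = M^{p,\ell}_{k,w}$ yields the corresponding summands indexed by $p\in\{2,3\}$. The remaining "diagonal" leftovers of the form $\sum_x a_4(x)[1-2\bs u^N(\cdot)]H^{(\ell)}_{1,x}\omega^\ell_{\cdot}$ are absorbed into the $[H^{(\ell)}_{1,x}]^2$ and $(\omega^\ell_x)^2$ budget of the last line by another Young inequality (using $|1-2\bs u^N|\le 1$, so only this existing budget is needed).

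The main obstacle is bookkeeping: verifying that each $M^{p,\ell}_{k,w}$ is invariant under the swap $T^{w,w+e_k}$, so that the hypothesis of Lemma \ref{l2} is satisfied. This independence is the analogue of \eqref{15b} one level deeper, and rests on the maximality of $\mb x_A$ in $A$ together with the support constraint $\{z,z+e_k\}\subset\Lambda_{2(\ell-1)}$ present both in the flow inside $H^{(\ell)}_{1,\cdot}$ and in the outer replacement step. Once this support check is carried out, the three applications of Lemma \ref{l2}, with crude control of the residual $A_w$-terms and a single Young inequality for the diagonal remainders, assemble mechanically into the stated bound.
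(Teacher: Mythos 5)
Your proposal is correct and follows essentially the same route as the paper: split $A_x$ into its quadratic and two linear summands, replace the stray $\omega$-factor by its block average plus the flow-difference representation (the content of Lemma \ref{l2-01}), control the averaged "diagonal" terms by Young's inequality to produce the $\gamma\,[\bs u^N(x+e_1)-\bs u^N(x)]^2[H^{(\ell)}_{1,x}]^2+\gamma^{-1}(\omega^\ell_x)^2$ line, verify via maximality of $\mb x_A$ and the support constraint that each $M^{p,\ell}_{k,w}$ is invariant under $T^{w,w+e_k}$, apply Lemma \ref{l2} once per summand (hence the $3\beta$), and bound the residual $A_w$-term crudely by $|A_w|\le \mf c(\bs u^N)$ rather than iterating. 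The only cosmetic difference is that you invoke the flow identity directly instead of citing Lemma \ref{l2-01} with an $\eta$-dependent coefficient, which is the same computation.
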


\begin{proof}
The function $A_x(\cdot)$ is the sum of three terms. To fix ideas,
consider the expression
\begin{equation}
\label{2-10}
2\, \int\, \sum_{x\in\bb T_N^d}  a_4(x) \, 
H^{(\ell)}_{1,x} (\eta) \, \omega_x\, \omega_{x+e_1} \, f(\eta) \; d\nu^N \;,
\end{equation}
where $a_4(x)$ has been defined in \eqref{2-27}. This term corresponds
to the integral of \eqref{2-25}. The other terms, from the degree $1$
part of $A_x$, are handled analogously.  Rewrite this integral as
\begin{equation}
\label{2-26}
\begin{aligned}
& 2\, \int \, \sum_{x\in\bb T_N^d}  a_4(x) \, 
H^{(\ell)}_{1,x} (\eta) \, \omega_x\,
\big\{\, \omega_{x+e_1} \,-\, \omega^{\ell}_{x+e_1} \,\big\}
\, f(\eta) \; d\nu^N \\
&\quad \;+\;
2\, \int\, \sum_{x\in\bb T_N^d}  a_4(x) \, 
H^{(\ell)}_{1,x} (\eta) \, \omega_x\,
\omega^{\ell} _{x+e_1}  \, f(\eta) \; d\nu^N \;.
\end{aligned}
\end{equation}
By Young's inequality and the definition of $a_4(x)$, the second term
is bounded by
\begin{equation*}
\mf c(\bs u^N)  \sum_{x\in\bb T_N^d} \int
\Big\{\, \gamma\, [\,\bs u^N(x+e_1) - \bs u^N(x)\,]^2 \,
H^{(\ell)}_{1,x} (\eta)^2 \, +\, \gamma^{-1} \,
(\omega^{\ell} _{x+e_1})^2\, \Big\}  \, f(\eta) \; d\nu^N
\end{equation*}
for all $\gamma>0$.

We turn to the first term in \eqref{2-26}. In identity \eqref{2-04}
set $G(x) = 2\, a_4(x)\, H^{(\ell)}_{1,x} (\eta)$, $A= \{0, e_1\}$,
$A_\star = \{0\}$, $\mb x_A = e_1$. By Lemma \ref{l2-01}, the first
term in \eqref{2-26} is equal to
\begin{equation}
\label{2-09}
\sum_{k=1}^d \sum_{x\in\bb T_N^d}  \int
\{\, \omega_{x} \,-\, \omega_{x+e_k} \,\} \, M^{(\ell)}_{k,x} (\eta)
\, f(\eta) \; d\nu^N \;,
\end{equation}
where
\begin{equation*}
{\color{blue} M^{(\ell)}_{k,x} (\eta)} \;:=\;
2\, \sum_{\{y,y+e_k\} \subset \Lambda_{2(\ell-1)}}
\Phi_\ell (y,y+e_k) \, a_4(\, x-e_1-y \,) \,
H^{(\ell)}_{1,x-e_1-y} (\eta)\,  \omega_{x-e_1-y} \;.
\end{equation*}
By definition of $\Lambda_{2(\ell-1)}$, $x-e_1-y \not\in \{x, x+e_k\}$
for $y\in \Lambda_{2(\ell-1)}$.  Therefore, since $\mb x_A = e_1$,
$A_\star = \{0\}$, by the definition \eqref{2-06a} of
$H^{(\ell)}_{1,z}$, $M^{(\ell)}_{k,x}$ does not depend on the variables
$\eta_x$, $\eta_{x+e_k}$. In particular, by Lemma \ref{l2},
\eqref{2-09} is bounded above by
\begin{equation*}
\begin{aligned}
& \beta \, I^E_{N}(f;\nu^N)
\;+\;  \frac{\mf c(\bs u^N)}{\beta} \, \sum_{k=1}^d 
\sum_{x\in\bb T_N^d} \,
\int M^{(\ell)}_{k,x} (\eta)^2 \, f (\eta)  \; d\nu^N
\;+\; \beta\, \mf c (\bs u^N)\, \sum_{x\in\bb T_N^d}
[\bs u^N(x+e_1) - \bs u^N(x)]^4
\\
&\quad \;-\; \sum_{k=1}^d
\sum_{x\in\bb T_N^d} [\bs u^N(x+e_1) - \bs u^N(x)] \,
\int  M^{(\ell)}_{k,x} (\eta)  \, A_x(\eta)  \,  f(\eta)  \; d\nu^N 
\end{aligned}
\end{equation*}
for all $\beta>0$. To complete the proof of the lemma, it remains to
recall that the absolute value of $A_x(\eta)$ is bounded by
$\mf c(\bs u^N)$, and that we estimated only one term of $A_x$. This
explains why $M^{(\ell)}_{k,x}$ has to be replaced by
$M^{p, \ell}_{k,x}$, $1\le p\le 3$, in the statement of the lemma.
\end{proof}

By \eqref{2-11} and Lemma \ref{2-l02}, setting $\gamma = 1$, and
replacing $4\beta$ by $\beta$,
\begin{equation}
\label{2-07}
\begin{aligned}
& \int  \mc W^{(2)}_\ell \, f\; d\nu^N \; \le\;
\beta  \, I^E_{N}(f;\nu^N)
\;+\;  \frac{\mf c(\bs u^N)}{\beta}  \,
\int \sum_{k=1}^d \sum_{x\in\bb T_N^d} (H^{(\ell)}_{k,x})^2 \, f  \; d\nu^N 
\\
&\quad 
\;+\; \beta\, \mf c (\bs u^N)\, \sum_{x\in\bb T_N^d} [\bs u^N(x+e_1) -
\bs u^N(x)]^4  \\
& \quad +\; \mf c(\bs u^N)\, \sum_{k=1}^d \sum_{p=1}^3 \sum_{x\in\bb T_N^d}
\Big\{ \frac{1}{\beta}
\int [M^{p, \ell}_{k,x}]^2 \, f  \; d\nu^N 
\;+\; |\, \bs u^N(x+e_1) - \bs u^N(x) \,| \,
\int  |M^{p, \ell}_{k,x}|  \,  f  \; d\nu^N \,\Big\} 
\\
&\quad +\; \mf c(\bs u^N)  \sum_{x\in\bb T_N^d}
 \int
\Big\{\, [\bs u^N(x+e_1) - \bs u^N(x)]^2 \, [H^{(\ell)}_{1,x}]^2 \, +\, 
(\omega^{\ell} _{x})^2 \,\Big\}  \, f \; d\nu^N
\end{aligned}
\end{equation}
for all $\beta>0$, density $f$ with respect to
$\nu^N$ and $N> 4\ell$. 

Recall from \eqref{2-53a}, \eqref{2-52}, \eqref{2-84} that there
exists a finite constant $\mf c(\bs u^N)$ such that
\begin{equation}
\label{2-15}
\max_{x\in \bb T^d_N} |\, \bs u^N(x+e_1) - \bs u^N(x) \,| \;\le\;
\mf c(\bs u^N) \,  \frac{\sqrt{K}}{N} 
\end{equation}
for all $N\ge 1$.

\begin{lemma}
\label{2-l03}
Recall the definition of $M^{p,\ell}_{k,x}$ introduced in
\eqref{2-06b}. There exists a finite constant $\mf c(\bs u^N)$,
uniformly bounded in $N$, such that
\begin{equation*}
\begin{gathered}
\sum_{k=1}^d \sum_{x\in\bb T_N^d}  M^{p, \ell}_{k,x} (\eta)^2
\;\le\; \mf c(\bs u^N) \, s_d(\ell)\, \frac{K}{N^2}\, 
\sum_{x\in\bb T_N^d}  H^{(\ell)}_{1,x} (\eta)^2 \;, 
\\
\frac{\sqrt{K}}{N}\,
\sum_{k=1}^d  \sum_{x\in\bb T_N^d}  |\,M^{p, \ell}_{k,x} (\eta)\,|
\;\le\; \mf c(\bs u^N)\,  \frac{K}{N^2} \, 
\Big\{\,  s_d(\ell)\, \frac{ N^{d}}{\gamma}
\;+\; \gamma\, 
\sum_{x\in\bb T_N^d} H^{(\ell)}_{1,x} (\eta)^2\,\Big\}
\end{gathered}
\end{equation*}
for all $\gamma>0$, $1\le p\le 3$. 
\end{lemma}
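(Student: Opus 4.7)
\textbf{Proof plan for Lemma \ref{2-l03}.}

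The starting point is the uniform pointwise estimate
$|m^{p,\ell}_{x,y}(\eta)| \le \mf c(\bs u^N)\, \frac{\sqrt K}{N}\, |H^{(\ell)}_{1,z_p(x,y)}(\eta)|$
for $p=1,2,3$, where $z_p(x,y)\in\{x-e_1-y,\, x-y\}$ is the shifted site appearing in the definitions \eqref{2-24}. Indeed, each $m^{p,\ell}_{x,y}$ is a product of a factor $a_4(\cdot)$ (bounded by $\mf c(\bs u^N)\sqrt K/N$ thanks to \eqref{2-15} and the definition \eqref{2-27}), the function $H^{(\ell)}_{1,\cdot}$, and either $\omega_{x-e_1-y}$ or $[1-2\bs u^N(\cdot)]$, both bounded by $1$ in absolute value. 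This reduces both estimates to flow/cube summation lemmas applied to $H^{(\ell)}_{1,\cdot}$.

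For the first bound, I would apply Cauchy--Schwarz in the $y$ variable inside the definition \eqref{2-06b} of $M^{p,\ell}_{k,x}$, yielding
\begin{equation*}
M^{p,\ell}_{k,x}(\eta)^2 \;\le\; \Big(\sum_{y} \Phi_\ell(y,y+e_k)^2\Big)\, \Big(\sum_{y \in \Lambda_{2(\ell-1)}} m^{p,\ell}_{x,y}(\eta)^2\Big)\;.
\end{equation*}
Substituting the Step-1 pointwise bound, summing over $k$ and $x\in \bb T_N^d$, and interchanging the order of summation so that for each fixed $(k,y)$ the sum in $x$ of $H^{(\ell)}_{1,z_p(x,y)}(\eta)^2$ equals $\sum_{x} H^{(\ell)}_{1,x}(\eta)^2$ by translation of the index, I obtain
\begin{equation*}
\sum_{k=1}^d\sum_{x\in \bb T_N^d} M^{p,\ell}_{k,x}(\eta)^2 \;\le\; \mf c(\bs u^N)\, \frac{K}{N^2}\, |\Lambda_{2(\ell-1)}|\, \Big(\sum_{k=1}^d\sum_{y\in \bb Z^d} \Phi_\ell(y,y+e_k)^2\Big) \sum_{x\in \bb T_N^d} H^{(\ell)}_{1,x}(\eta)^2\;.
\end{equation*}
Since $|\Lambda_{2(\ell-1)}| \le (2\ell)^d$ and the flow estimate of Lemma \ref{l01} bounds the parenthesized sum by $C_d\, g_d(\ell)$, the prefactor is of order $\ell^d g_d(\ell) = s_d(\ell)$, which gives precisely the asserted bound.

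For the second (absolute-value) estimate, I would use $|M^{p,\ell}_{k,x}| \le \sum_y |\Phi_\ell(y,y+e_k)|\,|m^{p,\ell}_{x,y}|$, insert the Step-1 bound, and apply Young's inequality $ab \le \frac{1}{2\gamma'} a^2 + \frac{\gamma'}{2} b^2$ with $a=|\Phi_\ell(y,y+e_k)|$ and $b=|H^{(\ell)}_{1,z_p(x,y)}(\eta)|$. Summing over $(k,x,y)$ and again exploiting translation invariance of $\sum_x H^{(\ell)}_{1,\cdot}{}^2$, the $a^2$ term contributes $\frac{N^d}{2\gamma'}\,\sum_{k,y}\Phi_\ell(y,y+e_k)^2 \le \frac{C_d\, N^d\, g_d(\ell)}{2\gamma'}$, while the $b^2$ term, after bounding the $y$-summation support by $(2\ell)^d$, contributes at most $\frac{d\,\gamma'\,(2\ell)^d}{2}\sum_x H^{(\ell)}_{1,x}(\eta)^2$. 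The free parameter $\gamma'$ is then calibrated by the choice $\gamma' = \gamma/(2\ell)^d$, which converts these two pieces into $\mf c(\bs u^N)\,\ell^d g_d(\ell)\, N^d/\gamma = \mf c(\bs u^N)\, s_d(\ell)\, N^d/\gamma$ and $\mf c(\bs u^N)\,\gamma \sum_x H^{(\ell)}_{1,x}{}^2$ respectively. Multiplying the whole inequality by the factor $\sqrt K/N$ coming from Step 1 produces the prefactor $K/N^2$ in front of the braces, which is exactly the asserted form.

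The computations are essentially routine once the pointwise bound in Step 1 is written down; the only potentially confusing point is keeping track of the translation that arises when exchanging the sums over $x$ and $y$, which is where the factor $s_d(\ell) = \ell^d g_d(\ell)$ emerges (as opposed to merely $g_d(\ell)$) and where the choice of splitting parameter $\gamma'$ must be rescaled by $\ell^d$ to recover the stated inequality.
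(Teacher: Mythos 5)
Your proposal is correct and follows essentially the same route as the paper: the pointwise bound $|m^{p,\ell}_{x,y}|\le \mf c(\bs u^N)\,(\sqrt K/N)\,|H^{(\ell)}_{1,\cdot}|$ coming from \eqref{2-15} and \eqref{2-27}, Cauchy--Schwarz with the flow estimate of Lemma \ref{l01}, translation invariance when exchanging the sums over $x$ and $y$, and Young's inequality for the second bound. The only cosmetic difference is that the paper first bounds $\sum_{k,y}|\Phi_\ell|\le C\sqrt{s_d(\ell)}$ by Cauchy--Schwarz and then applies Young with the parameter $\gamma$ directly, whereas you apply Young termwise with the rescaled parameter $\gamma'=\gamma/(2\ell)^d$; the two manipulations are equivalent and yield the same constants up to $\mf c(\bs u^N)$.
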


\begin{proof}
Recall the definitions \eqref{2-24}, \eqref{2-27} of
$m^{p, \ell}_{x,y}$.  By Schwarz inequality,
\begin{equation*}
\sum_{k=1}^d \sum_{x\in\bb T_N^d}  M^{p, \ell}_{k,x} (\eta)^2
\;\le\;
\sum_{x\in\bb T_N^d} \Big( \sum_{j=1}^d 
\sum_{\{y,y+e_j\} \subset \Lambda_{2(\ell-1)}}
\Phi_\ell (y,y+e_j)^2  \Big) \Big( \sum_{k=1}^d 
\sum_{\{y,y+e_k\} \subset \Lambda_{2(\ell-1)}}
m^{p, \ell}_{x,y} (\eta)^2 \Big)\;.
\end{equation*}
Here, after Schwarz inequality, we added the sum over $j$ to the 
term involving $\Phi_\ell (y,y+e_j)$, and then we exchanged the order
of the sums over $k$ and $x$.  By Lemma \ref{l01}, the definitions
\eqref{2-24}, \eqref{2-27} of $m^{p, \ell}_{x,y}$, $a_4(\cdot)$,
respectively, and \eqref{2-15}, the previous expression is bounded by
\begin{equation*}
\mf c(\bs u^N) \, g_d(\ell)\, \frac{K}{N^2}\, \ell^d\, 
\sum_{x\in\bb T_N^d} 
H^{(\ell)}_{1,x} (\eta)^2
\end{equation*}
for some finite constant $\mf c(\bs u^N)$.  To complete the proof of
the first claim of the lemma it remains to recall that $s_d(\ell) =
g_d(\ell)\, \ell^d$. 

We turn to the second assertion.  By the definition \eqref{2-06b} of
$M^{p, \ell}_{k,x} (\eta)$, \eqref{2-27}, and \eqref{2-15},
\begin{equation*}
\frac{\sqrt{K}}{N} \sum_{k=1}^d
\sum_{x\in\bb T_N^d}  |\,M^{p, \ell}_{k,x} (\eta)\,|
\;\le\; \mf c(\bs u^N)\,  \frac{K}{N^2}\, \sum_{x\in\bb T_N^d}
\big|\, H^{(\ell)}_{1,x} (\eta) \,\big|
\sum_{k=1}^d \sum_{\{y,y+e_k\} \subset \Lambda_{2(\ell-1)}}
\big|\, \Phi_\ell (y,y+e_k) \,\big| \;.
\end{equation*}
By Schwarz inequality and Lemma \ref{l01}, the previous
expression is bounded by
\begin{equation*}
\mf c(\bs u^N)\, \frac{K}{N^2} \,
\sqrt{g_d(\ell) \ell^d }\,
\sum_{x\in\bb T_N^d} \big|\, H^{(\ell)}_{1,x} (\eta) \,\big|
\;.
\end{equation*}
To complete the proof of the second assertion, it remains to apply
Young's inequality.
\end{proof}

\begin{proof}[Proof of Proposition \ref{2-l05}]
The result is a consequence of \eqref{2-07} (setting
$\beta = \delta N^2/K$), Lemma \ref{2-l03}, and the estimate
\eqref{2-15}.
\end{proof}

\section{Boltzmann-Gibbs principle}
\label{sec-BG}

Throughout this section we adopt the same convention that
$\mf c(\bs u^N)$ represents a constant uniformly bounded in $N$,
depending only on $\bs u^N(\cdot)$, a finite subset $B$ of $\bb Z^d$ or
a cylinder function $f$, and the dimension $d$, and whose value may
change from line to line. The sequence $(\ell_N: N\ge 1)$ is the one
introduced in Theorem \ref{mt1}.

Recall from \eqref{2-35} the definition of $\Xi_{\bs u^N(\cdot), x}
f$, where $f$ is a cylinder function and $x\in\bb T^d_N$.

\begin{theorem}[Boltzmann-Gibbs principle]
\label{t01}
Fix a bounded function $F\colon \bb R_+ \times \bb T^d_N \to \bb R$, and
a cylinder function $f$.  Then, there exists a finite constant
$\mf c(\bs u^N)$, uniformly bounded in $N$, such that
\begin{align*}
& \bb E_{\nu^N} \Big[\, \Big|\,
\int_0^t \, \sum_{x\in \bb T^d_N} F (s,x)\,
(\Xi_{\bs u^N(\cdot), x} f) (\eta^N(s))
\, ds \, \Big| \,\Big]
\\
& \quad \;\le\;
\mf c(\bs u^N) \, \Big\{ \frac{1}{K_N} \;+\;
K_N\,  \int_0^t (\, 1 + \Vert F(s)\Vert^2_\infty\,)\, \big[ \,
H_N(f^N_s)  + (N/\ell_N)^d \,\big] \, ds\; \Big\} 
\end{align*}
for all $t> 0$.  In this equation, $f^N_s$ is given by \eqref{2-34c},
and $(\ell_N : N\ge 1)$ is the sequence introduced in the statement of
Theorem \ref{mt1}.
\end{theorem}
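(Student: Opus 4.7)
My plan is to reduce the theorem to Theorem~\ref{t02} by expanding $\Xi_{\bs u^N(\cdot),x} f$ in a basis of centered monomials, and then time-integrating using the entropy-production bound of Proposition~\ref{l06-1}. Since $\Xi_{\bs u^N(\cdot),x}$ is (by its definition in \eqref{2-35}) the projection of $\tau_x f$ onto the orthogonal complement, under $\nu^N$, of constants and degree-one polynomials in $\omega$, there is a finite expansion
$$\Xi_{\bs u^N(\cdot),x} f(\eta) \;=\; \sum_{|A|\ge 2} c^{(x)}_A\,\omega_{x+A},$$
where each coefficient $c^{(x)}_A$ is a polynomial in values $\{\bs u^N(x+y)\}$, uniformly bounded in $N$ by \eqref{2-38b}. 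Substituting, $\sum_x F(s,x)\,\Xi_{\bs u^N(\cdot),x}f = \sum_A \mc W^{F(s,\cdot)c_A,A}$ in the notation of \eqref{06a}, so only a finite number of instances of Theorem~\ref{t02} are needed.

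Applying Theorem~\ref{t02} with $\delta=1$ to each summand pointwise in $s$, dividing by $K_N$, and using the same estimate with $-F$ in place of $F$ to control the absolute value, followed by $\bb E_{\nu^N}|\int_0^t V\,ds|\le \int_0^t\!\int |V|\,f^N_s\,d\nu^N\,ds$, yields
$$\bb E_{\nu^N}\Big|\!\int_0^t\!\!\sum_x F(s,x)\,\Xi_{\bs u^N(\cdot),x} f(\eta^N(s))\,ds\Big| \le \frac{N^2}{K_N}\!\int_0^t\! I^E_N(f^N_s;\nu^N)\,ds + \mf c(\bs u^N)\,K_N\!\int_0^t\!\bigl(1+\Vert F(s)\Vert^2_\infty\bigr)\bigl(H_N(f^N_s)+(N/\ell_N)^d\bigr)ds.$$
The second term already matches the target; only the Dirichlet-form remainder has to be absorbed.

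For the absorption, Proposition~\ref{l06-1} with $\varrho=\bs u^N$ and $H_N(f^N_0)=0$ gives $\,2\int_0^t I_N(f^N_s;\nu^N)\,ds \le \int_0^t\!\int f^N_s\,L^*_N\mb 1\,d\nu^N\,ds$. The right-hand side is bounded by decomposing $L^*_N\mb 1 = U + K_N\Upsilon$ via Lemma~\ref{2-l07}, using the deterministic estimate \eqref{2-40} on $U$, and applying Theorem~\ref{t02} with a small $\delta$ to each monomial in $K_N\Upsilon$; after a standard absorption of the residual Dirichlet form into the left-hand side, one obtains $\int_0^t I_N(f^N_s)\,ds \le \mf c(\bs u^N)\,K^2_N\int_0^t(H_N(f^N_s)+(N/\ell_N)^d)\,ds$. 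Combined with $N^2 I^E_N\le I_N$, the unwanted remainder becomes $\mf c(\bs u^N)\,K_N\int_0^t(1+\Vert F\Vert^2_\infty)(H_N+(N/\ell_N)^d)\,ds$ and folds into the main term.

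The additive $\mf c(\bs u^N)/K_N$ then captures a boundary/Dynkin correction of the form $\int_0^t V\,ds = K_N^{-1}[\psi(\eta_t)-\psi(\eta_0)-M_t-\int_0^t N^2 L^E_N \psi\,ds]$, obtained by solving $L^G_N\psi = V/K_N$ on the cylinder support of $V$, which yields a uniformly bounded $\psi$; the factor $K_N^{-1}$ out front is the source of the claimed scaling, while the Kawasaki integrand inside is controlled by the same Dirichlet-form absorption as above. The main obstacle is the simultaneous absorption of the two Dirichlet forms produced by the two applications of Theorem~\ref{t02} (one for the identity estimate and one inside the entropy-production bound for $K_N\Upsilon$); the hypothesis $\sup_N K^2_N/\ell_N^d<\infty$ is precisely what allows both to be carried out without spoiling the constants.
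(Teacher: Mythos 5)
Your reduction to monomials via \eqref{2-17} and the idea of absorbing the Dirichlet form by time-integrating the entropy-production inequality are fine as far as signed expectations go, but the argument has a genuine gap exactly at the point the statement is hardest: the absolute value outside the time integral. After the triangle inequality, what you must bound is $\int_0^t\int |\mc W(s,\cdot)|\,f^N_s\,d\nu^N\,ds$, i.e.\ the expectation of $|\mc W|$, whereas Theorem \ref{t02} (applied to $F$ or to $-F$) only controls the \emph{signed} quantity $\int \mc W\, f^N_s\,d\nu^N$; the $\pm F$ trick yields a bound on $\big|\int \mc W f^N_s d\nu^N\big|$, which sits on the wrong side of the inequality $\big|\int \mc W f\,d\nu^N\big|\le \int|\mc W|\,f\,d\nu^N$. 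This is precisely the obstruction the paper flags ("the difference is that we have an absolute value") and overcomes by a different device: for the part of $\mc W$ that is estimated through integration by parts (the $\mc W^{(2)}_\ell$ piece, where only signed bounds involving the carr\'e du champ exist), one passes to exponential moments, uses $e^{|a|}\le e^a+e^{-a}$, and applies the Feynman--Kac variational formula (Lemma \ref{l19b}), which produces a supremum over \emph{all} densities together with $-N^2 I^E_N(f;\nu^N)$, compensated by the extra process $\Psi$ of \eqref{2-33}; the $1/K_N$ in the statement is nothing but $\gamma^{-1}\log 2$ with $\gamma=K_N$. Only the pieces for which genuine pointwise absolute-value bounds exist ($\mc F^{(1)}_\ell$, $\Upsilon^{(1)}_\ell$, and the squares $(\omega^\ell_x)^2$, $(H^{(\ell)}_{k,x})^2$, via Lemmas \ref{2-l08} and \ref{2-l09}) are treated by the naive bound $\bb E|\int_0^t\cdot\,ds|\le\int_0^t\bb E|\cdot|\,ds$ you propose.

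A secondary problem is your explanation of the $1/K_N$ term through a Dynkin correction obtained by solving $L^G_N\psi=V/K_N$. The observable $V=\sum_x F(s,x)\,\omega_{x+A}$ is not a cylinder function but an extensive sum over $\bb T^d_N$, so the putative $\psi$ is a sum of $N^d$ local solutions and has sup norm of order $N^d\Vert F\Vert_\infty$, not $O(1)$; moreover the martingale term would then have to be controlled in $L^1$, which costs a quadratic-variation estimate of order $(K_N N^d t)^{1/2}/K_N$, far from $O(1/K_N)$. This device is both unjustified and unnecessary once the Feynman--Kac step is in place.
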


By \eqref{2-17}, the previous theorem follows from the next result.

\begin{proposition}
\label{2-l14} 
Fix a bounded function $F\colon \bb R_+ \times \bb T^d_N \to \bb R$,
and a finite subset $B$ of $\bb Z^d_\perp$ with at least two elements.
Then, there exists a finite constant $\mf c(\bs u^N)$, uniformly
bounded in $N$ and depending only on $\bs u^N(\cdot)$, $B$ and $d$,
such that
\begin{align*}
& \bb E_{\nu^N} \Big[\, \Big|\,
\int_0^t \, \sum_{x\in \bb T^d_N} F (s,x)\,
\omega_{B+x}(s)  \, ds \, \Big| \,\Big]
\\
& \quad
\;\le\; \mf c(\bs u^N) \, \Big\{ \frac{1}{K_N} \;+\;
K_N \,  \int_0^t (\, 1 + \Vert F(s)\Vert^2_\infty\,)\, \big\{ \,
H_N(f^N_s)  + (N/\ell_N)^d \,\big\} \, ds\; \Big\} 
\end{align*}
for all $t> 0$.  In this equation, $f^N_s$ is given by \eqref{2-34c},
and $(\ell_N : N\ge 1)$ is the sequence introduced in the statement of
Theorem \ref{mt1}.
\end{proposition}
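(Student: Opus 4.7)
The plan is to apply the energy estimate Theorem~\ref{t02} pointwise in time with $G(x)=F(s,x)$ and $A=B$, so that $\mc W^{G,A}=V_s:=\sum_{x\in\bb T^d_N} F(s,x)\,\omega_{B+x}$, and then convert the resulting bound on the time-integrated expectation into the desired bound on $\bb E_{\nu^N}[|Y_t|]$, where $Y_t:=\int_0^t V_s(\eta^N(s))\,ds$.

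The mean $\bb E_{\nu^N}[Y_t]=\int_0^t\int V_s\, f^N_s\,d\nu^N\,ds$ already captures the principal term. Applying Theorem~\ref{t02} with $\delta=1$ to both $F$ and $-F$, against $f^N_s$, gives
\begin{equation*}
\Big|\int V_s\, f^N_s\,d\nu^N\Big|\;\le\;\frac{N^2}{K_N}\,I^E_N(f^N_s;\nu^N)\;+\;\mf c(\bs u^N)\,K_N\,(1+\|F(s)\|_\infty^2)\,\big[H_N(f^N_s)+(N/\ell_N)^d\big]\,.
\end{equation*}
Integrating in $s$, using $I_N\ge N^2 I^E_N$, and invoking
\begin{equation*}
\int_0^t I_N(f^N_s;\nu^N)\,ds\;\le\;\mf c(\bs u^N)\,K_N^2\int_0^t\big[H_N(f^N_s)+(N/\ell_N)^d\big]\,ds\;,
\end{equation*}
which is extracted from the proof of Theorem~\ref{mt1} (one absorbs $N^2 I^E_N$ into the $-2 I_N$ term from Proposition~\ref{l06-1}), one then absorbs the Dirichlet-form contribution $(N^2/K_N)\int_0^t I^E_N\,ds\le (1/K_N)\int_0^t I_N\,ds$ into the principal term, yielding
\begin{equation*}
\big|\bb E_{\nu^N}[Y_t]\big|\;\le\;\mf c(\bs u^N)\,K_N\int_0^t(1+\|F(s)\|_\infty^2)\,\big[H_N(f^N_s)+(N/\ell_N)^d\big]\,ds\;.
\end{equation*}

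To upgrade this to $\bb E_{\nu^N}[|Y_t|]$, I would write $\bb E_{\nu^N}[|Y_t|]\le|\bb E_{\nu^N}[Y_t]|+\sqrt{\mathrm{Var}_{\nu^N}(Y_t)}$ and estimate the variance by a Kipnis--Varadhan / martingale argument. Applying Dynkin's formula to a suitable time-dependent function $\psi_s$ on $\Omega_N$ chosen to solve a Poisson-type equation for $V_s$, one writes $Y_t=\psi_t(\eta^N(t))-\psi_0(\eta^N(0))-M_t$, where $M_t$ is an $L^2$-martingale. The bracket $\langle M\rangle_t$ and the boundary terms $\psi_t(\eta^N(t))$, $\psi_0(\eta^N(0))$ are then estimated using the carr\'e du champ of $L_N=N^2 L^E_N+K_N L^G_N$, together with another application of Theorem~\ref{t02} and the entropy-production bound above.

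The main obstacle is the variance estimate: because $\nu^N$ is a spatially inhomogeneous, non-stationary product measure, the classical Kipnis--Varadhan inequality does not apply directly and must be redone delicately, balancing the Kawasaki contribution (of order $N^{-2}$ per cell) with the Glauber one (of order $K_N$ per cell), and $|B|\ge 2$ ensures that $\omega_{B+x}$ has mean zero under $\nu^N$ so that no order-one drift is produced. The final $1/K_N$ correction in Proposition~\ref{2-l14} emerges from this variance bound, and combining it with the mean estimate above yields the full statement.
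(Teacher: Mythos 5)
Your first step (the bound on the mean $\bb E_{\nu^N}[Y_t]$ via Theorem \ref{t02} applied against $f^N_s$, absorbing the Dirichlet form through the time-integrated entropy-production inequality) is sound and consistent with the paper's machinery. The genuine gap is the second step, which is exactly where the whole difficulty of the proposition lies: you reduce $\bb E_{\nu^N}[|Y_t|]$ to a variance bound and then propose a Kipnis--Varadhan/Poisson-equation argument, but you give no argument for it and, as stated, it would fail. The measure $\nu^N$ is neither stationary nor reversible for $L_N$ (not even for the exclusion part, since the marginals are inhomogeneous --- this is precisely why $L^{E,*}_N\mb 1\neq 0$ in Lemma \ref{2-l07}), so the classical $H^{-1}$/resolvent estimates behind Kipnis--Varadhan are unavailable, and solving and controlling a Poisson-type equation $-L_N\psi_s=V_s$ uniformly is itself a major unproved step. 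A crude variance bound does not rescue the argument: without exploiting time cancellations one gets $\sqrt{\mathrm{Var}(Y_t)}\sim t\,N^{d/2}\Vert F\Vert_\infty$, which in $d=1$ is $N^{1/2}$, much larger than the allowed $K_N(N/\ell_N)^d=K_N N^{1/5}$ with $K_N=O(\sqrt{\log N})$. You also never exhibit where the $1/K_N$ term comes from beyond asserting it ``emerges from this variance bound.''

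For comparison, the paper handles the absolute value by a completely different device: it bounds $|\int_0^t \mc F^{(2)}_\ell\,ds|$ minus a compensator $\Psi$ (built from $\Upsilon^{(1)}_\ell$, $(\omega^\ell_x)^2$ and the squared flow functions $(F^{(\ell)}_{k,s,x})^2$, $(J^{i,\ell}_{k,x})^2$) through Jensen, the elementary inequality $e^{|a|}\le e^a+e^{-a}$, and the Feynman--Kac variational formula with parameter $\gamma=K$; the variational expression is controlled pathwise by the integration-by-parts estimate \eqref{2-41} (which, unlike Lemma \ref{2-l08}, involves no entropy of the variational density), and the $\log 2/\gamma$ term is exactly the $1/K_N$ in the statement. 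The compensator and the low-degree piece $\mc F^{(1)}_\ell$ are then estimated in expectation against $f^N_s$ via the entropy inequality (Lemmata \ref{l15} and \ref{2-l08}), producing the $K_N\int_0^t(1+\Vert F(s)\Vert_\infty^2)[H_N(f^N_s)+(N/\ell_N)^d]\,ds$ term. Unless you can actually establish a non-stationary Kipnis--Varadhan-type bound adapted to the inhomogeneous $\nu^N$ --- which would be a substantial new argument --- your proposal does not prove the proposition; I would redirect it along the Feynman--Kac route. (Minor point: the role of $|B|\ge 2$ is not merely that $\omega_{B+x}$ is centered under $\nu^N$ --- that holds for $|B|=1$ too --- but that degree-one terms require the separate treatment of Lemma \ref{l05}/\ref{2-l11}.)
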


The proof of Proposition \ref{2-l14} is divided in several steps and
is close to the one of Theorem \ref{t02}. The difference is that we
have an absolute value. We appeal to Feynman-Kac formula to overcome
this impediment.

Let
\begin{equation}
\label{2-30}
{\color{blue} \mc F (s,\eta)} 
\;:=\; \sum_{x \in \bb T^d_N} F(s,x) \, \omega_{x+B}\;,
\end{equation}
and rewrite $\mc F (s,\eta)$ as
$\mc F^{(1)}_\ell (s,\eta) + \mc F^{(2)}_\ell (s,\eta)$, as in
\eqref{2-04}, \eqref{2-29}.  Recall the definition of $\Upsilon$,
introduced in \eqref{2-08}, and also rewrite it as
$\Upsilon^{(1)}_\ell (\eta) + \Upsilon^{(2)}_\ell (\eta)$ in the same
way.

Denote by $\color{blue} F^{(\ell)}_{k,s,x}$, the function
$H^{\ell, F_s, B}_{k}(x)$ introduced in \eqref{2-06a}. Note that $G$
and $A$ have been replaced by $F(s,\cdot)$ and $B$, respectively.
Recall from Lemma \ref{2-l07} the definition of the functions
$G_1=G_{1,1}$, $G_2$, and denote by $\color{blue} J^{1,\ell}_{k,x}$,
$\color{blue} J^{2,\ell}_{k,x}$, the function
$H^{\ell, G_1, A_1}_{k} (x)$, $H^{\ell, G_2, A_2}_{k} (x)$ introduced
in \eqref{2-06a} with $A_1 = \{0,e_1\}$, $A_2 = \{-e_1,0,e_1\}$,
respectively.

Recall the estimate \eqref{2-41}. The constant $\mf c(\bs u^N)$
(uniformly bounded in $N$) and the function $\Psi_{s,x}$ introduced
below correspond to the terms appearing on the right-hand side of the
inequality.  Let $\Psi_{s} \colon \Omega_N \to \bb R$,
$s \in \bb R_+$, be given by
\begin{equation}
\label{2-33}
\begin{aligned}
{\color{blue} \Psi  (s,\eta)} \; & =\;
\frac{1}{2}\, \Upsilon^{(1)}_\ell (\eta)
\;+\; 2\, \mf c(\bs u^N) \, \sum_{x\in\bb T_N^d} (\omega^{\ell} _{x})^2
\\
& +\;  \mf c(\bs u^N) \, K \, \frac{1}{s_d(\ell)}\, \sum_{k=1}^d
\sum_{x\in\bb T_N^d} \Big\{\, 2\, (F^{(\ell)}_{k,s, x})^2
\, + \,  (J^{1,\ell}_{k,x})^2 \, +  \,   (J^{2,\ell}_{k,x})^2\,\Big\} \;.
\end{aligned}
\end{equation}

In Lemma \ref{l19b} below, we estimate the expectation (with respect
to $\nu_N$) of
\begin{equation*}
\Big|\;
\int_0^t \mc F^{(2)}_\ell (s,\eta^N(s)) \, ds \, \Big|
\,-\, \int_0^t \Psi  (s,\eta^N(s)) \, ds \;.
\end{equation*}
In Lemma \ref{l15} and equation \eqref{2-32}, we bound the expectations of
\begin{equation*}
\int_0^t \Big\{ \,\Big|\, \mc F^{(1)}_\ell (s,\eta^N(s)) \,\Big| \;+\;
\Big|\Upsilon^{(1)}_\ell (\eta^N(s))\,\Big| \,\Big\} \; ds \,,
\end{equation*}
and 
\begin{equation*}
\begin{gathered}
\int_0^t \sum_{x\in\bb T_N^d} (\omega^{\ell} _{x})^2 \, ds \;,
\qquad 
\int_0^t \sum_{k=1}^d \sum_{x\in\bb T_N^d} \Big\{\, 
2 \,  (F^{(\ell)}_{k,s, x})^2 \,+\,  (J^{1,\ell}_{k,x})^2
\, + \, (J^{2,\ell}_{k,x})^2\,\Big\} \, ds \;.
\end{gathered}
\end{equation*}
Proposition \ref{2-l14} follows from these bounds.

Below, the factor $\Vert F\Vert^2_\infty$ is hidden in
$\Psi$, and not in the right-hand side.

\begin{lemma}
\label{l19b}
There exists a finite constant $\mf c(\bs u^N)$ such that 
\begin{equation*}
\begin{aligned}
& \bb E_{\nu^N} \Big[\,
\Big|\;
\int_0^t \mc F^{(2)}_\ell (s,\eta^N(s)) \, ds \, \Big|
\,-\, \int_0^t  \Psi  (s,\eta^N(s)) \, ds
\,\Big]
\\
&\quad  \;\le\; \frac{\log 2}{K} \;+\; 
\mf c(\bs u^N) \, K\, (N/\ell)^{d} \, t
\end{aligned}
\end{equation*}
for all $t>0$ and  $N\ge 1$.
\end{lemma}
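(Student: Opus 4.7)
The strategy is to linearize the absolute value through the elementary inequality $|a|\le K^{-1}\log(e^{Ka}+e^{-Ka})$, apply a Feynman--Kac / Rayleigh--Ritz bound adapted to the non-stationary initial measure $\nu^N$, and then close via the Dirichlet-form estimate \eqref{2-41} already in hand. The coefficient $\tfrac{1}{2}$ in front of $\Upsilon^{(1)}_\ell$ in the definition \eqref{2-33} of $\Psi$ and the specific prefactors multiplying the square terms are tailored exactly to the output of this scheme.

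Write $X_t=\int_0^t \mc F^{(2)}_\ell(s,\eta^N(s))\,ds$ and $Y_t=\int_0^t \Psi(s,\eta^N(s))\,ds$. The pointwise bound $K(|X_t|-Y_t)\le \log\bigl(e^{K(X_t-Y_t)}+e^{-K(X_t+Y_t)}\bigr)$, combined with Jensen's inequality for $\log$, yields
\[
\bb E_{\nu^N}\bigl[|X_t|-Y_t\bigr]\;\le\;\frac{\log 2}{K}\,+\,\frac{1}{K}\log \max_{\sigma=\pm 1}\bb E_{\nu^N}\Bigl[\exp\int_0^t K\bigl(\sigma\mc F^{(2)}_\ell-\Psi\bigr)(s,\eta^N(s))\,ds\Bigr].
\]
It therefore suffices to show that, for each sign $\sigma$, the exponential moment is bounded by $\exp(\mf c(\bs u^N)K^2(N/\ell_N)^d t)$; division by $K$ then produces the claimed rate $\mf c(\bs u^N)K(N/\ell_N)^d t$.

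For each sign $\sigma$, the Feynman--Kac formula together with the variational (Rayleigh--Ritz) characterization of the top eigenvalue of the perturbed generator $L_N+V_\sigma$ in $L^2(\nu^N)$ delivers
\[
\log \bb E_{\nu^N}\Bigl[e^{\int_0^t V_\sigma(s,\eta^N(s))\,ds}\Bigr]\;\le\;\int_0^t \sup_f\Bigl\{\!\int V_\sigma(s)\,f\,d\nu^N\,-\,I_N(f;\nu^N)\,+\,\tfrac{1}{2}\!\int L_N^*\mathbf{1}\cdot f\,d\nu^N\Bigr\}\,ds,
\]
with $V_\sigma=K(\sigma\mc F^{(2)}_\ell-\Psi)$, the supremum running over densities $f$ with respect to $\nu^N$; the correction $\tfrac{1}{2}\int L_N^*\mathbf{1}\cdot f\,d\nu^N$ arises because $\nu^N$ is not invariant under $L_N$. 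Expanding $L_N^*\mathbf{1}=U+K\Upsilon$ via Lemma~\ref{2-l07} and decomposing $\Upsilon=\Upsilon^{(1)}_\ell+\Upsilon^{(2)}_\ell$ exactly as in \eqref{2-04}, the $\tfrac{K}{2}\int\Upsilon^{(1)}_\ell f\,d\nu^N$ contribution from this correction is cancelled exactly by the $-\tfrac{K}{2}\int\Upsilon^{(1)}_\ell f\,d\nu^N$ contribution coming from $-K\int\Psi f\,d\nu^N$, which is the raison d'\^etre of the $\tfrac{1}{2}$ coefficient in front of $\Upsilon^{(1)}_\ell$ in \eqref{2-33}.

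The surviving quantity is controlled by three applications of \eqref{2-41}: once with $(G,A)=(\sigma F,B)$ to bound $K\sigma\int\mc F^{(2)}_\ell f\,d\nu^N$, and twice with $(G,A)=(G_1,\{0,e_1\})$ and $(G,A)=(G_2,\{-e_1,0,e_1\})$ to bound the two cylinder pieces of $\tfrac{K}{2}\int\Upsilon^{(2)}_\ell f\,d\nu^N$. Choosing $\delta$ small enough in \eqref{2-41} absorbs the $\delta N^2 I^E_N(f)$ terms into $-I_N(f;\nu^N)$ via $I_N\ge N^2 I^E_N$, while a sufficiently large choice of the constant $\mf c(\bs u^N)$ in $\Psi$ ensures that all positive quadratic contributions in $\sum_x(\omega^\ell_x)^2$, $\sum_{k,x}(F^{(\ell)}_{k,s,x})^2$ and $\sum_{k,x}(J^{i,\ell}_{k,x})^2$ are matched by the corresponding negative terms from $-K\int\Psi f\,d\nu^N$. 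The residual additive constants $\mf c(\bs u^N)K^2(N/\ell_N)^d$ from \eqref{2-41}, together with $\tfrac{1}{2}\int U f\,d\nu^N\le \tfrac{1}{2}\sup|U|\le \mf c(\bs u^N)K^2 N^{d-2}\le \mf c(\bs u^N)K^2(N/\ell_N)^d$ via \eqref{2-40} and $\ell_N^d\le N^2$ from \eqref{2-28}, combine to give $\sup_f[\cdots]\le \mf c(\bs u^N)K^2(N/\ell_N)^d$, completing the proof. The main obstacle is precisely this bookkeeping of constants: the coefficients in \eqref{2-33} are designed so that the cancellations work simultaneously across all three applications of \eqref{2-41}, while the sign-indefinite $\Upsilon^{(1)}_\ell$ piece is absorbed exactly by the Feynman--Kac correction.
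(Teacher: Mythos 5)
Your argument is correct and follows essentially the same route as the paper: linearize the absolute value at scale $\gamma=K$ via $e^{|a|}\le e^{a}+e^{-a}$ and Jensen, apply the Feynman--Kac bound with the non-invariance correction $\tfrac12\int L_N^*\mathbf 1\, f\,d\nu^N$ expanded through Lemma \ref{2-l07}, cancel the $\tfrac{K}{2}\Upsilon^{(1)}_\ell$ term against the matching piece of $\Psi$, control $\mc F^{(2)}_\ell+\tfrac12\Upsilon^{(2)}_\ell$ by three applications of \eqref{2-41} whose right-hand sides are absorbed by the remaining terms of $\Psi$, and bound $\tfrac12 U$ via \eqref{2-40} and $N^{d-2}\le (N/\ell_N)^d$. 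The bookkeeping of the Dirichlet-form fractions and of the constants in $\Psi$ is exactly as in the paper, so no gap.
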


\begin{proof}
By Jensen's inequality, the expectation appearing in the statement
of the lemma is bounded above by
\begin{equation*}
\begin{aligned}
\frac{1}{\gamma} \, \log \bb E_{\nu^N} \Big[\,
\exp \, \gamma \,  \Big\{\, \Big|\,
\int_0^{t} \mc F^{(2)}_\ell (s,\eta^N(s)) \, ds \, \Big| 
\,-\, \int_0^{t} \Psi  (s,\eta^N(s)) \, ds \,\Big\}
\,\Big] 
\end{aligned}
\end{equation*}
for all $\gamma>0$. As
$e^{|a|} \le e^a + e^{-a} \le 2 \max\{e^a , e^{-a}\}$, by the
linearity of the expectation, this expression is less than or equal to
\begin{equation*}
\frac{1}{\gamma} \, \log 2 \;+\; \max_{b=\pm 1}
\frac{1}{\gamma} \, \log \bb E_{\nu^N} \Big[\,
\exp \,\gamma\, \Big\{\, 
\int_0^{t} \big\{\, b\, \mc F^{(2)}_\ell (s,\eta^N(s)) 
\,-\, \Psi  (s, \eta^N(s)) \,\big\}\, ds
\,\Big\} \,\Big]\;.
\end{equation*}

We estimate the second term for $b=1$; the same argument applies to
$b=-1$. By the Feynman-Kac formula (cf. \cite[Lemma  B.1]{jl}), the
second term of this expression is bounded by
\begin{equation*}
\frac{1}{\gamma}\, \int_0^{t}\, 
\sup_{f} \Big\{ \int \gamma\, \widehat{\mc F}^{(2)}_\ell (s) \, f\, d\nu^N
\;+\; \frac{1}{2} \, \int L^*_N \mb 1 \, f\, d\nu^N
\;-\; N^2  \, I^E_N(f; \nu^N)\,\Big\}\, ds \;, 
\end{equation*}
where the supremum is carried over all probability densities $f$ with
respect to $\nu^N$, and
$\color{blue} \widehat{\mc F}^{(2)}_\ell (s) \,=\, \mc F^{(2)}_\ell
(s) - \Psi (s)$. Mind that we kept only the integrated carr\'e du
champ associated to the exclusion dynamics. Choosing $\gamma = K$, by
Lemma \ref{2-l07} the previous integral becomes
\begin{equation} 
\label{18}
\frac{1}{K} \, \int_0^t\, \sup_{f} \Big\{ \,
K \int \big[\, \widehat{\mc F}^{(2)}_\ell (s)
\, +\,  \frac{1}{2K}\, U \, +\,  \frac{1}{2} \, \Upsilon  \, \big]\, 
f\, d\nu^N \;-\; N^2 \, I^E_N(f; \nu^N) \,\Big\}\, ds \;.
\end{equation}

By definition of $\widehat{\mc F}^{(2)}_\ell (s)$, $U$, and
$\Upsilon$,
$K \, \{ \, \widehat{\mc F}^{(2)}_\ell (s) + (1/2K)\, U + (1/2)
\Upsilon \, \}$ is equal to
\begin{equation*}
\begin{aligned}
&  (1/2)\, U  (\eta) \;+\; K \, \mc F^{(2)}_\ell (s,\eta)
\;+\; (K/2)\,  \Upsilon^{(2)}_\ell (\eta) 
\;-\; 2\, \mf c(\bs u^N)  \, K\, \sum_{x\in\bb T_N^d} (\omega^{\ell} _{x})^2
\\
&  \;-\; \mf c(\bs u^N) \, K^2\, \frac{1}{s_d(\ell)}\, 
\sum_{k=1}^d  \sum_{x\in\bb T_N^d} \Big\{\,
2 \, (F^{(\ell)}_{k,s, x})^2  \,+\, 
\, (J^{1,\ell}_{k, x})^2 \, + \, (J^{2,\ell}_{k, x})^2 \,\Big\} \;.
\end{aligned}
\end{equation*}
Note that $\widehat{\mc F}^{(2)}_\ell (s)$, $ \Upsilon (\eta)$ have
been replaced by $\mc F^{(2)}_\ell (s)$, $\Upsilon^{(2)}_\ell (\eta)$,
respectively (because we included in $\Psi$ the term
$\Upsilon^{(1)}_\ell (\eta)$).

By \eqref{2-40}, the absolute value of $(1/2)\, U (\eta)$ is less than
or equal to $\mf c(\bs u^N) \, K^2\, N^{d-2}$.  We apply the bound
\eqref{2-41} to estimate the term
$\mc F^{(2)}_\ell + (1/2) \Upsilon^{(2)}_\ell$ integrated with respect
to the measure $f(\eta)\, \nu^N(d\eta)$.  Estimate \eqref{2-41} is
used as it does not involve entropy of $f$ as in Lemma \ref{2-l08}
Mind that $(1/2)\, \Upsilon^{(2)}_\ell$ is the sum of two terms,
each one multiplied by $1/2$ and that $G_{1,j}=0$ for $2\le j\le d$.
The term $\Psi$ has been introduced to compensate the terms appearing
on the right-hand side of \eqref{2-41}. Thus, by \eqref{2-41} with
$\delta=1/2$, the expression inside braces in \eqref{18} is less than
or equal to
\begin{equation*}
\mf c(\bs u^N) \, K^2\, N^{d-2} \;+\; \mf c(\bs u^N)
\, K^2\, (N/\ell)^{d}
\;\le\; 
\mf c(\bs u^N) \, K^2\, (N/\ell)^{d}  \;.
\end{equation*}
The values of the constants $\mf c(\bs u^N)$ have changed. There is a
factor $2\, \mf c(\bs u^N)$ multiplying $(\omega^{\ell} _{x})^2$
because the term $\mc F^{(2)}_\ell (s)$ contributed with $1$ and the
two terms in $\Upsilon^{(2)}_\ell$ contributed with $1/2$ each. In the
last step, we applied the bound
$N^2 = s_d(\ell) \, \ell^{d/2} \ge \ell^d$. This completes the proof
of the lemma.
\end{proof}

\begin{remark}
\label{rm4}
The previous result holds in any dimension and for any sequence
$K_N$. It is a consequence of Feynman-Kac formula and the integration
by parts stated in Lemma \ref{l2}.
\end{remark}

We turn to the terms $\mc F^{(1)}_\ell$ and $\Upsilon^{(1)}_\ell$. Fix
a bounded function $G\colon \bb R_+ \times \bb T^d_N \to \bb R$, a
finite subset $A$ of $\bb Z^d$ with at least two elements, and recall
the definition of $\mc W^{(1)}_\ell$ introduced in \eqref{2-04} which
is now time-dependent because so is the test function $F$ appearing in
the statement of Proposition \ref{2-l14}.

\begin{lemma}
\label{l15}
Fix a bounded function $G\colon \bb R_+ \times \bb T^d_N \to \bb R$, a
finite subset $A$ of $\bb Z^d$ with at least two elements. Then, there
exists a constant $\mf c (\bs u^N)$, uniformly bounded in $N$ and
depending only on $A$, $\bs u^N$ and the dimension, such that
\begin{equation*}
\bb E_{\nu^N} \Big[\,
\int_0^t \,\Big|\, \mc W^{(1)}_\ell (s,\eta^N(s)) \,  \Big| \,
\, ds \, \Big] 
\;\le \; \mf c(\bs u^N) \,
\int_0^t  \Vert G(s)\Vert_\infty\,
\Big\{   H_N(f^N_s )\, ds\; +\;  (N/\ell)^d\,\Big\}\; ds \;.
\end{equation*}
\end{lemma}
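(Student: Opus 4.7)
The statement is essentially a time-integrated version of Lemma \ref{2-l09}, with the static function $G$ there replaced by the time-dependent $G(s,\cdot)$. My plan is to apply Lemma \ref{2-l09} pointwise in $s$ and then integrate in time; no new estimate is really needed.

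First, since $\nu^N S^N(s) = f^N_s\, d\nu^N$, the distribution of $\eta^N(s)$ under $\bb P_{\nu^N}$ has density $f^N_s$ with respect to $\nu^N$. By Fubini,
\begin{equation*}
\bb E_{\nu^N} \Big[\, \int_0^t \big|\mc W^{(1)}_\ell(s, \eta^N(s))\big| \, ds \, \Big]
\;=\; \int_0^t \int \big|\mc W^{(1)}_\ell(s, \eta)\big|\, f^N_s(\eta) \, d\nu^N(\eta) \, ds.
\end{equation*}

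Next, I would fix $s\in[0,t]$ and apply Lemma \ref{2-l09} to the function $G(s,\cdot)$ in place of $G$ and to the density $f = f^N_s$. The decomposition \eqref{2-31} of $\mc W^{(1)}_\ell$ as $\sum_x M^{(1)}_\ell(x) M^{(2)}_\ell(x)$ carries over verbatim once $G$ is replaced by $G(s,\cdot)$, and Young's inequality with the choice $\beta = \Vert G(s)\Vert_\infty$ combined with \cite[Lemma 5.7]{jl} yields
\begin{equation*}
\int \big|\mc W^{(1)}_\ell(s,\cdot)\big|\, f^N_s \, d\nu^N \;\le\; \mf c(\bs u^N)\, \Vert G(s)\Vert_\infty \, \big\{\, H_N(f^N_s) \,+\, (N/\ell)^d \,\big\}.
\end{equation*}
This is legitimate since nothing in the proof of Lemma \ref{2-l09} uses any special structure of $G$ beyond its $\ell^\infty$ norm, and the hypothesis that $\bs u^N(\cdot)$ is bounded away from $0$ and $1$ (guaranteed by \eqref{2-38b}) holds at every time $s$.

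Integrating in $s\in[0,t]$ then produces the desired bound. There is no real obstacle: the only thing to verify is that the constant $\mf c(\bs u^N)$ produced by Lemma \ref{2-l09} is independent of $s$ and $G(s,\cdot)$, which is the case since it depends only on $A$, $\bs u^N$ and $d$. Thus Lemma \ref{l15} reduces to a direct quotation of Lemma \ref{2-l09} at each time $s$ followed by integration.
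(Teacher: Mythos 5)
Your proposal is correct and is essentially the paper's own argument: rewrite the expectation as $\int_0^t \int |\mc W^{(1)}_\ell(s,\cdot)|\, f^N_s\, d\nu^N\, ds$ and apply Lemma \ref{2-l09} at each fixed time $s$ with $G(s,\cdot)$ and $f=f^N_s$, noting the constant there depends only on $A$, $\bs u^N$ and $d$, hence is uniform in $s$.
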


\begin{proof}
The expectation appearing in the statement of the lemma is equal to
\begin{align*}
\int_0^t ds\; \int\, \big|\, 
\mc W^{(1)}_\ell (s,\cdot) \, \big|\, \, f^N_s \, d\nu^N\;.
\end{align*}
It remains to apply Lemma \ref{2-l09}.
\end{proof}

To complete the proof of the Boltzmann-Gibbs principle recall from
Lemma \ref{2-l08} that  for all $1\le k\le d$,
\begin{equation}
\label{2-32}
\begin{aligned}
& \bb E_{\nu^N} \Big[\,
\int_0^t \, 
\sum_{x\in \bb T^d_N} \big[\, R^{(\ell)}_{k,s,x} \,\big]^2
\, ds \, \Big]  \\
& \quad \le\;  \mf c(\bs u^N)  \, \int_0^t 
\Vert G(s)\Vert^2_\infty\,  s_d(\ell) \, \big\{\,  H_N(f^N_s)\; ds  \;+\; 
 (N/\ell)^d\,\big\} \; ds
\end{aligned}
\end{equation}
for a finite constant $\mf c(\bs u^N)$, In this equation,
$R^{(\ell)}_{k,s,x}$ represents $F^{(\ell)}_{k,s,x}$,
$J^{1,\ell}_{k,x}$, or $J^{2,\ell}_{k,x}$.  A similar bound holds for
$\omega^\ell_x$ and $G(s)=1$, without the factor $s_d(\ell)$ on the
right-hand side. Note that we have a front factor $K/s_d(\ell)$ in the
last term of $\Psi$ in \eqref{2-33}.

\begin{proof}[Proof of Proposition \ref{2-l14}]
The result follows from the definition of $\Psi_{s,x}$ given in
\eqref{2-33}, Lemmata \ref{l19b}, \ref{l15}, equation \eqref{2-32},
with $\ell_N$ as defined in relation \eqref{2-18}.
\end{proof}

\begin{proof}[Proof of Theorem \ref{t01}]
In view of the decomposition of \ $\Xi_{\bs u^N(\cdot), x} f$
presented in \eqref{2-17}, the Boltzmann-Gibbs principle follows from
Proposition \ref{2-l14}, and the definition \eqref{2-18} of the
sequence $\ell_N$.
\end{proof}

The next step is to replace $\Xi_{\bs u^N(\cdot), x} f$ in Theorem
\ref{t01} with $\Xi^c_{\bs u^N(\cdot), x} f$ introduced in
\eqref{2-35b}; see Corollary \ref{2-l10}. $\Xi^c$ is a more natural
object in view of Lemma \ref{lem:3.1-F}. We prepare a lemma.

For a finite subset $B$ of $\bb Z^d$, let
$\bs u^N_B\colon \bb T^d_N \to (0,1)$ be given by
$\color{blue} \bs u^N_B(x) = \prod_{z\in B} \bs u^N(x+z)$.  For a
function $G\colon \bb T^d_N \to \bb R$, $1\le j\le d$, let
\begin{equation*}
{\color{blue} \mf n_K (G)} \,:=\,
\{\, \Vert G \Vert_\infty \, + \, K^{-1/2}\, \Vert \nabla_N
G\Vert_\infty \}\;,
\end{equation*}
where $\color{blue} (\nabla_N G)(x)$ is the $d$-dimensional vector
whose $k$-th component is $N \,[\, G((x+e_k)/N) - G(x/N)\,]$.

\begin{lemma}
\label{l05}
Fix a finite subset $B$ of $\bb Z^d$ and a function
$G\colon \bb R_+ \times \bb T^d_N \to \bb R$. Then, there exists a
finite constant $\mf c(\bs u^N)$ such that
\begin{align*}
& \bb E_{\nu^N} \Big[\,
\int_0^t \,\Big|\, \sum_{x\in \bb T^d_N}
G (s, x)\, \bs u^N_B(x)\, 
[\, \omega_x(s) - \omega_{x+e_j}(s)\,]  \,\Big| \,  \, ds \, \Big] \\
&\qquad \le\;
\int_0^t \frac{1}{\gamma_s}  \big\{\, H_N(f^N_s) +
\log 2 \, \big\} \; ds
\;+\; \mf c (\bs u^N) \,  K\, N^{d-2}\, \int_0^t   \gamma_s \,  \mf n_K (G_s)^2\,
\, e^{c (\bs u^N) \, \gamma_s\,  \mf n_K (G_s) \, \epsilon_N }\; ds 
\end{align*}
for all $t>0$, $1\le j\le d$, $N\ge 1$, and
$\gamma\colon [0,t] \to (0,\infty)$. In this formula,
$\epsilon_N = K^{1/2}/N$.
\end{lemma}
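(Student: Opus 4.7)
The plan is to combine a summation by parts with an exponential Chebyshev estimate, then absorb the expectation against $f^N_s$ via the entropy inequality. Denote by $V_j(s,\eta)$ the inner sum $\sum_{x\in \bb T^d_N} G(s,x)\,\bs u^N_B(x)\,[\omega_x-\omega_{x+e_j}]$, and write $H(s,x) := G(s,x)\,\bs u^N_B(x)$.

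First, I would rewrite $V_j$ as $\sum_{x} \omega_x\,\Delta^j H(s,x)$ with $\Delta^j H(s,x) := H(s,x) - H(s,x-e_j)$. The coefficient $\Delta^j H(s,x)$ splits into $[G(s,x)-G(s,x-e_j)]\bs u^N_B(x) + G(s,x-e_j)[\bs u^N_B(x) - \bs u^N_B(x-e_j)]$. The first piece is bounded in absolute value by $N^{-1}\Vert\nabla_N G(s)\Vert_\infty$. For the second piece, when $j\neq 1$ it vanishes because $\bs u^N$ is constant along the directions $e_j$, $j\neq 1$; when $j=1$, one telescopes and applies \eqref{2-15} to get a bound $\mf c(\bs u^N)(\sqrt{K}/N)\Vert G(s)\Vert_\infty$. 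In either case
$$
\max_{x} |\Delta^j H(s,x)| \;\le\; \mf c(\bs u^N)\, \epsilon_N\, \mf n_K(G_s)\;,
$$
with $\epsilon_N = \sqrt{K}/N$.

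Next, I would apply the entropy inequality to $f^N_s\,d\nu^N$: for any $\gamma_s>0$,
$$
\int |V_j(s,\cdot)|\, f^N_s\, d\nu^N \;\le\; \frac{1}{\gamma_s}\Big\{H_N(f^N_s) + \log \int e^{\gamma_s |V_j(s,\eta)|}\, d\nu^N\Big\}\;.
$$
The absolute value is handled by $e^{|a|}\le e^a+e^{-a}\le 2\max\{e^a,e^{-a}\}$, which yields
$$
\log \int e^{\gamma_s |V_j|}\, d\nu^N \;\le\; \log 2 \;+\; \max_{b=\pm 1}\log \int e^{b\,\gamma_s V_j(s,\eta)}\, d\nu^N\;.
$$

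The main computation is then the exponential moment under the product measure $\nu^N$. Since the $\omega_x$ are centered, independent, and $|\omega_x|\le 1$ with $E_{\nu^N}[\omega_x^2]=\chi(x)$, a standard Bennett/Hoeffding-type bound gives, for each $x$,
$$
E_{\nu^N}\big[e^{b\gamma_s (\Delta^j H)(s,x)\,\omega_x}\big] \;\le\; \exp\Big\{\mf c\,\gamma_s^2\,(\Delta^j H)(s,x)^2\,\chi(x)\,e^{\,\gamma_s |(\Delta^j H)(s,x)|}\Big\}\;.
$$
Multiplying over $x\in \bb T^d_N$ by independence, and using the uniform bound on $\Delta^j H$ together with $\sum_x(\Delta^j H)^2 \le N^d\, \mf c(\bs u^N)^2\, \epsilon_N^2\, \mf n_K(G_s)^2$ and $\epsilon_N^2 N^d = K N^{d-2}$, gives
$$
\log \int e^{b\gamma_s V_j}\, d\nu^N \;\le\; \mf c(\bs u^N)\,\gamma_s^2\, K\, N^{d-2}\, \mf n_K(G_s)^2\, e^{c(\bs u^N)\,\gamma_s\,\mf n_K(G_s)\,\epsilon_N}\;.
$$

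Plugging this back into the entropy inequality, dividing by $\gamma_s$, and integrating in $s\in[0,t]$ produces exactly the two summands in the statement. The only step that demands a little care is the verification of the uniform bound on $|\Delta^j H|$, since this is where the smallness factor $\epsilon_N=\sqrt{K}/N$ and the combined norm $\mf n_K(G)$ emerge in tandem; I expect this to be the only mildly subtle point, the rest being a textbook entropy / Hoeffding argument under the product measure $\nu^N$.
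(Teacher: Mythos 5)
Your proposal is correct and follows essentially the same route as the paper's proof: summation by parts to put everything on $\omega_x$ with a coefficient of size $\mf c(\bs u^N)\,\epsilon_N\,\mf n_K(G_s)$, the entropy inequality, the $e^{|a|}\le e^a+e^{-a}$ trick costing $\log 2$, and a product-measure exponential moment bound (the paper uses $e^a\le 1+a+a^2e^{|a|}$ with $E_{\nu^N}[\omega_x]=0$, which is the same elementary estimate as your Bennett/Hoeffding step). Your explicit splitting of the discrete gradient of $G(s,\cdot)\,\bs u^N_B(\cdot)$ into the $\nabla_N G$ part and the $\nabla_N \bs u^N_B$ part, with \eqref{2-15} for $j=1$, is exactly the content the paper compresses into the citation of \eqref{2-52}, \eqref{2-53a}, \eqref{2-84}.
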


\begin{proof}
Sum by parts, and let
$W (s,\eta) = \sum_{x\in \bb T^d_N} [\, G (s,x)\, \bs u^N_B(x) \,-\,
G(s,x-e_j)\, \bs u^N_B(x-e_j) \,] \, \omega_x (s)$.  By the
entropy inequality, the expectation appearing in the statement of the
lemma is bounded by
\begin{equation*}
\int_0^t \frac{1}{\gamma_s}   H_N(f^N_s) \, ds \;+\;
\int_0^t \frac{1}{\gamma_s}  \log \int e^{\gamma_s\, |  \, W(s) \,|} \;
d\nu^N \, ds
\end{equation*}
for every $\gamma\colon [0,t] \to (0,\infty)$.

As $e^{|a|} \le e^a + e^{-a} \le 2 \max \{ e^a , e^{-a}\}$, by
linearity of the expectation,
\begin{equation*}
\frac{1}{\gamma_s} \log \int e^{\gamma_s\,  |\,W(s)\,|} \; d\nu^N \;\le\;
\frac{\log 2}{\gamma_s} \;+\; \max_{b=\pm 1} \frac{1}{\gamma_s}
\log \int e^{b\, \gamma_s\,  W(s)} \; d\nu^N\;.
\end{equation*}
We estimate the second term with $b=1$, as the argument applies to
$b=-1$. As $\nu^N$ is a product measure,
\begin{equation*}
\frac{1}{\gamma_s} \log \int e^{\gamma_s\,  W(s)} \; d\nu^N \;=\;
\frac{1}{\gamma_s} \sum_{x\in \bb T^d_N} \log \int
e^{\beta_x \, \gamma_s  \,  \omega_x} \; d\nu^N
\end{equation*}
where
$\beta_x = [\, G (s,x) \,\bs u^N_B(x)\, -\, G(s,x-e_j) \, \bs
u^N_B(x-e_j) \,]$. By \eqref{2-53a}, \eqref{2-52}, \eqref{2-84},
$|\beta|\le \mf c (\bs u^N)\, (K^{1/2}/N) \, \mf n_K
(G_s)$, where $\mf n_K (G_s)$ has been introduced above the statement
of the lemma.  Since $e^a \le 1 + a + a^2 e^{|a|}$,
$E_{\nu^N}[\omega_x]=0$, and $\log (1+b) \le b$, the previous
expression is bounded by
\begin{equation*}
\gamma_s\, \sum_{x\in \bb T^d_N} \beta^2_x\, 
e^{\gamma_s \, |\beta_x| } \;\le\;
c (\bs u^N) \, \gamma_s\,  \mf n_K (G_s)^2\, K\, N^{d-2}
\, e^{c (\bs u^N) \, \gamma_s\,  \mf n_K (G_s) \, (K^{1/2}/N) }
\;.
\end{equation*}
To complete the proof of the lemma, it remains to recollect the
previous estimates.
\end{proof}

Recall the definition of the local function
$\Xi^{\rm c}_{\bs u^N (\cdot), x} f$ introduced in \eqref{2-35b},
where $f$ represents a cylinder function. Next result is a consequence
of the Boltzmann-Gibbs principle and the previous lemma.

\begin{corollary}
\label{2-l10}
Fix a function $G\colon \bb R_+ \times \bb T^{d}_N \to \bb R$ and a
cylinder function $f$. Then, there exists a finite constant
$\mf c(\bs u^N)$, depending only on $\bs u^N$, $f$ and $d$, uniformly
bounded in $N$, such that
\begin{align*}
& \bb E_{\nu^N} \Big[\, \Big|\,
\int_0^t \, 
\sum_{x\in \bb T^d_N} G (s, x)\,
(\Xi^{\rm c}_{\bs u^N (\cdot), x}  f) (\eta (s))
\, ds \, \Big| \,\Big]
\\ 
&\quad \;\le \;
\mf c(\bs u^N) \, \Big\{ \frac{1}{K_N} \;+\;
K_N\,  \int_0^t (\, 1 + \Vert G(s)\Vert^2_\infty\,)\, \big[ \,
H_N(f^N_s)  + (N/\ell_N)^d \,\big] \, ds\; \Big\}
\\
& \quad \; + \;
\int_0^t \frac{1}{\gamma_s}  \big\{\, H_N(f^N_s) +
\log 2 \, \big\} \; ds
\;+\; \mf c (\bs u^N) \,  K\, N^{d-2}\, \int_0^t   \gamma_s \,  \mf n_K (G_s)^2\,
\, e^{c (\bs u^N) \, \gamma_s\,  \mf n_K (G_s) \, \epsilon_N }\; ds 
\end{align*}
for all $t>0$, $\gamma\colon [0,t] \to (0,\infty)$.  In this equation,
$f^N_s$ is given by \eqref{2-34c}, $(\ell_N : N\ge 1)$ is the sequence
introduced in the statement of Theorem \ref{mt1}, and
$\epsilon_N = K^{1/2}/N$.
\end{corollary}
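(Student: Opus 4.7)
The plan is to reduce the estimate on $\Xi^{\rm c}_{\bs u^N(\cdot), x} f$ to the Boltzmann--Gibbs principle already established in Theorem \ref{t01} for $\Xi_{\bs u^N(\cdot), x} f$, together with the exponential/entropy bound on gradients proved in Lemma \ref{l05}. The starting point is a decomposition
\begin{equation*}
\Xi^{\rm c}_{\bs u^N(\cdot), x} f \;=\; \Xi_{\bs u^N(\cdot), x} f \;+\; \sum_{j=1}^d \sum_{B} c^{(j)}_B\, \bs u^N_B(x)\, \big[\, \omega_x \,-\, \omega_{x+e_j}\,\big]\;,
\end{equation*}
coming from the definitions \eqref{2-35} and \eqref{2-35b}, where the sum runs over finitely many sets $B\subset \bb Z^d$ determined by the support of $f$ and the $c^{(j)}_B$ are constants depending only on $f$. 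This is exactly the type of gradient correction that distinguishes the centered object $\Xi^{\rm c}$ from $\Xi$, and it is the reason that Lemma \ref{l05} was established in the previous step.

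The next step is to apply the triangle inequality inside the absolute value. The contribution of the term $\sum_{x} G(s,x)\, \Xi_{\bs u^N(\cdot), x} f$ is bounded directly by Theorem \ref{t01}, which yields the first two lines on the right-hand side of the statement (with a harmless multiplicative constant $\mf c(\bs u^N)$ absorbing the dependence on $f$ and on the finitely many $c^{(j)}_B$). The remaining gradient pieces, of the form
\begin{equation*}
\int_0^t \Big| \sum_{x\in \bb T^d_N} c^{(j)}_B\, G(s,x)\, \bs u^N_B(x)\, \big[\, \omega_x(s) \,-\, \omega_{x+e_j}(s)\,\big]\,\Big|\; ds\;,
\end{equation*}
are handled one by one via Lemma \ref{l05} with test function $c^{(j)}_B\, G$. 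Since only finitely many $(B,j)$ appear and $|c^{(j)}_B|$ is bounded by a constant depending only on $f$, summing the bounds of Lemma \ref{l05} gives exactly the last two summands in the statement, after absorbing constants into $\mf c(\bs u^N)$ and redefining $\gamma_s$ by an irrelevant constant factor.

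The main conceptual point, and the only place where care is required, is bookkeeping: one must check that the decomposition above can be written so that each gradient coefficient has the form $\bs u^N_B(x)$ (needed to apply Lemma \ref{l05} as stated, with the right behavior of $\mf n_K(G_s)$ under the discrete derivative via \eqref{2-84}) and that the product $c^{(j)}_B\, G(s,\cdot)$ satisfies the sup-norm and gradient bounds that feed into $\mf n_K(\cdot)$. Both requirements are satisfied by the construction of $\Xi^{\rm c}$ from $\Xi$ in \eqref{2-35b}. Once these routine identifications are made, the estimate is obtained by simply combining Theorem \ref{t01} and Lemma \ref{l05} term by term.
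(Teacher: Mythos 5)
Your proposal is correct and follows essentially the same route as the paper: split $\Xi^{\rm c}_{\bs u^N(\cdot),x}f-\Xi_{\bs u^N(\cdot),x}f$ into gradient terms with coefficients $\bs u^N_B(x)$, bound the $\Xi$ part by Theorem \ref{t01} and the gradient part by Lemma \ref{l05}. The only detail you compress into ``bookkeeping'' is what the paper spells out, namely the reduction to monomials $f(\eta)=\eta_B$ via \eqref{2-42} and the telescoping of $\omega_{x+z}-\omega_x$ (for non--nearest-neighbour $z\in B$) into increments $\omega_w-\omega_{w+e_k}$ before invoking Lemma \ref{l05}; this is routine and does not affect the argument.
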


\begin{proof}
Fix a function $G\colon \bb R_+ \times \bb T^{d}_N \to \bb R$
satisfying the hypotheses of the corollary and a cylinder function
$f$.  In view of \eqref{2-42}, assume that $f(\eta) = \eta_B$ for some
finite set $B$. Recall from \eqref{2-35} the definition of
$\Xi_{\bs u^N(\cdot), x} f$.  The expectation appearing in the
statement is bounded by
\begin{equation}
\label{2-43}
\begin{aligned}
& \bb E_{\nu^N} \Big[\, \Big|\,
\int_0^t \, 
\sum_{x\in \bb T^d_N} G (s, x)\,
(\Xi_{\bs u^N(\cdot), x}  f) (\eta(s))  
\, ds \, \Big| \,\Big] \\
&\quad
+\; 
\bb E_{\nu^N} \Big[\, \Big|\,
\int_0^t \, 
\sum_{x\in \bb T^d_N} G (s, x)\, \Big\{\, 
(\Xi^{\rm c}_{\bs u^N (\cdot), x} f) (\eta (s))
\,-\, (\Xi_{\bs u^N(\cdot), x}  f) (\eta(s))  \,\Big\}
\, ds \, \Big| \,\Big] \;.
\end{aligned}
\end{equation}
Theorem \ref{t01} provides a bound for the first term. We turn to the
second.

Recall the definition of $\zeta_\cdot$ in \eqref{2-20}.
By definition,
\begin{align*}
& \sum_{x\in \bb T^d_N} G (s,x)\, \big\{\, 
(\Xi^{\rm c}_{\bs u^N (\cdot), x}  f) (\eta)
\,-\, (\Xi_{\bs u^N(\cdot), x}  f) (\eta) 
\,\big\}
\\
&\quad =\;  
-\,\sum_{x\in \bb T^d_N} G (s,x)\, \sum_{y\in\bb Z^d}
E_{\nu^N}
[\, \tau_x f\, \zeta_{x+y} \,]
\big\{\, \omega_{x+y} - \omega_{x}\,\}\;.
\end{align*}
As $f(\eta) = \eta_B$,
\begin{align*}
\sum_{y\in\bb Z^d} E_{\nu^N}
[\, \tau_x f\, \zeta_{x+y} \,]
\big\{\, \omega_{x+y} - \omega_{x}\,\} 
\;=\; \sum_{z\in B} \bs u^N_{B\setminus \{z\}} (x)\, 
\big\{\, \omega_{x+z} - \omega_{x}\,\}\;,
\end{align*}
where $\bs u^N_{C} (\cdot)$ has been introduced just before Lemma
\ref{l05}. The penultimate displayed formula is thus equal to
\begin{align*}
-\, \sum_{x\in \bb T^d_N} G (s,x)\, 
\sum_{y\in B} \bs u^N_{B\setminus \{y\}} (x)\, 
\big\{\, \omega_{x+y} - \omega_{x}\,\} \;.
\end{align*}
The second term in \eqref{2-43} is thus bounded by
\begin{align*}
\sum_{y\in B}
\bb E_{\nu^N} \Big[\, \Big|\,
\int_0^t \, 
\sum_{x\in \bb T^d_N} G (s, x)\,
\bs u^N_{B\setminus \{y\}} (x)\, 
\big\{\, \omega_x (s) - \omega_{x+y} (s) \,\}
\, ds \, \Big| \,\Big] \;.
\end{align*}
To complete the proof of the corollary, it remains to rewrite the
difference $\omega_x (s) - \omega_{x+y} (s)$ as a sum of terms of the
form $\omega_{z} (s) - \omega_{z+e_k} (s)$, and to apply Lemma
\ref{l05}.
\end{proof}

Recall that, to study the fluctuation of the interface, we introduced
in \eqref{2-44} the stretching operator $A$ to the normal direction of
the interface by $\sqrt{K}$ combining with the division by $N$, which
associates the microscopic variable to the macroscopic one.

Fix a function
$F\colon \bb R_+ \times (\sqrt{K} \bb T \times \bb T^{d-1}) \to \bb
R$, and let $G\colon \bb R_+ \times \bb T^{d}_N \to \bb R$ be given by
$G(s,x) = \widehat {F_{s}}(Ax)$ (cf. \eqref{2-69}). Then,
\begin{equation*}
\begin{gathered}
\Vert G(s) \Vert_\infty \,\le\, \Vert F(s) \Vert_\infty\;, \quad
\mf n_K (G_s) \,\le\,  \mf m_K(F(s))\;,
\\
\text{where} \;\; {\color{blue} \mf m_K(F(s)) }
\, := \, \Vert F(s) \Vert_\infty \,+\,
\Vert \partial_\vartheta  F(s) \Vert_\infty \,+\, \frac{1}{\sqrt{K}}
\, 
\sum_{j=2}^d \Vert \partial_{\theta_j}  F(s) \Vert_\infty \;.
\end{gathered}
\end{equation*}
With this notation, Corollary \ref{2-l10} can be restated as

\begin{corollary}
\label{2-l12}
Fix a function
$F\colon \bb R_+ \times (\sqrt{K} \bb T \times \bb T^{d-1}) \to \bb R$
and a cylinder function $f$. Then, there exists a finite constant
$\mf c(\bs u^N)$, depending only on $\bs u^N$, $f$ and $d$, uniformly
bounded in $N$, such that
\begin{align*}
& \bb E_{\nu^N} \Big[\, \Big|\,
\int_0^t \, 
\sum_{x\in \bb T^d_N} \widehat {F_{s}}(Ax) \,
(\Xi^{\rm c} _{\bs u^N (\cdot), x} f) (\eta (s))
\, ds \, \Big| \,\Big]
\\
&\quad \;\le \;
\mf c(\bs u^N) \, \Big\{ \frac{1}{K_N} \;+\;
K_N\,  \int_0^t (\, 1 + \Vert F(s)\Vert^2_\infty\,)\, \big[ \,
H_N(f^N_s)  + (N/\ell_N)^d \,\big] \, ds\; \Big\}
\\
& \quad \; + \;
\int_0^t \frac{1}{\gamma_s}  \big\{\, H_N(f^N_s) +
\log 2 \, \big\} \; ds
\;+\; \mf c (\bs u^N) \,  K\, N^{d-2}\, \int_0^t   \gamma_s \,  \mf m_K (F(s))^2\,
\, e^{c (\bs u^N) \, \gamma_s\,  \mf m_K (F(s)) \, \epsilon_N }\; ds
\end{align*}
for all $t>0$, $\gamma\colon [0,t] \to (0,\infty)$.  In this equation,
$f^N_s$ is given by \eqref{2-34c}, $(\ell_N : N\ge 1)$ is the sequence
introduced in the statement of Theorem \ref{mt1}, and
$\epsilon_N = K^{1/2}/N$.
\end{corollary}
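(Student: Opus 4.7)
The plan is that Corollary \ref{2-l12} is essentially a renaming of Corollary \ref{2-l10}: set $G(s,x) := \widehat{F_s}(Ax)$ and simply verify that the two quantities $\Vert G(s)\Vert_\infty$ and $\mf n_K(G_s)$ that appear on the right-hand side of Corollary \ref{2-l10} are controlled by $\Vert F(s)\Vert_\infty$ and $\mf m_K(F(s))$, respectively. Once this is done, we plug $G$ into Corollary \ref{2-l10} and read off the claim.

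The sup-norm bound is immediate from \eqref{2-69}: since $\widehat{F_s}(Ax)$ is, up to the normalisation $N^d/\sqrt{K}$, the integral of $F_s$ over the hyperrectangle $\square_N(x)$ whose Lebesgue measure is exactly $\sqrt{K}/N^d$, we have $|\widehat{F_s}(Ax)| \le \Vert F(s)\Vert_\infty$. For the discrete gradient I would use the shift structure of $A$: translating $x$ by $e_1$ shifts $Ax$ by $(\sqrt{K}/N,0,\dots,0)$, while translating by $e_k$ with $k\ge 2$ shifts $Ax$ by $(0,\dots,1/N,\dots,0)$. Writing the difference $\widehat{F_s}(A(x+e_k))-\widehat{F_s}(Ax)$ as an integral of a derivative over $\square_N(x)$ (translated appropriately), one obtains
\begin{equation*}
N\,|\,G(s,x+e_1)-G(s,x)\,| \;\le\; \sqrt{K}\,\Vert\partial_\vartheta F(s)\Vert_\infty,
\quad
N\,|\,G(s,x+e_k)-G(s,x)\,| \;\le\; \Vert\partial_{\theta_k} F(s)\Vert_\infty
\end{equation*}
for $k\ge 2$. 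Combining these with the sup-norm estimate and the definition of $\mf n_K$ above the statement gives $\mf n_K(G_s) \le \Vert F(s)\Vert_\infty + \Vert\partial_\vartheta F(s)\Vert_\infty + K^{-1/2}\sum_{j=2}^d \Vert\partial_{\theta_j} F(s)\Vert_\infty = \mf m_K(F(s))$.

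Substituting these two bounds into the right-hand side of Corollary \ref{2-l10} (where $\Vert G(s)\Vert_\infty^2$ and $\mf n_K(G_s)$ appear) yields exactly the statement of Corollary \ref{2-l12}. There is no real obstacle here; the only mild point is to be careful that the normalising factor $N^d/\sqrt{K}$ in \eqref{2-69} is precisely what converts $\int_{\square_N(x)} d\vartheta\, d\bar\theta = \sqrt{K}/N^d$ into $1$, so that $\widehat{F_s}(Ax)$ is a genuine local average of $F_s$ and inherits its Lipschitz constants with the correct anisotropic weighting $(\sqrt{K}/N, 1/N, \dots, 1/N)$ dictated by the map $A$.
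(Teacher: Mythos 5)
Your proposal is correct and coincides with the paper's argument: the paper obtains Corollary \ref{2-l12} precisely by applying Corollary \ref{2-l10} to $G(s,x)=\widehat{F_s}(Ax)$ together with the bounds $\Vert G(s)\Vert_\infty\le\Vert F(s)\Vert_\infty$ and $\mf n_K(G_s)\le \mf m_K(F(s))$, which follow from the anisotropic scaling $(\sqrt{K}/N,1/N,\dots,1/N)$ of the map $A$ exactly as you verify. Your explicit check that $|\square_N(x)|=\sqrt{K}/N^d$ cancels the normalisation in \eqref{2-69} is the same observation the paper leaves implicit.
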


The next estimate is needed in the proof of the convergence of the
density field $X^N_t$.

\begin{lemma}
\label{2-l11}
Fix a function $G\colon \bb R_+ \times \bb T^{d}_N \to \bb R$. Then,
\begin{align*}
& \bb E_{\nu^N} \Big[\, 
\int_0^t \Big| \, 
\sum_{x\in \bb T^d_N} G (s, x)\, [\eta_x(s) - \bs u^N(x)\,]
\, \Big| \, ds \, \Big]
\\
&\quad \leq \,
\int_0^t \frac{1}{\gamma_s}\,   \{\, H_N(f^N_s)  \,+\, \log 2 \,\} \, ds
\,+\, \int_0^t \gamma_s \,  \, \sum_{x\in \bb T^d_N}
G (s, x)^2  \, e^{\gamma_s \, \Vert G(s) \Vert_\infty}\; ds
\end{align*}
for all  $t>0$, $\gamma\colon [0,t]\to (0,\infty)$. 
\end{lemma}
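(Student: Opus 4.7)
\textbf{Proof plan for Lemma \ref{2-l11}.}
The strategy is a standard entropy inequality plus an exponential Chebyshev/product-measure computation, following exactly the pattern used in the proof of Lemma \ref{l05}. Let $W(s,\eta) := \sum_{x\in\bb T^d_N} G(s,x)\,[\eta_x-\bs u^N(x)] = \sum_{x\in\bb T^d_N} G(s,x)\,\omega_x$. Since $\nu^N S^N(s)$ has density $f^N_s$ with respect to $\nu^N$, for fixed $s$ and $\gamma_s>0$ the entropy inequality gives
\[
\bb E_{\nu^N}\big[\,|W(s)|\,\big] \,=\, \int |W(s)|\, f^N_s\, d\nu^N
\,\le\, \frac{1}{\gamma_s}\Big\{\, H_N(f^N_s) \,+\, \log \int e^{\gamma_s\, |W(s)|}\, d\nu^N\,\Big\}.
\]

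Next, apply the elementary bound $e^{|a|} \le e^{a}+e^{-a} \le 2\max\{e^{a},e^{-a}\}$ to reduce to a one-sided exponential moment:
\[
\log \int e^{\gamma_s\, |W(s)|}\, d\nu^N \,\le\, \log 2 \,+\, \max_{b=\pm 1}\log \int e^{b\,\gamma_s\, W(s)}\, d\nu^N.
\]
Since $\nu^N$ is the product measure with marginals $\bs u^N(\cdot)$ and $W(s)$ is a sum of independent centered variables,
\[
\log \int e^{b\,\gamma_s W(s)}\, d\nu^N \,=\, \sum_{x\in \bb T^d_N} \log E_{\nu^N}\big[\, e^{b\,\gamma_s G(s,x)\, \omega_x}\,\big].
\]
Using $e^{a}\le 1+a+a^{2}e^{|a|}$, $E_{\nu^N}[\omega_x]=0$, $|\omega_x|\le 1$, and $\log(1+u)\le u$ for $u\ge 0$, each summand is bounded by $\gamma_s^{2}\, G(s,x)^{2}\, e^{\gamma_s\,\|G(s)\|_\infty}$, so
\[
\log \int e^{b\,\gamma_s W(s)}\, d\nu^N \,\le\, \gamma_s^{2}\, e^{\gamma_s\,\|G(s)\|_\infty}\, \sum_{x\in\bb T^d_N} G(s,x)^{2}.
\]

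Dividing by $\gamma_s$ and combining these inequalities yields
\[
\bb E_{\nu^N}\big[\,|W(s)|\,\big]
\,\le\, \frac{1}{\gamma_s}\big\{\, H_N(f^N_s)+\log 2\,\big\}
\,+\, \gamma_s\, e^{\gamma_s\,\|G(s)\|_\infty}\,\sum_{x\in\bb T^d_N} G(s,x)^{2}.
\]
Finally, integrating in $s$ over $[0,t]$ and applying Fubini gives the claim. There is no real obstacle: all ingredients (entropy inequality, symmetrization of $|\cdot|$ via $e^{|a|}$, independence under $\nu^N$, and the truncated Taylor bound on $e^{a}$) are standard and were already used above in Lemma \ref{l05}; the only minor care is that $G(s,x)$ may be signed, which is why we retain $\|G(s)\|_\infty$ inside the exponential rather than $\gamma_s G(s,x)$ itself.
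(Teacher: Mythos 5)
Your proposal is correct and follows essentially the same route as the paper's proof: the entropy inequality applied at each time $s$ with density $f^N_s$, the symmetrization $e^{|a|}\le e^a+e^{-a}$ costing $\log 2$, factorization over the product measure $\nu^N$, and the bound $e^a\le 1+a+a^2e^{|a|}$ together with $E_{\nu^N}[\omega_x]=0$, $|\omega_x|\le 1$ and $\log(1+u)\le u$. No gaps.
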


\begin{proof}
By the entropy inequality, the expectation appearing in the statement
of the lemma is bounded by
\begin{equation*}
\int_0^t \frac{1}{\gamma_s}\,   H_N(f^N_s)\, ds\;
+\;  \int_0^t \frac{1}{\gamma_s}\,  \log \,  E_{\nu^N} \Big[\, 
\exp \Big\{\,  \Big|\, 
\gamma_s \, \sum_{x\in \bb T^d_N} G (s, x)\, [\eta_x - \bs u^N(x)\,]
\, \Big|\,  \Big\} \, \Big] \, ds
\end{equation*}
for all $\gamma\colon [0,t]\to (0,\infty)$. Repeat the argument
presented in the proof of Lemma \ref{l19b} to get rid of the absolute
value to conclude that the second term of the previous expression is
bounded by
\begin{equation*}
\int_0^t \frac{1}{\gamma_s}\, \log 2  \, ds
\;+\; \max_{b=\pm1}
\int_0^t \frac{1}{\gamma_s}\,  \log \,  E_{\nu^N} \Big[\, 
\exp \Big\{\,  b\, \gamma_s\, 
\sum_{x\in \bb T^d_N} G (s, x)\, [\eta_x - \bs u^N(x)\,]
\,  \Big\} \, \Big] \, ds\;.
\end{equation*}
As $\nu^N$ is a product measure, the second term for $b=1$ is equal to
\begin{equation*}
\int_0^t \frac{1}{\gamma_s} \, \sum_{x\in \bb T^d_N}
\log \, E_{\nu^N} \Big[\,  
\exp \Big\{\,  \gamma_s
G (s, x)\, [\eta_x - \bs u^N(x)\,]
\, \Big\} \, \Big] \, ds\;.
\end{equation*}
Since $e^a \le 1 + a + a^2 e^{|a|}$ and $\log (1+b) \le b$,
$a\in\bb R$, $b>0$, and $\eta_x - \bs u^N(x)$ has mean zero with
respect to $\nu^N$ and $|\eta_x - \bs u^N(x)|\leq 1$, a second order
Taylor expansion yields that the previous expression is bounded by
\begin{equation*}
\int_0^t \, \gamma_s \,  \sum_{x\in \bb T^d_N}
G (s, x)^2  \, e^{\gamma_s \, \Vert G(s) \Vert_\infty}\; ds\;.
\end{equation*}
To complete the proof, it remains to recollect all previous
estimates.
\end{proof}

\subsection*{Conclusion}

In this last part of this section, we recall the estimate for the
entropy obtained in Corollary \ref{m-cor} and restate the last two
results in view of this bound.  Fix $T\ge 1$, and recall from
\eqref{2-50b} the definition of $\kappa_N(T)$.

By \eqref{2-50b}, in the statement of Corollary \ref{2-l12} bound
$H_N(f^N_s)$ and $\log 2$ by $\kappa_N(T) (N/\ell)^d$, After this
replacement, to make the two terms in the second line of the same
order, choose
$\gamma_s = (\kappa_N(T)/K)^{1/2} \, (N/\ell^{d/2}_N)\, [\, 1 + \mf
m_K(F(s))\,]^{-1}$. After these replacements, the right-hand side of
the statement of Corollary \ref{2-l12} becomes
\begin{align*}
& \mf c(\bs u^N) \, \Big\{ \frac{1}{K_N} \;+\;
K_N\,  \kappa_N(T)\,  (N/\ell)^d  \,
\int_0^t (\, 1 + \Vert F(s)\Vert^2_\infty\,) \, ds\; \Big\}
\\
& \quad \; + \;
\mf c (\bs u^N) \,  \sqrt{ K\, \kappa_N(T)}\,
\frac{N^{d-1}}{\ell^{d/2}_N} \,
\Big \{\, 1 \,+\, \, e^{\mf c (\bs u^N) \, \sqrt{\kappa_N(T)} /
\ell^{d/2}_N} \,\Big\}  \int_0^t   [\, 1 + \mf m_K (F(s)) \,] \; ds
\end{align*}
By the assumptions \eqref{2-72} on the sequence $K_N$, \eqref{2-67}
holds so that $\sqrt{\kappa_N(T)} / \ell^{d/2}_N \to 0$. On the other
hand, $1/K_N$ is bounded by the second term in the first line.  Hence,
the previous expression is less than or equal to
\begin{align*}
& \mf c(\bs u^N) \, 
K_N\,  \kappa_N(T)\,  (N/\ell)^d  \,
\int_0^t (\, 1 + \Vert F(s)\Vert^2_\infty\,) \, ds\; 
\\
& \quad \; + \;
\mf c (\bs u^N) \,  \sqrt{ K\, \kappa_N(T)}\,
\frac{N^{d-1}}{\ell^{d/2}_N} \, \int_0^t   [\, 1 + \mf m_K (F(s)) \,] \; ds\;.
\end{align*}
By \eqref{2-28},
$\sqrt{ K\, \kappa_N(T)} \,\ell^{d/2}_N \le K\, \kappa_N(T)\, N$. In
conclusion, under the hypotheses \eqref{2-72},
\begin{equation}
\label{2-46}
\begin{aligned}
& \bb E_{\nu^N} \Big[\, \Big|\,
\int_0^t \, 
\sum_{x\in \bb T^d_N} \widehat {F_{s}}(Ax) \,
(\Xi^{\rm c}_{\bs u^N (\cdot), x} f) (\eta (s))
\, ds \, \Big| \,\Big]
\\
&\quad \;\le \;
\mf c(\bs u^N) \, 
K_N\,  \kappa_N(T)\,  (N/\ell)^d  \,
\int_0^t (\, 1 + \mf m_K (F(s)) \,) \, ds\; 
\end{aligned}
\end{equation}
for all $t\le T$.

Analogously, assume that the sequence $K_N$ fulfills the assumptions
\eqref{2-72} so that, by \eqref{2-67}, $\kappa_N(T)/\ell^d_N \to 0$.
In Lemma \ref{2-l11}, as $H_N(f^N_s)$ is bounded by
$\kappa_N(T) (N/\ell)^d$, estimate $\log 2$ by
$\kappa_N(T) (N/\ell)^d$, and set
$\gamma_s = (\kappa_N(T)/\ell^d)^{1/2}/[1+\Vert G(s)\Vert_\infty]$ to
get that
\begin{equation}
\label{2-65}
\begin{aligned}
& \bb E_{\nu^N} \Big[\, 
\int_0^t \Big| \, 
\sum_{x\in \bb T^d_N} G(s,Ax)
\, [\eta_x(s) - \bs u^N(x)\,]
\, \Big| \, ds \, \Big]
\\
&\quad
\le \;\mf c(\bs u^N) \, \frac{N^d}{\ell^{d/2}_N}\,
\sqrt{\kappa_N(T)}\, 
\int_0^t \{\, 1 + \Vert G(s) \Vert_\infty\,\} \,   ds
\end{aligned}
\end{equation}
for all $0\le t \le T$, $G\colon \bb R_+ \times \bb T^{d}_N \to \bb R$
because $\kappa_N(T)/\ell^d_N \to 0$.

\section{Proof of Theorem \ref{mt2}}
\label{sec5}

Throughout this section, $d\le 2$, and $T\ge 1$.  The proof of Theorem
\ref{mt2} relies on several lemmata.  The following elementary
estimate will be used repeatedly.  It explains the introduction of the
average \eqref{2-69} in the definition of the fluctuation field to get
the uniform estimates in \eqref{2-62}.  For a function
$G\in \ms D(\bb R \times \bb T^{d-1})$, denote by $\nabla_{N,j} G$,
$1\le j\le d$, the discrete partial derivatives of $G$ given by
\begin{equation*}
{\color{blue} (\nabla_{N,j} G)(Ax)} \,: =\,
N\,  [\, G(A(x+e_j)) - G(Ax)\, ] \;.
\end{equation*}
There exists a finite contant $C_0$ such that
\begin{equation}
\label{2-62}
\begin{gathered}
\frac{\sqrt{K}} {N^d} \sum_{x\in \bb T^d_N} \widehat G(Ax)^2 
\;\le\; \Vert G \Vert^2_{2} \;,
\quad
\frac{1}{\sqrt{K}\, N^d} 
\sum_{x\in \bb T^d_N} 
[(\nabla_{N,1} \widehat G )(Ax)]^2 \;\le\; 
C_0\, \Vert \partial_\vartheta G \Vert^2_{2}\;,
\\
\frac{1}{\sqrt{K}\, N^d} 
\sum_{x\in \bb T^d_N} 
[(\nabla_{N,j} \widehat G )(Ax)]^2 \;\le\; C_0\, 
\frac{1}{K}\, \, \Vert \partial_{\theta_j} G \Vert^2_{2}  \;,
\end{gathered}
\end{equation}
for all $2\le j\le d$, $N\ge 1$ and functions $G$ in
$\ms D (\bb R \times \bb T^{d-1})$. Here and below, $\color{blue} C_0$
represents a constant which depends only on the model (the dimension
$d$ and the cylinder function $c_0$), and whose value may change from
line to line. Moreover, for a real function $G$ defined on a set
$\Lambda$ (which can be $\bb R \times \bb T^{d-1}$ or
$\sqrt{K} \bb T \times \bb T^{d-1}$), $\Vert G \Vert_2$ represents its
$L^2$ norm:
$\color{blue} \Vert G \Vert_2^2 = \int_{\Lambda} G(z)^2 \, dz$.

We start with a simple consequence of the entropy inequality, which
shows the local ergodicity.  For a cylinder function $h$, let
$R_{h} \colon \bb T^d_N \to \bb R$ be the function given by
\begin{equation}
\label{2-03}
{\color{blue} R_{h}(x)} \;:=\; E_{\nu^N} [\,\tau_x h\,] \;.
\end{equation}
If $h$ is the function $h_0$ defined by
\begin{equation}
\label{2-63}
{\color{blue} h_0(\eta) } \;:=\; [1-2\eta_0]\, c_0(\eta)\;,
\end{equation}
we represent $R_{h_0}(x)$ by
\begin{equation}
\label{2-14}
{\color{blue} R (x)} \;:=\; E_{\nu^N} [\,\tau_x h_0\,] \;.
\end{equation}
The proof of the next result is similar to the one of Lemma \ref{l05}
and left to the reader. 

\begin{lemma}
\label{l2-02}
Fix a cylinder function $f$. Then, there exists a finite constant
$C_1 = C_1(f)$ such that
\begin{align*}
& \bb E_{\nu^N} \Big[\,
\int_0^t \,\Big|\, \sum_{x\in \bb T^d_N} J_x \,
[\, f(\tau_x \eta^N(s)) - R_f(x) \,]  \,\Big| \,  \, ds \, \Big] \\
&\qquad \le\;
\frac{1}{\lambda} \, \int_0^t  \big\{\, H_N(f^N_s)
\,+\, \log 2 \, \big\} \, ds
\;+\; 
C_1 \, \lambda\, e^{C_1 \, \lambda\,  \Vert  J \Vert_\infty}
\, \sum_{x\in \bb T^d_N} J_x^2 \; t
\end{align*}
for every function $J:\bb T^d_N \to \bb R$, $t>0$, $N\ge 1$,
$\lambda>0$.
\end{lemma}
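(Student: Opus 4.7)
The plan is to follow the same strategy used in Lemma \ref{l05} and Lemma \ref{2-l11}: apply the entropy inequality to reduce the problem to estimating an exponential moment under the product measure $\nu^N$, strip the absolute value with the trick $e^{|a|}\le e^a+e^{-a}\le 2\max(e^a,e^{-a})$, and finally exploit the product structure together with the centering $E_{\nu^N}[f(\tau_x\eta)-R_f(x)]=0$. Writing $W(s,\eta)=\sum_{x}J_x\,[f(\tau_x\eta)-R_f(x)]$, the entropy inequality gives, at each $s$,
\[
\int|W(s,\eta)|\,f^N_s\,d\nu^N \;\le\; \frac{1}{\lambda}\Big\{H_N(f^N_s)+\log 2+\max_{b=\pm1}\log E_{\nu^N}\big[e^{b\lambda W(s,\eta)}\big]\Big\},
\]
so after integrating in $s$ it is enough to show that for some $C_1=C_1(f)$,
\[
\log E_{\nu^N}\big[e^{\pm\lambda W}\big]\;\le\;C_1\,\lambda^2\,e^{C_1\lambda\|J\|_\infty}\sum_{x}J_x^2.
\]

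The only real point is that the random variables $f(\tau_x\eta)-R_f(x)$ are not independent under $\nu^N$ because $f$ is a cylinder function with some finite support $\mathrm{supp}(f)\subset\Lambda_r=\{-r,\ldots,r\}^d$. I will handle this by partitioning $\bb T^d_N$ into $M=(2r+1)^d$ translates of a coarse sublattice $\Lambda^{(i)}$, $1\le i\le M$, so that for $x\neq y$ both in $\Lambda^{(i)}$ the translates $\tau_xf$ and $\tau_yf$ depend on disjoint sets of occupation variables. Setting $W^{(i)}=\sum_{x\in\Lambda^{(i)}}J_x[f(\tau_x\eta)-R_f(x)]$, AM--GM yields $e^{\lambda W}=\prod_i e^{\lambda W^{(i)}}\le M^{-1}\sum_i e^{M\lambda W^{(i)}}$, and by the disjoint-support property
\[
E_{\nu^N}\big[e^{M\lambda W^{(i)}}\big]\;=\;\prod_{x\in\Lambda^{(i)}}E_{\nu^N}\big[e^{M\lambda J_x[f(\tau_x\eta)-R_f(x)]}\big].
\]

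At this point I apply the inequalities $e^a\le 1+a+a^2e^{|a|}$, $\log(1+b)\le b$, together with $E_{\nu^N}[f(\tau_x\eta)-R_f(x)]=0$ (the defining property of $R_f$) and the bound $\|f-R_f\|_\infty\le 2\|f\|_\infty$ from the cylindricity of $f$. This gives, for a constant $C=C(f)$,
\[
\log E_{\nu^N}\big[e^{M\lambda W^{(i)}}\big]\;\le\; C\,M^2\,\lambda^2\,e^{CM\lambda\|J\|_\infty}\sum_{x\in\Lambda^{(i)}}J_x^2,
\]
whence $\log E_{\nu^N}[e^{\lambda W}]\le \max_i\log E_{\nu^N}[e^{M\lambda W^{(i)}}]$ is bounded by $C_1\lambda^2 e^{C_1\lambda\|J\|_\infty}\sum_x J_x^2$ with $C_1=C_1(f)$ absorbing $M=M(f)$. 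Dividing by $\lambda$ and integrating over $s\in[0,t]$ yields the stated bound.

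The proof is essentially a repackaging of well-known concentration-for-dependent-sums arguments, and I do not expect a serious obstacle beyond bookkeeping. The one ingredient that must be handled with care is the dependence structure, which is why the partition into $M$ independent sublattices is the pivotal step; everything else is a second-order Taylor expansion plus the product structure of $\nu^N$ exactly as in the proof of Lemma \ref{l05}.
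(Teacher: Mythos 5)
Your proof is correct and follows the route the paper intends: the paper leaves this proof to the reader as ``similar to Lemma \ref{l05}'', i.e., the entropy inequality, removal of the absolute value via $e^{|a|}\le e^{a}+e^{-a}$, and the second-order bound $e^{a}\le 1+a+a^{2}e^{|a|}$ applied to mean-zero, bounded factors under the product measure $\nu^N$. The one point the paper's hint glosses over---that the translates $\tau_x f$ of a cylinder function overlap and are therefore not independent---is exactly what your partition into $(2r+1)^d$ sublattices with disjoint supports plus the AM--GM (or H\"older) decoupling supplies, which is the standard completion of this argument (as in the concentration estimates of \cite{jl}), so the proposal fills the only gap correctly.
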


Fix a test function $F \in \ms D(\bb R\times \bb T^{d-1}) $. For $K$
sufficiently large, $F$ can be understood as a smooth function defined
on $\sqrt{K}\, \bb T \times \bb T^{d-1}$.  Let $(M^N_t: t\ge 0)$ be
the Dynkin martingale
\begin{equation}
\label{2-60b}
{\color{blue} M^{N}_t (F)} \;:=\; X^N_t(F) \,-\, X^N_0(F) \,-\,
\int_0^t L_N X^N_s(F) \; ds \;.
\end{equation}
By \cite[Lemma A1.5.1]{kl},
\begin{gather}
\nonumber
{\color{blue} M^{N,2}_t (F)}  \,:=\, 
M^{N}_t(F) ^2 \;-\; \int_0^t \Gamma^{N,2}_s(F) \, ds\;,
\\
\text{where}\quad 
{\color{blue} \Gamma^{N,2}_s(F)} \,:=\,
L_N X^N_s(F)^2 \,-\, 2\, X^N_s(F)  \, L_N X^N_s(F)\;, 
\label{2-68}
\end{gather}
is a martingale. A straightforward computation yields that
\begin{equation}
\label{47-b}
\Gamma^{N,2} (F) 
\;=\; \frac{1}{\sqrt{K}\, N^d} \sum_{j=1}^d
\sum_{x\in \bb T^d_N} [\eta_{x+e_j} - \eta_x]^2\,
[(\nabla_{N,j}  \widehat F )(Ax)]^2
\; +\; \frac{\sqrt{K}}{N^d} \sum_{x\in \bb T^d_N} c_0(\tau_x \eta)\, 
\widehat F(Ax)^2 
\end{equation}
for $N$ sufficiently large.  Clearly,
\begin{equation*}
\Gamma^{N,2}_2(F) 
\;\le \; \frac{1}{\sqrt{K}\, N^d} \sum_{j=1}^d
\sum_{x\in \bb T^d_N} 
[(\nabla_{N,j} \widehat F )(Ax)]^2
\; +\; \frac{C_0\, \sqrt{K}}{N^d} \sum_{x\in \bb T^d_N} 
\widehat F(Ax)^2 \;.
\end{equation*}
By \eqref{2-62},
\begin{equation}
\label{2-70}
\sup_{\eta\in\Omega_N} \Gamma^{N,2}(F) \;\le\; C_0\,\Big\{
\, \Vert F\Vert^2_{2} \,+\,
\Vert \partial_\vartheta F\Vert^2_{2} \,+\, (1/K)
\Vert \nabla_\theta F \Vert^2_{2} \,\Big\}  \;,
\end{equation}
where
$\color{blue} \nabla_\theta F = (\partial_{\theta_2} F, \dots,
\partial_{\theta_d}  F)$. As $F$ belongs to $\ms D (\bb R \times \bb
T^{d-1})$, the right-hand side is bounded so that
\begin{equation}
\label{35-b}
\bb E^N_{\eta} \big[\, M^{N}_t(F)^2\,\big] \;\le\;
C_0\, \big\{\, \Vert F\Vert^2_{2} + \Vert \nabla F\Vert^2_{2}\, \big\}
\, t
\end{equation}
for all $t>0$, $\eta\in \Omega_N$.

The proof of the next result uses Lemma \ref{l2-02} for
$J_x = a\, F (Ax) + b \cdot \nabla F (Ax)$, $a\in \bb R$,
$b\in\bb R^d$. It requires, in particular, $F$ and $\nabla F$ to be
bounded. Mind that only the derivative in the first coordinate appears
in the covariance structure of the field, and that the noise has a
conservative and a non-conservative part.

\begin{lemma}
\label{l38}
The sequence of measures $\bb Q^M_N = \bb P_{\nu^N} \circ (M^N)^{-1}$
on $D([0,T], \ms D'(\bb R \times \bb T^{d-1}))$ converges, as
$N\to\infty$, to the centered Gaussian random field whose covariances
are given by
\begin{equation}
\label{x4}
\begin{aligned}
\bb Q^M \big[\, M_t(F) \, M_s(G)\,\big]\; & =\;
2\, (s\wedge t)\,  \int_{\bb R} d\vartheta \; 
\chi (\phi  (\vartheta)) \, \int_{\bb T^{d-1}} d\theta \;
(\partial_\vartheta F)(\vartheta, \theta)\,
(\partial_\vartheta G)(\vartheta, \theta)\,
\\
& +\; (s\wedge t)\, \int_{\bb R} d\vartheta \;
\widehat c_0(\phi  (\vartheta)) \,
\int_{\bb T^{d-1}} d\theta \; F(\vartheta, \theta)\,
G (\vartheta, \theta)\,   \;
\end{aligned}
\end{equation}
for all $F$, $G$ in $C^\infty(\bb R \times \bb T^{d-1})$ with compact
support. Here, $\bb Q^M$ stands also for the respective expectation.
\end{lemma}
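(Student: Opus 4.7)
The plan is to apply Rebolledo's martingale central limit theorem to establish finite-dimensional convergence, and then lift to weak convergence in $D([0,T],\ms D')$ via Mitoma's criterion. Two conditions must be verified: (i) the maximum jump of $M^N(F)$ on $[0,T]$ vanishes in probability, which is immediate because each jump of $X^N(F)$ is bounded by a multiple of $\Vert \widehat F\Vert_\infty/\sqrt{N^d K_N^{1/2}}\to 0$; and (ii) for each pair $F,G$, the polarized quadratic covariation $\int_0^t\Gamma^{N,2}_s(F,G)\,ds$ converges in probability to the right-hand side of \eqref{x4}.

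For (ii), split $\Gamma^{N,2}(F,G)$ (the polarization of \eqref{47-b}) into its exclusion and Glauber parts. For the exclusion part, apply Lemma \ref{l2-02} with cylinder functions $h_j(\eta)=(\eta_{e_j}-\eta_0)^2$ and coefficients $J_x=(\sqrt{K_N}\,N^d)^{-1}(\nabla_{N,j}\widehat F)(Ax)(\nabla_{N,j}\widehat G)(Ax)$ to replace $[\eta_{x+e_j}-\eta_x]^2$ by its $\nu^N$-mean $R^j(x):=\bs u^N(x)(1-\bs u^N(x+e_j))+\bs u^N(x+e_j)(1-\bs u^N(x))$. For the Glauber part, apply the same lemma with $h=c_0$ to replace $c_0(\tau_x\eta)$ by $R(x)=E_{\nu^N}[c_0(\tau_x\eta)]$. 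In each case the free parameter $\lambda$ is tuned to balance the entropy term $\lambda^{-1}H_N(f^N_s)$ against the quadratic term $\lambda e^{C\lambda\Vert J\Vert_\infty}\sum_x J_x^2$; combined with the entropy bound $H_N(f^N_s)\le\kappa_N(T)(N/\ell_N)^d$ from Corollary \ref{m-cor}, this yields a total error of order $\sqrt{\kappa_N(T)K_N/\ell_N^d}$, which vanishes by the growth condition \eqref{2-72}.

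After the replacement, the deterministic sum is analyzed by Riemann approximation. From the smoothness bounds \eqref{2-84} one obtains $R^j(x)=2\chi(\bs u^N(x))+O(K_N/N^2)$ (the correction being present only for $j=1$) and $R(x)=\widehat c_0(\bs u^N(x))+O(\sqrt{K_N}/N)$; the corrections are negligible by \eqref{2-67}. The change of variables $\vartheta=x_1\sqrt{K_N}/N$, $\theta_j=x_j/N$ for $j\ge 2$ converts the lattice sum into a Riemann integral with spacings $(\sqrt{K_N}/N,1/N,\dots,1/N)$. For $j\ge 2$ the residual prefactor $1/\sqrt{K_N}$ kills those contributions, whereas for $j=1$ the factor $K_N$ arising from $(\nabla_{N,1}\widehat F)(Ax)(\nabla_{N,1}\widehat G)(Ax)\sim K_N\,\partial_\vartheta F\,\partial_\vartheta G$ cancels the $1/\sqrt{K_N}$ prefactor and produces the diffusive term in \eqref{x4}. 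Since $\rho^K\to\phi$ uniformly on compact subsets of $\bb R$ by Lemma \ref{CP_lemma} and $F,G$ have compact support, the Riemann sum converges to the stated integrals; the Glauber part is handled analogously to yield the second term. Off-diagonal covariances follow from the martingale identity $E[M^N_t(F)M^N_s(G)]=E[\langle M^N(F),M^N(G)\rangle_s]$ for $s\le t$, producing the $s\wedge t$ structure.

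Tightness of each $M^N(F)$ in $D([0,T],\bb R)$ is a consequence of the uniform bound \eqref{2-70} on $\Gamma^{N,2}(F)$ via Aldous's criterion, and Mitoma's criterion transfers tightness to $D([0,T],\ms D')$. The principal obstacle is the local ergodicity step: Lemma \ref{l2-02}'s quadratic term $\lambda\sum_x J_x^2\cdot t$ is of order $\lambda K_N/N^d$ while the entropy term is of order $\lambda^{-1}\kappa_N(T)(N/\ell_N)^d$, and the optimal $\lambda$ forces the exponential factor $e^{C\lambda\Vert J\Vert_\infty}$ to the brink of divergence; this is precisely where the stringent growth conditions \eqref{2-72}--\eqref{2-90}, capping $K_N$ at $O(\sqrt{\log N})$, are essential.
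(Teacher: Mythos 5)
Your proposal is correct and follows essentially the same route as the paper: the quadratic variation is replaced by its $\nu^N$-mean via Lemma \ref{l2-02} with exactly the paper's choices of $J_x$ and cylinder functions, the entropy bound of Corollary \ref{m-cor} together with \eqref{2-67}/\eqref{2-72} controls the error, the deterministic limit is identified by the Riemann-sum argument based on \eqref{2-62} and Lemma \ref{CP_lemma}, and a martingale CLT (you cite Rebolledo plus Mitoma, the paper uses Fouque's criterion plus Jacod--Shiryaev with an $L^4$ uniform-integrability step) yields the Gaussian limit, with covariances obtained by polarization. The only slips are cosmetic: the correction to $R^1(x)$ is $(1-2\bs u^N(x))\,[\bs u^N(x+e_1)-\bs u^N(x)]=O(\sqrt{K_N}/N)$, not $O(K_N/N^2)$, and only \eqref{2-72} (not the full strength of \eqref{2-90}) is needed for this lemma --- neither affects the argument.
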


\begin{proof}
By \cite[Th\'eor\`eme IV.1]{Fou} and Lemma \ref{2-l16}, the sequence
$\bb Q^{M}_{N}$ is tight. It remains to check the uniqueness of limit
points. Denote by $\bb Q^M$ one of them and assume, without loss of
generality, that the sequence $\bb Q^M_{N}$ converges to $\bb Q^M$.

Fix a test function $F$ in $\ms D (\bb R \times \bb T^{d-1})$.  By
\eqref{35-b}, the martingale $M^{N}_t (F)$ is uniformly bounded in
$L^2(\bb P^N_{\nu^N})$.  Therefore, under the measure $\bb Q^M$,
$M_t(F)$ is a martingale.

Recall the definition of the martingale $M^{N,2}_t (F)$ introduced in
\eqref{2-68} and the formula \eqref{47-b} for the compensator
$\Gamma^{N,2}_s (F) $. At this point we apply Lemma \ref{l2-02} with
$J_x = K^{-1/2} \, N^{-d} [(\nabla_{N,j} \widehat F )(Ax)]^2$,
$f(\eta) = [\eta_{e_j} - \eta_0]^2$, and
$J_x = K^{1/2} \, N^{-d} [\widehat F (Ax)]^2$, $f(\eta) =
c_0(\eta)$. In both cases, $|J_x|$ is uniformly bounded (in $x$ and
$N$) by
$C_0 (\sqrt{K}/N^d) (\Vert \nabla F\Vert_\infty^2 + \Vert
F\Vert_\infty^2)$ for some finite constant $C_0$ independent of $N$
and $F$. By \eqref{2-67}, the aforementioned lemma and Corollary
\ref{m-cor}, choosing a sequence $\lambda_N$ such that
$(N/\ell_N)^d \kappa_N(T) \ll \lambda_N\ll N^d/K_N$, yields that
\begin{equation*}
\lim_{n\to\infty} \bb E_{\nu^N} \Big[\,
\int_0^T \,\Big|\, \Gamma^{N,2}_s(F)
\,-\, I_N(F) \,\Big| \,  \, ds \, \Big]
\;=\;0\;.
\end{equation*}
where
\begin{equation*}
\begin{aligned}
I_N(F) \; &=\;
\frac{1}{\sqrt{K}\, N^d} \sum_{j=1}^d
\sum_{x\in \bb T^d_N} 
[(\nabla_{N,j} \widehat F )(Ax)]^2 \, [\, \bs u^N(x+e_j) + \bs u^N(x)  -  2\bs
u^N(x) \bs u^N(x+e_j) \,]  \\
\; & +\; \frac{\sqrt{K}}{N^d} \sum_{x\in \bb T^d_N} 
\widehat F (Ax)^2\, R_{c_0}(x)\;,
\end{aligned}
\end{equation*}
and $R_{c_0}(x)$ has been introduced in \eqref{2-03}.  By the
definition \eqref{2-69} of $\widehat F$ and Lemma \ref{CP_lemma},
\begin{equation*}
\begin{aligned}
\lim_{N\to\infty} I_N(F) \;=\; I(F) \; & : =\; 2\,  \int_{\bb R} d\vartheta
\, \chi (\phi (\vartheta)) \, \int_{\bb T^{d-1}} d\theta \;
[(\partial_\vartheta F)(\vartheta, \theta)]^2
\\ \; & +\; \int_{\bb R} d\vartheta \;
\widehat c_0(\phi (\vartheta)) \, 
\int_{\bb T^{d-1}} d\theta \; F(\vartheta, \theta)^2 \;.
\end{aligned}
\end{equation*}

We estimate the martingale $M^{N,2}_t (F)$ $L^4$-norm based on
\cite[Lemma B.3]{jl}. For $\Psi\colon \Omega_N \to \bb R$, let
$\Xi^N_k(\eta, \Psi)$, $k\ge 2$, be given by
\begin{equation*}
\Xi^N_k(\eta,\Psi) \;:=\; \sum_{\xi\in \Omega_N} R_N(\eta,\xi)\,
\big[\, \Psi(\xi) - \Psi(\eta)\,\big]^k\;,
\end{equation*}
where $R_N(\eta,\xi)$ stands for the rate at which the process
$\eta^N(t)$ jumps from $\eta$ to $\xi$. Note that
$\Xi^N_2(\eta,X^N(F)) = \Gamma^{N,2}(F)$.  A straightforward
computation yields that there exists a finite constant $\mf c (F)$,
depending only on the support of $F$, such that
\begin{equation*}
|\, \Xi^N_k(\eta,X^N(F))\, | \, \le\, \mf c  (F) \,
\{\, \Vert F\Vert^k_\infty + \Vert \nabla F\Vert^k_\infty \,\}
\quad \text{for $2\le k\le 4$, $N\ge 1$, $\eta\in\Omega_N$}\;.
\end{equation*}
Therefore, by \cite[Lemma B.3]{jl}, there exists a finite constant
$\mf c(F)$, depending only on the support of $F$, such that
\begin{equation*}
\bb E^N_\eta\big[\, M^{N}_t(F)^4 \,\big] \;\le\;
\mf c (F)\, \int_0^t \bb E^N_\eta\big[\, c_4 \,+\, c_3\, |\, M^{N}_s(F)\,|
\,+\, c_2\, M^{N}_s(F)^2 \,\big]\; ds\;,
\end{equation*}
where $c_k = \Vert F\Vert^k_\infty + \Vert \nabla F\Vert^k_\infty$.
Hence, by \eqref{35-b} and Young's inequality, 
\begin{equation}
\label{36}
\bb E^N_\eta\big[\, M^{N}_t(F)^4 \,\big] \;\le\;
\mf c (F) \, (1+T)^2 \, \{\, 1 + \Vert F\Vert^4_\infty + \Vert \nabla
F\Vert^4_\infty \,\} 
\end{equation}
all $0\le t\le T$, $\eta\in\Omega_N$, $N\ge 1$.

By definition of the martingale $M^{N,2}_t (F)$, \eqref{2-70}, and
\eqref{36}, the martingale $M^{N,2}_t (F)$ is uniformly bounded and
uniformly integrable in $L^2(\bb P^N_{\nu^N})$.  Therefore, under the
measure $\bb Q^M$, $M_t(F)^2 \,-\, I(F) \, t$ is a martingale. In
particular, $M_t(F)$ is a time-changed Brownian motion, and $M_t(F)$ a
Gaussian random variable.  To complete the proof of the lemma, it
remains to compute the covariance of $M_t(F)$ and $M_s(G)$ through
polarization.
\end{proof}

For a smooth function
$F\colon [0,T] \times (\sqrt{K} \bb T \times \bb T^{d-1}) \to \bb R$,
denote by $(M^{N}_t (F(\cdot)) : 0\le t\le T)$ the Dynkin martingale
given by
\begin{equation}
\label{2-60}
M^{N}_s (F(\cdot))\;=\; X^N_s(F(s)) \,-\, X^N_0(F(0)) \,-\,
\int_0^s \big\{\, X^N_r(\partial_r F(r))
+ L_N X^N_r(F(r))\,\big\}\; dr \;.
\end{equation}
In Proposition \ref{2-p1} below, we consider a special case of
time-dependent functions.  Fix a smooth test function
$F \colon \bb R \times \bb T^{d-1} \to \bb R$ with compact support (as
before, considered as a function defined on
$\sqrt {K} \bb T \times \bb T^{d-1}$ for $N$ large enough). Fix
$0<t\le T$, and let $\color{blue} F(s) = T^K_{t-s} F$, $0\le s\le t$,
where $\color{blue} (T^K_r:r\ge 0)$ is the semigroup associated to the
operator $\mf A_K$ defined in \eqref{2-71}.  The goal is to rewrite
$X^N_t(F)$ into a stochastic integral $M^N_t (T^K_{t-\cdot} F)$ called
a mild form in Corollary \ref{2-cor1}.

\begin{proposition}
\label{2-p1}
Fix $0\le t \le T$ and a function
$F\in \ms D (\bb R \times \bb T^{d-1})$.  Let $F(s) = T^K_{t-s} F$,
$0\le s\le t$.  Then,
\begin{equation*}
\lim_{N\to \infty} \bb E_{\nu^N} \Big[\, \Big|\, 
\int_0^t \big\{\, X^N_s(\partial_s F(s))
+ L_N X^N_s(F(s))\,\big\}\; ds \,\Big | \,\Big]\;=\; 0\;.
\end{equation*}
\end{proposition}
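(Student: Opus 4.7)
The approach is to view the Dynkin martingale $M^N_s(F(\cdot))$ in \eqref{2-60} in its ``mild form'': choosing $F(s) = T^K_{t-s} F$ designs $F(s)$ so that $\partial_s F(s) = -\mf A_K F(s)$ exactly cancels the continuum limit of the drift $L_N X^N_s(F(s))$. After the Boltzmann--Gibbs homogenization, this drift acts on $X^N_s$ precisely as $X^N_s(\mf A_K \cdot)$, and I must verify this cancellation in $L^1(\bb P_{\nu^N})$.

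First, I would compute $L_N X^N_s(F(s))$ directly. Since $\bs u^N$ is time-independent, $L_N X^N_s(F(s)) = \frac{1}{\sqrt{N^d \sqrt K}} \sum_x \widehat{F(s)}(Ax)\, L_N \eta_x$. The exclusion part, after a discrete summation by parts, produces $\frac{1}{\sqrt{N^d \sqrt K}} \sum_x (\Delta_N \widehat{F(s)})(Ax)\, \eta_x$; under the stretching \eqref{2-44}, $(\Delta_N \widehat{F(s)})(Ax)$ is pointwise close to $K\partial^2_\vartheta \widehat{F(s)}(Ax) + \Delta_{d-1}\widehat{F(s)}(Ax)$, with error controlled by higher derivatives of $F(s)$ times $K/N^2$. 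The Glauber part produces $\frac{K}{\sqrt{N^d \sqrt K}} \sum_x \widehat{F(s)}(Ax)\, h_0(\tau_x \eta)$ with $h_0$ as in \eqref{2-63}.

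Next, I would split $\eta_x = \bs u^N(x) + \omega_x$ and $h_0(\tau_x\eta) = R(x) + [h_0(\tau_x\eta) - R(x)]$ with $R$ as in \eqref{2-14}. The deterministic part assembles as $\frac{1}{\sqrt{N^d \sqrt K}} \sum_x \widehat{F(s)}(Ax)\,[(\Delta_N \bs u^N)(x) + K R(x)]$; since $R \approx -V'(\bs u^N) \approx G_0$ modulo subleading terms (Lemma \ref{2-l18}), the near-stationarity estimate \eqref{2-39} gives that this quantity is uniformly of order $K^2 N^{d-2}/\sqrt{N^d \sqrt K}$, which integrates to $o(1)$ in time under \eqref{2-72}. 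For the centered Glauber piece, the Boltzmann--Gibbs decomposition \eqref{2-17} expresses $h_0(\tau_x\eta) - R(x)$ as a linear functional of the $\omega_y$'s plus the closed residual $\Xi^{\rm c}_{\bs u^N(\cdot), x} h_0$; the residual's time integral against $\widehat{F(s)}(Ax)$ is controlled by Corollary \ref{2-l12} in the sharpened form \eqref{2-46}, which tends to $0$ under \eqref{2-90}. The linear part, after a discrete integration by parts that exploits the slow variation of $\bs u^N$ (gradient of order $\sqrt K/N$), combines with the exclusion drift above to yield exactly $X^N_s\big([K(\partial^2_\vartheta - V''(\bs u^N)) + \Delta_{d-1}] F(s)\big) = X^N_s(\mf A_K F(s))$, with the gradient remainders handled through Lemma \ref{2-l11}. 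Since $\partial_s F(s) = -\mf A_K F(s)$ by construction of $T^K$, the two pieces of the integrand cancel up to the errors just controlled.

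The principal obstacle will be uniform control of $F(s) = T^K_{t-s} F$ and its first two derivatives, since all the Boltzmann--Gibbs and concentration bounds depend on $\mf m_K(F(s))$ and $\Vert F(s) \Vert_\infty$. The generator $\mf A_K$ carries the large prefactor $K$, so the maximum principle (Lemma \ref{maximum lemma}) yields only $\Vert F(s) \Vert_\infty \le e^{\mf c_5 K T}\Vert F \Vert_\infty$, with analogous bounds for $\partial_\vartheta F(s)$ and $\partial_{\theta_j} F(s)$ obtained by differentiating the defining equation. Condition \eqref{2-89} is calibrated precisely so that, when these exponential factors are multiplied against the Boltzmann--Gibbs and deterministic error estimates above, the product still vanishes as $N \to \infty$; tracking this bookkeeping uniformly in $s \in [0,t]$ is the technical core of the argument.
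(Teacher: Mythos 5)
Your proposal follows essentially the same route as the paper: decompose the integrand as in \eqref{2-58}, kill the deterministic term via the near-stationarity of $\bs u^N$ (Lemma \ref{2-l18}), replace the discrete Laplacians by $K\partial_\vartheta^2+\Delta_\theta$ (Lemma \ref{lem:3.2-F}), linearize the centered Glauber term through the Boltzmann--Gibbs bound \eqref{2-46} together with Lemma \ref{lem:3.1-F} and the entropy estimate \eqref{2-65}, and conclude by the exact cancellation $\partial_s T^K_{t-s}F=-\mf A_K T^K_{t-s}F$, with the maximum-principle bounds of Lemma \ref{maximum lemma} absorbed by the growth conditions \eqref{2-89}, \eqref{2-90}. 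The only cosmetic discrepancy is your quoted Taylor error of order $K/N^2$ for the Laplacian replacement versus the paper's $K^{3/2}\Vert\partial^3_\vartheta F\Vert_\infty/N$ per site, which does not affect the conclusion.
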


The proof of this proposition is divided in several steps. We first
compute the expression inside braces.  A long but elementary
computation yields that it is equal to
\begin{equation}
\label{2-58}
\begin{aligned}
& \frac{1} {\sqrt{N^d  \sqrt{K}}}
\sum_{x\in \bb T^d_N} (\partial_s \widehat F)(s,Ax) \,
[\,\eta_x - \bs u^N (x)\,] \\
&\quad +\;
\frac{N^2} {\sqrt{N^d  \sqrt{K}}} \sum_{j=1}^d
\sum_{x\in \bb T^d_N} (D^2_j\widehat F)(s,Ax) \,
[\, \eta_x - \bs u^N(x)\,]  \\
&\quad +\;  \frac{K} {\sqrt{N^d  \sqrt{K}}}
\sum_{x\in \bb T^d_N} \widehat F (s,Ax) \, \big(\tau_x h_0 -
E_{\nu^N} [\tau_x h_0]\,\big) \\
&\quad +\; \frac{1} {\sqrt{N^d  \sqrt{K}}}
\sum_{x\in \bb T^d_N} \widehat F (s,Ax) \, \Big(\ \sum_{j=1}^d
(\Delta^{(j)}_N \bs u^N)(x)\, +\, K \, E_{\nu^N} [\tau_x
h_0]\, \Big) \;.
\end{aligned}
\end{equation}
Note that the last term allows to center by $\bs u^N(x)$ and
$E_{\nu^N}[\tau_xh_0]$ in the second and third terms. In the above
formula and below,
\begin{equation}
\label{2-87}
\begin{gathered}
(D^2_jH)(Ax) \,=\, H(A(x+e_j)) + H(A(x-e_j)) - 2 H(Ax) \;,
\\
(\Delta^{(j)}_N h) (x) \;=\;
N^2\, \big\{\, h(x+e_j) + h (x-e_j) - 2 h (x) \,\big\}
\;, \quad 1\le j\le d\;, 
\end{gathered}
\end{equation}
and, recall, $\{e_j : 1\le j\le d\}$ represents the canonical basis of
$\bb R^d$.

In the next paragraphs, we estimate the terms in \eqref{2-58}. We
start with the last one which is a sequence of real functions (and not
of random variable as the other ones). We claim that in dimension
$d\le 3$,
\begin{equation}
\label{2-59}
\lim_{N\to\infty} \sup_{0\le s\le T}
\frac{1}{\sqrt{N^d\sqrt{K_N}}}\,
\sum_{x\in \bb T^d_N} \Big|\, \widehat F (s,Ax) \, \Big(\ \sum_{j=1}^d
(\Delta^{(j)}_N \bs u^N)(x)\, +\, K \, E_{\nu^N} [\tau_x
h_0]\, \Big) \, \Big| \;=\; 0\;.
\end{equation}

To prove this claim, first observe that, as $\bs u^N (\cdot)$ is
constant in the $k$-th coordinate, $k\neq 1$,
$\Delta^{(j)}_N \bs u^N =0$ for $j\neq 1$.  Thus, by Lemma
\ref{2-l18}, by definition of $\widehat F$, the last term in
\eqref{2-58} bounded by
\begin{equation*}
\frac{\mf c(\rho^K)\,  \, K^2}{N^2}\,
\frac{1}{\sqrt{N^d\sqrt{K}}}\, \sum_{x\in \bb T^d_N}  \big|\,
\widehat F (s,Ax) \, \big|
\;\le\;
\frac{\mf c(\rho^K)\,  \, K^2}{N^2}\,
\frac{ N^{d/2}}{K^{3/4}}
\int_{\sqrt{K} \bb T \times \bb T^{d-1}}   \big|\, F (s,
\vartheta,  \theta) \, \big|\;
d\vartheta\, d\theta
\end{equation*}
for some finite constant $\mf c(\rho^K)$ depending only on $c_0$ and
$\rho^K$. By \cite[Lemma B.7]{F1} and Schwarz inequality, the integral
is bounded by $C(F) K^{1/4}$ for some finite constant $\mf c_0$. To
complete the proof of the claim \eqref{2-59}, it remains to recall the
estimate \eqref{2-67}.

Note that the previous argument does not apply to dimension $d\ge 4$.
In high dimensions the approximation of the solution of the discrete
equation \eqref{2-93} by the solution of the continuous one
\eqref{2-56} is not sharp enough in the context of the requirements of
$K_N$ needed for other estimates. The next result permits to replace
the discrete Laplacians by the continuous one in the second term of
\eqref{2-58}.

\begin{lemma}
\label{lem:3.2-F}
There exists a finite constant $C_0$ such that
\begin{equation*}
\begin{aligned}
& \Big |\, \sum_{j=1}^d N^2 (D^2_j \widehat G)(Ax)\,-\, 
K\, (\partial^2_{\vartheta} \widehat G)(Ax) \,-\,
\sum_{k=2}^d (\partial^2_{\theta_k} \widehat G)(Ax)\, \Big|
\\
&\quad 
\, \le\, C_0\, \frac{1}{N} \, 
\Big\{\, K^{3/2}\, \| \partial^3_\vartheta G \|_\infty \,+\,
\sum_{j=2}^d \| \partial^3_{\theta_j} G \|_\infty \, \Big\} 
\end{aligned}
\end{equation*}
for all $N\ge 1$, $x \in \T_N^d$, and smooth
$G\colon \sqrt{K} \bb T \times \bb T^{d-1}\to \R$.
\end{lemma}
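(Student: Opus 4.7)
The plan is to reduce the statement to a symmetric second-order Taylor expansion with remainder controlled by the third derivative, applied direction by direction to $t \mapsto \widehat G(Ax + t e_j)$, and then to sum over $j$. Two preliminary observations underlie the approach.

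First, I would verify that $\widehat G$ inherits the sup-norms of the derivatives of $G$. By \eqref{2-69}, $\widehat G(Ax)$ is the mean of $G$ over the hyperrectangle $\square_N(x)$, whose location depends on $Ax$ only through a rigid translation. Differentiation in any coordinate of $\bb R \times \bb T^{d-1}$ therefore commutes with the averaging, so
\[
\|\partial^k_\vartheta \widehat G\|_\infty \le \|\partial^k_\vartheta G\|_\infty, \qquad
\|\partial^k_{\theta_j}\widehat G\|_\infty \le \|\partial^k_{\theta_j} G\|_\infty,
\]
for every $k\ge 0$ and $2\le j\le d$. Strictly speaking, $\widehat G$ is only defined on the image of the map $A$, so I would read it as the trace at $y = Ax$ of the smooth function $y \mapsto (N^d/\sqrt{K})\int_{\square_N(0)} G(y+z)\,dz$.

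Second, for any smooth one-variable function $f$ and any step $h$, the symmetric Taylor expansion gives
\[
\big|\, f(y+h) + f(y-h) - 2 f(y) - h^2 f''(y)\,\big| \,\le\, \tfrac{1}{3}\, |h|^3\, \|f'''\|_\infty.
\]
For $j=1$, note that $A(x\pm e_1) - Ax = \pm(\sqrt{K}/N) e_1$, so applying the Taylor estimate with $h = \sqrt{K}/N$ and multiplying by $N^2$ produces the main term $K\,(\partial^2_\vartheta \widehat G)(Ax)$ with remainder of order $N^2 (\sqrt{K}/N)^3 \|\partial^3_\vartheta \widehat G\|_\infty = (K^{3/2}/N)\|\partial^3_\vartheta G\|_\infty$. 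For $2\le j\le d$, the step is $h = 1/N$ in the $j$-th coordinate only, and the same argument yields $N^2 (D^2_j\widehat G)(Ax) = (\partial^2_{\theta_j}\widehat G)(Ax) + O\big((1/N)\|\partial^3_{\theta_j} G\|_\infty\big)$. Summing over $j=1,\dots,d$ gives the claimed inequality with $C_0$ a fixed dimensional constant.

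The argument is essentially mechanical and I do not foresee any obstacle; the only mild subtlety is keeping track of the anisotropic scaling of $A$, which forces the separation of $j=1$ from $j\ge 2$ and explains the asymmetric factors $K^{3/2}$ versus $1$ in the two types of contributions.
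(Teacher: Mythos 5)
Your proposal is correct and follows essentially the same route as the paper: split the discrete Laplacian into the $j=1$ direction (step $\sqrt{K}/N$) and the $j\ge 2$ directions (step $1/N$), and apply a symmetric third-order Taylor expansion to $\widehat G$ in each direction, with the remainder controlled by the third derivatives of $G$ since the averaging in \eqref{2-69} commutes with differentiation. Your explicit remark that $\widehat G$ inherits the sup-norms of the derivatives of $G$ merely spells out what the paper uses implicitly when it bounds the remainders by $\|\partial^3_\vartheta G\|_\infty$ and $\|\partial^3_{\theta_j} G\|_\infty$.
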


\begin{proof}
Fix a smooth function $G\colon \sqrt{K} \bb T\times \T^{d-1}\to
\R$. Write $x \in \T_N^d$ as $x= (x_1,\ux)$, where $x_1 \in \T_N$,
$ \ux \in \T_N^{d-1}$. The sum $\sum_{1\le j\le d} N^2 (D^2_j\widehat G)(Ax)$
can be decomposed into the sum $I_1+I_2$, where
\begin{gather*}
I_1 \,=\, \sum_{b=\pm 1}  N^2\, \big\{\,  \widehat G((x_1+b)\sqrt{K}/N, \ux/N)
- \widehat G(x_1\sqrt{K}/N, \ux/N)\, \big\} \, , 
\\
I_2 = \sum_{j=1}^{d-1} \sum_{b=\pm 1}
N^2\, \big\{\,  \widehat G (x_1\sqrt{K}/N, (\ux + b  \widehat e_j)/N)
- \widehat G (x_1\sqrt{K}/N, \ux/N)\, \big\} \;,
\end{gather*}
and $\{\widehat  e_1, \dots \widehat  e_{d-1}\}$ represents the canonical basis of
$\R^{d-1}$.

A third order Taylor expansion yields that
\begin{equation*}
I_1 \,=\, K\,  (\partial_\vartheta^2 \widehat G)(Ax) \,+\, R\;, \quad
\text{where}\quad
|R| \,\le\, \frac{K^{3/2}}{N}\,  \|\partial_\vartheta^3 G\|_\infty\; .
\end{equation*}
Similarly,
\begin{align*}
I_2 \,=\,  \sum_{j=2}^d (\partial^2_{\theta_j} \widehat G)(Ax) \,+\, R\;, \quad
\text{where}\quad
|R| \,\le\, \frac{1}{N} \, \sum_{j=2}^d
\| \partial^3_{\theta_j}  G\|_\infty\; .
\end{align*}
This complete the proof of the lemma.
\end{proof}

Let $J \colon [0,T] \times \bb T^d_N \to \bb R$ be given by
\begin{equation}
\label{2-92}
J(s,x) \,=\, \sum_{j=1}^d N^2 (D^2_j \widehat F)(s, Ax)\,-\, 
K\, \widehat{(\partial^2_{\vartheta} F)} (s,Ax) \,-\,
\sum_{j=2}^d  \widehat {(\partial^2_{\theta_j} F)} (s, Ax)\;.
\end{equation}
By the previous result and \eqref{2-65} applied to
$G= N^{-d/2}K^{-1/4}\, J$,
\begin{align*}
& \bb E_{\nu^N} \Big[\, 
\int_0^t \, \Big|\, \frac{1}{\sqrt{N^d \sqrt{K}}}
\sum_{x\in \bb T^d_N} J(s,Ax)
\, [\eta_x(s) - \bs u^N(x)\,]\, \Big|
\, ds \, \Big]
\\
&\quad \le
\;\mf c(\bs u^N) \, \frac{K^{5/4}}{N} \, \Big(\frac{N}{\ell_N}\Big)^{d/2} \,
\sqrt{\kappa_N(T)}\, 
\int_0^t \Big\{\, 1 + \Vert (\partial^3_{\vartheta} F)(s)
\Vert_\infty
+ \sum_{j=2}^d \Vert (\partial^3_{\theta_j} F)(s) \Vert_\infty
\,\Big\} \,   ds\;,
\end{align*}
where, recall, $F(s) = T^K_{t-s} F$. By Lemma \ref{maximum lemma}, the
previous integral is bounded by $C(G) K^3 \, e^{4 \mf c_5 K T}$. Thus,
by \eqref{2-90}, the previous expression vanishes as $N\to\infty$:
\begin{equation}
\label{2-57}
\lim_{N\to\infty}\, \bb E_{\nu^N} \Big[\, 
\int_0^t \, \Big|\, \frac{1}{\sqrt{N^d \sqrt{K}}}
\sum_{x\in \bb T^d_N} J(s,Ax)
\, [\eta_x(s) - \bs u^N(x)\,]\, \Big|
\, ds \, \Big]  \;=\; 0\;.
\end{equation}

We turn to the third term in \eqref{2-58}.  Recall the definition of
$\Xi^{\rm c}_{\bs u^N (\cdot) , x} f \colon \Omega_N\to \bb R$ given
in \eqref{2-35b}.  By the Boltzmann-Gibbs principle \eqref{2-46},
\begin{equation}
\label{2-64}
\begin{aligned}
& \bb E_{\nu^N} \Big[\, \Big|\,
\int_0^t \, \frac{K^{3/4}}{N^{d/2}} \,
\sum_{x\in \bb T^d_N} \widehat F (s,Ax)\,
(\Xi^{\rm c}_{\bs u^N (\cdot), x}  f) (\eta (s))
\, ds \, \Big| \,\Big]
\\ & \quad
\;\le \; \mf c(\bs u^N) \, K^{7/4}\,
\, \Big(\frac{N}{\ell_N}\Big)^d \frac{1}{N^{d/2}} \, \kappa_N(T)
\int_0^t [\, 1 + \mf m_K(F_s)\,] \, ds \;.
\end{aligned}
\end{equation}
By Lemma \ref{maximum lemma}, $1 + \mf m_K(F_s)$ is bounded by
$C(F)\, K\, e^{2(\mf c_5 K +1)T}$.  By \eqref{2-90}, the right-hand
side vanishes as $N\to\infty$. This estimate is the only one which
requires the dimension to be less than $3$ in the estimation of the
third term.

By the previous estimate, in the third term of \eqref{2-58}, we may
replace $\tau_x h_0 - E_{\nu^N}[\tau_x h_0]$ by
$\Xi^{\rm c}_{\bs u^N (\cdot), h_0}(x)\, [\eta_x - \bs u^N(x)]$.  Let
$J \colon [0,T] \times \bb T^d_N \to \bb R$ be given by
\begin{equation*}
J(s,x) \,=\, \widehat F (s,Ax)\,
\big\{\, \Xi^c_{\bs u^N (\cdot), h_0}(x) \,+\, V''(\bs u^N(x))
\,\big\} \;.
\end{equation*}
By Lemma \ref{lem:3.1-F} and \eqref{2-65} applied to $G =
(N/\sqrt{K})\, J$,
\begin{equation*}
\begin{aligned}
& \bb E_{\nu^N} \Big[\, 
\int_0^T \, \Big|\, \frac{K}{\sqrt{N^d \sqrt{K}}}
\sum_{x\in \bb T^d_N} J(s,Ax)
\, [\eta_x(s) - \bs u^N(x)\,]\, \Big|
\, ds \, \Big]  \\
&\quad \;\le \;
\mf c(\bs u^N) \, \Big(\frac{N}{\ell_N}\Big)^{d/2}\,
\frac{K^{5/4}}{N}\, 
\sqrt{\kappa_N(T)}\, \int_0^T \big \{\, 1\,+\,
\Vert T^K_{s} F \Vert_\infty \,\big\} \, ds
\;.
\end{aligned}
\end{equation*}
Note that $\|T^K_s F\|_\infty \leq e^{(\frak{c_5}K+1)T}$ by Lemma
\ref{maximum lemma}.

Since the right-hand side of \eqref{2-64} vanishes as $N\to\infty$, by
\eqref{2-89},
\begin{equation}
\label{2-55}
\begin{aligned}
& \lim_{N\to\infty}\,
\bb E_{\nu^N} \Big[\, \Big|\,
\int_0^t \, \frac{K^{3/4}}{N^{d/2}} \,
\sum_{x\in \bb T^d_N} \widehat F (s,Ax)\, \Psi(x, \eta(s)) 
\, ds \, \Big| \,\Big] \; = \; 0\;.
\\
&\quad \text{where}\;\;
\Psi(x, \eta )  \;:=\;
\tau_x h_0 (\eta ) \,-\, E_{\nu^N} [\tau_x h_0]
\, + \, V''(\bs u^N(x)) \,
[\eta_x - \bs u^N(x)\,]\;.
\end{aligned}
\end{equation}

\begin{proof}[Proof of Proposition \ref{2-p1}]
Recall formula \eqref{2-58} for
$X^N_s(\partial_s F(s)) + L_N X^N_s(F(s))$.  By \eqref{2-59}, the last
term vanishes. Therefore, by \eqref{2-55} and \eqref{2-57},
\begin{equation*}
\begin{aligned}
& \limsup_{N\to \infty} \bb E_{\nu^N} \Big[\, \Big|\, 
\int_0^t \big\{\, X^N_s(\partial_s F(s))
+ L_N X^N_s(F(s))\,\big\}\; ds \,\Big | \,\Big]
\\
&\quad
\le\; \limsup_{N\to \infty} \bb E_{\nu^N} \Big[\, \Big|\, 
\int_0^t 
\frac{1} {\sqrt{N^d  \sqrt{K}}}
\sum_{x\in \bb T^d_N} \widehat{H_K} (s,x) 
\, [\,\eta_x(s)  - \bs u^N (x)\,] \; ds \,\Big | \,\Big] \;,
\end{aligned}
\end{equation*}
where
\begin{equation*}
H_K(s,x) \; =\;  (\partial_s F)(s,Ax) \,+\, \; (\mf A_K F) (s,Ax)
\;=\; 0 \;.
\end{equation*}
To complete the proof it remains to observe that
$\partial_s F \,=\, \partial_s T^K_{t-s} F \,=\, -\, \mf A_K F$,
where $\mf A_K$ is defined in \eqref{2-71}.
\end{proof}

\begin{corollary}
\label{2-cor1}
Under the hypotheses of Proposition \ref{2-p1},
\begin{equation*}
\lim_{N\to\infty} 
\bb E_{\nu^N} \big[ \, \big|\, X^N_t(F) \,-\, M^{N}_t(F(\cdot)) \,\big|\,
\big] \;=\; 0\;.
\end{equation*}
\end{corollary}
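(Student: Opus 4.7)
Evaluating the Dynkin formula \eqref{2-60} at $s=t$, with $F(t) = T^K_0 F = F$ and $F(0) = T^K_t F$, we obtain the identity
\begin{equation*}
X^N_t(F) \,-\, M^N_t(F(\cdot)) \;=\; X^N_0(T^K_t F) \;+\; \int_0^t \big\{X^N_r(\partial_r F(r)) + L_N X^N_r(F(r))\big\}\, dr.
\end{equation*}
By Proposition \ref{2-p1}, the $L^1(\bb P_{\nu^N})$ norm of the integral term vanishes as $N\to\infty$. The corollary will therefore follow once we prove that $\bb E_{\nu^N}[|X^N_0(T^K_t F)|] \to 0$.

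Since $\nu^N$ is a product of Bernoulli distributions with means $\bs u^N(x)$, the variables $\{\eta_x(0) - \bs u^N(x) : x\in\bb T^d_N\}$ are independent, bounded, and centered. Hence, for any function $G$ on $\sqrt{K}\bb T\times\bb T^{d-1}$, the definition \eqref{2-16} together with the first estimate in \eqref{2-62} yields
\begin{equation*}
\bb E_{\nu^N}\big[X^N_0(G)^2\big] \;=\; \frac{1}{N^d\sqrt{K}}\sum_{x\in\bb T_N^d} \widehat G(Ax)^2\,\chi(\bs u^N(x)) \;\le\; \frac{1}{4K}\, \Vert G\Vert_2^2,
\end{equation*}
so that, by the Cauchy-Schwarz inequality,
\begin{equation*}
\bb E_{\nu^N}\big[\,|X^N_0(T^K_t F)|\,\big] \;\le\; \frac{1}{2\sqrt{K}}\, \Vert T^K_t F\Vert_{L^2(\sqrt{K}\bb T\times \bb T^{d-1})}.
\end{equation*}

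The only nontrivial remaining task, and the place where the main obstacle lies, is to control $\Vert T^K_t F\Vert_2$ uniformly in $K\in\bb N$ and $t\in[0,T]$. This is precisely the role of the Sturm-Liouville analysis of $\mc A_K = \partial_\vartheta^2 - V''(\rho^K)$ referenced in the introduction: the translation mode $\partial_\vartheta\rho^K$ is a zero-eigenvalue eigenfunction of $\mc A_K$, and $\mc A_K$ is non-positive on $L^2(\sqrt{K}\bb T)$ with a spectral gap on the orthogonal complement of $\partial_\vartheta\rho^K$, uniform in $K$ (see \cite{cp,F1}). Writing the splitting
\begin{equation*}
T^K_t F \;=\; \bs e_K\, S_t \<F,\bs e_K\> \,+\, (T^K_t F)^\perp,
\end{equation*}
the first summand is $L^2$-bounded by $\Vert F\Vert_2$ via Cauchy-Schwarz and the normalization of $\bs e_K$, while the orthogonal remainder decays in $L^2$ at rate $e^{-c Kt}$. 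Combining this with the $L^2$-contractivity of $e^{r\Delta_{d-1}}$ on $\bb T^{d-1}$ gives $\Vert T^K_t F\Vert_2 \le C\Vert F\Vert_2$ uniformly in $K$ and $t\in[0,T]$. Since $\Vert F\Vert_2$ depends only on $F$, we conclude $\bb E_{\nu^N}[|X^N_0(T^K_t F)|] = O(K^{-1/2})\to 0$, which finishes the proof.
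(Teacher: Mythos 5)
Your argument follows the paper's proof in its essential structure: evaluate the Dynkin formula \eqref{2-60} at $s=t$, invoke Proposition \ref{2-p1} to kill the time integral, and then bound $\bb E_{\nu^N}[|X^N_0(T^K_t F)|]$ by $C K^{-1/2}\Vert T^K_t F\Vert_2$ using that $\nu^N$ is product, the Schwarz inequality, and \eqref{2-62}. The only divergence is how the uniform bound on $\Vert T^K_t F\Vert_2$ is justified: the paper simply cites \cite[Lemma B.7]{F1}, while you sketch a spectral argument, and that sketch is not accurate as stated. On the torus $\sqrt{K}\,\bb T$ the operator $\ms A_K=\partial^2_\vartheta-V''(\rho^K)$ linearized around the \emph{two-layer} profile is not non-positive: $\partial_\vartheta\rho^K$ changes sign, so it is an eigenfunction with eigenvalue $0$ but not the ground state, and the Carr--Pego analysis \cite{cp} shows the principal eigenvalue is positive, though exponentially small in $\sqrt{K}$ (this is the metastable direction in which the two layers attract). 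Consequently the component of $T^K_tF$ orthogonal to $\partial_\vartheta\rho^K$ does not decay at rate $e^{-cKt}$; in the near-ground-state direction it is essentially non-decaying. Your conclusion survives because the top of the spectrum of $K\ms A_K$ is $O(K e^{-c\sqrt K})\to 0$, so $\Vert T^K_t F\Vert_2\le C(T)\Vert F\Vert_2$ uniformly in $K$ and $t\in[0,T]$ — which is precisely the content of \cite[Lemma B.7]{F1} used in the paper — but if you want a self-contained justification you should phrase it via the exponentially small upper bound on the spectrum (two low-lying modes plus a uniform gap) rather than via non-positivity of $\ms A_K$ and exponential decay off the translation mode.
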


\begin{proof}
In view of Proposition \ref{2-p1}, it is enough to show that
\begin{equation*}
\lim_{N\to\infty} 
\bb E_{\nu^N} \big[ \, \big|\, X^N_0(T^K_t F) \,\big|\,
\big] \;=\; 0\;.
\end{equation*}
As $\nu^N$ is a product measure, by Schwarz inequality and by the
definition \eqref{2-69} of the average $\widehat G$, the square of the
expectation is bounded by
\begin{equation*}
\frac{1}{N^d K^{1/2} }
\sum_{x\in \bb T^d_N} [\, \widehat F (t,Ax)\,]^2\,  \chi(\bs u^N(x))
\;\le\; \frac{1}{K} \, \Vert T^K_t F \Vert^2_2 \;.
\end{equation*}
By \cite[Lemma B.7]{F1}, $\Vert T^K_tF \Vert^2_2$ is uniformly
bounded, which completes the proof of the lemma.
\end{proof}

Recall from \eqref{2-83} the definition of the function
$\bs e\colon \bb R \to \bb R$, and that $(S_t:t\ge 0)$ represents the
semigroup of the heat equation in $\bb T^{d-1}$. Denote by
$\bs e_K\colon \sqrt{K} \bb T \to \bb R$ the function $\bs e$ immersed
in $\sqrt{K} \bb T$. More precisely, if
$\sqrt{K} \bb T \cong [-\sqrt{K}/2, \sqrt{K}/2)$, $\bs e_K$ and
$\bs e$ coincide on $[-\sqrt{K}/4, \sqrt{K}/4]$ and $\bs e_K$ smoothly
interpolates on the rest of the torus so that
$|\partial^j_\vartheta \bs e_K|\leq C|\partial^j_\vartheta \bs e|$ on
$\sqrt{K}\bb T$ for $j=0,1,2$.

Denote by
$\color{blue} (P^K_t:t\ge 0)$ the semigroup associated to the
generator $K\, \ms A_K$ introduced in \eqref{2-49}. In particular, for
smooth functions $F\colon \sqrt{K}\bb T \to \bb R$,
$G\colon \bb T^{d-1} \to \bb R$, $T^K_t (FG) = P^K_t F\, S_t G$.

Fix $0\le t\le T$, $p\ge 1$, and functions $F_j\in C_c^\infty(\bb R)$,
$G_j\in C^\infty( \bb T^{d-1})$, $1\le j\le p$. Consider $F_j$ as
defined on $\sqrt{K}\bb T$ for $K$ sufficiently large. Let
$F_j^{(1)} (s, \vartheta) = (P^{K}_{t-s} F_j)(\vartheta)$,
$F^{(2)}_j(\vartheta) = \bs e_K (\vartheta) \, \<\bs e, F_j\>$,
$G_j(s, \theta) = (S_{t-s} G_j)(\theta)$,
\begin{equation*}
\begin{gathered}
H^{(1)}_s(\vartheta, \theta) \,=\, \sum_{j=1}^p
F_j^{(1)} (s, \vartheta)\, G_j(s, \theta)\;, \quad
H^{(2)}_s(\vartheta, \theta) \,=\, \sum_{j=1}^p
F_j^{(2)} (\vartheta)\, G_j(s, \theta)\;,
\\
H_s(\vartheta, \theta) \,=\,
H^{(1)}_s(\vartheta, \theta)\,-\,
H^{(2)}_s(\vartheta, \theta)\;. 
\end{gathered}
\end{equation*}
We comment by density arguments that it is sufficient to identify
martingale limits with respect to these linear combinations of
products of functions of $\vartheta$ and $\theta$.

In these formulas, as before, $\< F, G\>$ stands for the scalar
product in $L^2(\bb R)$.  Therefore, if
$\color{blue} H (\vartheta, \theta) := \sum_{1\le j\le p}
F_j(\vartheta)\, G_j(\theta)$, with the notation introduced in
\eqref{2-75}, 
\begin{equation}
\label{2-73}
\begin{gathered}
H^{(1)}_s (\vartheta, \theta)  \;=\; (T^K_{t-s} H) (\vartheta,
\theta)\;, 
\\
H^{(2)}_s (\vartheta, \theta) \,=\, \bs e_K (\vartheta)
\sum_{j=1}^p \< F_j\>\,
(S_{t-s} G_j) (\theta)
\,=\, \bs e_K (\vartheta) \, S_{t-s} \<H\> (\theta)  \;.
\end{gathered}
\end{equation}
Mind that $H^{(1)}_t = H$, while $H^{(2)}_t = \bs e_K \, \<H\>$.  By
\eqref{CP properties}, $\bs e_K$ converges to $\bs e$ in $L^2(\bb
R)$. Hence, $H^{(2)}_s \to S_{t-s}\, \Pi H$, as $N\to\infty$.

\begin{lemma}
\label{2-l04}
Assume that $d\le 2$, and fix $0\le t \le T$.  Then,
\begin{equation*}
\lim_{N\to\infty} 
\bb E_{\nu^N} \big[ \, \big\{\, M^{N}_t(H^{(1)} (\cdot))
\,-\, M^{N}_t(H^{(2)} (\cdot))  \,\big\}^2\,
\big] \;=\; 0\;.
\end{equation*}
\end{lemma}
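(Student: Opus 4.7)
The plan is to combine the $L^2$-isometry for Dynkin martingales with the spectral gap of the Sturm--Liouville operator $K\mc A_K$. By linearity of the Dynkin formula \eqref{2-60} in its time-dependent test function, $M^N_t(H^{(1)}(\cdot))-M^N_t(H^{(2)}(\cdot)) = M^N_t(D(\cdot))$ for $D_s := H^{(1)}_s - H^{(2)}_s$. Since the smooth compensator does not contribute to the quadratic variation, the $L^2$-isometry for c\`adl\`ag martingales gives
\[
\bb E_{\nu^N}\Big[\big(M^N_t(H^{(1)})-M^N_t(H^{(2)})\big)^2\Big] \;=\; \bb E_{\nu^N}\!\left[\int_0^t \Gamma^{N,2}(D_s)\,ds\right].
\]
Applying the deterministic bound \eqref{2-70} term by term then reduces the claim to showing that the (deterministic) quantity
$\int_0^t \{\|D_s\|_2^2 + \|\partial_\vartheta D_s\|_2^2 + (1/K)\|\nabla_\theta D_s\|_2^2\}\,ds$
tends to $0$ as $N\to\infty$.

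Next I would exploit the product structure to write
\[
D_s(\vartheta,\theta) \;=\; \sum_{j=1}^p \big(P^K_{t-s}F_j - \bs e_K\,\<F_j,\bs e\>\big)(\vartheta)\,(S_{t-s}G_j)(\theta),
\]
so that each of the three $L^2$ norms factorizes into a $\vartheta$-factor and a $\theta$-factor, the latter controlled uniformly by contractivity of $S_{t-s}$. The problem thus reduces to uniform estimates on $\|P^K_{t-s}F_j - \bs e_K\<F_j,\bs e\>\|_{H^1(\sqrt{K}\bb T)}$. I would invoke the Sturm--Liouville analysis of $K\mc A_K$ carried out in \cite{cp, F1}: its top eigenfunction approaches $\bs e$ on compact sets of $\bb R$ and there is a uniform spectral gap $\mu>0$, yielding $\|P^K_r F - \bs e_K\<F,\bs e_K\>\|_2 \le C\, e^{-\mu K r}\|F\|_2$. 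Combined with $\bs e_K\to\bs e$ in $L^2(\bb R)$, this gives $\|D_s\|_2\to 0$ for each fixed $s<t$. I would then split $\int_0^t = \int_0^{t-\delta_K} + \int_{t-\delta_K}^t$ with $\delta_K=(\log K)/(\mu K)\to 0$: on the first piece the spectral decay makes the integrand $o_N(1)$ uniformly in $s$, and on the second the integrand is uniformly bounded (since $\|P^K_r\|_{L^2\to L^2}\le 1$) while $\delta_K\to 0$ kills the contribution. This handles the $\|D_s\|_2^2$ and $(1/K)\|\nabla_\theta D_s\|_2^2$ integrals.

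The hard part will be the $\|\partial_\vartheta D_s\|_2^2$ integral, because $\partial_\vartheta$ does not commute with $P^K_r$: differentiation through the semigroup equation picks up an $O(K)$ zeroth-order term from $V'''(\rho^K)\partial_\vartheta\rho^K$, so the naive bound $\|\partial_\vartheta P^K_r F\|_2\le \|\partial_\vartheta F\|_2$ is not available. To bypass this I would expand $F_j$ along the eigenbasis $\{e_k^K\}$ of $\mc A_K$ on $\sqrt K\bb T$ and write $\partial_\vartheta P^K_{t-s}F_j = \sum_k e^{(t-s)K\lambda_k^K}\<F_j,e_k^K\>\,\partial_\vartheta e_k^K$; the same spectral gap forces the sum over $k\ge 1$ to decay as $e^{-\mu K(t-s)}$, and Sturm--Liouville bounds (using Lemma \ref{CP_lemma} and Appendix \ref{sec-c} for the smoothness of $\rho^K$) absorb any polynomial-in-$k$ growth of $\|\partial_\vartheta e_k^K\|_2$. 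Applying the same $[0,t-\delta_K]\cup[t-\delta_K,t]$ split then closes the estimate for the $\vartheta$-derivative norm and completes the proof.
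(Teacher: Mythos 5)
Your first half coincides with the paper's argument: by linearity you pass to $M^N_t(D(\cdot))$, use the Dynkin quadratic variation together with the bound \eqref{2-70}, exploit the product structure and the contractivity of the heat semigroup in $\theta$, and reduce everything to the two one-dimensional quantities $\int_0^t\Vert P^K_{t-s}F_j-\bs e_K\<F_j,\bs e\>\Vert_2^2\,ds$ and $\int_0^t\Vert \partial_\vartheta P^K_{t-s}F_j-\<F_j,\bs e\>\partial_\vartheta\bs e_K\Vert_2^2\,ds$. This is exactly the paper's reduction. Where you diverge is in how these two quantities are controlled, and there your argument has genuine gaps. First, the fixed-$K$ estimate $\Vert P^K_rF-\bs e_K\<F,\bs e_K\>\Vert_2\le C e^{-\mu Kr}\Vert F\Vert_2$ is not correct: $\bs e_K$ is not an eigenfunction of $\ms A_K$, and the linearization around the two-layer profile $\rho^K$ on $\sqrt K\,\bb T$ has a \emph{two}-dimensional near-zero subspace (the translation mode $\partial_\vartheta\rho^K$, with eigenvalue exactly $0$, together with a second exponentially small — possibly positive — eigenvalue), so at fixed $K$ the semigroup does not converge exponentially to the rank-one projection $\bs e_K\<\cdot,\bs e_K\>$; in particular $\Vert P^K_r\Vert_{L^2\to L^2}\le 1$ also fails (harmlessly, since $K\lambda_0^K\to0$). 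What is true, and what the paper uses, is the $K\to\infty$ statement of \cite[Proposition B.4]{F1} for the time-integrated quantity; your "spectral gap makes the integrand $o_N(1)$ uniformly on $[0,t-\delta_K]$" conflates the gap (decay toward the genuine near-zero eigenspace) with the separate $K\to\infty$ convergence of that eigenspace and of the associated projections.

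The more serious gap is in the $\partial_\vartheta$ term. Your eigenfunction expansion needs two inputs that are neither proved nor available from the results you cite: (i) convergence of the \emph{derivatives} of the near-zero eigenfunctions (suitably combined) to $\<F,\bs e\>\partial_\vartheta\bs e_K$ in $L^2(\sqrt K\,\bb T)$, and (ii) quantitative control of $\Vert\partial_\vartheta e^K_k\Vert_2$ for all $k$ together with an exchange of sum and time integral — the claimed uniform-in-$s$ decay $e^{-\mu K(t-s)}$ of the derivative sum is false near $s=t$, and on $[t-\delta_K,t]$ the boundedness of $\Vert\partial_\vartheta P^K_rF\Vert_2$ requires a Dirichlet-form monotonicity argument, not the $L^2$ contraction you invoke. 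The paper avoids all of this through Lemma \ref{2-l19}: integrating by parts, $\Vert\partial_\vartheta P^K_tF-\<F,\bs e\>\partial_\vartheta\bs e_K\Vert_2^2$ is bounded by Cauchy–Schwarz in terms of $\Vert P^K_tF-\<F,\bs e\>\bs e_K\Vert_2$ times uniform bounds on $\Vert\partial^2_\vartheta P^K_tF\Vert_2$ and $\Vert\partial^2_\vartheta\bs e_K\Vert_2$ (\cite[Lemma B.7]{F1} and \eqref{CP properties}), so that only the $L^2$ convergence already needed for the zeroth-order term enters. Your route could probably be completed with a careful spectral analysis of $\ms A_K$, but as written the key derivative estimates are asserted rather than justified, and the integration-by-parts device that lets the paper bypass them is the missing idea.
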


\begin{proof}
By linearity, for $i=1$, $2$, 
$M^{N}_t(H^{(i)} (\cdot)) = \sum_{1\le j\le p} M^{N}_t(J^{(i)}_j
(\cdot))$, where
$J^{(i)}_j (s, \vartheta, \theta) = F^{(i)}_j (s, \vartheta) G_j (s,
\theta) $. It is therefore enough to prove that
\begin{equation*}
\lim_{N\to\infty} 
\bb E_{\nu^N} \big[ \, \big\{\, M^{N}_t(J^{(1)}_j (\cdot))
\,-\, M^{N}_t (J^{(2)}_j (\cdot)) \,\big\}^2\,
\big] \;=\; 0
\end{equation*}
for $1\le j\le p$.

Fix such $j$. By linearity again,
$ M^{N}_t(J^{(1)}_j (\cdot)) \, -\, M^{N}_t(J^{(2)}_j (\cdot)) \,=\,
M^{N}_t(J_j (\cdot))$, where $J_j = J^{(1)}_j - J^{(2)}_j$. By the
formula for the quadratic variation of a Dynkin martingale,
\cite[Lemma A1.5.1]{kl},
\begin{equation*}
\bb E_{\nu^N} \big[ \, \big\{\, M^{N}_t(J^{(1)}_j (\cdot))
\,-\, M^{N}_t (J^{(2)}_j (\cdot)) \,\big\}^2
\big] \;=\; \bb E_{\nu^N} \Big[ \,
\int_0^t \Gamma^N_2 \big( J_j(s)) \, ds \, \Big] \;,
\end{equation*}
where $\Gamma^N_2(\cdot)$ is defined in \eqref{47-b}. Since
$c_0(\cdot)$ is bounded, the previous expression is less than or equal
to
\begin{equation}
\label{2-66}
\int_0^t \Big\{\, \frac{1}{\sqrt{K}\, N^d} \sum_{k=1}^d
\sum_{x\in \bb T^d_N} 
[(\nabla_{N,k} \widehat J_j )(s,Ax)]^2
\; +\; \frac{\sqrt{K}}{N^d} \sum_{x\in \bb T^d_N} 
\widehat J_j (s, Ax)^2 \,\Big\}\, ds \;.
\end{equation}

Recall the definition of $J_j(s)$, $F^{(i)}_j(s)$, $G_j(s)$.  By the
maximum principle for the heat equation, $G_j(r,\theta)$ is bounded by
$\Vert G_j \Vert_\infty$, $0\le r\le t$. Therefore,
by Schwarz inequality and the definition \eqref{2-69} of $\widehat G$,
the second term is bounded by
\begin{equation*}
\Vert G_j \Vert^2_{\infty} \, 
\int_0^t \Vert F^{(1)} _j (s) \,
-\, F^{(2)} _j (s) \Vert^2_{2} 
\; ds \;.
\end{equation*}
Similarly, as $\partial_{\theta_k} G_j(r,\theta)$ is
bounded by
$\Vert \partial_{\theta_k} G_j \Vert_\infty$ the
first term in \eqref{2-66} is bounded by
\begin{equation*}
\begin{aligned}
& \frac{1}{K} \, \sum_{k=2}^{d}\Vert
\partial_{\theta_k} G_j \Vert^2_{\infty} 
\int_0^t \, \Vert F^{(1)} _j (s) \,
-\, F^{(2)} _j \Vert^2_{2}
\; ds 
\\
&\quad +\;
\Vert G_j \Vert^2_{\infty}  \,
\int_0^t \big \Vert \partial_\vartheta 
F^{(1)} _j (s) \, -\, \partial_\vartheta  F^{(2)} _j 
\,  \big \Vert^2_{2}\; ds \;.
\end{aligned}
\end{equation*}
By \cite[Proposition B.4]{F1}, the penultimate display vanishes as
$N\to\infty$.

To complete the proof of the lemma, it remains to recall from Lemma
\ref{2-l19} that the last integral also vanishes as $K\to\infty$ for
all $t>0$.
\end{proof}

\begin{remark}
\label{rm2}
The proof of this lemma shows that in non-equilibrium it is easier to
estimate martingales than the density field. For martingales, passing
to the quadratic variation leads to a sum divided by $N^d$. If we had
to deal with the density field directly, one would need to estimate a
sum divided by $N^{d/2}$ through the entropy inequality. In view of
Lemma \ref{2-l11}, as the entropy is large, the free parameter
$\gamma$ has to be large, which would force the test function $G$ to
be very small.
\end{remark}

Recall the notation introduced in the previous lemma.  In particular,
the definition of
$H \colon \sqrt{K} \bb T \times \bb T^{d-1} \to \bb R$.  By
\eqref{2-73}, Corollary \ref{2-cor1}, and Lemma \ref{2-l04},
\begin{equation}
\label{2-74}
\lim_{N\to\infty} 
\bb E_{\nu^N} \big[ \, \big|\, X^N_t(H)
\,-\, M^{N}_t ( H^{(2)} (\cdot)) \,\big|\,
\big] \;=\; 0\;.
\end{equation}

\begin{proof}[Proof of Theorem \ref{mt2}]
Fix $0\le t\le T$, $p\ge 1$, and functions $F_j\in C_c^\infty(\bb R)$,
$G_j\in C^\infty( \bb T^{d-1})$, $1\le j\le p$. Recall the notation
introduced in Lemma \ref{2-l04}.  By \eqref{2-74}, we need to compute
the asymptotic behavior of the finite-dimensional distributions of
$M^{N}_t (H^{(2)} (\cdot))$.

By equation (8.1) in \cite{jl}, and since $H^{(2)}_t = \bs e_K \<H\>$
and $\partial_s H_s^{(2)}=\Delta_\theta H_s^{(2)}$,
\begin{equation}
\label{2-76}
M^{N}_t (H^{(2)} (\cdot))  \;=\; M^N_t(\bs e_K \<H\>) \,+\, 
\int_0^t  M^N_s(\Delta_\theta H^{(2)}_s) \; ds \;,
\end{equation}
where on the right-hand side $M^N_s (G_s)$ has to be understood as the
martingales $(M^{N}_r (G_s) : r\ge 0)$ introduced in \eqref{2-60b}
taken at time $r=s$. 

By definition of $H^{(2)} (\cdot)$,
$\partial_s H^{(2)}_s \,=\, -\, \Delta_\theta H^{(2)}_s$. Hence, by
Lemma \ref{l38}, the uniform convergence of $\bs e_K$ to
$\bs e$  (by \eqref{CP properties}),
$X^N_t(H)$ converges in distribution to the mean-zero
Gaussian random variable
\begin{equation*}
M_t(\Pi H) \,+\, 
\int_0^t  M_s(\Delta_\theta  \, S_{t-s} \Pi H) \; ds \;.
\end{equation*}

To complete the proof of the theorem, it remains to compute the
covariances of these variables. We present the details for the
variance, the covariance being computed by polarization.  Recall the
definition of $H^{(2)}$ given in \eqref{2-73}.  By Lemma \ref{l38},
$\bb Q^N[\, M_t(\Pi F)^2\,] $ is given by \eqref{x4}.  Since
$-\, \Delta_\theta H^{(2)}_s = \partial_s H^{(2)}_s$, by \eqref{x4}
and an integration by parts, since
$H^{(2)}_t = \bs e_K \<H\> \to \Pi H$ uniformly,
\begin{equation*}
\begin{aligned}
& 2\, \bb Q^M \big[\, M_t(\Pi H) \, \int_0^t
M_s(\Delta_\theta H^{(2)}_s)\,ds\,\big]
\\
\quad =\;
& - \, 4\, t\,  \int_{\bb R} d\vartheta \; 
\chi (\phi  (\vartheta)) \, \int_{\bb T^{d-1}} d\theta \;
(\partial_\vartheta \Pi H)(\vartheta, \theta)\,
(\partial_\vartheta \, \Pi H)(\vartheta, \theta)\,
\\
\quad & +\, 4\, \int_0^t ds \,
\int_{\bb R} d\vartheta \; 
\chi (\phi (\vartheta)) \, \int_{\bb T^{d-1}} d\theta \;
(\partial_\vartheta \Pi H)(\vartheta, \theta)\,
(\partial_\vartheta H^{(2)}_s)(\vartheta, \theta)\,+\, A_1\,,
\end{aligned}
\end{equation*}
where $A_1$ corresponds to the second term in \eqref{x4} (the one
without the derivatives) and which is similar to the expression
above. Finally, and by the same reasons,
\begin{equation*}
\begin{aligned}
& \bb Q^M \Big[\, \Big( \int_0^t
M_s(\Delta_\theta H^{(2)}_s)\,ds \Big)^2 \,\Big]
\\
\quad & =\;
2\, t\,  \int_{\bb R} d\vartheta \; 
\chi (\phi  (\vartheta)) \, \int_{\bb T^{d-1}} d\theta \;
(\partial_\vartheta \Pi H)(\vartheta, \theta)\,
(\partial_\vartheta \, \Pi H)(\vartheta, \theta)\,
\\
\quad & +\, 2\, \int_0^t ds \,
\int_{\bb R} d\vartheta \; 
\chi (\phi (\vartheta)) \, \int_{\bb T^{d-1}} d\theta \;
(\partial_\vartheta H^{(2)}_s)(\vartheta, \theta)\,
(\partial_\vartheta H^{(2)}_s)(\vartheta, \theta)
\\
\quad & -\, 4\, \int_0^t ds \,
\int_{\bb R} d\vartheta \; 
\chi (\phi (\vartheta)) \, \int_{\bb T^{d-1}} d\theta \;
(\partial_\vartheta \Pi H)(\vartheta, \theta)\,
(\partial_\vartheta H^{(2)}_s)(\vartheta, \theta)\,+\, A_2\,,
\end{aligned}
\end{equation*}
with the same convention for $A_2$ as above. Adding all terms, and
since $H^{(2)}_s = S_{t-s} \Pi H$, yields
that
\begin{equation*}
\begin{aligned}
& \bb Q^M \Big[\, \Big( M_t (\Pi H) \,+\, \int_0^t
M_s(\Delta_\theta S_{t-s} \Pi H )\,ds \Big)^2 \,\Big]
\\
\quad & = \, 2\, \int_0^t ds \,
\int_{\bb R} d\vartheta \; 
\chi (\phi (\vartheta)) \, \int_{\bb T^{d-1}} d\theta \;
(\partial_\vartheta S_{t-s} \Pi H )(\vartheta, \theta)^2\,
\\
\quad & + \,  \int_0^t ds \,
\int_{\bb R} d\vartheta \; 
\widehat c_0  (\phi (\vartheta)) \, \int_{\bb T^{d-1}} d\theta \;
(S_{t-s} \Pi H )(\vartheta, \theta)^2\,,
\end{aligned}
\end{equation*}
as claimed.
\end{proof}

\section{Proof of Theorem \ref{mt3}}
\label{sec6}

Fix $T>1$ and recall that we denote by $\bb Q_N$ the probability
measure on $C([0,T], \ms D')$ induced by the process
$\bb X^N_t =\int_0^t X^N_s\, ds$ and the initial product measure
$\nu^N$. 

\begin{proof}[Proof of Theorem \ref{mt3}]
As $\ms D(\bb R \times \bb T^{d-1})$ is a LF-space (a locally convex
inductive limit of a sequence of Fr\'echet spaces), according to
\cite[Th\'eor\`eme IV.1]{Fou}, it is enough to prove that for each
$F\in \ms D(\bb R\times \bb T^{d-1} )$ the sequence of measures on
$C([0,T], \bb R)$ induced by the process $\bb X^N_t(F)$ is tight.

By definition,
\begin{equation*}
\bb X^N_t(F) \,=\,
\int_0^t X^N_r(F)\, dr\;, \quad 0\le t\le T\;.
\end{equation*}
Fix $r\in [0,T]$. By equation \eqref{2-60} with $F(q) = T^K_{r-q} F$,
$0\le q\le r$, 
\begin{equation*}
X^N_r(F)\,=\, 
M^{N}_r (F(\cdot)) \,+ \, X^N_0(T^K_r F) \,+\,
\int_0^r \big\{\, X^N_q(\partial_q T^K_{r-q} F)
+ L_N X^N_q(T^K_{r-q} F)\,\big\}\; dq \;.
\end{equation*}
We consider each term on the right-hand side separately.

In Lemma \ref{2-l16b} we show that the process
$\int_0^t M^N_r(F (\cdot))\, dr$ is tight  in $C([0,T], \bb R)$. On
the other hand, by Schwarz inequality and the proof of Corollary 
\ref{2-cor1},  
\begin{equation*}
\begin{aligned}
\bb E_{\nu_N} \Big[\, \sup_{|t-s|\le \delta}
\Big(\int_s^t X_0(T^K_r F) \, dr \Big)^2 \,\Big]
\, & \le\,
\delta\, \int_0^T \bb E_{\nu_N} \Big[\, 
X_0(T^K_r F) ^2 \Big] \, dr \,
\\
& \le\, \frac{C_0\, \delta}{K} \,
\int_0^T \Vert T^K_r F
\Vert_{2}^2 \; dr  
\end{aligned}
\end{equation*}
for some finite constant $C_0$. By \cite[Lemma B.7]{F1}, the integral
is bounded by
$T\, \Vert F\Vert_{2}^2$.
Therefore, the process $\int_0^t X^N_0(T^K_s F) \, ds$ is also tight
in $C([0,T], \bb R)$.

It remains to prove that the process
\begin{equation*}
\bb Y^N_t \,:=\, \int_0^t dr \int_0^r \big\{\, X^N_q(\partial_q F(q))
+ L_N X^N_q(F(q))\,\big\}\; dq
\end{equation*}
is tight in $C([0,T], \bb R)$. Writing $\bb Y^N_t = \int_0^t U^N_r\,
dr$, Proposition \ref{2-p1} showed that $\bb E_{\nu^N}[\, |U^N_r|\,]
\to 0$ for each $0\le r\le T$. If one can show that $\sup_{0\le r\le
T} \sup_N \bb E_{\nu^N}[\, |U^N_r|^p\,] < \infty$ for some $p>1$, we
get the tightness of $\bb Y^N_t$ since
\begin{equation*}
\bb E_{\nu^N}[\, |\bb Y^N_t - \bb Y^N_s|^p\,] \,=\,
\bb E_{\nu^N} \Big[\, \Big|\, \int_s^t U^N_r\; dr \, \Big|^p \,\Big]
\,\le\, (t-s)^{p/q} \, \bb E_{\nu^N}\Big[\, \int_s^t | U^N_r|^p \; dr  \,\Big]
\,\le\, C (t-s)^{1+q}\;,
\end{equation*}
where $(1/p) + (1/q)=1$ so that $q>0$. In Subsection 8.2, we derive
estimates with this flavour.

Recall the formula \eqref{2-58}. By
\eqref{2-59}, we may ignore the last term in \eqref{2-58}. By
\eqref{2-92}, \eqref{2-57}, in the second term of \eqref{2-58}, we may
replace the discrete Laplacians by the continuous ones. Once this has
been done, since $\partial_q T^K_{r-q} F = - \mf A_K T^K_{r-q} F$, the
sum of the first and second lines in \eqref{2-58} is given by
\begin{equation*}
\frac{K} {\sqrt{N^d  \sqrt{K}}} 
\sum_{x\in \bb T^d_N} \widehat F (q,Ax) \, V''(\bs u^N(x))\,
[\, \eta_x(q) - \bs u^N(x)\,] \;,
\end{equation*}
where $F(q, \cdot) = (T^K_{r-q} F )(\cdot)$.  Therefore, including now
the second line of \eqref{2-58}, to prove that the process $\bb Y^N_t$
is tight, it is enough to show that the process
\begin{equation*}
\begin{aligned}
\widehat {\bb Y}^N_t \, &
:=\, \int_0^t dr \int_0^r \Big\{\,
\frac{K} {\sqrt{N^d  \sqrt{K}}} 
\sum_{x\in \bb T^d_N} \widehat F (q,Ax) \, V''(\bs u^N(x))\,
[\, \eta_x(q) - \bs u^N(x)\,] \,\Big\}\; dq \\
& +\, \int_0^t dr \int_0^r \Big\{\,
\frac{K} {\sqrt{N^d  \sqrt{K}}}
\sum_{x\in \bb T^d_N} \widehat F (q,Ax) \, \big(\tau_x h_0 (\eta(q)) -
E_{\nu^N} [\tau_x h_0]\,\big) \,\Big\}\; dq
\end{aligned}
\end{equation*}
is tight.

By the expansion \eqref{2-19} of the cylinder function $h_0$, the
tightness of the process $\widehat{\bb Y}^{N}_t$ follows from the one of
the processes $\bb Y^{j,N}_t$, $j=1$, $2$, where $\bb Y^{j,N}_t =
\int_0^t Y^{j,N}_s\, ds $,   
\begin{equation*}
\begin{gathered}
Y^{1,N}_t \,:=\, \int_0^t \frac{K} {\sqrt{N^d  \sqrt{K}}}
\sum_{x\in \bb T^d_N} \widehat F (s,Ax) \,
(\Xi_{\bs u^N(\cdot), x} h_0) (\eta(s)) \; ds  \;,
\\
Y^{2,N}_t \,:=\, \int_0^t \frac{K} {\sqrt{N^d  \sqrt{K}}}
\sum_{x\in \bb T^d_N} \widehat F (s,Ax) \, \mf R_N(\eta(s)) \; ds  \;.
\end{gathered}
\end{equation*}
In Subsection 8.$j$, we prove the tightness of the process
$\bb Y^{3-j,N}_t$, $1\le j\le 2$. This completes the proof of the
theorem.
\end{proof}

\begin{lemma}
\label{2-l16b}
Assume that $d=1$ or $2$. Fix $T>0$, and a function $F\in C^\infty_c
(\bb R^{d-1} \times \bb T)$.  Then, the sequence of probability
measures $\bb Q^M_{N}$ on $C([0,T], \bb R)$ induced by the process
$\int_{[0,t]} M^N_r(F(\cdot)) \, dr$ and the product initial measure
$\nu^N$ is tight. 
\end{lemma}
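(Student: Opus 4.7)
The plan is to establish tightness via the Kolmogorov--Chentsov criterion. Since $\int_0^0 M^N_r(T^K_{r-\cdot}F)\, dr = 0$, it suffices to produce a finite constant $C = C(F,T)$ such that
\[
\bb E_{\nu^N}\Big[\Big(\int_s^t M^N_r(T^K_{r-\cdot}F)\, dr\Big)^2\Big] \;\le\; C\,(t-s)^2
\]
for all $0 \le s \le t \le T$ and $N \ge 1$. By Cauchy--Schwarz, this reduces to proving a uniform second-moment bound $\sup_{0 \le r \le T,\,N\ge 1} \bb E_{\nu^N}[M^N_r(T^K_{r-\cdot}F)^2] < \infty$.

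Fix $r \in [0,T]$. By construction $s \mapsto M^N_s(T^K_{r-\cdot}F)$ is a Dynkin martingale on $[0,r]$, whose predictable quadratic variation equals $\int_0^s \Gamma^{N,2}(T^K_{r-q}F)(\eta^N(q))\, dq$, where $\Gamma^{N,2}(\cdot)$ is the compensator in \eqref{47-b}. The uniform bound \eqref{2-70} yields
\[
\bb E_{\nu^N}[M^N_r(T^K_{r-\cdot}F)^2] \;\le\; C_0 \int_0^r \!\!\big\{\|T^K_{r-q}F\|_2^2 + \|\partial_\vartheta T^K_{r-q}F\|_2^2 + K^{-1}\|\nabla_\theta T^K_{r-q}F\|_2^2\big\}\, dq.
\]
The first integrand is handled by \cite[Lemma B.7]{F1}, which provides a uniform-in-$t,K$ bound on $\|T^K_t F\|_2$, and contributes at most $C(F)\,T$. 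For the derivative terms, use the energy identity
\[
\tfrac{1}{2}\tfrac{d}{dt}\|T^K_t F\|_2^2 \;=\; -\,K\|\partial_\vartheta T^K_t F\|_2^2 \,-\, K\!\int\! V''(\rho^K)(T^K_t F)^2 \,-\, \|\nabla_\theta T^K_t F\|_2^2,
\]
which follows from self-adjointness of $K\mc A_K + \Delta_{d-1}$. Integrating on $[0,r]$, together with $|V''(\rho^K)| \le C$ (by \eqref{2-38b}) and the uniform $L^2$ bound on $T^K_t F$, gives
\[
K \int_0^r \|\partial_\vartheta T^K_t F\|_2^2\, dt \,+\, \int_0^r \|\nabla_\theta T^K_t F\|_2^2\, dt \;\le\; C(F)(1 + KT),
\]
so that both $\int_0^r \|\partial_\vartheta T^K_{r-q}F\|_2^2\, dq$ and $K^{-1}\int_0^r \|\nabla_\theta T^K_{r-q}F\|_2^2\, dq$ are bounded uniformly in $N$ and $r \in [0,T]$.

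The main subtlety is the $K$-dependence of the semigroup $T^K_t = e^{t(K\mc A_K + \Delta_{d-1})}$: naively, the spatial derivative $\partial_\vartheta T^K_t F$ could have $L^2$ norm growing with $K$. However, the energy identity carries a factor of $K$ precisely in front of the $\|\partial_\vartheta T^K_t F\|_2^2$ term, which matches exactly the factor $1/K$ \emph{absent} from this term in $\Gamma^{N,2}$ (compare the scalings in \eqref{2-62}, \eqref{47-b}). In this way the singular $K$-scaling cancels, and we obtain uniform estimates. With the $|t-s|^2$ second-moment bound in hand, Kolmogorov's criterion (with $p=2$, $\beta = 1$) yields tightness in $C([0,T], \bb R)$.
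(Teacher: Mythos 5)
Your proof is correct, and its skeleton is the same as the paper's: reduce tightness of $\int_0^t M^N_r(F(\cdot))\,dr$ to a uniform bound on second moments of $M^N_r(F(\cdot))$ (the paper does this with a Schwarz bound on the modulus of continuity, $\bb E[\sup_{|t-s|\le\delta}(\int_s^t M^N_r\,dr)^2]\le \delta\,\bb E[\int_0^T (M^N_r)^2 dr]$, you via the Kolmogorov moment criterion with exponent $2$ and increment bound $C(t-s)^2$ — both standard and equally effective here), then express the second moment through the quadratic variation of the time-dependent Dynkin martingale (\cite[Lemma A1.5.1]{kl}) and invoke \eqref{2-70}. Where you genuinely diverge is in bounding the resulting semigroup integrals $\int_0^r\Vert \partial_\vartheta T^K_q F\Vert_2^2\,dq$ and $K^{-1}\int_0^r\Vert \nabla_\theta T^K_q F\Vert_2^2\,dq$: the paper cites \cite[Lemma B.7 and \S B.2]{F1} for the $\vartheta$-derivative and uses the commutation $\nabla_\theta T^K_q F = T^K_q \nabla_\theta F$ (available because $V''(\rho^K)$ does not depend on $\theta$) for the transverse gradient, whereas you derive both from a self-contained energy identity, observing that the dissipation carries the factor $K$ in front of $\Vert\partial_\vartheta T^K_t F\Vert_2^2$ which exactly compensates the absence of $1/K$ in \eqref{47-b}/\eqref{2-70}, and absorbing the non-coercive term via $|V''(\rho^K)|\le \mf c_5$ (a consequence of \eqref{2-38b} and smoothness of $V$, not \eqref{2-38b} alone) together with the uniform-in-$K,t$ bound on $\Vert T^K_t F\Vert_2$ from \cite[Lemma B.7]{F1}. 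Your argument is more elementary and only yields time-integrated control of the derivatives (which is all that is needed), while the paper's commutation trick gives a pointwise-in-time bound for the transverse term; note also that the uniform $L^2$ bound from \cite{F1} remains indispensable in your route — a pure Gr\"onwall estimate from the energy identity alone would produce an $e^{CKt}$ factor and fail.
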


\begin{proof}
By Schwarz inequality,
\begin{equation*}
\bb E_{\nu_N} \Big[\, \sup_{|t-s|\le \delta}
\Big(\int_s^t M^N_{r} (F(\cdot))\, dr \Big)^2 \,\Big]
\,\le\,
\delta\, \bb E_{\nu_N} \Big[\, 
\int_0^T [\, M^N_{r} (F(\cdot))\,]^2 \, dr \,\Big]\;.
\end{equation*}
By the formula for the quadratic variation of a Dynkin martingale,
\cite[Lemma A1.5.1]{kl}, the previous expectation is equal to
\begin{equation*}
\bb E_{\nu^N} \Big[ \,
\int_0^T dr\, \int_0^r  \Gamma^N_2 \big(T^K_{r-q} F) \, dq \, \Big] \;,
\end{equation*}
where $\Gamma^N_2(\cdot)$ is defined in \eqref{47-b}. By \eqref{2-70},
this expression is less than or equal to 
\begin{equation*}
C_0 \int_0^T dr \int_0^r \, \Big\{ \, \Vert T^K_{q} F  \Vert^2_2 \,+\,
\Vert \partial_{\vartheta} T^K_{q} F \Vert^2_2 \,+\,
(1/K)\, \Vert \nabla_\theta T^K_{q}  F \Vert^2_2 \,\Big\}\, dq
\end{equation*}
for some finite constant $C_0$. By Lemma B.7 and the last paragraph of
section B.2 in \cite{F1}, the previous expression is bounded by a
constant independent of $N$. This completes the proof of the lemma
since $\nabla_\theta T^K_{q}  F = T^K_{q} \nabla_\theta  F$. 
\end{proof}

\subsection{Tightness of the process $\bb Y^{2,N}$}

In this subsection, we show that the process $Y^{2,N}$ is tight, which
implies that $\bb Y^{2,N}$ is tight.  This is the simplest term as,
after a summation by parts, the test functions are of order $1/N$.  By
\eqref{2-19}, $Y^{2,N}$ is composed of two terms $Y^{2,a,N}$ and
$Y^{2,b,N}$. We examine each one separately. Consider
\begin{equation*}
Y^{2,a,N}_t \,:=\, \int_0^t \frac{K} {\sqrt{N^d  \sqrt{K}}}
\sum_{x\in \bb T^d_N} \widehat F (Ax) \,  (\Pi^1_x h_0)(\eta(s)) \; ds\;.
\end{equation*}
Fix $y\in \bb Z^d$ in the support of the cylinder function $h_0$, that
is some $y$ such that
$E_{\nu_{\rho}} [\, \omega_{y} \, h_0 \,] \neq 0$ for some
$\rho\in (0,1)$.  It is enough to prove the tightness of the process
$Y^{2,y,N}_t$ obtained from $Y^{2,a,N}_t $ by replacing in the
previous formula $\Pi^1_x h_0$ by
$G_y(x) \, [\, \omega_{x+y} - \omega_x\,]$, where
\begin{equation*}
G_y(x)   \,=\, E_{\nu^N} \Big[\, \frac{\omega_{x+y}}{\chi (x+y)} \,
\tau_x h_0 \,\Big] \;.
\end{equation*}

By a summation by parts,
\begin{equation*}
Y^{2,y,N}_t \,:=\, \int_0^t \frac{K} {\sqrt{N^d  \sqrt{K}}}
\sum_{x\in \bb T^d_N}  H_y(x) \, \omega_x(s)\, ds \;,
\end{equation*}
where
$H_y(x) = \widehat F (A(x-y)) \, G_y(x-y) - \widehat F (A(x)) \,
G_y(x) $.  With this notation, by Schwarz inequality,
\begin{equation*}
\bb E_{\nu^N} \Big[ \, \sup_{t\le T} \big|\, Y^{2,y,N}_t
\, \big|\, \Big]^2  \;\le\; T\, 
\bb E_{\nu^N} \Big[ \, 
\int_0^T \Big(\, \frac{K} {\sqrt{N^d  \sqrt{K}}}
\sum_{x\in \bb T^d_N} H_y(x)\, \omega_x(s)\;  \Big)^2   \; ds
\, \Big] \, 
\end{equation*}
By the entropy inequality this expression is bounded by 
\begin{equation*}
\frac{T}{A}\,  \int_0^T  H_N(f^N_s) \, ds \;+\;
\frac{T^2}{A}\, \log 
\bb E_{\nu^N} \Big[ \, \exp \Big\{\, A\, \Big(\, \frac{K} {\sqrt{N^d  \sqrt{K}}}
\sum_{x\in \bb T^d_N} H_y(x)\, \omega_x \;  \Big)^2  \Big\}  \, \Big]
\end{equation*}
for all $A>0$.

A simple computation yields that
$\sup_{x\in \bb T^d_N} |H_y(x)| \le C_0 \sqrt{K}/N$ for some finite
constant $C_0$ depending on $y$, $\Vert \nabla F\Vert_\infty$ and the
bounds \eqref{2-38b}, \eqref{2-84} on the stationary solution of the
reaction-diffusion equation. Set $A= a N^2/K^{5/2}$ for some small
constant $a$. By the concentration inequality (cf. equation (4.8) and
Lemma 4.4 in \cite{jl}), for $a$ small enough the previous
expression is bounded by
\begin{equation*}
\frac{K^{5/2}}{aN^2} \, \Big(\, T \,  \int_0^T  H_N(f^N_s) \, ds \;+\;
C_0\, T^2 \,\Big) \,\le\,
C_0\, T^2\, \frac{K^{5/2}}{aN^2} \, \big(\, (N/\ell_N)^d\, \kappa_N(T)
\;+\; 1 \,\big)  
\end{equation*}
for some finite constant $C_0$. In the last step, we used the bound
\eqref{2-50b} on the entropy. By \eqref{2-72}, this expression
vanishes as $N\to\infty$. This proves that the process $Y^{2,a,N}_t$
converges to $0$ in the uniform topology.

The process $Y^{2,b,N}_t$, obtained from $Y^{2,a,N}_t$ by replacing
$\Pi^1_x h_0$ by $R^{(1)}_N$, is handled similarly.

\subsection{Tightness of the process $\bb Y^{1,N}$}

We follow the same route pursued in \cite[Section 7.2]{jl} based on
Kolmogorov-\v{C}entsov theorem, \cite[Section 2.4, Problem 4.11]{ks91}
or \cite[Proposition 7]{RacSuq01}

Fix $T>0$ and denote by $C^\alpha([0,T],\bb R)$, $0<\alpha<1$,
the H\"older space of continuous functions endowed with the norm
\begin{equation*}
\Vert Y \Vert_{C^\alpha} \;=\;
\sup_{0\le t \le T} |\, Y_t \,| \;+\; 
\sup_{0\le s <t \le T}
\frac{|\, Y_t - Y_s\,|}{|t-s|^\alpha}\;\cdot
\end{equation*}
Note that this topology is stronger than the uniform topology of
$C([0,T], \bb R)$.

\begin{theorem}
\label{l46}
Fix $T>0$ and $0<\alpha<1$. A sequence of probability measures
$\bb M_n$ on $C^\alpha([0,T], \bb R)$ is tight if
\begin{enumerate}
\item[(i)] There exist a constant $a>0$ such that
$\sup_{n\ge 1} \bb M_{n} \big[ \, |\, Y_0 \,|^a \,\big] \;<\, \infty$;
\item[(ii)] There exist a finite constant $C_0$ and positive constants
$b>0$, $c>0$ such that $c/b > \alpha$ and 
$\sup_{n\ge 1} \bb M_n \big[ \, | \, Y_t \,-\,
Y_s \, |^b\,\big] \;\le\, C_0 \, |t-s|^{1+c}$ for
all $0\le s, t\le T$.
\end{enumerate}
\end{theorem}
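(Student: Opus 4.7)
The plan is to derive this from the standard Kolmogorov–\v{C}entsov continuity theorem together with a compact embedding of H\"older spaces. The key observation is that, for $\alpha' > \alpha$, the inclusion $C^{\alpha'}([0,T],\bb R) \hookrightarrow C^\alpha([0,T],\bb R)$ is compact: any ball
\begin{equation*}
K_{R_1,R_2} \;=\; \big\{\, Y\in C([0,T],\bb R) :
|Y_0|\le R_1,\ \sup_{0\le s<t\le T}\tfrac{|Y_t-Y_s|}{|t-s|^{\alpha'}} \le R_2
\,\big\}
\end{equation*}
is equicontinuous and uniformly bounded in $C^\alpha([0,T],\bb R)$, and a short Arzel\`a--Ascoli argument (applied with the stronger H\"older modulus) shows it is relatively compact in the $C^\alpha$ norm. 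Thus tightness on $C^\alpha$ reduces to showing that the measures $\bb M_n$ concentrate on such balls.

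First I would choose $\alpha'\in (\alpha, c/b)$; this is possible by hypothesis~(ii). The Kolmogorov--\v{C}entsov theorem (see for instance \cite[Theorem 2.2.8]{ks91} or \cite[Proposition 7]{RacSuq01}) applied with parameters $b$, $c$ and exponent $\alpha'$ yields a finite constant $C_1 = C_1(T, b, c, \alpha', C_0)$, depending only on $T$, $b$, $c$, $\alpha'$ and the constant $C_0$ appearing in~(ii), such that
\begin{equation*}
\bb M_n \Big[\, \sup_{0\le s<t\le T}
\frac{|Y_t-Y_s|}{|t-s|^{\alpha'}}\, \Big] \;\le\; C_1
\quad\text{for every}\ n\ge 1 \,.
\end{equation*}
Crucially, this bound depends on the laws of the increments only through~(ii) and hence is uniform in $n$.

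Next, given $\varepsilon>0$, I would choose $R_2$ so that $C_1/R_2 < \varepsilon/2$, and, using hypothesis~(i) together with Markov's inequality on $|Y_0|^a$, choose $R_1$ so that $\bb M_n(|Y_0|>R_1) < \varepsilon/2$ uniformly in $n$. A further application of Markov's inequality to the H\"older seminorm of exponent $\alpha'$ yields
\begin{equation*}
\bb M_n\bigl(\, Y \notin K_{R_1,R_2}\,\bigr) \;<\; \varepsilon
\quad\text{for every}\ n\ge 1\,.
\end{equation*}
Since $K_{R_1,R_2}$ is compact in $C^\alpha([0,T],\bb R)$, this proves tightness.

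The only non-routine point is the compact embedding argument. The main obstacle—if any—is verifying carefully that bounded sets of $C^{\alpha'}$ are relatively compact in $C^\alpha$: one has uniform equicontinuity and uniform boundedness (so by Arzel\`a--Ascoli relative compactness in the uniform topology), and one must then upgrade uniform convergence of a subsequence to $C^\alpha$ convergence, which follows by interpolating the uniform norm against the H\"older seminorm of exponent $\alpha'$. Everything else is a direct application of Kolmogorov--\v{C}entsov and Markov's inequality.
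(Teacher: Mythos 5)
Your argument is correct and is essentially the route the paper relies on: the paper does not prove Theorem \ref{l46} but invokes it by citing \cite[Section 2.4, Problem 4.11]{ks91} and \cite[Proposition 7]{RacSuq01}, and your write-up simply spells out that standard proof via the quantitative Kolmogorov--\v{C}entsov estimate at an exponent $\alpha'\in(\alpha,c/b)$ together with the compact embedding of $C^{\alpha'}$ into $C^{\alpha}$ and Markov/Chebyshev bounds. One small correction: since hypothesis (ii) allows $0<b<1$, the Kolmogorov--\v{C}entsov chaining bound gives a uniform estimate on the $b$-th moment of the $\alpha'$-H\"older seminorm rather than on its first moment, so the Markov step should be applied to the $b$-th power of the seminorm (for $b\ge 1$ your first-moment formulation then follows by Jensen), after which the rest of your argument goes through unchanged.
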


We used here the same notation $\bb M_n$ to represent a probability on
$C^\alpha([0,T], \bb R)$ and the expectation with respect to
this probability.

By \eqref{2-17}, there exists finite collection $\mc E$ of finite
subsets of $\bb Z^d$ with at least two elements such that
\begin{gather*}
(\Xi_{\bs u^N(\cdot)} h_0  )(\eta)
\;=\; \sum_{D \in \mc E} \widetilde c_D(\cdot) \,\omega_D \;,
\end{gather*}
where
$\color{blue} \widetilde c_D(\cdot) := \widetilde c_D (\bs
u^N(\cdot))$ depends on the density profile $\bs u^N(\cdot)$ smoothly
through a finite number of coordinates, as the rate $c_0$ is a
cylinder function of $\eta$.  In particular, to prove the tightness of
the process $\bb Y^{1,N}_t$, it is enough to prove this property for
the processes $\bb Y^{1,D,N}_t$ defined by
$\color{blue} \bb Y^{1,D,N}_t = \int_0^t Y^{1,D,N}_s\, ds$,
\begin{equation*}
{\color{blue}  Y^{1,D,N}_t}  \;:=\; \int_0^t  \ms Y^{1,D,N}_s \; ds \;, \quad
{\color{blue}  \ms Y^{1,D,N}_s } \;:=\;  \frac{K} {\sqrt{N^d  \sqrt{K}}}
\sum_{x\in \bb T^d_N} \widehat F (s, Ax) \, \widetilde c_D(Ax) \,
\omega_{x+D}(s) \;, 
\end{equation*}
where $D$ a finite subset of $\bb Z^d$ with at least two elements.
The proof of the next result is similar to the one of \cite[Lemma
7.4]{jl}. We do not need to assume in this lemma that the set $D$ has
at least two elements.  The proof relies on a H\"older
inequality (to replace the power $\alpha$ by a square), followed by the
entropy inequality and a concentration inequality to estimate the
expectation of the exponential.

\begin{lemma}
\label{2-l01}
Fix a finite subset $D$ of $\bb Z^d$. Then, there exists a finite constant
$C_0$, depending only on $D$ and the bounds \eqref{2-38b},
\eqref{2-84} on the density profile, such that
\begin{equation*}
\bb E_{\nu^N} \Big[ \, \Big|\, \sum_{x\in \bb T^d_N} G(x) \,
\omega_{x+D}(t) \,\Big|^\alpha\,\Big] \;\le\; C_0 \,
N^{\alpha d/2} \, \Vert G\Vert^\alpha _\infty \, \big( \, 1 \,+\,
H_N(f^N_t)\,\big)^{\alpha /2} 
\end{equation*}
for all $1\le \alpha\le 2$, $t>0$, $G:\bb T^d_N \to\bb R$ and
$N \ge 1$. 
\end{lemma}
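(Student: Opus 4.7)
The plan is to follow the three-step strategy hinted at after the statement, mirroring \cite[Lemma 7.4]{jl}: (i) a Jensen reduction from the $\alpha$-th moment to the second moment, (ii) the entropy inequality to exchange the path-measure expectation against $\nu^N$, and (iii) a concentration estimate for $W := \sum_{x} G(x)\, \omega_{x+D}$ under the product measure $\nu^N$.

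First, write $W(\eta) = \sum_{x\in \bb T^d_N} G(x)\, \omega_{x+D}$. Since $1\le \alpha\le 2$, the map $u\mapsto u^{2/\alpha}$ is convex, so Jensen's inequality gives
\begin{equation*}
\bb E_{\nu^N}\big[\,|W(\eta(t))|^\alpha\,\big]
\;\le\; \bb E_{\nu^N}\big[\, W(\eta(t))^2\,\big]^{\alpha/2}
\;=\; \Big(\int W^2\, f^N_t\, d\nu^N\Big)^{\alpha/2}.
\end{equation*}
Thus it suffices to prove that $\int W^2 f^N_t\, d\nu^N \le C_0 \|G\|_\infty^2\, N^d\, (1 + H_N(f^N_t))$ and then raise to the power $\alpha/2$; this matches the right-hand side of the claim.

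Second, the entropy inequality yields, for every $\gamma>0$,
\begin{equation*}
\int W^2\, f^N_t\, d\nu^N
\;\le\; \frac{1}{\gamma}\, H_N(f^N_t)
\;+\; \frac{1}{\gamma}\, \log \int \exp\{\gamma\, W^2\}\, d\nu^N.
\end{equation*}
The problem therefore reduces to bounding the partition function $\int e^{\gamma W^2}\, d\nu^N$ at scale $\gamma \asymp 1/(\|G\|_\infty^2 N^d)$. Third, this bound is provided by the concentration estimate used in \cite[eq.~(4.8) and Lemma 4.4]{jl}, which applies to polynomials of fixed degree in the uniformly bounded, independent, centered variables $\omega_y$ under the product measure $\nu^N$. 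Two facts make this go through: the orthogonality $E_{\nu^N}[\omega_{x+D}\,\omega_{x'+D}] = 0$ for $x\ne x'$ under the product measure gives
\begin{equation*}
E_{\nu^N}[W^2] \,=\, \sum_{x\in \bb T^d_N} G(x)^2\, E_{\nu^N}[\omega_{x+D}^2]
\,\le\, \|G\|_\infty^2\, N^d,
\end{equation*}
and $W$ is a homogeneous polynomial of fixed degree $|D|$ in the $\omega_y$'s with coefficients bounded by $\|G\|_\infty$. Together, these yield constants $c_0 = c_0(D)>0$ and $C_0<\infty$, depending on $|D|$ and on the bounds \eqref{2-38b}, \eqref{2-84} on $\bs u^N$, such that
\begin{equation*}
\int \exp\{\gamma\, W^2\}\, d\nu^N \;\le\; C_0
\quad\text{whenever}\quad \gamma\, \|G\|_\infty^2\, N^d \,\le\, c_0.
\end{equation*}
Taking $\gamma$ at this threshold in the entropy bound gives $\int W^2 f^N_t\, d\nu^N \le C_0'\, \|G\|_\infty^2\, N^d\, (1 + H_N(f^N_t))$, and combining with the first step completes the proof.

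The main obstacle is the concentration bound on $\int e^{\gamma W^2}\, d\nu^N$ at the right scale. For $|D|=1$, $W$ is a sum of independent bounded mean-zero variables and sub-Gaussian concentration is standard. For $|D|\ge 2$, $W$ is a polynomial of degree $|D|$ in independent $\pm$-type variables, and one needs a Hanson--Wright / hypercontractive-type bound (exactly the content of \cite[Lemma 4.4]{jl}) to control the squared polynomial's exponential moment at the scale dictated by its $L^2$ norm. The inhomogeneity of $\nu^N$ only enters through the factors $\chi(x) = \bs u^N(x)(1-\bs u^N(x))$, which are uniformly bounded above and away from $0$ by \eqref{2-38b}, so the concentration argument transfers verbatim from the homogeneous case.
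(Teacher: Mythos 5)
Your proposal is correct and follows essentially the same route as the paper, which proves this lemma exactly by a H\"older/Jensen step reducing the power $\alpha$ to a square, the entropy inequality, and a concentration estimate for the exponential moment, deferring the details to \cite[Lemma 7.4]{jl}. The only cosmetic remark is that the needed exponential-moment bound for $|D|\ge 2$ is most simply obtained by partitioning $\bb T^d_N$ into finitely many sublattices on which the translates $x+D$ are disjoint and applying Hoeffding to each block (treating each bounded, mean-zero $\omega_{x+D}$ as a single variable), rather than a genuine Hanson--Wright/hypercontractive chaos bound, but this is the content of the concentration inequality you invoke.
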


The next result is a simple consequence of the previous lemma and
H\"older's inequality.

\begin{corollary}
\label{2-l13}
Fix a finite subset $D$ of $\bb Z^d$. Then, there exists a finite
constant $C_0$, depending only on $D$ and the bounds \eqref{2-38b},
\eqref{2-84} on the density profile, such that
\begin{align*}
& \bb E_{\nu^N} \Big[ \, \Big|\, \int_s^t \sum_{x\in \bb T^d_N} G_x(r)\,
\omega_{x+D}(r) \; dr \,\Big|^\alpha\,\Big] \\
&\qquad \;\le\; C_0 \,
N^{\alpha d/2} \, |t-s|^\alpha \, \sup_{s\le r \le t} \Vert G(r) \Vert^\alpha _\infty \,
\sup_{s\le r\le t} \big( \, 1 \,+\,
H_N(f^N_r)\,\big)^{\alpha /2} 
\end{align*}
for all $1\le \alpha\le 2$, $0<s<t$,
$G\colon [0,T]\times \bb T^d_N \to\bb R$ and $N\ge 1$.
\end{corollary}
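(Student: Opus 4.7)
The plan is to combine Lemma \ref{2-l01} with Jensen's inequality for the time integral. First I would bring the $\alpha$-th power of the absolute value of the time integral inside by applying Jensen's (or equivalently H\"older's) inequality on the probability space $[s,t]$ equipped with the normalized Lebesgue measure: for $\alpha \ge 1$,
\begin{equation*}
\Big|\, \int_s^t Z_r \, dr \,\Big|^\alpha \;\le\; (t-s)^{\alpha-1} \int_s^t |\, Z_r \,|^\alpha\, dr\;,
\end{equation*}
applied to $Z_r = \sum_{x\in \bb T^d_N} G_x(r)\, \omega_{x+D}(r)$. Taking $\bb E_{\nu^N}$ and using Fubini,
\begin{equation*}
\bb E_{\nu^N}\Big[\, \Big|\, \int_s^t Z_r\, dr \,\Big|^\alpha\,\Big] \;\le\; (t-s)^{\alpha-1} \int_s^t \bb E_{\nu^N}\big[\, |Z_r|^\alpha\,\big]\, dr\;.
\end{equation*}

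Next, I would apply Lemma \ref{2-l01} pointwise at each $r\in[s,t]$ to the function $G(\cdot) = G_\cdot(r)$, which gives
\begin{equation*}
\bb E_{\nu^N}\big[\, |Z_r|^\alpha\,\big] \;\le\; C_0\, N^{\alpha d/2}\, \Vert G(r)\Vert_\infty^\alpha \, \big(\, 1 + H_N(f^N_r)\,\big)^{\alpha/2}\;,
\end{equation*}
with the constant $C_0$ depending only on $D$ and the bounds on the density profile (the same as in Lemma \ref{2-l01}). Bounding $\Vert G(r)\Vert_\infty^\alpha$ and $(1+H_N(f^N_r))^{\alpha/2}$ by their suprema over $[s,t]$ and pulling them out of the integral yields a remaining $\int_s^t dr = (t-s)$, which combined with the prefactor $(t-s)^{\alpha-1}$ gives $(t-s)^\alpha$. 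This is exactly the claimed bound.

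There is no real obstacle: both the H\"older step and the application of Lemma \ref{2-l01} are standard and the constant $C_0$ carries through unchanged. The only care needed is to ensure the restriction $1\le \alpha\le 2$ is respected, which is inherited directly from the hypothesis of Lemma \ref{2-l01}; the lower bound $\alpha \ge 1$ is also what makes Jensen's inequality go in the right direction in the first step.
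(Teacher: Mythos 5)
Your proposal is correct and is exactly the paper's route: the paper states the corollary as a simple consequence of Lemma \ref{2-l01} and H\"older's (Jensen's) inequality in the time variable, which is precisely your two-step argument. The bookkeeping of the constant and the $(t-s)^{\alpha-1}\cdot(t-s)=(t-s)^\alpha$ factor is carried out correctly.
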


The proof of the tightness of the process $\bb Y^{1,D,N}_t$ relies on
the following result.

\begin{lemma}
\label{2-l22}
Fix a finite subset $D$ of $\bb Z^d$ with at least two elements. Then,
there exist a finite constant $\mf c(\bs u^N)$, uniformly bounded in
$N$ and depending only on $\bs u^N$ and $D$, such that
\begin{equation*}
\bb E_{\nu_N} \Big[ \,
\Big | \, \int_0^t \ms Y^{1,D,N}_{s}   \; ds \, \Big |^a \,
\Big]
\,\le\, \mf c(\bs u^N) \, T^a\,
\sup_{0\le r\le T} (1+\Vert F(r) \Vert^2_\infty)^a\,
\Big(\frac{1}{N^{d/2}}\Big)^{\theta/c} \, \ms K_N^{1/c}\;,
\end{equation*}
for all $N\ge 1$, $0\le t\le T$, $a>0$, $1\leq b\leq 2$,
and $0<\theta<1$ such that
\begin{equation}
\label{2-51}
\theta \,+\, (1-\theta) b \,>\, a\;.
\end{equation}
In this formula, $c = [\, \theta \,+\, (1-\theta) b\,]/ a >1$, and
\begin{equation*}
\ms K_N \,:=\, K^{[7 \theta + 3b(1-\theta)]/4}\, [\kappa_N(T)\,
(N/\ell_N)^d]^{\theta + (1/2) b (1-\theta)}\;.
\end{equation*}
\end{lemma}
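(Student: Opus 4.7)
The proof is by Hölder interpolation between two moment bounds on $X_N := \int_0^t \ms Y^{1,D,N}_s\, ds$. Write $\ms Y^{1,D,N}_s = \sum_x G_x(s)\, \omega_{x+D}(s)$ with $G_x(s) = (K^{3/4}/N^{d/2})\, \widehat F(s,Ax)\, \widetilde c_D(Ax)$; crucially, $\|G(s)\|_\infty \le \mf c(\bs u^N)\, (K^{3/4}/N^{d/2})\, \|F(s)\|_\infty$ because $\widehat F$ and $\widetilde c_D$ are both $O(1)$ in $N$ and $K$.

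For the $L^b$ estimate, Corollary \ref{2-l13} applied directly with these $G_x$, combined with the entropy bound $H_N(f^N_s) \le \kappa_N(T)\, (N/\ell_N)^d$ from \eqref{2-50b}, yields
\begin{equation*}
\bb E_{\nu^N}[|X_N|^b]\, \le\, \mf c(\bs u^N)\, T^b\, (1+\|F\|_\infty^2)^{b/2}\, K^{3b/4}\, \big[\kappa_N(T)\, (N/\ell_N)^d\big]^{b/2},
\end{equation*}
since the $N^{bd/2}$ produced by Corollary \ref{2-l13} cancels with $(N^{-d/2})^b$ coming from the normalization. For the $L^1$ estimate, apply Proposition \ref{2-l14} to the \emph{unnormalized} sum $\int_0^t \sum_x \widehat F\, \widetilde c_D\, \omega_{x+D}(s)\, ds$, which is legitimate because $|D|\ge 2$ and $\widehat F\, \widetilde c_D$ is uniformly bounded. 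Using \eqref{2-50b} to control the right-hand side produces a bound of order $\mf c(\bs u^N)\, K T\, (1+\|F\|_\infty^2)\, \kappa_N(T)\, (N/\ell_N)^d$. Multiplying by the prefactor $K^{3/4}/N^{d/2}$ gives
\begin{equation*}
\bb E_{\nu^N}[|X_N|]\, \le\, \mf c(\bs u^N)\, T\, (1+\|F\|_\infty^2)\, \frac{K^{7/4}}{N^{d/2}}\, \kappa_N(T)\, (N/\ell_N)^d,
\end{equation*}
so the Boltzmann-Gibbs step trades a gain of $N^{-d/2}$ for an extra $K^{3/4}$ relative to the crude bound.

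Since $c = [\theta + (1-\theta)b]/a > 1$, Jensen's inequality gives $\bb E[|X_N|^a]^c \le \bb E[|X_N|^{\theta+(1-\theta)b}]$, and Hölder with conjugate exponents $(1/\theta,\, 1/(1-\theta))$ then yields $\bb E[|X_N|^{\theta+(1-\theta)b}] \le \bb E[|X_N|]^\theta\, \bb E[|X_N|^b]^{1-\theta}$. Substituting the two displays above and raising to the power $1/c$, the $T$-exponent collapses to $(\theta + b(1-\theta))/c = a$; the $K$-exponent becomes $[7\theta + 3b(1-\theta)]/(4c)$; the $[\kappa_N(T)(N/\ell_N)^d]$-exponent becomes $[\theta + b(1-\theta)/2]/c$; the $N^{-d/2}$-exponent becomes $\theta/c$; and $(1+\|F\|_\infty^2)^{\theta/c}\, \|F\|_\infty^{b(1-\theta)/c} \le (1+\|F\|_\infty^2)^a$ by $(1+\|F\|_\infty^2) \ge 1$ and the arithmetic identity $\theta/c + b(1-\theta)/(2c) \le a$. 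This matches the claimed form of $\ms K_N^{1/c}$.

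The only real subtlety is the order of operations in the $L^1$ estimate: the Boltzmann-Gibbs principle must be applied to the sum \emph{before} the normalization $K^{3/4}/N^{d/2}$ is multiplied in, so that the $N^{-d/2}$ gain enters cleanly rather than being dissolved into the constraint on the test function's sup-norm. The hypothesis $\theta + (1-\theta)b > a$ is precisely the condition $c > 1$ required for the initial Jensen step, so the interpolation simply transfers a fraction $\theta/c$ of the Boltzmann-Gibbs gain in $N$ to the full $L^a$ moment.
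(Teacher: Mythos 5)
Your proof is correct and follows essentially the same route as the paper: the interpolation step you derive via Jensen plus H\"older is exactly Lemma \ref{l-jl}, the $L^1$ bound comes from Proposition \ref{2-l14} applied to the unnormalized sum (with the prefactor $K^{3/4}/N^{d/2}$ multiplied in afterwards, as the paper implicitly does) together with \eqref{2-50b}, and the $L^b$ bound comes from Corollary \ref{2-l13}; the exponent bookkeeping matches $\ms K_N^{1/c}$. No gaps.
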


\begin{proof}
By Lemma \ref{l-jl} below, the expectation appearing in the statement
of the lemma is bounded by
\begin{equation*}
\bb E_{\nu_N} \Big[ \,
\Big | \, \int_0^t \ms Y^{1,D,N}_{s}   \; ds \, \Big | \,
\Big]^{\theta/c}
\,
\bb E_{\nu_N} \Big[ \,
\Big | \, \int_0^t \ms Y^{1,D,N}_{s}   \; ds \, \Big |^b \, \Big]
^{(1-\theta)/c} \;.
\end{equation*}
By Proposition \ref{2-l14} and \eqref{2-50b}, the first expectation
(without the exponent $\theta/c$) is bounded by
\begin{equation*}
\mf c(\bs u^N) \, T\,  \frac{K^{3/4}}{N^{d/2}} \, \sup_{0\le r\le T}
(1+\Vert F(r) \Vert^2_\infty) \, K \, \kappa_N(T) \, (N/\ell_N)^d \;.
\end{equation*}

Corollary \ref{2-l13} provides an estimate for the second
expectation. Multiplying them together in the combination given
completes the proof of the lemma.
\end{proof}

\begin{corollary}
\label{2-l21}
Suppose that the dimension is $1$ or $2$.  Fix a function $F$ in
$\ms D (\bb R \times \bb T^{d-1})$, and finite subset $D$ of $\bb Z^d$
with at least two elements. Then, the sequence of probability measures
on $C([0,T], \bb R)$ induced by the process $\bb Y^{1,D,N}_t$ and the
measure $\nu^N$ taken as the initial state is tight.
\end{corollary}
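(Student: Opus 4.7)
The plan is to verify the tightness criterion of Theorem~\ref{l46} (taking the H\"older exponent arbitrarily close to $0$, which is equivalent to the standard Kolmogorov tightness criterion in $C([0,T],\bb R)$). Since $\bb Y^{1,D,N}_0 = 0$, hypothesis (i) of Theorem~\ref{l46} is automatic, and it remains to produce constants $a>1$ and $C_0$ (uniform in $N$) with
\begin{equation*}
\bb E_{\nu^N}\!\big[\,|\bb Y^{1,D,N}_t - \bb Y^{1,D,N}_s|^{a}\,\big] \;\le\; C_0\, (t-s)^{a}
\quad \text{for all } 0\le s<t\le T.
\end{equation*}

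First I would write the increment as $\bb Y^{1,D,N}_t - \bb Y^{1,D,N}_s = \int_s^t \ms Y^{1,D,N}_r\, dr$. The proof of Lemma~\ref{2-l22} is built from Proposition~\ref{2-l14} (Boltzmann-Gibbs) and Corollary~\ref{2-l13}, and both inputs rely only on the entropy bound \eqref{2-50b}, which is uniform on $[0,T]$. Rerunning the argument of Lemma~\ref{2-l22} on the subinterval $[s,t]$ in place of $[0,t]$ therefore yields, for every $a>0$, $b\in[1,2]$ and $\theta\in(0,1)$ satisfying \eqref{2-51},
\begin{equation*}
\bb E_{\nu^N}\!\big[\,|\bb Y^{1,D,N}_t - \bb Y^{1,D,N}_s|^{a}\,\big]
\;\le\; \mf c(\bs u^N)\, (t-s)^{a}\,
\sup_{0\le r\le T}\!\big(1+\Vert F(r)\Vert_\infty^{2}\big)^{a}\,
\big(N^{-d/2}\big)^{\theta/c}\, \ms K_N^{1/c},
\end{equation*}
where $c = [\theta+(1-\theta)b]/a$.

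Next I would choose $a=1+\epsilon$ with $\epsilon>0$ small, so that $(t-s)^a$ produces the desired exponent $1+\epsilon$ on $|t-s|$. The test function $F(r)$ is of the form $T^K_{r-q}F$, whose $L^\infty$ norm is controlled by a constant times $e^{(\mf c_5 K+1)T}$ via the maximum principle (Lemma~\ref{maximum lemma}). Since raising to the $1/c$-th power preserves boundedness (for $c$ bounded), it then suffices to check
\begin{equation*}
\sup_{N\ge 1}\; e^{2a(\mf c_5 K+1)T}\, K^{[7\theta+3b(1-\theta)]/4}\,
[\kappa_N(T)(N/\ell_N)^d]^{\theta+(1/2)b(1-\theta)}\, N^{-d\theta/2} \;<\; \infty.
\end{equation*}
Setting $\theta = 1-\delta$ and $b = 1+\eta$ with $\delta\in(0,2/3)$ taken from hypothesis \eqref{2-48} and $\eta>0$ sufficiently small, the three exponents align with the ones appearing in \eqref{2-48}: the exponent of $\kappa_N(T)(N/\ell_N)^d$ becomes $1-\delta/2+O(\eta\delta)$, the exponent of $K$ is at most $7/4$, and the denominator is $N^{(d/2)(1-\delta)}$. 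The compatibility constraint \eqref{2-51} reduces to $(1-\theta)\eta>\epsilon$, i.e.\ $\delta\eta>\epsilon$, which is ensured by choosing $\epsilon$ small relative to $\delta$ and $\eta$. Hypothesis \eqref{2-48} then delivers the required uniform bound.

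The main difficulty is precisely the algebraic bookkeeping in the final step: the three exponents (of $K$, of $\kappa_N(T)(N/\ell_N)^d$, and of $N$) in $\ms K_N/N^{d\theta/2}$ must all simultaneously fall under \eqref{2-48}, while \eqref{2-51} remains valid and $a>1$. This is exactly the raison d'\^etre of the sharp assumption \eqref{2-48}; once the parameters are matched, Theorem~\ref{l46} concludes the tightness.
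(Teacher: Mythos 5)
There is a genuine gap, and it sits exactly at the point you flag as routine. First, the increment is misidentified: by definition $\bb Y^{1,D,N}_t=\int_0^t Y^{1,D,N}_r\,dr$ with $Y^{1,D,N}_r=\int_0^r \ms Y^{1,D,N}_{r'}\,dr'$, so $\bb Y^{1,D,N}_t-\bb Y^{1,D,N}_s=\int_s^t\big(\int_0^r \ms Y^{1,D,N}_{r'}\,dr'\big)dr$ is a \emph{double} time integral, not $\int_s^t \ms Y^{1,D,N}_r\,dr$. Your version amounts to proving tightness of the singly-integrated process $Y^{1,D,N}$, and the bound you would need for that, namely $\bb E_{\nu^N}[|\int_s^t \ms Y^{1,D,N}_r\,dr|^a]\le C(t-s)^a$ with $C$ uniform in $N$, is not obtained by "rerunning Lemma \ref{2-l22} on $[s,t]$". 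The interpolation in Lemma \ref{2-l22} (Lemma \ref{l-jl}) feeds on two inputs: the $b$-th moment from Corollary \ref{2-l13}, which does scale like $(t-s)^b$, and the first moment from the Boltzmann--Gibbs estimate, Proposition \ref{2-l14}, which does \emph{not} scale with the interval length. That estimate carries the additive term $\mf c/K_N$ (the $\log 2/K$ from the Jensen/Feynman--Kac step in Lemma \ref{l19b}), and if one restarts the Feynman--Kac argument at time $s$ from the law $f^N_s\,d\nu^N$ one picks up in addition an entropy term of order $H_N(f^N_s)/K_N\le \kappa_N(T)(N/\ell_N)^d/K_N$, again with no factor $(t-s)$. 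Consequently, for $t-s$ small (relative to quantities depending on $N$) the interpolated bound only produces the exponent $b(1-\theta)/c<1$ in $(t-s)$, which is below the threshold $1$ required by Theorem \ref{l46}; condition \eqref{2-48} controls the $N$-dependence but cannot repair a deficient power of $|t-s|$. This is precisely the obstruction that forces the authors to work with time-integrated fields (the paper proves tightness of $\int_0^t X^N_s\,ds$, not of $X^N_t$).

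The paper's proof uses the double-integral structure instead: by H\"older on the \emph{outer} integral, $\bb E_{\nu^N}[|\int_s^t dr\int_0^r \ms Y^{1,D,N}_{r'}dr'|^a]\le (t-s)^{a-1}\int_s^t \bb E_{\nu^N}[|\int_0^r \ms Y^{1,D,N}_{r'}dr'|^a]\,dr$, so the full factor $(t-s)^a$ comes from the outer time variable, while Lemma \ref{2-l22} is invoked only to bound the inner expectation \emph{uniformly} over $r\in[0,T]$ (with the parameter choice $\theta=1-\delta$, $b=1+2\delta$, $1<a<1+2\delta^2$, and \eqref{2-48} giving uniformity in $N$, together with Lemma \ref{maximum lemma} for $\Vert T^K_rF\Vert_\infty$). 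No localization of the Boltzmann--Gibbs principle to $[s,t]$ is needed. Your parameter bookkeeping at the end is in the right spirit, but to close the argument you must work with the correct increment and extract the modulus in $t$ from the outer integral rather than from Lemma \ref{2-l22} itself.
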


\begin{proof}
The proof relies on Theorem \ref{l46}.  The first condition of this
theorem is satisfied because $\bb Y^{1,D,N}_0=0$.

We turn to condition (ii). Fix $0<\delta<2/3$ according to condition
\eqref{2-48}.  In the previous lemma, set $\theta = 1 -\delta$,
$b =1+2\delta$. With this choice,
$\theta + (1-\theta)b = 1+ 2\delta^2$. Fix $1<a< 1+ 2\delta^2$ so that
condition \eqref{2-51} is fulfilled. Moreover,
$\theta + (1/2)b(1-\theta) = 1 - (1/2) \delta +\delta^2$,
$[7 \theta + 3b(1-\theta)] = 7- 4\delta + 6\delta^2 \le 7$ because
$\delta <2/3$.

Fix $0\le s<t\le T$. By H\"older's inequality,
\begin{equation*}
\begin{aligned}
\bb E_{\nu_N} \big[ \,
\big | \bb Y^{1,D,N}_t - \bb Y^ {1,D,N}_s \, \big |^a
\, \big]\;  & = \
\bb E_{\nu_N} \Big[ \,
\Big | \int_s^t dr \, \int_0^r \ms Y^{1,D,N}_{r'}   \; dr' \, \Big |^a
\, \Big]\;
\\
& \le\;(t-s)^{a-1} \int_s^t \, dr\, 
\bb E_{\nu_N} \Big[ \,
\Big | \, \int_0^r \ms Y^{1,D,N}_{r'}   \; dr' \, \Big |^a \, \Big] \;.
\end{aligned}
\end{equation*}
By Lemma \ref{2-l22}, the previous expectation is bounded above by
\begin{equation*}
\mf c(\bs u^N, T, D, \delta)  \,
\sup_{0\le r\le T} (1+\Vert T^K_r F \Vert^2_\infty)^a\,
\Big( \frac{1}{N^{(d/2)(1-\delta)}} \, K^{7/4}\, \big[\, \kappa_N(T) \,
(N/\ell_N)^d \,\big]^{1-(\delta/2)}\Big)^{1/c} 
\end{equation*}
for some constant $\mf c(\bs u^N, T, D, \delta)$ uniformly bounded in
$N$. Here $c = [\, \theta \,+\, (1-\theta) b\,]/ a >1$. By Lemma
\ref{maximum lemma},
$(1+\Vert T^K_r F \Vert^2_\infty)^a \le C(F) \exp\{2a (\mf c_5
K+1)T\}$. By \eqref{2-48}, the previous expression is bounded
uniformly in $N$, which completes the proof of condition (ii) in
Theorem \ref{l46} and the proof of the corollary.
\end{proof}

We conclude this section with a bound used in the proof of Lemma
\ref{2-l22}, and the proof of the tightness of the martingale
$M^N_t(F)$ introduced in \eqref{2-60b}

\begin{lemma}
\label{2-l16}
Assume that $d=1$ or $2$. Fix $T>0$, and a function
$F\in C^\infty_c (\bb R \times \bb T^{d-1})$.  Then, the sequence of
probability measures $\bb Q^{\bb R, M}_{N}$ on $D([0,T], \bb R)$ induced by the
process $M^N_t(F)$ and the product initial measure $\nu^N$ is
tight. 
\end{lemma}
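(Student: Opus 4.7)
The plan is to verify Aldous's tightness criterion in $D([0,T], \bb R)$ for the sequence $(M^N_t(F) : 0\le t \le T)$. This requires two ingredients: (i) tightness of $M^N_t(F)$ in $\bb R$ at each fixed time $t$, and (ii) the time-modulus condition
\begin{equation*}
\lim_{\delta \downarrow 0} \limsup_{N\to\infty} \sup_{\tau, \theta} \bb P_{\nu^N}\bigl[\, \bigl|\, M^N_{\tau+\theta}(F) - M^N_\tau(F)\,\bigr| > \varepsilon\, \bigr] \;=\; 0
\end{equation*}
for every $\varepsilon>0$, where the supremum runs over stopping times $\tau \le T$ and deterministic $\theta \le \delta$.

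For (i), I would invoke the $L^2$ bound \eqref{35-b}, which gives $\bb E_{\nu^N}[M^N_t(F)^2] \le C_0 (\Vert F\Vert_2^2 + \Vert \nabla F\Vert_2^2)\, T$ uniformly in $N$; tightness at each time follows immediately from Markov's inequality. For (ii), I would apply the optional sampling theorem to the martingale $M^{N,2}_t(F)$ introduced in \eqref{2-68}, which yields
\begin{equation*}
\bb E_{\nu^N}\bigl[\, (M^N_{\tau+\theta}(F) - M^N_\tau(F))^2 \,\bigr] \;=\; \bb E_{\nu^N}\Bigl[\, \int_\tau^{\tau+\theta} \Gamma^{N,2}_s(F)\, ds \,\Bigr].
\end{equation*}
The crucial input is the deterministic $L^\infty$ bound \eqref{2-70} on the compensator, which asserts
\begin{equation*}
\sup_{\eta \in \Omega_N} \Gamma^{N,2}(F) \;\le\; C_0\bigl\{\, \Vert F\Vert_2^2 + \Vert \partial_\vartheta F\Vert_2^2 + K^{-1}\Vert \nabla_\theta F\Vert_2^2\, \bigr\} \;=:\; \mf C(F),
\end{equation*}
which is finite and uniform in $N$ since $F$ is smooth and compactly supported and $K_N \to \infty$. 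Hence the right-hand side is bounded by $\mf C(F)\, \delta$, and Markov's inequality delivers condition (ii).

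There is no serious obstacle here: the bound \eqref{2-70} on the carr\'e du champ, which the authors have already established, makes the argument essentially routine. The only care needed is to remark that $(M^N_t(F))$ is indeed c\`adl\`ag (inherited from the c\`adl\`ag jump process $\eta^N(t)$) so that Aldous's criterion applies in $D([0,T], \bb R)$, and that the initial value $M^N_0(F) = 0$ makes (i) trivial at $t=0$. Combining (i) and (ii) via Aldous's theorem yields the tightness of $\bb Q^{\bb R, M}_N$ in $D([0,T],\bb R)$, completing the proof.
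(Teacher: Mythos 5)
Your proof is correct, but it follows a genuinely different route from the paper. The paper does not verify Aldous's criterion directly: it invokes the martingale functional CLT of Jacod--Shiryaev (Theorem VIII.3.11), using the convergence of the quadratic variation $\int_0^t \Gamma^{N,2}_s(F)\,ds$ established in the proof of Lemma \ref{l38} (which rests on the local ergodicity Lemma \ref{l2-02} and the entropy bound) together with the observation that the jumps of $M^N_t(F)$ are bounded by $\Vert F\Vert_\infty/N^{d/2}$; convergence to a time-changed Brownian motion then yields tightness by Prohorov. Your argument uses only the crude deterministic bound \eqref{2-70} on the carr\'e du champ, the $L^2$ estimate \eqref{35-b}, and optional sampling applied to $M^{N,2}_t(F)$, so it is more elementary and self-contained: it needs no replacement/entropy input, no growth condition on $K_N$, and in fact no restriction to $d\le 2$; it also cleanly decouples tightness from the identification of the limit, whereas the paper's proof of this lemma refers forward to a computation inside the proof of Lemma \ref{l38} (harmlessly, since the quadratic-variation convergence there does not use tightness, but your route avoids that cross-reference altogether). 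What the paper's approach buys is economy and a stronger by-product: the quadratic-variation convergence is needed anyway to identify the Gaussian limit, and the same appeal gives convergence of $M^N_\cdot(F)$ to a time-changed Brownian motion, not merely tightness. The only points of care in your write-up, which you essentially address, are that optional sampling applies because $M^N(F)$ and $M^{N,2}(F)$ are square-integrable martingales of a finite-state Markov chain and the stopping times are bounded (replace $\tau+\theta$ by $(\tau+\theta)\wedge T$ or use that the process is defined on $\R_+$), and that the version of Aldous's criterion with fixed-time tightness plus the modulus condition is exactly the one stated in \cite{kl}, Chapter 4.
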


\begin{proof}
In the proof of Lemma \ref{l38} we have shown that the quadratic
variation of the martingale $M^N_t(F)$ converges to the quadratic
variation of a Brownian motion changed in time. Since the jumps of
$M^N_t(F)$ are bounded by $\Vert F\Vert_\infty /N^{d/2}$, by
\cite[Theorem VIII.3.11]{js}, $M^N_t(F)$ converges to a Brownian
motion changed in time. In particular, by Prohorov's theorem, the
sequence $\bb Q^{\bb R, M}_{N}$ is tight.
\end{proof}

The next result follows from H\"older inequality.. 

\begin{lemma}
\label{l-jl}
Let $X$ be a non-negative random variable. Then,
\begin{equation*}
E[X^a] \,\le\,  E[X]^{\theta/c}\, E[X^b]^{(1-\theta)/c}
\end{equation*}
for all $a>0$, $b>0$ and $0<\theta<1$ such that
\begin{equation*}
\theta \,+\, (1-\theta) b \,>\, a\;.
\end{equation*}
In this formula $c = [\, \theta \,+\, (1-\theta) b\,]/ a >1$.
\end{lemma}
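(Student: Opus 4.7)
The plan is to reduce the statement to a combination of Jensen's inequality and H\"older's inequality by exploiting the key algebraic identity $ac = \theta + (1-\theta) b$, which is immediate from the definition of $c$.

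First, since the hypothesis is $\theta + (1-\theta) b > a$, I would observe that $c = [\,\theta + (1-\theta) b\,]/a > 1$. This lets me apply Jensen's inequality to the convex function $y \mapsto y^{c}$ (on $[0,\infty)$) to obtain
\begin{equation*}
E[X^{a}]^{c} \;\le\; E[(X^{a})^{c}] \;=\; E[X^{ac}]\;.
\end{equation*}

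Next, I would rewrite $X^{ac} = X^{\theta} \cdot X^{(1-\theta) b}$, using the identity $ac = \theta + (1-\theta) b$. H\"older's inequality with conjugate exponents $1/\theta$ and $1/(1-\theta)$ then yields
\begin{equation*}
E[X^{\theta}\, X^{(1-\theta) b}] \;\le\; E[X]^{\theta}\, E[X^{b}]^{1-\theta}\;.
\end{equation*}
Combining the two displays gives $E[X^{a}]^{c} \le E[X]^{\theta}\, E[X^{b}]^{1-\theta}$, and taking the $c$-th root (valid since $c > 0$ and both sides are non-negative) produces the claimed bound.

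There is no real obstacle here beyond correctly identifying the exponent arithmetic; the statement is essentially a log-convexity (Lyapunov) inequality dressed up with an extra Jensen step so that the exponent on the left-hand side is $a$ rather than $ac$. The hypothesis $\theta + (1-\theta) b > a$ is used solely to guarantee $c > 1$ so that Jensen applies in the correct direction.
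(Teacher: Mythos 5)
Your proof is correct: the exponent identity $ac=\theta+(1-\theta)b$ is exactly what makes the Jensen step ($E[X^a]^c\le E[X^{ac}]$, valid since $c>1$) combine with H\"older at exponents $1/\theta$, $1/(1-\theta)$ to give the stated bound after taking $c$-th roots. The paper offers no details beyond ``follows from H\"older's inequality,'' and your argument is essentially that route --- your Jensen step is equivalent to absorbing the deficiency $1-1/c$ into a trivial third factor in a three-exponent H\"older --- so there is nothing to correct.
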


\appendix

\section{Cylinder functions expansion}
\label{ap1}

Fix a density profile $\rho\colon \bb T^d_N \to (0,1)$,
$x\in \bb T^d_N$, and let $\Xi_{\rho(\cdot) , x}$ be the operator
acting on cylinder function as
\begin{equation}
\label{2-35}
(\Xi_{\rho(\cdot) , x}  f )(\eta) \;:=\; \tau_x f(\eta) \,-\,
E_{\nu_{\rho(\cdot)}}
[\, \tau_x f\,] \,-\, \sum_{y\in \bb T^d_N}
E_{\nu_{\rho(\cdot)}} \Big[\, \frac{\omega_{x+y}}{\chi (\rho(x+y))} \,
\tau_x f \,\Big] \; \omega_{x+y} \;,
\end{equation}
where $\omega_z = \eta_z - \rho(z)$ and
$\chi(\frak{r}(z))=\frak{r}(z)(1-\frak{r}(z))$.

As $f$ is a cylinder function, only a finite number of terms in the
sum does not vanish. When $x=0$, we represent $\Xi_{\rho(\cdot) , x} $
by $\Xi_{\rho(\cdot)} $. 

As $f$ is a cylinder function, there exists a finite set $A$ and
constants $c(B)$, $B\subset A$, such that
\begin{equation}
\label{2-42}
f(\eta) \;=\; \sum_{B\subset A} c(B)\, \eta_B\;, \quad
\text{where} \;\; {\color{blue}  \eta_B} \,:=\, \prod_{x\in B} \eta_x\;.
\end{equation}
By \eqref{2-35}, and noting that
$E_{\nu_{\rho(\cdot)}} [\omega_x \eta_x] = \chi(\rho(x))$,
\begin{equation*}
(\Xi_{\rho(\cdot)} f )(\eta) \;:=\; f(\eta)
\;-\; \sum_{B\subset A} c(B)\, \rho_B
\;-\; \sum_{x\in  A} \omega_x \sum_{B: x\in B \subset A} c(B)\,
\rho_{B\setminus \{x\}}\;, \quad
\end{equation*}
where ${\color{blue} \rho_C} \,:=\, \prod_{x\in C} \rho(x)$.  Adopting
the same notation for $\omega_C$,
${\color{blue} \omega_C} \,:=\, \prod_{x\in C} \omega (x)$.
\begin{equation*}
\begin{aligned}
f(\eta) \;& =\; \sum_{B\subset A} c(B)\, \eta_B
\;=\; \sum_{B\subset A} c(B)\, \prod_{y\in B} [\omega_y + \rho_y]
\\
\;& =\; \sum_{B\subset A} c(B)\, \sum_{C\subset B}
\omega_C \rho_{B\setminus C}
\;=\; \sum_{C\subset A} \omega_C \sum_{C\subset B\subset A} c(B)\, 
\rho_{B\setminus C} \;.
\end{aligned}
\end{equation*}
The term $C=\varnothing$ corresponds to the first term in the formula
for $\Xi_{\rho(\cdot)} f$, and the terms with $|C|=1$ to the
second. Therefore, for every $x\in\bb T^d_N$,
\begin{equation}
\label{2-17}
(\Xi_{\rho(\cdot), x} f )(\eta) \;:=\;
\sum_{C\subset A : |C|\ge 2} \omega_{C+x} \sum_{C\subset B\subset A} c(B)\, 
\rho_{B\setminus C} \;.
\end{equation}

It is sometimes more convenient to have $\omega_x$ instead of
$\omega_{x+y}$ on the right-hand side of equation \eqref{2-35}. Let
$\Xi^{\rm c}_{\rho(\cdot), x} f \colon \Omega_N\to \bb R$ (c for
centered) be given by
\begin{equation}
\label{2-35b}
{\color{blue} (\Xi^{\rm c}_{\rho(\cdot),  x}  f) (\eta)} \;:=\;
\tau_x f(\eta) \,-\,  E_{\nu_{\rho(\cdot)}}
[\, \tau_x f\,] \,-\, \Xi^{\rm c}_{\rho (\cdot), f}(x) \, \omega_x \;.
\end{equation}
where
\begin{equation*}
\Xi^{\rm c}_{\rho (\cdot), f}(x)  \;:=\; E_{\nu_{\rho(\cdot)}}
\Big [\, \tau_x f \, \sum_{y\in \bb Z^d}
\frac{\eta_{x+y} - \rho(x+y)}{\chi(\rho(x+y))} \, \Big]
\;.
\end{equation*}
Recall $h_0$ in \eqref{2-63}. The next lemma asserts that
$\Xi^{\rm c}_{\bs u^N (\cdot), h_0}(x)$ is close to
$-\, V''(\bs u^N(x))$.

\begin{lemma}
\label{lem:3.1-F}
There exists a finite constant $\mf c(\bs u^N)$, depending only on the
cylinder function $h_0$ and on $\bs u^N$, and uniformly bounded in $N$,
such that
\begin{equation*}
\sup_{x\in \bb T^d_N} \Big|\, 
\Xi^{\rm c}_{\bs u^N (\cdot), h_0}(x) \,+\,
V''(\bs u^N(x)) \, \Big| \,\le\,  \mf c(\bs u^N) \,
\frac{\sqrt{K}}{N} \;.
\end{equation*}
\end{lemma}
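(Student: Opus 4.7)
The plan is to reduce the inhomogeneous quantity $\Xi^{\rm c}_{\bs u^N(\cdot), h_0}(x)$ to the constant-density quantity $\Xi^{\rm c}_{\bs u^N(x), h_0}(0)$ by a Taylor expansion in the local values of the density profile, and then to observe that the constant-density quantity is exactly $-V''(\bs u^N(x))$.

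First I would establish the constant-density identity. Since $\log \nu_\rho(\eta) = \sum_y [\eta_y\log\rho + (1-\eta_y)\log(1-\rho)]$ for constant $\rho$, a direct differentiation gives, for any cylinder function $f$,
\begin{equation*}
\frac{d}{d\rho} E_{\nu_\rho}[f] \;=\; \frac{1}{\chi(\rho)} \sum_{y\in\bb Z^d} E_{\nu_\rho}\big[f(\eta_y - \rho)\big] \;=\; \Xi^{\rm c}_{\rho, f}(0)\;.
\end{equation*}
Applied to $f=h_0$ and recalling from \eqref{2-54} and \eqref{2-63} that $V'(\rho)=-E_{\nu_\rho}[h_0]$, this yields $\Xi^{\rm c}_{\rho, h_0}(0) = -V''(\rho)$.

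Next I would unpack the inhomogeneous case. By \eqref{2-35b} and since $h_0$ is supported on $\{-e_1,0,e_1\}$, only $y\in\{-e_1,0,e_1\}$ contributes to the sum defining $\Xi^{\rm c}_{\bs u^N(\cdot),h_0}(x)$, so
\begin{equation*}
\Xi^{\rm c}_{\bs u^N(\cdot),h_0}(x) \;=\; \sum_{y\in\{-e_1,0,e_1\}} \frac{1}{\chi(\bs u^N(x+y))} \, E_{\nu^N}\big[\tau_x h_0 \cdot (\eta_{x+y} - \bs u^N(x+y))\big]\;.
\end{equation*}
Because $\nu^N$ is a product measure, the expectation on the right depends only on the three values $\bs u^N(x+z)$, $z\in\{-e_1,0,e_1\}$, and since $c_0$ is a polynomial in the occupation variables this dependence is polynomial. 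Hence $\Xi^{\rm c}_{\bs u^N(\cdot),h_0}(x) = \Phi(\bs u^N(x-e_1), \bs u^N(x), \bs u^N(x+e_1))$ for a smooth function $\Phi$, and by construction $\Phi(\rho,\rho,\rho) = \Xi^{\rm c}_{\rho,h_0}(0) = -V''(\rho)$.

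Finally I would apply a first-order Taylor expansion of $\Phi$ around the diagonal point $(\bs u^N(x), \bs u^N(x), \bs u^N(x))$. By \eqref{2-15}, $|\bs u^N(x\pm e_1)-\bs u^N(x)| \le \mf c(\bs u^N)\sqrt{K}/N$, while the partial derivatives of $\Phi$ are uniformly bounded because $\bs u^N$ stays bounded away from $0$ and $1$ by \eqref{2-38b}, keeping the $1/\chi$ factors under control. This gives
\begin{equation*}
\big|\, \Xi^{\rm c}_{\bs u^N(\cdot),h_0}(x) - (-V''(\bs u^N(x)))\,\big| \;\le\; \mf c(\bs u^N)\, \frac{\sqrt K}{N}\;,
\end{equation*}
uniformly in $x$, as claimed. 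The only point demanding care is verifying that the derivatives $\partial_i \Phi$ are bounded uniformly in $N$; this is immediate from the explicit polynomial form of $c_0$ combined with the lower bound on $\chi(\bs u^N(\cdot))$, and constitutes the only routine obstacle in the argument.
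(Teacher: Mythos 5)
Your proposal is correct and follows essentially the same route as the paper: identify the constant-density value $\Xi^{\rm c}_{\rho,h_0}(0)=-V''(\rho)$ by differentiating $E_{\nu_\rho}[h_0]=-V'(\rho)$ in $\rho$, and then control the replacement of the local densities $\bs u^N(x\pm e_1)$ by $\bs u^N(x)$ using the gradient bound \eqref{2-15} and the bounds \eqref{2-38b} keeping $\chi(\bs u^N)$ away from zero. The paper performs this replacement in two explicit steps (first in the $1/\chi$ weights, then comparing the inhomogeneous and homogeneous product measures site by site), which is just a different packaging of your single Taylor expansion of the smooth function $\Phi$.
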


\begin{proof}
Denote by $\bb B_m$ the $d$-dimensional cube of length $2m+1$ centered
at the origin: $\color{blue} \bb B_m = \{-m, \dots, m\}^d$.  Since
$h_0(\eta)$ depends only on $\{\eta_y; y \in \bb B_p\}$ for some
$p<\infty$, 
\begin{equation*}
\Xi^{\rm c}_{\bs u^N (\cdot), h_0}(x)  =\; \sum_{y\in \bb B_p}E_{\nu_{\bs u^N(\cdot)}}
\Big [\, \tau_x h_0 \cdot \frac{\eta_{x+y} - \bs u^N(x+y)}{\chi(\bs
u^N(x+y))} \, \Big] \;.
\end{equation*}

By \eqref{2-38b}, $\rho_- \le \bs u^N(x)\le \rho_+$. By definition
of $\bs u^N$ (cf. \eqref{2-53a}),
$|\nabla_N \bs u^N(x)| \le \Vert \partial_\theta \rho^K \Vert_\infty\,
\sqrt{K}$. Thus, there exists a finite constant $C_0$, depending only
on $h_0$ such that
\begin{align*}
\Big| \frac{\eta_{x+y} - \bs u^N(x+y)}{\chi(\bs u^N(x+y))}
- \frac{\eta_{x+y} - \bs u^N(x)}{\chi(\bs u^N(x))} \Big| 
\,\le\,  C_0  \, \frac{\sqrt{K}}N \, \Vert \partial_\theta \rho^K \Vert_\infty
\end{align*}
for all $x\in \bb T^d_N$, $y\in \bb B_p$.  Therefore, since $h_0$ is
bounded, 
\begin{equation}
\label{2-47}
\Big|\, \Xi^{\rm c}_{\bs u^N (\cdot), h_0}(x) \,-\,
\sum_{y\in \bb B_p}E_{\nu_{\bs u^N(\cdot)}}
\Big [\, \tau_x h_0 \cdot \frac{\eta_{x+y} - \bs u^N(x)}{\chi(\bs
u^N(x))} \, \Big] 
\,\Big|\,\le\, C_0  \, \frac{\sqrt{K}}N \,
\Vert \partial_\theta \rho^K \Vert_\infty
\end{equation}
for some finite constant $C_0$, depending only on the cylinder
function $h_0$.

On the other hand, recalling \eqref{2-54},
\begin{equation*}
-\, V'(\rho) \,=\, E_{\nu_\rho}[h_0]
\,=\, \sum_{\eta\in \{0,1\}^{\bb B_p}} h_0(\eta) \, 
\nu^{\bb B_p}_\rho (\eta) \;.
\end{equation*}
In this formula, for a subset $\bb A$ of $\bb Z^d$ and a density
profile $\varrho\colon\bb A \to (0,1)$, $\nu^{\bb A}_{\varrho
(\cdot)}$ stands for the Bernoulli product measure
with density $\varrho(\cdot)$ on $\{0,1\}^{\bb A}$:
\begin{equation*}
{\color{blue} \nu^{\bb A}_{\varrho (\cdot)} (\eta)} \;:=\;
\prod_{z\in \bb A} \varrho (z)^{\eta_z} \, (1-\varrho(z))^{1-\eta_z},
\quad \eta\in \{0,1\}^{\bb A} \;.
\end{equation*}
When the profile $\varrho (\cdot)$ is constant equal to $\rho$, we
abbreviate $\nu^{\bb A}_{\varrho (\cdot)}$ by $\nu^{\bb
A}_{\rho}$. With this notation, taking the derivative with respect to
$\rho$, yields that
\begin{align*}
-V''(\rho) & = E_{\nu^{\bb B_p}_\rho}
\Big[h_0\, \Big( \tfrac1\rho \sum_{y\in \bb B_p} \eta_y
- \tfrac1{1-\rho} \sum_{y\in \bb B_p} (1-\eta_y) \Big) \Big] \\
& = E_{\nu^{\bb B_p}_\rho} \Big[h_0 
  \sum_{y\in \bb B_p} \frac{\eta_y - \rho}{\chi(\rho)} \Big] \;=\;
\sum_{y\in \bb B_p} E_{\nu_\rho} \Big[\t_x h_0 
  \cdot \frac{\eta_{x+y} - \rho}{\chi(\rho)} \Big] \;.  
\end{align*}

By the previous estimate and \eqref{2-47}, to complete the proof of
the lemma it remains to estimate the difference
\begin{equation*}
E_{\nu_{\bs u^N(\cdot)}}
\Big [\, \tau_x h_0 \cdot \frac{\eta_{x+y} - \bs u^N(x)}{\chi(\bs
u^N(x))} \, \Big] 
- E_{\nu_{\bs u^N(x)}}
\Big [\, \tau_x h_0 \cdot \frac{\eta_{x+y} - \bs u^N(x)}{\chi(\bs
u^N(x))} \, \Big] .
\end{equation*}
for each $y$ in $\bb B_p$.  As $1/\chi(\bs u^N(x))$ is bounded, we
have to bound the differences
\begin{equation*}
E_{\nu_{\bs u^N(\cdot)}}
\big [\, \tau_x h_0 \cdot \eta_{x+y} \big]
- E_{\nu_{\bs u^N(x)}}
\big [\, \tau_x h_0 \cdot \eta_{x+y} \big],
\end{equation*}
and
\begin{equation*}
E_{\nu_{\bs u^N(\cdot)}}
\big [\, \tau_x h_0  \big]
- E_{\nu_{\bs u^N(x)}}
\big [\, \tau_x h_0  \big].
\end{equation*}
Since the estimates of these two terms are similar, we only give the
details for the latter. The difference is expressed as
\begin{equation*}
\sum_{\eta\in \bb B_p}  h_0 (\eta) \,\{\,
\nu^{\bb B_p}_{\bs u^N(x+\cdot)}(\eta) - \nu^{\bb B_p}_{\bs u^N(x)}(\eta)
\,\big\}\;.
\end{equation*}
To estimate this sum, we change the density
$\{\bs u^N(x+y) : y\in \bb B_p\}$ in $\{\bs u^N(x) : y\in \bb B_p\}$
modifying one by one the site densities.  Since $\bb B_p$ has a finite
number of elements, it is sufficient to compare the expectation of
$h_0$ with respect to two product measures whose densities differ at
only one site.

Fix $z\in \bb B_p$ and two density profiles $\bs u_1(\cdot)$ and
$\bs u_2(\cdot)$ such that $\bs u_1(y)= \bs u_2(y)$ for all
$y\in \bb B_p$, $y\neq z$.  Assume that
$0<\mf a \le u_1(z), u_2(z) \le 1-\mf a$, so that
\begin{align*}
\big| \, \nu_{\bs u_1^N(\cdot)} (\eta) - \nu_{\bs u_2^N(\cdot)} (\eta)
\, \big|
& \,=\, \nu_{\bs u_2^N(\cdot)} (\eta)  \, 
\Big| \, \frac{\bs u_1(z)^{\eta_{z}} (1- \bs u_1(z))^{1-\eta_{z}}}
{\bs u_2(z)^{\eta_{z}} (1- \bs u_2(z))^{1-\eta_{z}}} -1 \, \Big|  \\
& \le \mf c(\bs u^N)\, \nu_{\bs u_2^N(\cdot)} (\eta)  \,
| \bs u_1(z)- \bs u_2(z)| 
\end{align*}
for some finite constant $\mf c(\bs u^N)$ which depends only on
$\bs u^N$.  This completes the proof of the lemma since, recall,
$|\nabla_N \bs u^N(z)| \le C_0 \sqrt{K}$.
\end{proof}

In view of the previous lemma, and by \eqref{2-35}, \eqref{2-35b},
\begin{equation}
\label{2-19}
\tau_x f(\eta) \,-\,  E_{\nu_{\rho(\cdot)}} [\, \tau_x f\,] \,=\, -\,
V''(\bs u^N(x))\, \omega_x \;+\;  (\Xi_{\rho(\cdot) , x}  f )(\eta) 
\;+\; \mf R_N(\eta)   \;,
\end{equation}
where
\begin{equation*}
\begin{gathered}
\mf R_N(\eta) \,=\, (\Pi^1_x f) (\eta)  \;+\; R^{(1)}_N(x)\, \omega_x\;,
\\
(\Pi^1_x f)(\eta) \,=\,  \sum_{y\in \bb T^d_N}
E_{\nu_{\rho(\cdot)}} \Big[\, \frac{\omega_{x+y}}{\chi (x+y)} \,
\tau_x f \,\Big] \; \{ \, \omega_{x+y} - \omega_x\,\} \;,
\\
R^{(1)}_N(x) \,=\, \Xi^{\rm c}_{\rho (\cdot), f}(x) \, + V''(\bs
u^N(x)) \;.
\end{gathered}
\end{equation*}

\section{The reaction--diffusion equation}
\label{sec-c}

In this section, we present some results on the solutions
$u^{(\epsilon)}$, $\rho^K$ of equations \eqref{2-56}, \eqref{2-23b},
respectively.  Some of the results are taken from \cite{F1} which
improves and elaborates results in \cite{cp} in a stable situation on
$\bb T$. Mind that we do not assume in the analysis below that the
potential $V$ is symmetric around $\rho^*$.

Recall that $\rho_- = 1-\rho_+$, $\rho_+$ and $\rho^*$ represent the
critical points of the potential $V$.  By definition, $u^{(\epsilon)}$
has two layers and $u^{(\epsilon)} (0) = \rho^*$. Denote by
$h_2\in (0,1)$ the second point at which $u^{(\epsilon)} (\cdot)$ is
equal to $\rho^*$: $\color{blue} u^{(\epsilon)} (h_2) = \rho^*$. Since, by
definition, $(\partial_\theta u^{(\epsilon)}) (0) <0$,
$u^{(\epsilon)} (\theta) > \rho^*$ for $\theta \in (h_2, 1)$,
$u^{(\epsilon)} (\theta) < \rho^*$ for $\theta \in (0,h_2)$.  Denote by
$m_1$, $m_2$ the points at which $u^{(\epsilon)}$ attains its minimum
and maximum values, respectively. By symmetry,
$\color{blue} m_1 = h_2/2$ and $\color{blue} m_2 = (1+h_2)/2$.

We claim that
\begin{equation}
\label{2-38b}
\rho_- \,\le\, u^{(\epsilon)}  (\theta)\,\le\, \rho_+\;, \quad
\theta \in \bb T \;.
\end{equation}
By construction, it is enough to show that
$u^{(\epsilon)} (m_1) \ge \rho_-$ and
$u^{(\epsilon)} (m_2) \le \rho_+$. We prove the first bound, as the
proof of the second is similar. By construction,
$u^{(\epsilon)} (m_1) < \rho^*$. On the other hand, as $u^{(\epsilon)}$
is a solution of \eqref{2-56} and $m_1$ is a local minimum,
$V'(u^{(\epsilon)} (m_1)) = \Delta u^{(\epsilon)} (m_1) \ge 0$. As $V$
is a double well potential, $V'$ is non-negative on
$[\rho_-, \rho^*] \cup [\rho_+, 1]$. Thus, $u^{(\epsilon)} (m_1)$ belongs
to this set. Since we have already shown that
$u^{(\epsilon)} (m_1) < \rho^*$,
$u^{(\epsilon)} (m_1) \in [\rho_-, \rho^*)$, as claimed.

Recall from \eqref{2-79} that we denote by $\phi$ the decreasing
standing wave solution of the equation $\Delta u - V'(u)=0$.  By
\cite[Lemma 2.1]{AHM}, \cite[Lemma 5]{kfhps}, there exists $C>0$ and
$\lambda>0$ so that
\begin{equation}
\label{CP properties}
\begin{gathered}
\rho_+ \,-\, C\, e^{-\lambda |\vartheta|}
\,<\,  \phi(\vartheta) 
\,<\, \rho_+  \,, \quad \vartheta \,<\, 0\;, \\
\rho_-  \,<\,  \phi(\vartheta) 
\,<\, \rho_-  \,+\, C\, e^{-\lambda |\vartheta|} \,,
\quad \vartheta \,>\, 0\;, \\
|\, \partial^j_\vartheta \phi\, (\vartheta)\,|
\,\leq\, C\, e^{-\lambda |\vartheta|} \,\quad j=1\,, 2\,,\;\;
\vartheta\in {\mathbb R}\;. 
\end{gathered}
\end{equation}
Define $\bar{u}_K: \T \rightarrow [0,1]$ by
\begin{equation*}
\bar{u}_K (\theta) \,=\,\
\left\{
\begin{aligned}
& \phi(\theta\sqrt{K}) \,, \quad  \theta\in [0, m_1]\,, \\
& \phi((h_2 - \theta)\sqrt{K}) \,, \quad  \theta\in [m_1, m_2] \,,\\
& \phi((\theta - 1)\sqrt{K}) \,, \quad  \theta\in [m_2, 1]\, .
\end{aligned}
\right.
\end{equation*}
Mind that $\bar{u}_K$ is not differentiable at $m_1$, $m_2$.

The next result is Lemmata B.5 and B.6 in \cite{F1}. As $\bar{u}_K$ is
not differentiable at $m_1$, $m_2$, in the next lemma, the derivatives
at these points should be understood as left and right derivatives.

\begin{lemma}
\label{CP_lemma}
There exists a finite constant $C_0$ such that
\begin{align*}
\sup_{\theta\in \T}|\,u^{(\epsilon)}(\theta) - \bar{u}_K(\theta)\,|
\, \leq\, C_0 K^{-1/4}\;, \quad
\sup_{\theta\in \T} |\, (\partial_\theta u^{(\epsilon)}) (\theta) -
(\partial_\theta \bar{u}_K) (\theta)\, | \, \leq\,  C_0 K^{1/4} 
\end{align*}
For all $K\ge 1$. In this formula, $\epsilon = K^{-1/2}$.
\end{lemma}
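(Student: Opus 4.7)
The plan is to work in the rescaled coordinate $\vartheta = \theta\sqrt{K}$, where $\rho^K(\vartheta) = u^{(\epsilon)}(\vartheta/\sqrt{K})$ satisfies the autonomous ODE $\partial_\vartheta^2 \rho^K = V'(\rho^K)$ on the torus $\sqrt{K}\bb T$, and in these coordinates $\bar u_K$ corresponds to the piecewise function built from $\phi$ and its reflection, with junction points at $m_1\sqrt{K}$ and $m_2\sqrt{K}$. First, I would exploit energy conservation: setting $E(\rho) := \tfrac{1}{2}(\partial_\vartheta \rho)^2 - V(\rho)$, both $\rho^K$ and $\phi$ are orbits in the $(\rho, \partial_\vartheta \rho)$ phase plane with conserved energies $E_K$ and $E_\infty := -V(\rho_\pm)$, and the standing wave $\phi$ corresponds to the homoclinic separatrix. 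Since $\rho^K$ traces a closed orbit of period $\sqrt{K}$, it is a periodic orbit just inside the separatrix; its turning points $\rho^K(m_1\sqrt{K})$ and $\rho^K(m_2\sqrt{K})$ (where $\partial_\vartheta\rho^K$ vanishes) satisfy $V(\rho^K(m_i\sqrt{K})) = -E_K$ and lie in $[\rho_-,\rho_+]$ by the maximum-principle argument in \eqref{2-38b}.

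Second, I would quantify $E_\infty - E_K$. Integrating $d\vartheta = d\rho / \sqrt{2V(\rho) - 2V(\rho^K_{\min})}$ between the turning points yields the half-period as an integral which, by the usual near-separatrix analysis (the integrand has a logarithmic singularity at $\rho_\pm$ coming from $V''(\rho_\pm)>0$), behaves like $C\log(1/(E_\infty - E_K))$ as $E_K\uparrow E_\infty$. Setting this equal to $h_2\sqrt{K}$ (up to a uniformly bounded $h_2\in(0,1)$) forces $E_\infty - E_K$ to be exponentially small in $\sqrt{K}$, hence in particular $O(K^{-1/4})$, and likewise $\rho^K(m_i\sqrt{K}) - \rho_\mp$ is of the same order.

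Third, I would compare trajectories. On $[0, m_1\sqrt{K}]$ both $\rho^K$ and $\phi$ solve the same ODE starting from $(\rho^*, v_0^K)$ and $(\rho^*, v_0^\infty)$ respectively, where $v_0^K = -\sqrt{2V(\rho^*) + 2E_K}$ and $v_0^\infty = -\sqrt{2V(\rho^*) + 2E_\infty}$, so $|v_0^K - v_0^\infty|$ is exponentially small. Rather than applying Gronwall naively on an interval of length $\sqrt{K}$ (which would blow up), I would use the energy identity directly: on any level set where $V(\rho) + E_K$ is bounded below, $\partial_\vartheta \rho^K = -\sqrt{2V(\rho^K) + 2E_K}$, and one can parametrize by $\rho$ instead of $\vartheta$ to get
\begin{equation*}
\vartheta(\rho; K) - \vartheta(\rho; \infty) = \int_\rho^{\rho^*}\Big[\tfrac{1}{\sqrt{2V(r)+2E_\infty}} - \tfrac{1}{\sqrt{2V(r)+2E_K}}\Big]\, dr,
\end{equation*}
whose magnitude is controlled by $|E_\infty - E_K|$ times an integrable singularity at the endpoints. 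Inverting this gives $|\rho^K(\vartheta) - \phi(\vartheta)| = O(K^{-1/4})$ uniformly on $[0, m_1\sqrt{K}]$. The symmetric argument handles $[m_2\sqrt{K}, \sqrt{K}]$, and the interval $[m_1\sqrt{K}, m_2\sqrt{K}]$ is treated by reflecting and applying the same argument with the other branch of $\phi$. The derivative bound then follows immediately from energy conservation, since $(\partial_\vartheta\rho^K)^2 - (\partial_\vartheta\phi)^2 = 2(V(\rho^K) - V(\phi)) + 2(E_K - E_\infty)$ is $O(K^{-1/4})$, and a change of variables back to $\theta$ produces the extra factor $\sqrt{K}$, yielding the stated $C_0 K^{1/4}$ rate.

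\textbf{Main obstacle.} The delicate step is the near-separatrix asymptotics in the second paragraph: establishing, uniformly in the asymmetries of $V$ (which is not assumed symmetric), that the period of nearby closed orbits diverges logarithmically and that the map $E_K \mapsto \text{period}$ has a well-controlled inverse. Once this is in place, the trajectory comparison becomes essentially a careful bookkeeping in phase space, but the junction points $m_1, m_2$ must be handled with care because $\bar u_K$ is only piecewise smooth there; the weak $K^{-1/4}$ rate is generous enough to absorb the mismatch of left/right derivatives at the junctions, which are themselves of order $\sqrt{K}\,|\partial_\vartheta\phi(\sqrt{K}/4)|$ and hence exponentially small.
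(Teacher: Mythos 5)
The paper itself does not prove Lemma \ref{CP_lemma}: it quotes it from Lemmata B.5 and B.6 of \cite{F1}, which rest on the Carr--Pego analysis \cite{cp}. So your phase-plane/energy argument is a self-contained alternative rather than a variant of an in-paper proof, and its skeleton (conserved energy $E_K$ for the periodic orbit $\rho^K$, energy defect $\delta=E_\infty-E_K$ forced to be exponentially small by the period $\sqrt K$, then orbit comparison, with the corner mismatch of $\bar u_K$ at $m_1,m_2$ being exponentially small) is the right classical route. (Minor: the separatrix here is heteroclinic, not homoclinic, since $V(\rho_-)=V(\rho_+)$.)

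There are, however, two concrete gaps. First, the claim that $\vartheta(\rho;K)-\vartheta(\rho;\infty)$ is ``controlled by $|E_\infty-E_K|$ times an integrable singularity'' fails near the turning point: with $s=r-\rho_-$ and $s_0\asymp\sqrt{2\delta/V''(\rho_-)}$, the integrand difference is of order $\delta/(s^2\sqrt{s^2-s_0^2})$, and $\int_{s_0}^{\kappa}\big[(s^2-s_0^2)^{-1/2}-s^{-1}\big]\,ds\to\log 2$ as $\delta\to0$, so the accumulated time shift at the turning point is $O(1)$, not $O(\delta)$. The conclusion survives, but only after splitting: invert the time change only where $\phi$ stays a fixed distance $\kappa$ from $\rho_\pm$ (there the shift is $O_\kappa(\delta)$ and the speeds are bounded below by $c_\kappa$), and in the complementary flat region note that both $\rho^K$ (by monotonicity and the turning value $\rho_-+O(\sqrt\delta)$) and $\phi$ lie in $[\rho_-,\rho_-+\kappa]$, hence differ by at most $\kappa$; optimizing $\kappa$ gives the sup bound (in fact far better than $K^{-1/4}$). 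Second, the derivative bound does not follow from your difference-of-squares identity with the $K^{-1/4}$ input: since $\partial_\vartheta\rho^K$ and $\partial_\vartheta\phi$ have the same sign, $|(\partial_\vartheta\rho^K)^2-(\partial_\vartheta\phi)^2|\le CK^{-1/4}$ only yields $|\partial_\vartheta\rho^K-\partial_\vartheta\phi|\le CK^{-1/8}$, i.e.\ $O(K^{3/8})$ after the $\sqrt K$ rescaling to $\theta$, which misses the claimed $K^{1/4}$. To close this you need either the stronger (exponentially small) closeness that the corrected first step actually delivers, or a case analysis: divide by $|\partial_\vartheta\rho^K|+|\partial_\vartheta\phi|$ where this sum is bounded below (the layer region), and in the flat region bound each derivative separately using $|\partial_\vartheta\rho^K|\le C|\rho^K-\rho_-|$ and the exponential decay of $\phi'$ as in \eqref{CP properties}. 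Finally, your main-obstacle paragraph correctly flags the near-separatrix period asymptotics; note it is also what guarantees $h_2$ stays away from $0$ and $1$ (the two half-periods have ratio tending to $\sqrt{V''(\rho_+)/V''(\rho_-)}$), which you implicitly use when asserting the corner mismatch $2\sqrt K\,|\phi'(h_2\sqrt K/2)|$ is exponentially small.
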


It follows from the previous result and the definition of $\rho^K$
that there exists a finite constant $C_0$ such that 
\begin{gather*}
\sup_{\vartheta\in \sqrt{K}\T} |\, \rho^K(\vartheta)
- \bar{u}_K(\vartheta/\sqrt{K})\, | \,\leq\,  C_0\,  K^{-1/4} \;,
\\
\sup_{\vartheta\in \sqrt{K}\T}|\, (\partial_\vartheta \rho^K) (\vartheta) -
(\partial_\vartheta \bar{u}_K) (\vartheta/\sqrt{K})\, |  \,\leq\,
C_0\, K^{-1/4}
\end{gather*}
for all $K\ge 1$. Thus, as a consequence of the definition of
$\bar u_K$ and \eqref{CP properties},
$|\, (\partial_\vartheta \rho^K) (\vartheta)\,| \le C_0$ for some
finite constant $C_0$ and all $\vartheta\in \sqrt{K}\T$, $K\ge 1$.
Since $\partial^2_\vartheta \rho^K = V'(\rho^K)$,
$\partial^3_\vartheta \rho^K = V''(\rho^K)\, \partial_\vartheta
\rho^K$,
$\partial^{4}_\vartheta \rho^K = \partial^2_\vartheta V'(\rho^K) =
V'''(\rho^K)(\partial_\vartheta \rho^K)^2 +
V''(\rho^K)\partial^2_\vartheta \rho^K$, the same bound holds for
$\partial^2_\vartheta \rho^K$, $\partial^3_\vartheta \rho^K$,
$\partial^4_\vartheta \rho^K$:
\begin{equation}
\label{2-84}
\max_{1\le j\le 4}\, \sup_{K\ge 1} \sup_{\vartheta\in \sqrt{K}\T}
|\, (\partial^j_\vartheta \rho^K) (\vartheta)\,| \,<\, \infty\;.
\end{equation}

\subsection*{The parabolic equation}

Denote by $\mf A_K$ the continuum operator defined on smooth functions
$G\colon \sqrt{K} \, \bb T \times \bb T^{d-1} \to \bb R$ by
\begin{equation}
\label{2-71}
{\color{blue} (\mf A_K G) (\vartheta, \theta)} \,:=\,
K\, ( \ms A_K G) (\vartheta, \theta)
\,+\, (\De_{\theta} G) (\vartheta, \theta)\;,  
\end{equation}
where $\Delta_{\theta}$ represents the Laplacian on the variables
$\theta\in \bb T^{d-1}$, and $\ms A_K$ the operator defined by
\begin{equation}
\label{2-49}
{\color{blue} (\ms A_K G) (\vartheta, \theta)} \,:=\,
(\partial_\vartheta^2 G) (\vartheta, \theta) \,-\,
V''(\rho^K(\vartheta))\, G (\vartheta, \theta)\; .
\end{equation}
Denote by $\color{blue} (S_t : t\ge 0)$,
$\color{blue} (T^K_t : t\ge 0)$ the semigroup associated to the
generator $\Delta_\theta$, $\mf A_K$, respectively.

\begin{lemma}
\label{maximum lemma}
Let $\mf c_5 = \sup_{u\in (0,1)}|V''(u)|$. Then, for each
$F\in C^\infty(\sqrt{K}\T \times \bb T^{d-1})$ there exists a finite constant
$C(F)$ such that
\begin{gather*}
\|\partial^i_{\vartheta} T_s^K F\|_\infty \leq C(F)\,
K^i \, e^{(i+1) \, (\mf c_5 K +1)T}\;,
\\
\|\partial^i_{\theta_j} T_s^K F\|_\infty \,\leq\,  C(F)\,
e^{(\mf c_5 K +1)T}\;,
\end{gather*}
for all $0\le s\le T$, $2\le j\le d$, $0\le i\le 3$, $K\ge 1$.
 \end{lemma}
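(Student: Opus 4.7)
The plan is to exploit the product structure of the semigroup and then handle $\vartheta$-derivatives by an inductive Duhamel argument.

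First, I would observe that since $K\mathscr{A}_K$ only involves the variable $\vartheta$ while $\Delta_\theta$ only involves the variables $\theta_2,\dots,\theta_d$, the two operators commute, so
\[
T^K_s \;=\; e^{sK\mathscr{A}_K}\, e^{s\Delta_\theta}\; .
\]
Since $e^{s\Delta_\theta}$ is an $L^\infty$-contraction on $\bb T^{d-1}$ and commutes with $\partial_{\theta_j}$, all the $\|\cdot\|_\infty$-bounds reduce to bounds on $e^{sK\mathscr{A}_K}$ acting on smooth functions of $(\vartheta,\theta)$ (treating $\theta$ parametrically).

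For the $L^\infty$-bound on $e^{sK\mathscr{A}_K}$, write $u(s,\vartheta,\theta)=e^{sK\mathscr{A}_K}G$, so that $u$ solves $\partial_s u = K\partial^2_\vartheta u - K V''(\rho^K(\vartheta))\,u$. Setting $v = e^{-\mathfrak{c}_5 K s}u$, one gets $\partial_s v = K\partial^2_\vartheta v - K[V''(\rho^K)+\mathfrak{c}_5]\,v$, where the zeroth-order coefficient is $\le 0$ by definition of $\mathfrak{c}_5$. The parabolic maximum principle gives $\|v(s)\|_\infty \le \|v(0)\|_\infty$, hence $\|e^{sK\mathscr{A}_K}G\|_\infty \le e^{\mathfrak{c}_5 Ks}\|G\|_\infty$. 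Applied to $G=F$ and $G=\partial_{\theta_j}F$ this yields the case $i=0$ of the first bound and the second bound of the lemma.

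For the $\vartheta$-derivative bounds, I would proceed by induction on $i$. Differentiating the PDE $i$ times in $\vartheta$ and using Leibniz gives
\[
\partial_s\,\partial^i_\vartheta u \;=\; K\partial^2_\vartheta(\partial^i_\vartheta u) - KV''(\rho^K)\,\partial^i_\vartheta u - K\sum_{k=0}^{i-1}\binom{i}{k} \big[\partial^{i-k}_\vartheta V''(\rho^K)\big]\,\partial^k_\vartheta u\; .
\]
By \eqref{2-84} and the chain rule, each $\partial^{i-k}_\vartheta V''(\rho^K)$ is uniformly bounded by a constant depending only on $V$ and $\rho^K$. Writing this as an inhomogeneous equation for $w_i := \partial^i_\vartheta u$, Duhamel's formula against the semigroup $e^{sK\mathscr{A}_K}$ gives
\[
w_i(s) \;=\; e^{sK\mathscr{A}_K}\partial^i_\vartheta F \,-\, K\sum_{k=0}^{i-1}\binom{i}{k}\int_0^s e^{(s-r)K\mathscr{A}_K}\bigl(\bigl[\partial^{i-k}_\vartheta V''(\rho^K)\bigr]\,w_k(r)\bigr)\,dr\; .
\]
Applying the $L^\infty$-bound from the previous paragraph to each piece,
\[
\|w_i(s)\|_\infty \;\le\; e^{\mathfrak{c}_5 Ks}\|\partial^i_\vartheta F\|_\infty \,+\, C\,K\sum_{k=0}^{i-1}\int_0^s e^{\mathfrak{c}_5 K(s-r)}\,\|w_k(r)\|_\infty\,dr\; .
\]
Starting from $\|w_0(r)\|_\infty\le C(F)e^{\mathfrak{c}_5 Kr}$ and iterating, the inductive hypothesis $\|w_k(r)\|_\infty \le C(F)\,K^k e^{(k+1)(\mathfrak{c}_5 K+1)T}$ produces for $w_i$ a bound of the form $C(F)K^i(1+s)e^{(i)(\mathfrak{c}_5 K+1)T}\cdot e^{\mathfrak{c}_5 Ks}$, and absorbing the polynomial $(1+s)\le e^T$ factor yields $C(F)K^i e^{(i+1)(\mathfrak{c}_5 K+1)T}$, which is exactly the desired estimate.

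The main obstacle is simply organizing the induction cleanly: one must keep track of the factor of $K$ picked up at each differentiation (one $K$ from the coefficient in $\mathscr{A}_K$) and absorb all polynomial-in-$s$ Duhamel factors into the extra $e^{(\mathfrak{c}_5 K+1)T}$ slack built into the right-hand side. No subtle analysis is required beyond the maximum principle for $e^{sK\mathscr{A}_K}$ and the uniform bounds \eqref{2-84} on the derivatives of $\rho^K$.
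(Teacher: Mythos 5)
Your proposal is correct and follows essentially the same route as the paper: an $L^\infty$ bound via the parabolic maximum principle with the zeroth-order coefficient controlled by $\mf c_5 K$, then differentiation of the equation in $\vartheta$, with the commutator terms bounded uniformly through \eqref{2-84}, iterated up to $i=3$ (and the $\theta_j$-derivatives handled trivially since they satisfy the same homogeneous equation). The only difference is organizational — you run the induction through Duhamel against $e^{sK\ms A_K}$ after factoring out $e^{s\Delta_\theta}$, whereas the paper reapplies Lieberman's inhomogeneous maximum principle at each order — which is a cosmetic variant of the same estimate.
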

 
\begin{proof}
Fix $T>0$, $F\in C^\infty(\sqrt{K}\T \times \bb T^{d-1})$, and let
$v =T^K_s F\in C^{1,2}((0,T]\times \sqrt{K}\T \times \bb T^{d-1})$.
The function $v$ satisfies
\begin{equation}
\label{2-85}
\partial_t v = K\,\partial^2_{\vartheta} v \,+\, \Delta_\theta v
\,-\, K\, V''(\rho^K)\, v
\end{equation}
on $(0,T]\times \sqrt{K}\T \times \bb T^{d-1}$ with initial condition
$v(0, \vartheta, \theta) = F(\vartheta, \theta)$. By the parabolic
maximum principle, \cite[Theorem 2.10]{Lieberman} where
$\Omega = (0,T]\times \sqrt{K}\T \times \bb T^{d-1}$,
$I(\Omega) = (0,T)$,
$\mc P \Omega = \{0\} \times \sqrt{K}\T \times \bb T^{d-1}$
$k=K\, \mf c_5$, $f\equiv 0$, we have
$\sup_{0<s<T} \|v(s) \|_\infty \leq e^{(\mf c_5 K +1)T}\|F\|_\infty$.

Since $\rho^K$ is smooth, by the proof of Theorem 4.9 \cite{Lieberman}
(cf. exercise 4.5 \cite{Lieberman}),
$v\in C^{1, r}((0,T]\times \sqrt{K}\T \times \bb T^{d-1})$ for all
$r\geq 2$.  Let $v^{(1,1)} =\partial_{\vartheta}v$,
$v^{(1,j)} =\partial_{\theta_j}v$, $2\le j\le d$. Clearly, $v^{(1,1)}$
solves the equations
\begin{gather*}
\partial_t  v^{(1,1)}  \,=\,  K\,\partial^2_{\vartheta} v^{(1,1)}
\,+\, \Delta_\theta v^{(1,1)}  \,-\, K\, V''(\rho^K)\, v^{(1,1)}
\,-\, K\, V'''(\rho^K)\, \partial_{\vartheta}  \rho^K \, v\;,
\end{gather*}
while $v^{(1,j)}$ solve the equation \eqref{2-85}.  Hence, again by
the parabolic maximum principle Theorem 2.10 \cite{Lieberman},
a bound for $v^{(1,j)}$, $2\le j\le d$, similar to the one obtained in
the first part of the proof holds, and 
\begin{gather*}
\max_{1\leq i\leq d} \sup_{0<s<T}  \|v^{(1,j)} (s) \|_\infty \,\leq\,
e^{(\mf c_5 K  +1)T} \,\Big\{\, \|\nabla F\|_\infty \,+\, \mf c_0\,  K\,
\| \partial_{\vartheta}  \rho_K\|_\infty \, \|v\|_\infty \,\Big\}
\end{gather*}
for some finite constant $\mf c_0$ depending only on the potential
$V$. By \eqref{2-84},
$\sup_K\| \partial_{\vartheta} \rho_K\|_\infty < \infty$. Thus, by the
first part of the proof, the right-hand side in the previous displayed
equation is bounded by 
$\mf c_0\, C(F)\, K\, e^{2 (\mf c_5 K+1)T}$ for some finite constant $\mf c_0$
which depends only on $V$ (and $\rho_K$).

By the same scheme and maximum principle, noting that coefficients
involve $\partial^2_{\vartheta} \rho^K=V'(\rho^K)$ and
$\partial^3_{\vartheta} \rho^K = V''(\rho^K) \, \partial_{\vartheta}
\rho^K$, both uniformly bounded in $K$ by \eqref{2-84}, we recover the
bounds in the lemma statement for $i=2,3$.
\end{proof}

Recall that $\color{blue} (P^K_t : t\ge 0)$ stands for the semigroup
associated to the generator $\ms A_K$ introduced in \eqref{2-49}.
Recall also
$\bs e = \partial_{\vartheta} \phi / \| \partial_{\vartheta} \phi
\|_2$, $\bs e_K$ equals $\bs e$ on $[-\sqrt{K}/4, \sqrt{K}/4]$ and
smoothly interpolates on the rest of the torus $\sqrt{K}\bb T$ so that
$|\partial^j_\vartheta \bs e_K| \leq C|\partial^j_\vartheta \bs e|$ on
$\sqrt{K}\bb T\cong[-\sqrt{K}/2, \sqrt{K}/2)$ for $j=0,1,2$.

\begin{lemma} 
\label{2-l19}
For every $F\in C^\infty_c(\bb R)$,  $T>0$
$$
\lim_{K\uparrow\infty}\int_0^T \|\, \partial_\vartheta  P^K_t F
- \langle F, \bs e\rangle \,
\partial_\vartheta \bs e_K\|^2_{L^2(\sqrt{K}\T)} \; dt \,=\, 0\;.
$$
\end{lemma}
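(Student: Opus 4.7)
The approach combines the Sturm--Liouville analysis of $\ms A_K$ from \cite{cp, F1} with an energy identity for $P^K_t$. Regard $\ms A_K$ as a self-adjoint operator on $L^2(\sqrt K\, \bb T)$ with eigenvalues $\lambda_0^K>\lambda_1^K\ge \cdots$ and orthonormal eigenfunctions $\psi_n^K$. I will need the following three spectral facts from \cite{cp, F1}: (i) $K|\lambda_0^K|\to 0$ as $K\to\infty$; (ii) $\|\psi_0^K - \bs e_K\|_{H^1(\sqrt K\,\bb T)}\to 0$; and (iii) a uniform spectral gap, $\sup_{K\text{ large}}\lambda_1^K\le -\kappa$ for some $\kappa>0$. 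Setting $c_0^K=\langle F,\psi_0^K\rangle$ and $F^\perp=F-c_0^K\psi_0^K$, write
\begin{equation*}
P^K_tF \,-\, \langle F,\bs e\rangle\, \bs e_K
\;=\;\bigl(c_0^K\,e^{tK\lambda_0^K}\psi_0^K-\langle F,\bs e\rangle\,\bs e_K\bigr)\,+\,P^K_tF^\perp\;,
\end{equation*}
so by the triangle inequality it suffices to control the time integral of $\|\partial_\vartheta\cdot\|_2^2$ for each of the two summands.

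For the first (near-ground-state) summand, I split it as $c_0^K(e^{tK\lambda_0^K}-1)\psi_0^K+(c_0^K-\langle F,\bs e\rangle)\psi_0^K+\langle F,\bs e\rangle(\psi_0^K-\bs e_K)$. By (i), $|e^{tK\lambda_0^K}-1|\le 2tK|\lambda_0^K|\to 0$ uniformly on $[0,T]$; by (ii), together with the fact that $F$ has compact support and $\bs e_K\equiv\bs e$ on $[-\sqrt K/4,\sqrt K/4]$, one gets $c_0^K\to \langle F,\bs e\rangle$ and $\|\partial_\vartheta(\psi_0^K-\bs e_K)\|_2\to 0$; and from $\|\partial_\vartheta\psi_0^K\|_2^2 = -\lambda_0^K - \int V''(\rho^K)|\psi_0^K|^2$ one gets $\|\partial_\vartheta\psi_0^K\|_2^2\le |\lambda_0^K|+\mf c_5$, uniformly bounded. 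Thus each piece converges to $0$ in $H^1(\sqrt K\bb T)$ uniformly on $[0,T]$, which settles this summand.

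For the second (orthogonal) summand, start from the energy identity $\tfrac{d}{dt}\Vert P^K_tF^\perp\Vert^2 = -2K\Vert\partial_\vartheta P^K_tF^\perp\Vert^2 - 2K\int V''(\rho^K)[P^K_tF^\perp]^2\,d\vartheta$, which integrates on $[0,T]$ to
\begin{equation*}
2K\int_0^T\!\!\Vert\partial_\vartheta P^K_tF^\perp\Vert^2\,dt \;=\; \Vert F^\perp\Vert^2 - \Vert P^K_TF^\perp\Vert^2 - 2K\int_0^T\!\!\int V''(\rho^K)[P^K_tF^\perp]^2\,d\vartheta\,dt\;.
\end{equation*}
The spectral gap (iii) yields $\Vert P^K_tF^\perp\Vert^2\le e^{-2tK\kappa}\Vert F\Vert^2$, hence $\int_0^T\Vert P^K_tF^\perp\Vert^2\,dt\le \Vert F\Vert^2/(2K\kappa)$, so the rightmost term is $O(1)$ uniformly in $K$. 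Dividing by $2K$ gives $\int_0^T\Vert\partial_\vartheta P^K_tF^\perp\Vert^2\,dt = O(1/K)$, vanishing as $K\to\infty$.

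The main obstacle is securing ingredients (i) and (ii): $\lambda_0^K$ must collapse fast enough that $K|\lambda_0^K|\to 0$, and $\psi_0^K$ must match $\bs e_K$ in $H^1$ rather than merely in $L^2$. The difficulty stems from the fact that the exact kernel element $\partial_\vartheta\rho^K$ of $\ms A_K$ has two sign changes on $\sqrt K\,\bb T$ (one per transition of $\rho^K$) and hence is not $\psi_0^K$, while $\bs e_K$ is a one-transition localisation of the kernel of $\partial_\vartheta^2-V''(\phi)$ on $\R$. The exponentially small splitting of the two near-null eigenvalues on the torus and the fine asymptotic shape of $\psi_0^K$ are the content of the ODE analysis in \cite{cp, F1}. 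If those references provide $\psi_0^K\to\bs e_K$ only in $L^2$, the $H^1$-upgrade can be obtained by elliptic regularity applied to $\partial_\vartheta^2\psi_0^K=(V''(\rho^K)+\lambda_0^K)\psi_0^K$ combined with the uniform bound on $\|\psi_0^K\|_{H^1}$ produced by (i) and $\|V''\|_\infty\le \mf c_5$.
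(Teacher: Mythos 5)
Your strategy (spectral decomposition of $\ms A_K$ on $\sqrt K\,\bb T$) is genuinely different from the paper's, but as written it rests on two spectral inputs that are false for the two-layer profile $\rho^K$, and this breaks both halves of your argument. Since $\rho^K$ solves $\partial_\vartheta^2\rho^K-V'(\rho^K)=0$ periodically, $\partial_\vartheta\rho^K$ is an \emph{exact} eigenfunction of $\ms A_K$ with eigenvalue $0$; it changes sign (two zeros, one per layer), so it is not the ground state, and hence $\lambda_1^K\ge 0$. Your assumption (iii) of a uniform gap $\lambda_1^K\le-\kappa$ therefore fails, and your assumption (ii) fails as well: by the Carr--Pego picture you yourself invoke, the ground state $\psi_0^K$ is close to the \emph{normalized symmetric combination} of the two localized layer modes (mass $\approx 1/2$ at each layer), not to the single-layer function $\bs e_K$, so $\|\psi_0^K-\bs e_K\|_{L^2}$ stays bounded away from $0$ and $c_0^K=\langle F,\psi_0^K\rangle\not\to\langle F,\bs e\rangle$ (rather $c_0^K\to\langle F,\bs e\rangle/\sqrt2$ up to sign). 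Concretely, in your decomposition the "orthogonal" part $F^\perp$ contains the order-one component $\langle F,\partial_\vartheta\rho^K\rangle\,\partial_\vartheta\rho^K/\|\partial_\vartheta\rho^K\|^2$, which does not decay at all under $P^K_t$ and whose $\vartheta$-derivative has $L^2$-norm of order one; the energy/gap estimate $\|P^K_tF^\perp\|\le e^{-tK\kappa}\|F\|$ is thus unavailable and the second summand does not vanish. The lemma is still true because the \emph{two-dimensional} near-null projection of a compactly supported $F$ (localized at the layer at $\vartheta=0$) recombines, by near-degeneracy, into $\langle F,\bs e\rangle\,\bs e_K$; any spectral proof must project onto the span of both small modes and use this localization, which is exactly the content of the ODE analysis you defer to.

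For comparison, the paper avoids redoing any spectral analysis: it integrates by parts to write $\|\partial_\vartheta P^K_tF-\langle F,\bs e\rangle\partial_\vartheta\bs e_K\|_{L^2(\sqrt K\T)}^2$ as minus the $L^2$ pairing of $P^K_tF-\langle F,\bs e\rangle\bs e_K$ with its second derivative, bounds $\|\partial_\vartheta^2P^K_tF\|_2$ and $\|\partial_\vartheta^2\bs e_K\|_2$ uniformly in $0\le t\le T$ (Lemma B.7 of \cite{F1} and \eqref{CP properties}, using $\partial_\vartheta^2\bs e=V''(\phi)\partial_\vartheta\phi/\|\partial_\vartheta\phi\|_{L^2(\bb R)}$), and then invokes Proposition B.4 of \cite{F1}, which gives the time-integrated $L^2$ convergence of $P^K_tF$ to $\langle F,\bs e\rangle\bs e_K$ and encapsulates the two-mode Sturm--Liouville analysis. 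If you want to keep your more self-contained route, replace (ii)--(iii) by: a uniform gap below the \emph{second} excited level, exponential smallness of $\lambda_0^K$, and the statement that the projection of $F$ onto the span of $\{\psi_0^K,\partial_\vartheta\rho^K\}$ converges to $\langle F,\bs e\rangle\bs e_K$ in $H^1$; with those corrected inputs your energy argument for the remainder goes through.
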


\begin{proof}
Integrating by parts,
\begin{align*}
& \|\, \partial_\vartheta  P^K_t F
- \langle F, \bs e\rangle \,
\partial_\vartheta \bs e_K\|^2_{L^2(\sqrt{K}\T)}
\\
&\quad  \;=\; -\, 
\int_{\sqrt{K}\bb T}
\big\{\, P^K_t F - \langle F, \bs e\rangle \, \bs e_K\,\big\}\,
\partial^2_\vartheta \, \big\{\,  P^K_t F
- \langle F, \bs e\rangle \,
\bs e_K \,\big\}\; d\vartheta\;.
\end{align*}
By Schwarz inequality, this expression is less than or equal to
\begin{equation*}
\|\, P^K_t F - \langle F, \bs e\rangle \, \bs e_K\|_{L^2(\sqrt{K}\T)}
\, \{\, \|\, \partial^2_\vartheta  P^K_t F \|_{L^2(\sqrt{K}\T)}
\,+\, \big\|\, \langle F, \bs e\rangle \,
\partial^2_\vartheta \bs e_K\|_{L^2(\sqrt{K}\T)} \, \big\}\;.
\end{equation*}
By \cite[Lemma B.7]{F1}, \eqref{CP properties} noting that
$\partial^2_\vartheta \bs e = V''(\phi)\partial_\vartheta\phi/\|\bs
e\|_{L^2(\bb R)}$, the expression inside braces is bounded by a
constant uniformly in $0\le t\le T$. By Lemma B.7 and Proposition B.4
in \cite{F1}, the time integral of the first term converges to $0$ as
$N\to\infty$.
\end{proof}

We complete this section with an estimate on the density profile
$\bs u^N\colon \bb T^d_N \to (0,1)$ introduced in \eqref{2-52},
\eqref{2-53a}.  Recall from \eqref{2-87} that $\Delta^{(1)}_N$ stands
for the discrete Laplacian on the first coordinate.

\begin{lemma}
\label{2-l18}
There exists a finite constant $C_0$, depending only on $V$, such that
\begin{equation*}
\max_{x\in \bb T^d_N} \big|\, (\Delta^{(1)}_N \bs u_N)(x) + K\,
E_{\nu^N}[\tau_x h_0] \,\big|\;\le\; \frac{C_0 \, K^2}{N^2}
\end{equation*}
for all $N\ge 1$. 
\end{lemma}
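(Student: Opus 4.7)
The plan has two independent Taylor-style approximations, each giving an $O(K^2/N^2)$ error, which add up to the claimed bound.

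First I would identify $E_{\nu^N}[\tau_x h_0]$ explicitly. Since $h_0(\eta) = (1-2\eta_0)c_0(\eta) = -\sigma_0\, c_0(\eta)$ and $c_0$ has the specific form \eqref{01}, a direct computation under the product measure $\nu^N$ with inhomogeneous density $\bs u^N$ gives
\begin{equation*}
E_{\nu^N}[\tau_x h_0] \;=\; \gamma[\beta(x-e_1)+\beta(x+e_1)] - \beta(x) - \gamma^2\, \beta(x-e_1)\beta(x)\beta(x+e_1) \;=\; G_0(x)\;,
\end{equation*}
where $\beta(y) = 2\bs u^N(y)-1$; this is the same $G_0$ that appears in Lemma \ref{2-l07}. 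The factor $-\sigma_0$ eliminates odd-degree monomials in $\sigma$-variables when computing the expectation, and the $\gamma$-coefficients come out exactly as $G_0$.

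Next I would compare $G_0(x)$ with $-V'(\bs u^N(x))$. From the explicit form of $V'$, one has $-V'(\bs u^N(x)) = 2\gamma\beta(x) - \beta(x) - \gamma^2\beta(x)^3$, so
\begin{equation*}
G_0(x) + V'(\bs u^N(x)) \;=\; \gamma\,[\beta(x+e_1)+\beta(x-e_1)-2\beta(x)] - \gamma^2\,[\beta(x-e_1)\beta(x)\beta(x+e_1) - \beta(x)^3]\;.
\end{equation*}
The first bracket is $(2/N^2)(\Delta^{(1)}_N \bs u^N)(x)$, which is $O(K/N^2)$ since $\bs u^N(x)= \rho^K(x_1\sqrt K/N)$ and $\partial^2_\vartheta\rho^K = V'(\rho^K)$ is uniformly bounded by \eqref{2-84}. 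For the cubic bracket I would factor $\beta(x)$ out and write the remaining difference $\beta(x-e_1)\beta(x+e_1)-\beta(x)^2$ as $\beta(x)\cdot [\beta(x+e_1)+\beta(x-e_1)-2\beta(x)] + [\beta(x-e_1)-\beta(x)][\beta(x+e_1)-\beta(x)]$, both pieces being $O(K/N^2)$ by the same argument together with the first-difference bound $|\beta(x\pm e_1)-\beta(x)|\le C\sqrt K/N$ from \eqref{2-84}. Hence
\begin{equation*}
\big|\, K\, G_0(x) + K\, V'(\bs u^N(x)) \,\big| \;\le\; C_0\, K^2/N^2\;.
\end{equation*}

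Third, I would approximate the discrete Laplacian by the continuous one. Setting $f(\theta) = \rho^K(\theta\sqrt K)$ so that $\bs u^N(x) = f(x_1/N)$, a fourth-order Taylor expansion gives
\begin{equation*}
(\Delta^{(1)}_N \bs u^N)(x) \;=\; f''(x_1/N) + O\big(N^{-2}\|f^{(4)}\|_\infty\big)\;.
\end{equation*}
By the chain rule, $f''(\theta) = K\,(\partial^2_\vartheta \rho^K)(\theta\sqrt K) = K\,V'(\rho^K(\theta\sqrt K)) = K\,V'(\bs u^N(x))$, and $\|f^{(4)}\|_\infty \le K^2\,\|\partial^4_\vartheta \rho^K\|_\infty$, which is $O(K^2)$ by \eqref{2-84}. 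Therefore $(\Delta^{(1)}_N \bs u^N)(x) = K\,V'(\bs u^N(x)) + O(K^2/N^2)$.

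Adding the two approximations,
\begin{equation*}
(\Delta^{(1)}_N \bs u^N)(x) + K\, E_{\nu^N}[\tau_x h_0] \;=\; [K V'(\bs u^N(x)) - K V'(\bs u^N(x))] + O(K^2/N^2) \;=\; O(K^2/N^2)\;,
\end{equation*}
with constant depending only on $V$ and on the sup-norms of $\partial^j_\vartheta \rho^K$, $1\le j\le 4$, which are bounded uniformly in $K$ by \eqref{2-84}. There is no real obstacle here; the only thing to watch is that the linear-in-$K$ error term from Step~2 would be fatal ($K^{3/2}/N$ is too large), and it is the symmetric structure of $c_0$ (which produces the discrete Laplacian of $\beta$ rather than a first difference) that makes that error $O(K/N^2)$ and the multiplication by $K$ then acceptable.
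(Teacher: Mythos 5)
Your proposal is correct and follows essentially the same route as the paper: the paper's proof also splits the estimate into a fourth-order Taylor expansion giving $|(\Delta^{(1)}_N \bs u^N)(x) - K V'(\bs u^N(x))| \le \Vert\partial^4_\vartheta\rho^K\Vert_\infty K^2/N^2$, and an explicit comparison of $E_{\nu^N}[\tau_x h_0]$ with $-V'(\bs u^N(x))$ via the identity $\beta(z-e_1)\beta(z+e_1)-\beta(z)^2 = [\beta(z-e_1)-\beta(z)][\beta(z+e_1)-\beta(z)]+\beta(z)[\beta(z-e_1)+\beta(z+e_1)-2\beta(z)]$, with error $O(K/N^2)$ controlled by $\Vert\partial_\vartheta\rho^K\Vert^2_\infty$ and $\Vert\partial^2_\vartheta\rho^K\Vert_\infty$ from \eqref{2-84}. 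Your observation about why the second-difference (rather than first-difference) structure of the error is essential is accurate and consistent with the paper's argument.
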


\begin{proof}
The proof relies on simple estimates.  A fourth order Taylor
expansions, noting \eqref{2-56}, yields that
\begin{equation}
\label{2-23bis}
\max_{x\in \bb T^d_N}  \big|\, (\Delta_N u^N)(x)
\,-\, K \, V'(u^N(x))\,\big| \,\le \,
\Vert \partial^4_\vartheta \rho^K \Vert_\infty \, K^2/N^2 \;, \quad x\in
\bb T_N
\end{equation}
for all $N\ge 1$. Note that
$\Vert \partial^4_\vartheta \rho^K \Vert_\infty$ is bounded by
\eqref{2-84}.

Recall from \eqref{01}, \eqref{2-63} the definition of the cylinder
function $h_0$. We claim that there exists a finite constant $C_0$,
depending only on $\gamma$ (the parameter which appears in the
definition of the jump rates $c_0$), such that
\begin{equation}
\label{2-37tris}
\max_{x\in \bb T^d_N} \big|\, E_{\nu^N}[\tau_x h_0]
\,+\,  V'(\bs u^N(x))\, \big| \;\le\;
\frac{C_0\, K}{N^2} \, \Big\{\, \Vert \partial_\vartheta\rho^K \Vert^2_\infty
\,+\, \Vert \partial^2_\vartheta \rho^K \Vert_\infty\,
\,\Big\}\, 
\end{equation}
for all $N\ge 1$.

To prove this claim, recall that $\bs u^N$ is translation invariant in
the directions $e_k$, $2\le k\le d$, so that
\begin{equation*}
E_{\nu^N}[\tau_x h_0] \;=\; \gamma\,[ \beta(x_1-1)
\,+\,  \beta(x_1+1) \,]
\,-\, \beta(x_1) \,-\, \gamma^2 \beta(x_1-1)\, \beta(x_1)\,
\beta(x_1+1) \;, 
\end{equation*}
where $\beta (y) = 2 u^N(y) - 1$, $y\in\bb T_N$, and $u^N$ is given by
\eqref{2-52}. By definition of $V'(\cdot)$, and since, by
\eqref{2-38b}, $u^N$ is uniformly bounded by $1$, the left-hand side
of \eqref{2-37tris} is bounded by
\begin{equation*}
\gamma^2 \, \max_{z\in \bb T_N} \big|\, \beta(z-1)
\beta(z+1) - \beta(z)^2 \, \big| \,+\,
\gamma \, \max_{z\in \bb T_N} \big|\, \beta(z-1) +
\beta(z+1) - 2 \beta(z) \, \big| \;.
\end{equation*}
Write $\beta(z-1) \beta(z+1) - \beta(z)^2 $ as
$[\beta(z-1) -\beta(z)]\, [\beta(z+1) - \beta(z)] + \beta(z) \,\{\,
\beta(z-1) + \beta(z+1) - 2 \beta(z) \}$. By \eqref{2-38b}, $u^N$ is
uniformly bounded by $1$. Hence, the previous expression is bounded by
\begin{equation*}
4\, \gamma^2 \, \max_{z\in \bb T_N} \big[\, u^N(z+1)
- u^N (z)\,\big ]^2 \,+\,  2\,
(\gamma^2 + \gamma) \, N^{-2}\,
\max_{z\in \bb T_N} \big|\, (\Delta_N u^N) (z) \,\big | \;.
\end{equation*}
By definition \eqref{2-52} of $u^N$, the first term is bounded by
$4\, \gamma^2 \, \Vert \partial_\vartheta\rho^K \Vert^2_\infty\, K
N^{-2}$, and the second one by
$2(\gamma^2 + \gamma) \, \Vert \partial^2_\vartheta \rho^K \Vert_\infty
\, K N^{-2}$. This completes the proof of \eqref{2-37tris}.

Adding the estimates  \eqref{2-23bis} and \eqref{2-37tris} completes
the proof of the lemma.
\end{proof}


\begin{thebibliography}{99}

\bibitem{AHM} M. Alfaro, D. HIlhorst, H. Matano: The singular limit of
the Allen-Cahn equation and the FitzHugh-Nagumo system.  J. Diff. Eq.,
{\bf 2} 505--565 (2008).

\bibitem{Belletini} G.\ Bellettini: {\it Lecture Notes on Mean
Curvature Flow, Barriers and Singular Perturbations}, Lecture
Notes. Scuola Normale Superiore di Pisa, vol.\ 12, 2013.


\bibitem{bl} T. Bodineau, M. Lagouge: Large deviations of the
empirical currents for a boundary-driven reaction diffusion model.
Ann. Appl. Probab. {\bf 22}, 2282--2319 (2012).%

\bibitem{Bonaventura} L. \ Bonaventura: Interface dynamics in an
interacting spin system.  Nonlinear Anal. Theory Methods Appl., {\bf
25} 799--819 (1995).

\bibitem{cp} J.Carr, R. L.  Pego: Metastable patterns in solutions of
$u_t= \epsilon^2 u_{xx} -  f (u)$. Communications Pure Appl. Math.  {\bf
42}, 523--576. (1989).

\bibitem{deMasi} A. De Masi, E. Presutti: {\it Mathematical methods
for hydrodynamic limits.} Springer Lecture Notes 1501,
Springer-Verlag, Berlin. (1991).

\bibitem{dfl} A. De Masi, P. Ferrari, J. Lebowitz: Reaction diffusion
equations for interacting particle systems.  J. Stat. Phys. {\bf 44},
589--644 (1986).%

\bibitem{kfhps} P. El Kettani, T. Funaki, D. Hilhorst, H. Park and
S. Sethuraman: Singular limit of an Allen-Cahn equation with nonlinear
diffusion. Tunisian J. Math. {\bf 4} 719--754 (2022).


\bibitem{flt} J. Farf\'an, C. Landim, K. Tsunoda; Static large
deviations for a reaction-diffusion model.
Probab. Th. Rel. Fields. {\bf 174}, 49--101 (2019)


\bibitem{Fou} J. P. Fouque: La convergence en loi pour les processus \`a
valeurs dans un espace nucl\'eaire.  Ann. IHP Probabilit\'es et
statistiques {\bf 20}, 225-245) (1984).




\bibitem{FMSTspeedchange} T. Funaki, P. van Meurs, S. Sethuraman and
K. Tsunoda: Motion by mean curvature from Glauber-Kawasaki dynamics
with speed change. J. Stat. Phys. {\bf 190}; 45 (2023).


\bibitem{F24} T.\ Funaki: Interface motion from Glauber-Kawasaki
dynamics of non-gradient type, preprint, arXiv:2404.18364 (2024).

\bibitem{Funakibook} T. \ Funaki: Lectures on Random Interfaces.
Springer Briefs in Probability and Mathematical Statistics. Springer,
Singapore. (2016).

\bibitem{F1}{\sc T.\ Funaki}, Stochastic PDE approach to fluctuating
interfaces. preprint arXiv:2412.00708 (2024).


\bibitem{FT} T.\ Funaki and K.\ Tsunoda: Motion by mean curvature from
Glauber-Kawasaki dynamics.  J.\ Statist.\ Phys., {\bf 177} 183--208
(2019).

\bibitem{gjmm} P. Gon\c calves, M. Jara, R. Marinho, O. Menezes: CLT
for NESS of a reaction-diffusion model. Probability Theory and Related
Fields {\bf 190}, :337--377 (2024)

\bibitem{js} J. Jacod, A. N. Shiryaev: {\emph Limit theorems for
stochastic processes}. Second edition. Grundlehren der Mathematischen
Wissenschaften [Fundamental Principles of Mathematical Sciences], {\bf
288}. Springer-Verlag, Berlin, 2003.

\bibitem{jl} M. Jara, C. Landim: The stochastic heat equation as
the limit of a stirring dynamics perturbed by a voter model.
Ann. App.  Probab. {\bf 33}, 4163--4209 (2023).

\bibitem{jm1} M. Jara, O. Menezes: Non-equilibrium fluctuations for a
reaction-diffusion model via relative entropy, arXiv:1810.03418
(2018).

\bibitem{jm2} M. Jara, O. Menezes: Non-equilibrium fluctuations of
interacting particle systems. arXiv:1810.09526 (2018)

\bibitem{jlv} G. Jona-Lasinio, C. Landim, M. E. Vares: Large
deviations for a reaction diffusion model. Probability Theory Related
Fields {\bf 97}, 339--361 (1993).%

\bibitem{ks91} I, Karatzas, S. Shreve: {\it Brownian Motion and
Stochastic Calculus} Graduate Texts in Mathematics {\bf 113}, second
edition, Springer Verlag, 1991.
 
\bibitem{KS94} M.A.\ Katsoulakis and P.E.\ Souganidis: Interacting
particle systems and generalized evolution of fronts.  Arch.~Rational
Mech.~Anal., {\bf 127} 133--157 (1994).


\bibitem{kl} {\sc C. Kipnis, C. Landim}: {\it Scaling Limits of
Interacting Particle Systems}, Grundlheren der mathematischen
Wissenschaften {\bf 320}, Springer-Verlag, Berlin, New York, 1999.

\bibitem{lt} C. Landim, K. Tsunoda: Hydrostatics and dynamical large
deviations for a reaction-diffusion model, Ann. Inst. Henri Poincar\'e
Probab. Stat. {\bf 54}, 51--74 (2018).%

\bibitem{Lieberman} G. M. Lieberman: {\it Second order parabolic
differential equations.} World Scientific Publishing
Co. Pte. Ltd. Singapore (1996).

\bibitem{RacSuq01} A. Ra\v{c}kauskas, C. Suquet: Invariance principles
for adaptive self-normalized partial sums processes.
Stoc. Proc. Appl. {\bf 95}, 63--81 (2001).

\bibitem{Spohn} H. Spohn:
Interface motion in models with stochastic dynamics,
J.~Statist.~Phys., {\bf 71}, 1081--1132. (1993).

\bibitem{v20} J. Voigt: {\it A course on topological vector spaces}.
Compact Textbooks in Mathematics Birkh\"auser.  2020 







 

\end{thebibliography}
\end{document}